\documentclass{article}
\usepackage{geometry}
\usepackage[utf8]{inputenc}
\usepackage[english]{babel}
\usepackage{graphicx}
\usepackage{array}
\usepackage{tabularx}
\newcolumntype{C}{>{\centering\arraybackslash}X}


\usepackage{amssymb, amsmath, amsthm, mathrsfs, yhmath}
\usepackage{enumerate}

\newtheorem{theorem}{Theorem}[section]
\newtheorem{corollary}[theorem]{Corollary}
\newtheorem{lemma}[theorem]{Lemma}
\newtheorem{proposition}[theorem]{Proposition}

\theoremstyle{definition}

\newtheorem{remark}[theorem]{Remark}

\newenvironment{example}
{\pushQED{\qed}\examplex}
{\popQED\endexamplex}

\usepackage{chngcntr}
\counterwithin{table}{section}
\usepackage{multirow}

\renewcommand{\epsilon}{\varepsilon}
\renewcommand{\theta}{\vartheta}

\usepackage{mathtools}
\usepackage{etoolbox}
\DeclareFontFamily{U}{matha}{\hyphenchar\font45}
\DeclareFontShape{U}{matha}{m}{n}{
      <5> <6> <7> <8> <9> <10> gen * matha
      <10.95> matha10 <12> <14.4> <17.28> <20.74> <24.88> matha12
      }{}
\DeclareSymbolFont{matha}{U}{matha}{m}{n}
\DeclareFontSubstitution{U}{matha}{m}{n}

\DeclareFontFamily{U}{mathx}{\hyphenchar\font45}
\DeclareFontShape{U}{mathx}{m}{n}{
      <5> <6> <7> <8> <9> <10>
      <10.95> <12> <14.4> <17.28> <20.74> <24.88>
      mathx10
      }{}
\DeclareSymbolFont{mathx}{U}{mathx}{m}{n}
\DeclareFontSubstitution{U}{mathx}{m}{n}

\DeclareMathDelimiter{\vvvert}{0}{matha}{"7E}{mathx}{"17}
\DeclarePairedDelimiterX{\normi}[1]
  {\vvvert}
  {\vvvert}
  {\ifblank{#1}{\:\cdot\:}{#1}}

 \usepackage{todonotes}

\usepackage[
    backend=biber,
    citestyle=numeric,
    bibstyle=authoryear,
    dashed=false
]{biblatex}
\usepackage{csquotes}

\makeatletter
\input{numeric.bbx}
\makeatother

\DeclareFieldFormat[article,inbook,incollection,inproceedings,patent,thesis,unpublished, periodical]{title}{{#1\isdot}}
\DeclareFieldFormat[article,inbook,incollection,inproceedings,patent,thesis,unpublished, periodical]{volume}{\mkbibbold{#1}}

\renewbibmacro{in:}{}
\DeclareFieldFormat{pages}{#1}

\addbibresource{vandenBosch_Hupkes_wholespace.bib} 

\setlength\bibitemsep{5pt}

\title{{Local Phase Tracking and Metastability of Planar Waves in Stochastic Reaction-Diffusion Systems}}
\author{M.\,van den Bosch$^{\,\rm a,}$\footnote{Corresponding author.\\Email addresses: \url{vandenboschm@math.leidenuniv.nl} and \url{hhupkes@math.leidenuniv.nl}.}\,\,, H.\,J.\,Hupkes$^{\,\rm a}$}
\date{\today}

\numberwithin{equation}{section}
\usepackage[hidelinks]{hyperref}

\begin{document}


\maketitle

\begin{center}\small
    \textsc{
    $^{\mathrm a}$Mathematical Institute,  Leiden University,\\ P.O. Box 9512, 2300 RA Leiden, The Netherlands}
\end{center}





\

\begin{abstract}
\noindent Planar travelling waves on   $\mathbb R^d,$ with $ d\geq 2,$ are shown to persist in systems of reaction-diffusion equations  with multiplicative noise  on significantly long timescales with high probability, provided   that the wave is orbitally stable in dimension one ($d=1$). 
While a  global phase tracking mechanism is required to determine the location of the stochastically perturbed wave in one dimension, or on a cylindrical domain, we show that the travelling wave on the full unbounded space can be controlled by keeping track of local deviations only.
In particular, the energy infinitesimally added to or withdrawn from the system by noise  dissipates almost fully into the transverse direction, leaving behind small localised phase shifts.  The noise process considered is white in time and coloured in space, possibly weighted, and either translation invariant or trace class.
\end{abstract}

\,

\noindent \textsc{Keywords:}  metastability, (no) spectral gap, algebraic decay, heat semigroup,  SPDEs.



\

\section{Introduction}\label{sec:intro}
In this paper we investigate the multidimensional (meta)stability of planar travelling wave solutions 
to systems of stochastic reaction-diffusion equations  of the form
\begin{equation}
    \mathrm du=[D\Delta u + f(u)]\mathrm  dt+\sigma g(u)\mathrm d
W_t^Q,\quad t>0,\label{eq:u}
\end{equation}
where solutions $u=u(x,y,t)\in \mathbb R^n$, with $n\geq 1$, evolve over  $\mathbb R^d=\mathbb R\times \mathbb R^{d-1}\ni (x,y)$
for some spatial dimension $d \ge 2$. We
assume that \eqref{eq:u} in the deterministic setting $\sigma =0 $ admits a spectrally stable solution of the form
\begin{equation}
\label{eq:int:planar:wave}
    u(x,y,t) = \Phi_0( x  - c_0 t),
\end{equation}
\pagebreak and study  the behaviour of small perturbations to this planar wave within the small-noise regime $0 < \sigma \ll 1$. 
The noise is assumed to be heterogeneous and sufficiently localised with respect to the coordinate $y \in \mathbb R^{d-1}$, i.e., the direction transverse to the propagation of the wave. In particular, 
the problem cannot be reduced to a one-dimensional spatial setting and indeed the transverse dynamics play a key role in our analysis.

Our main result below 
states that any solution
to  \eqref{eq:u} that starts sufficiently close to
\eqref{eq:int:planar:wave} at initial time $t = 0$ will  stay close to this planar wave|with (exponentially) high probability|over timescales that are polynomially ($d  = 2, 3, 4)$ or exponentially ($d \ge 5$) long with respect to $\sigma^{-1}$.   
The main step in the proof is the establishment of moment bounds for the running supremum of several quantities related to the size of the perturbation, 
inspired by a stochastic freezing approach that was originally developed for one-dimensional settings \cite{hamster2019stability,hamster2020diag,hamster2020expstability,hamster2020} and has recently been extended to cylindrical domains \cite{bosch2025conditionalspeedshapecorrections,bosch2024multidimensional}. 
 We focus in our analysis on the equal diffusion setting, thus  taking $D=I_n$ where $I_n$ is the $n\times n$ identity matrix. Nonetheless, we claim that  Theorem \ref{thm:main}  also holds in the case of unequal (positive) diffusion coefficients as long as the deterministic flow does not  experience any transverse instabilities. 



\begin{theorem}[see {\S}\ref{sec:preliminaries}]\label{thm:main}
Suppose $k > d/2+1$.
Under 
technical yet rather mild assumptions,
there are \textnormal(small\textnormal) constants $\kappa_b> 0,$ $\kappa_c> 0,$ $\delta_\eta>0$, and $\delta_\sigma > 0$ so that
for all $0<\eta<\delta_\eta$,  $  0 < \sigma < \delta_\sigma$, and\footnote{We view the parameter $r$ as a means to balance the length of the timescale with the sharpness of the probability bound. Specifically, $r = 1$ provides us with the best bound, whereas the longest timescales arise by taking $r \downarrow 0$ for $2 \le d \le 4$. For $d = 5$ the longest timescale is achieved by taking $r = 1/3$, while for  $d > 5$ taking $r = 1$ leads to a timescale and probability bound which are both optimal.} $0< r\leq 1,$
the probability bound
    \begin{equation}
    \label{eq:int:bnd:p:thm}
        \mathbb P\left(\sup_{0\leq t\leq T(\eta/\sigma^2,r)}\|u(x,y,t)-\Phi_0(x-c_0t)\|_{H^k(\mathbb R^d;\mathbb R^n)}^2>\eta\right)\leq 8\exp\left(-\frac{\kappa_b \eta^r}{4\sigma^{2r}}\right)
    \end{equation}
holds, where the
timescales
are given by
\begin{equation}
    T(\chi,r)=\begin{cases}
      \exp\big( \kappa_c \chi \big),&d> 5,\\
     \exp\big( \kappa_c \chi^{\frac{1}{2}\min\{2/3,1-r\}}\big),&d=5, \\
       \kappa_c (r \chi)^{{2(1-r)}/{(5-d)}},&2\leq d\leq 4,
    \end{cases}\label{eq:timescales}
\end{equation}
 whenever   $u(x,y,0)$  is $\mathcal O(\eta)$-close to $\Phi_0(x)$. 
\end{theorem}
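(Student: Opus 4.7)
The plan is to track the solution $u$ by means of a \emph{local} phase $\gamma(y,t)$ and to use an It\^o calculus / stopping-time argument to bound both the perturbation around the phase-shifted wave and the phase itself. Specifically, I would represent the solution as
\[
u(x,y,t) = \Phi_0\bigl(x-c_0t-\gamma(y,t)\bigr) + v(x,y,t),
\]
with $v$ constrained by a pointwise-in-$y$ orthogonality condition of the form $\langle v(\cdot,y,t),\psi_0\rangle_{L^2(\mathbb R)}=0$ against the kernel element $\psi_0$ of the adjoint one-dimensional linearization. An implicit-function argument applied uniformly in $y$ should give existence and smoothness of the decomposition as long as $v$ stays small in $H^k$. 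Applying It\^o's formula to $u$ then produces coupled stochastic evolution equations: a quasilinear stochastic heat equation for $\gamma$ whose linear part is the transverse Laplacian $\Delta_y$ (the translation zero-eigenvalue of the one-dimensional problem lifting to $\Delta_y$ on the full space), and an equation for $v$ whose linear part is the one-dimensional linearization restricted to the orthogonal complement, combined with $\Delta_y$.

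Next, I would apply It\^o's formula to a Lyapunov functional of the form
\[
N(t) = \|v(\cdot,t)\|_{H^k(\mathbb R^d)}^2 + \text{(an appropriate norm of $\gamma(\cdot,t)$)},
\]
and estimate each contribution. The drift for $v$ is controlled via the exponential decay of the restricted one-dimensional semigroup, while $\gamma$ is dissipated only by $e^{t\Delta_y}$, which on $\mathbb R^{d-1}$ enjoys only algebraic decay. The stochastic-integral contributions take the form $\int_0^t \|e^{(t-s)\Delta_y}\cdot\mathrm{noise}\|^2 \, \mathrm ds$ in a suitable norm, and the asymptotic behaviour of such integrals, convergent for $d$ large, logarithmic at $d=5$, and algebraically divergent of order $t^{(5-d)/2}$ for $2\le d\le 4$, is precisely what produces the three regimes in \eqref{eq:timescales}. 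Nonlinear terms are handled via the Sobolev embedding guaranteed by $k>d/2+1$, in the regime where $N(t)$ is small.

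Third, I would introduce the stopping time $\tau_\eta=\inf\{t\ge 0:N(t)>\eta\}$ and combine the above estimates with a supremum-martingale bound (Doob or Burkholder--Davis--Gundy) applied to $N(t\wedge\tau_\eta)^r$ for $0<r\le 1$. The exponent $r$ serves to tune a standard exponential Chebyshev step: the constant $\kappa_b$ in \eqref{eq:int:bnd:p:thm} arises as the coefficient in the resulting exponential moment bound, while $T(\eta/\sigma^2,r)$ is the largest time on which the deterministic error from the nonlinearity and the stochastic error from the noise both remain of order $\eta$. Smallness of $v$ and $\gamma$ then translates back to smallness of $u(\cdot,t)-\Phi_0(\cdot-c_0t)$ in $H^k(\mathbb R^d)$ via the Taylor expansion $\Phi_0(x-c_0t-\gamma(y,t))-\Phi_0(x-c_0t)=-\gamma(y,t)\Phi_0'(x-c_0t)+O(\gamma^2)$, using that $\gamma$ is small in a norm that controls $L^2(\mathbb R^{d-1})$.

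The chief obstacle is the \emph{absence of a spectral gap} in the multidimensional linearization: the essential spectrum reaches the imaginary axis through the transverse direction, so the algebraic decay of $e^{t\Delta_y}$ on $\mathbb R^{d-1}$ is the only damping available for the phase. The delicate step is therefore to show that \emph{local} rather than \emph{global} phase tracking is sufficient, and to keep careful track of how this algebraic decay interacts with the spatially coloured noise in the stochastic convolution, both for the translation-invariant and the trace-class regimes. For $2\le d\le 4$ the transverse diffusion is too weak to fully dissipate the noise energy and only polynomial timescales can be achieved, whereas for $d\ge 5$ one recovers essentially the same exponential metastability as on a cylindrical domain.
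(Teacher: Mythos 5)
Your overall strategy is on the right track and matches the paper in spirit: decompose the solution with a local, $y$-dependent phase, impose a pointwise orthogonality constraint against $\psi_{\rm tw}$, derive coupled SPDEs for the phase and remainder, use the exponential decay of the restricted one-dimensional semigroup together with the algebraic decay of $e^{t\Delta_y}$, and close via stopping times and an exponential-Chebyshev step. However, there are three genuine gaps between your sketch and what is actually needed.

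\textbf{First}, you attribute the three regimes in \eqref{eq:timescales} to the stochastic-convolution integral $\int_0^t \|e^{(t-s)\Delta_y}\cdot\text{noise}\|^2\,\mathrm ds$. That integral grows like $\mathfrak d_{\rm stc}(T)$, which is constant for $d\ge 4$, logarithmic at $d=3$, and like $T^{1/2}$ at $d=2$ — not the regimes you quote. The exponent $T^{(5-d)/2}$ you list comes instead from the \emph{square} of the deterministic convolution factor $\mathfrak d_{\rm det}(T)\sim T^{(5-d)/4}$ (logarithmic at $d=5$, constant for $d>5$). The reason the square appears, rather than $\mathfrak d_{\rm det}(T)$ itself, is the presence of the mixed quadratic term $\|v\|_{H^k}\|\vartheta\|_{H^k_y}$ in the phase evolution: one cannot absorb it into the decay of either factor alone, and the resulting estimate shortens the timescale from the naive $\sigma^2 T^{(5-d)/4}\ll1$ to $\sigma^2 T^{(5-d)/2}\ll1$. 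Without identifying that mixed term as the bottleneck, you cannot recover the exponent $2(1-r)/(5-d)$.

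\textbf{Second}, the functional $N(t)=\|v\|_{H^k}^2 + \text{(norm of $\gamma$)}$ is not sufficient to close the argument. You need to augment it with weighted convolution integrals of the schematic form $\int_0^t(1+t-s)^{-\mu}\bigl[\|v(s)\|_{H^{k+1}}^2 + \|v(s)\|_{H^k}\|\vartheta(s)\|_{H^k_y}+\|\nabla_y\vartheta(s)\|_{H^k_y}^2\bigr]\,\mathrm ds$. These are needed both for the maximal-regularity estimates (the $H^k\to H^{k+1}$ semigroup bound has a non-square-integrable $t^{-1/2}$ singularity) and to prevent the blow-up criterion of the local-existence theory from being triggered prematurely. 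Plain sup-norm control of $v$ and $\vartheta$ does not give this.

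\textbf{Third}, the role of $r$ is not as you describe. You propose applying a martingale inequality to $N(t\wedge\tau_\eta)^r$. In fact the parameter $r$ never enters the moment bounds; integer $p$-th moments are taken and converted to a tail bound $\sim T\exp(-\kappa\eta/(\sigma^2\mathfrak d_{\rm det}(T)^2))$ via an exponential Markov inequality. The parameter $r$ only appears \emph{afterwards}, as a free choice of the terminal time $T=T(\eta/\sigma^2,r)$: one substitutes the explicit $T$ into $T\exp\bigl(-\kappa\eta/(\sigma^2\mathfrak d_{\rm det}(T)^2)\bigr)$ and uses an elementary inequality of the form $z^p e^{-q z^{2r}}\le C\, e^{-sqz^{2r}/4}$ to absorb the polynomial-in-$T$ prefactor into the exponential at the cost of replacing $\eta/\sigma^2$ by $(\eta/\sigma^2)^r$. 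This is a purely deterministic calculation separate from the probabilistic estimates, and getting the exact constants $\kappa_b,\kappa_c$ and the case split $d>5$, $d=5$, $2\le d\le 4$ right requires carrying it out explicitly.
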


One of the major points in the result above is that
the phase of the planar wave $(\Phi_0, c_0)$ used to track $u(t)$ in \eqref{eq:int:bnd:p:thm} is given simply by 
$\gamma(t)=c_0t$. In particular, no corrections are required to the 
movement of the deterministic wave $(\Phi_0, c_0)$.
This is in stark contrast to the situation encountered in one-dimensional settings and on cylindrical domains, where the tracking phase $\gamma(t)$ satisfies an SDE and can deviate significantly from the deterministic position. In the present multidimensional setting, the stochastic forcing causes \textit{local} $y$-dependent perturbations to the phase, which all spread out in the transverse direction.
Our main task in this paper is to characterise and control these localised perturbations. 



\paragraph{Deterministic setting} To set the stage, we first review the deterministic approach developed by Kapitula in \cite{kapitula1997} to establish the asymptotic stability of planar waves in dimension $d\geq 2,$ improving the prior results in    \cite{levermore1992multidimensional,xin1992multidimensional} where either  $d\geq 4$ or other restrictive assumptions were required. The key idea in \cite{kapitula1997} is to decompose the solution
as
\begin{equation}
u(x+c_0t,y,t)= \Phi_0\big(x-\theta(y,t)\big) + v(x,y,t)
,\label{eq:pert_v}
\end{equation}
thus  splitting  the perturbation into a 
$y$-dependent phase-shift $\theta(\cdot,t)\in  
H^k(\mathbb R^{d-1};\mathbb R)$ and a
remainder term $v(\cdot, \cdot, t) \in H^k(\mathbb R^{d};\mathbb R^n)$ that satisfies $P_{\rm tw} v(\cdot, y, t)  = 0$ for each $y \in \mathbb R^{d-1}$ and $t \ge 0$. Here $P_{\rm tw}$ denotes the spectral projection associated to the neutral   eigenvalue of the linear operator $\mathcal L_{\rm tw}$ that describes the linearisation around the wave  in the $x$-direction only. 
The (local) existence of such a decomposition is assured by the implicit function theorem, but can also be obtained by prescribing appropriate evolution equations for $v(t)$ and $\theta(t)$.

Assuming that the one-dimensional operator $\mathcal{L}_{\rm tw}$ possesses a spectral gap, this decomposition guarantees exponential decay in time for the linear flow of the $v$-component in \eqref{eq:pert_v}.
However, this is not the case for the $\vartheta$-component.
Indeed, no spectral gap is present with regard to the  operator $\mathcal L=\mathcal L_{\rm tw}+\Delta_y$ 
associated to the full spatial problem \cite{kapitula1997,xin1992multidimensional},
where $\Delta_y$ denotes the Laplacian on $\mathbb R^{d-1}$. In particular, the boundary of the spectrum of $\mathcal L$ touches zero in a quadratic tangency, resulting in the algebraic decay rate $\vartheta(t) \sim t^{-(d-1)/4}$. For dimensions $d \in \{2, 3\}$ this decay rate is too slow to close a nonlinear stability argument. The key observation that allows one to circumvent this problem is that the critical terms in the relevant nonlinearities depend solely on $\nabla_y \vartheta$, which decay at the faster rate $\nabla_y \vartheta(t) \sim t^{-(d+1)/4}$. For these lower dimensions, the decomposition \eqref{eq:pert_v} allows  this specific structure of the nonlinearities to be exploited in a crucial fashion.

Although the results in \cite{kapitula1997} are formulated for the equal diffusion setting, the main aspects of this approach also work when the diffusion coefficients are unequal but positive and the spectrum of $\mathcal L$ touches the imaginary axis at the origin only, via a quadratic tangency. See, e.g., \cite{hoffman2015multi}, where anisotropic (discrete) diffusion affects the decay rates of $v$ and $\vartheta$. However, transversal instabilities can occur in various settings, which we discuss further in the outlook below.





\paragraph{Evolution of the perturbation}
We continue to use the decomposition \eqref{eq:pert_v}
for our analysis in this paper. Our It{\^o} computations
in {\S}\ref{sec:evol:pert} lead to the coupled
system\footnote{We are suppressing the corresponding $H^k(\mathbb R^d;\mathbb R^n)$ and $H^k(\mathbb R^{d-1};\mathbb R)$ norms in \eqref{eq:int:coupled:v:theta:ev} for readability purposes.}
\begin{equation}
\label{eq:int:coupled:v:theta:ev}
\begin{aligned}
\mathrm dv & = \big[ \mathcal{L}_{\rm tw} v  + \mathcal O (\sigma^2 + \|v\|^2 +\|v\|\|\theta\|
 + \|\nabla_y \vartheta\|^2  ) \big] \, \mathrm dt +  \mathcal O(  \sigma  ) \,  \mathrm  dW^Q_t ,
 \\[0.2cm]
\mathrm d \vartheta & =  \big[ \Delta_y \vartheta + \mathcal O (\sigma^2 + \|v\|^2 +\|v\|\|\theta\|
 + \|\nabla_y \vartheta\|^2 ) ] \, \mathrm dt
 + \mathcal O( \sigma )  \,\mathrm d W^Q_t,
\end{aligned}
\end{equation}
which we will analyse by passing to a mild formulation and utilising the semigroups $e^{\mathcal L_{\rm tw} t}$
and $e^{\Delta_y t}$. These  decay on the appropriate subspaces\footnote{The  subspaces are $H^k_\perp(\mathbb R^d;\mathbb R^n)=\{v\in H^k(\mathbb R^d;\mathbb R^n):P_{\rm tw}v=0\}$ and $H^k(\mathbb R^{d-1};\mathbb R) \cap L^1(\mathbb R^{d-1};\mathbb R)$, respectively.}  at an exponential and algebraic rate, respectively. 

One of the main technical contributions in this paper is that we obtain precise growth rates for the running suprema of the resulting stochastic convolutions. For example, we have 
\begin{equation}
\label{eq:int:stoch:conv:growth}
    \mathbb E  \sup_{0 \le t \le T}  \left\|\int_0^t e^{\Delta_y (t-s)} B(s) \,\mathrm d W^Q_s \right\|^{2p}_{H^k(\mathbb R^{d-1};\mathbb R)} \sim (p^p + \log T)^p 
     \left[ \int_0^T (1 + T - t)^{-(d-1)/2} \, \mathrm dt \right]^p ,
\end{equation}
for all $p\geq 1,$ in  case   a suitable norm of the integrand $B(t)$ is uniformly bounded.
The decay rate in the final integral is the square of the natural semigroup decay and results into extra growth at the rate
\begin{equation}
\label{eq:int:def:d:stc}
\int_0^T (1 + T - t)^{-(d-1)/2} \, \mathrm dt
\lesssim
\mathfrak d_{\rm stc}(T)=\begin{cases}
   1,&d\geq 4,\\
        \log(T),&d=3,\\
        T^{1/2},&d=2.
    \end{cases}
\end{equation}
In particular, by writing
\begin{equation}\begin{aligned}
    \theta_I(t)= \int_0^t e^{\Delta_y (t-s)}\mathcal O(\sigma)\,\mathrm dW_s^Q,\label{eq:mild:theta}
    \end{aligned}
\end{equation}
we expect the behaviour
\begin{equation}
    \mathbb E \sup_{0 \le t \le T} \| \vartheta_I(t) \|_{H^k(\mathbb R^{d-1};\mathbb R)}^2 \sim  \sigma^2 \mathfrak d_{\rm stc}(T) \log (T).
\end{equation}
This additional growth term does not occur for  stochastic convolutions against the semigroup $e^{\mathcal L_{\rm tw} t}$, which have been analysed extensively in previous works \cite{bosch2025conditionalspeedshapecorrections,bosch2024multidimensional,hamster2020expstability}. In particular, based (solely) on the $\mathcal{O}(\sigma)$ term in the first line of \eqref{eq:int:coupled:v:theta:ev} we expect the behaviour
\begin{equation}
\label{eq:int:exp:beh:v}
    \mathbb E \sup_{0 \le t \le T} \| v(t) \|_{H^k(\mathbb R^d;\mathbb R^n)}^2  \sim  \sigma^2 \log (T).
\end{equation}

Another crucial difference with respect to these prior results is that the deterministic leading order terms will  lead to even stronger growth with respect to $T$, which occurs at the rate
\begin{equation}
\label{eq:int:def:d:det}
    \int_0^T(1+T-t)^{-(d-1)/4}\mathrm dt \lesssim \mathfrak d_{\rm det}(T)=\begin{cases}
   1,&d> 5,\\
        \log(T),&d=5,\\
        T^{(5-d)/4},&2\leq d\leq 4,
    \end{cases}
\end{equation}
corresponding to the heat semigroup. In particular,
writing
\begin{equation}\begin{aligned}
    \theta_{II}(t)=\int_0^te^{\Delta_y (t-s)}\mathcal O(\sigma^2)\,\mathrm ds ,
    \end{aligned}
\end{equation}
we expect the growth behaviour
\begin{equation}
    \mathbb E \sup_{0 \le t \le T} \| \vartheta_{II}(t) \|_{H^k(\mathbb R^{d-1};\mathbb R)}^2 \sim \sigma^4 \mathfrak d_{\rm det}(T)^2. 
\end{equation}
Based on the bounds above, the maximal relevant timescales over which we can anticipate $\vartheta(t)$ and $v(t)$ to remain small are hence determined by
\begin{equation}
\label{eq:int:restr:on:T}
\sigma^2 T^{(5-d)/4} \ll 1 \hbox { for } d \in \{2, 3, 4\},
\qquad \qquad
    \sigma^2 \log (T) \ll 1 \hbox{ for } d \ge 5.
\end{equation}
For $d > 5$ these agree with those in our main theorem.

For $2 \le d \le 5$ however the timescales \eqref{eq:timescales} are shorter, which turns out to be solely attributable to the presence of the mixed term
\begin{equation}\begin{aligned}
    \theta_{III}(t)=\int_0^te^{\Delta_y (t-s)}\mathcal O( \|v\|_{H^k(\mathbb R^d;\mathbb R^n)} \| \vartheta \|_{H^k(\mathbb R^{d-1};\mathbb R)})\,\mathrm ds .
    \end{aligned}
\end{equation}
Imposing the crude a priori bound $\| \vartheta\| \le 1$ 
and using the heuristic \eqref{eq:int:exp:beh:v}
leads to the estimate
\begin{align}
\label{eq:int:crude:est:theta:mix}
\nonumber\textstyle\mathbb E\big[\sup_{0 \le t \le T}  \| \theta_{III}(t)\|^2_{H^k(\mathbb R^{d-1};\mathbb R)} 1_{\|\vartheta(t)\| \le 1}\big]
  &\textstyle \sim  \mathbb E\sup_{0\leq t\leq T}\left[\int_0^t(1+t-s)^{-\frac{d-1}{4}}\| v(s)\|_{H^k(\mathbb R^d;\mathbb R^n)} \,\mathrm ds\right]^2
  \\ 
  & \textstyle\sim  
  \mathfrak d_{\rm det}(T)^2 \mathbb E \sup_{0\leq t\leq T} \|v(t)\|_{H^k(\mathbb R^d;\mathbb R^n)}^2
  \\
  & \textstyle\sim 
     \sigma^2 \mathfrak d_{\rm det}(T)^2 \log (T) .\nonumber
\end{align}
In particular, the associated timescale restrictions are given by
\begin{equation}
\label{eq:int:restr:on:T:mix}
\sigma^2 T^{(5-d)/2} \log(T) \ll 1 \hbox { for } d \in \{2, 3, 4\},
\qquad \qquad
    \sigma^2 \log (T)^3 \ll 1 \hbox{ for } d = 5.
\end{equation}
Observe that this leads directly to the $\sigma$-scalings in \eqref{eq:timescales} upon using the parameter $r > 0$ to absorb the logarithmic term in \eqref{eq:int:restr:on:T:mix} for $d \in \{2, 3, 4\}$ into an additional small power of $T$.

It is important to
note that the timescales obtained in \eqref{eq:timescales} are not a consequence of our choice of the decomposition  \eqref{eq:pert_v}, but are in fact inherent to the original problem. Indeed, exit problems for the pair $(v, \vartheta)$ can be directly related to those for the solution
$u$. In particular, there exists a constant $K > 0$ so that for every sufficiently small $\eta$ the exit event in \eqref{eq:int:bnd:p:thm} is guaranteed to occur whenever the size of $(v, \vartheta)$ crosses through the threshold $K \eta$. There are hence no local mechanisms through which the effect on $u$ of a growing $\vartheta$ can be ``neutralised'' by altering 
$v$.

\paragraph{Numerical toy problem}
\begin{figure}[!t]
    \centering
    \includegraphics[width=0.495\linewidth, trim = 0 0 1cm 0, clip]{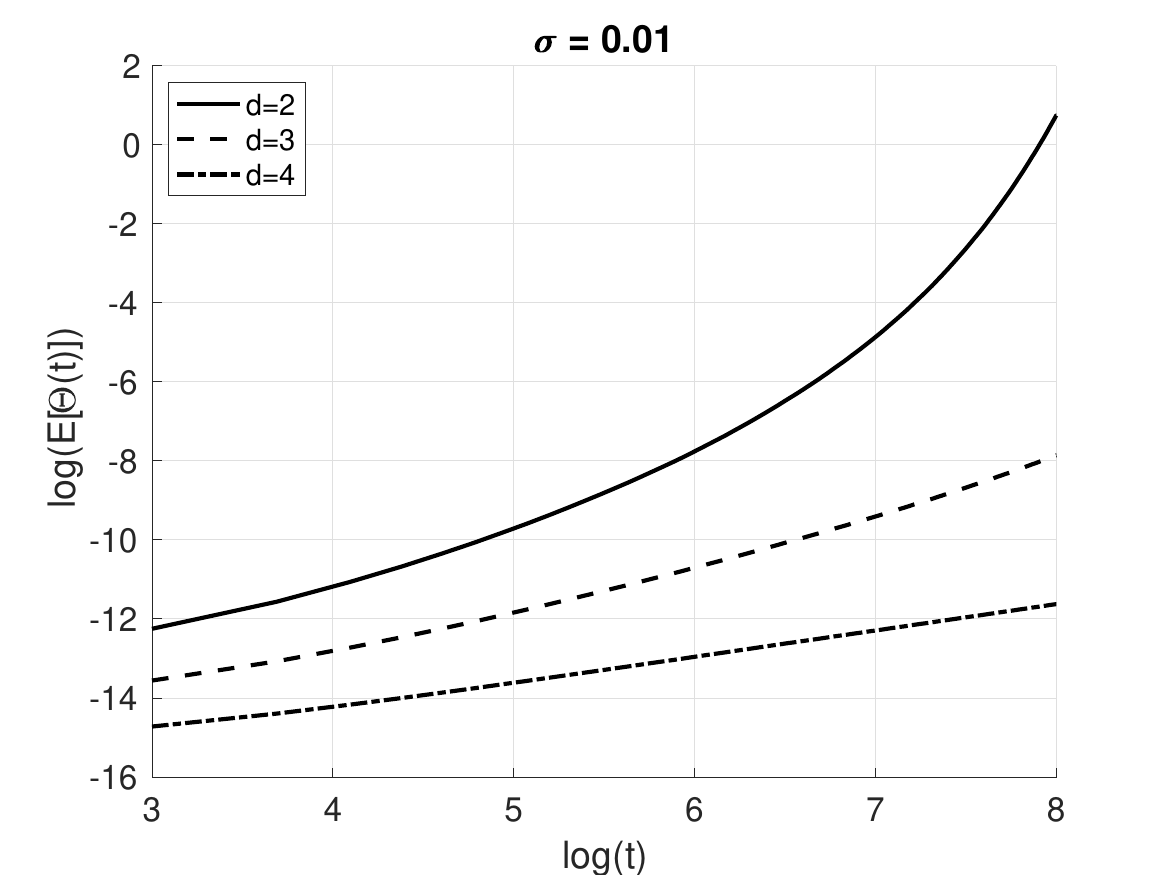}
\includegraphics[width=0.495\linewidth, trim = .5cm 0 .5cm 0, clip]{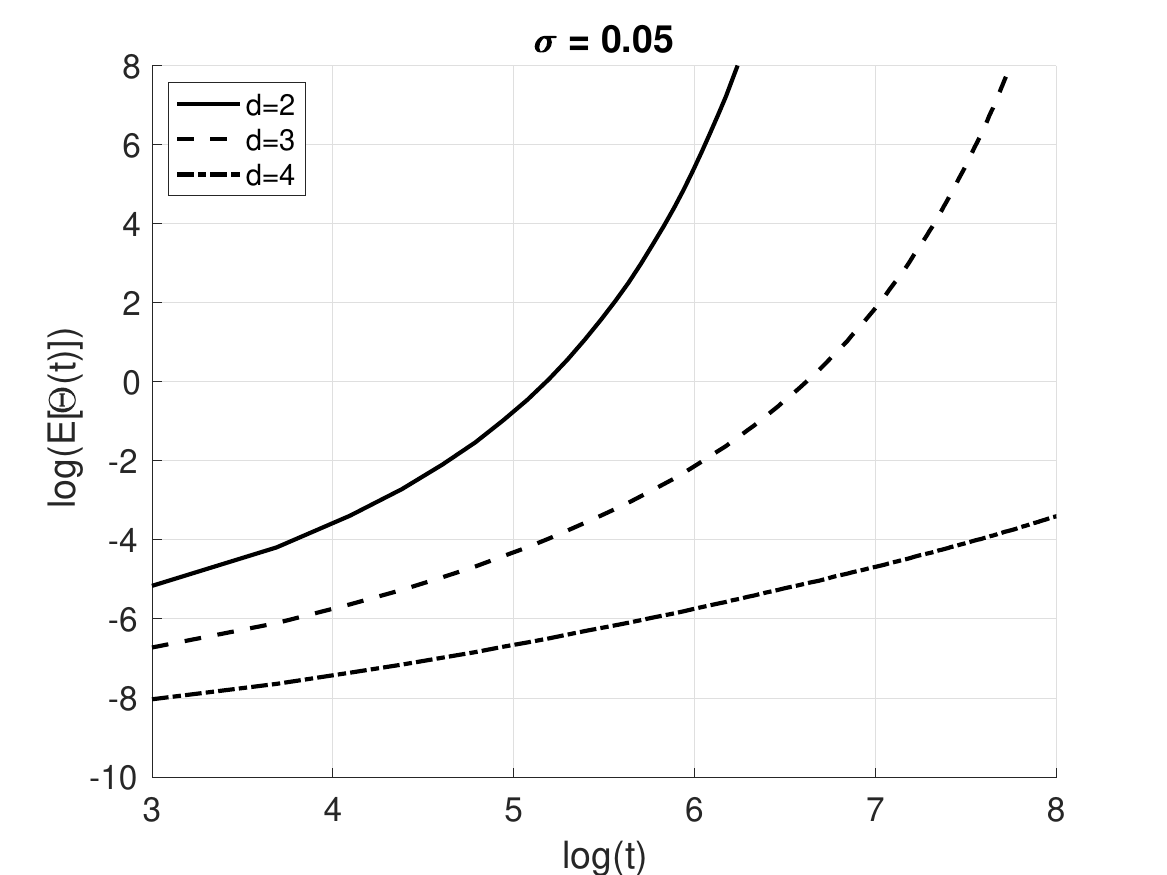}
    \caption{Average over 200 realisations of the quantity $ \Theta(t)$ defined in \eqref{eq:timescale:rel}. Since the curves exhibit super-linear growth when viewed in this double logarithmic scale, these computations suggest that
    $\mathbb  E[\Theta(T)]$ grows faster than any power of $T$.
    }
    \label{fig:theta-hat}
\end{figure}
On account of the rather brute estimate \eqref{eq:int:crude:est:theta:mix}, it is natural to wonder in what sense the timescales \eqref{eq:timescales} are optimal. To this end, we 
capture the essential terms featuring in the discussion above in the two-component SDE
\begin{equation}
    \hat \theta(t):=\int_0^t(1+t-s)^{-\frac{d-1}{4}}\big[\sigma^2+|\hat v(s)|\hat \theta(s)\big]\,\mathrm ds,\qquad     \mathrm d\hat v(t)=- \hat v(t) \,\mathrm dt+\sigma\, \mathrm d\beta(t),\label{eq:mild:scalar}
\end{equation}
which we view as a toy problem for our original physical system. Here
$(\beta(s))_{s\geq 0}$ denotes  a standard Brownian motion.
The process $\hat \theta$ is 
strongly intertwined with
the scalar Ornstein-Uhlenbeck
process 
\begin{equation}
\label{eq:int:def:Y:ou}
\hat v(t)   := \sigma Y_{\rm ou}(t),\qquad Y_{\rm ou}(t) :=  \int_0^t e^{-(t-s)} \, \mathrm d \beta(s),
\end{equation} via a convolution kernel that decays at the same algebraic rate as the heat semigroup.

We proceed  by defining the maximal process 
\begin{equation}
    \textstyle\Theta(t)=\sup_{0\leq s\leq t} |\hat \theta(s)|^2,\label{eq:timescale:rel}
\end{equation}
together with  the on average hitting time
\begin{equation}
   \tau_{\rm avg}(\eta)=\inf\{t\geq 0:\mathbb E[\Theta(t)]>\eta\}.
\end{equation}
The numerical results
in Figure \ref{fig:theta-hat} clearly suggest that
$\mathbb E[\Theta(T)]$ grows at a rate that is faster than any power of $T$, which is significantly more rapid than any of the bounds discussed above. 
In particular, it is essential to use a priori bounds on our variables to obtain slower rates such
as \eqref{eq:int:crude:est:theta:mix}, which we achieve by introducing appropriate stopping times. This can be interpreted as the first estimate in
\eqref{eq:int:crude:est:theta:mix}.
The fits in  Figure \hyperref[fig:growth]{\ref{fig:growth}(b)} confirm that the final two estimates 
in \eqref{eq:int:crude:est:theta:mix} are highly reasonable.

Furthermore, the results in Figure  \ref{fig:datafit} for $d =2$ numerically support the claim that $\tau_{\rm avg}(\eta) \ll \sigma^{-2}$ holds for small $\sigma$. This is compatible with the timescale $T \sim \sigma^{-4/3}$ in our main theorem, but also  
indicates that the timescale $T \sim \sigma^{-8/3}$ arising from \eqref{eq:int:restr:on:T} is indeed too long. In particular, the mix term
in the second line of \eqref{eq:int:coupled:v:theta:ev}
is indeed responsible for the significant and numerically confirmed shortening of the timescales over which stability can be
maintained. 


\begin{figure}[!t]
    \centering
    \includegraphics[width=0.99\linewidth]{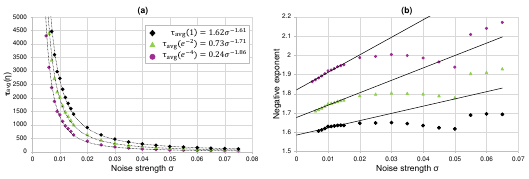}
    \caption{Power law behaviour of $\tau_{\rm avg}(\eta)$ for $d=2$ in terms of $\sigma^{-1}$, obtained by averaging over 200 realisations of $\Theta(t)$ with: $\eta=1$ (diamond); $\eta=\exp(-2)\approx 0.1$ (triangle); and  $\eta=\exp(-4)\approx 0.02$ (circle). \textbf{(a)} We start with $\sigma=0.075$ and decrease $\sigma$ until $\tau_{\rm avg}(\eta)$ is beyond the threshold 5000. A standard regression is performed on all  collected data points for fixed $\eta$. \textbf{(b)} For each $\sigma$, we plot the negative exponent of the power law obtained from a standard regression on the subset of data points in Figure \hyperref[fig:datafit]{\ref{fig:datafit}(a)}
    that lie in in the horizontal range $[\sigma, 0.075]$.
   It would be beneficial to acquire more data with smaller values of $\sigma$ to decrease the spread in the exponents, but the required running times increase prohibitively. Nevertheless, these results provide clear evidence that $\hat \theta(t)$ experiences significant stochastic forcing well before the 
     timescale $t\sim \sigma^{-2}.$ 
    }
    \label{fig:datafit}
\end{figure}


\paragraph{Algebraic decay and stochastic convolutions}
In order to appreciate the growth rate in \eqref{eq:int:stoch:conv:growth}, it is insightful
to consider the scalar process $Y=(Y(t))_{t\geq 0},$ defined by
\begin{equation}
        Y(t)=\int_0^t(1+t-s)^{-(d-1)/4}\mathrm d\beta(s),\label{eq:Y(t)}
\end{equation}
where $(\beta(s))_{s\geq 0}$ is again  a standard Brownian motion. The  chaining  principle by Talagrand  \cite{dirksen2015tail,talagrand2005generic} is a powerful technique to analyse such processes. Indeed, 
exploiting the fact that $Y$ is a Gaussian process, 
it provides estimates of the form
 \begin{align}
    \label{eq:int:dudley}
    \mathbb E\sup_{0\leq t\leq T}|Y(t)|^{2}&\leq  K_{\rm ch}^{2}\left(\int_0^\infty \sqrt{\log(N(T,d_{\rm ch},\nu))}\,\mathrm d\nu\right)^{2},
    \end{align}
for some $K_{\rm ch}>0$. The covering number $N(T, d_{\rm ch}, \nu)$ denotes the minimum number
of intervals of length $\nu$ or less  in the metric $d$ that are required to cover $[0, T]$; the metric $d_{\rm ch}$ is 
given by
\begin{equation}
d_{\rm ch}(t,s)^2= \mathbb E|Y(t)-Y(s)|^2  
=\mathbb E[Y(t)^2]+\mathbb E[Y(s)^2]-2\mathbb E[Y(t)Y(s)].
\end{equation}
 This metric is related
to the Chernoff bound
\begin{equation}
    \mathbb P(|Y(t)-Y(s)|>\theta)\leq 2\exp(-\theta^2/d_{\rm ch}(t,s)^2) 
\end{equation}
and can be evaluated explicitly using the identity
\begin{equation}
   \textstyle \mathbb E[Y(t)Y(s)]=\int_0^{t\wedge s}(1+t-u)^{-(d-1)/4}(1+s-u)^{-(d-1)/4}\,\mathrm du .
\end{equation}

\begin{figure}[!t]
    \centering
\includegraphics[width=0.495\linewidth, trim = 0 0 1cm 0, clip]{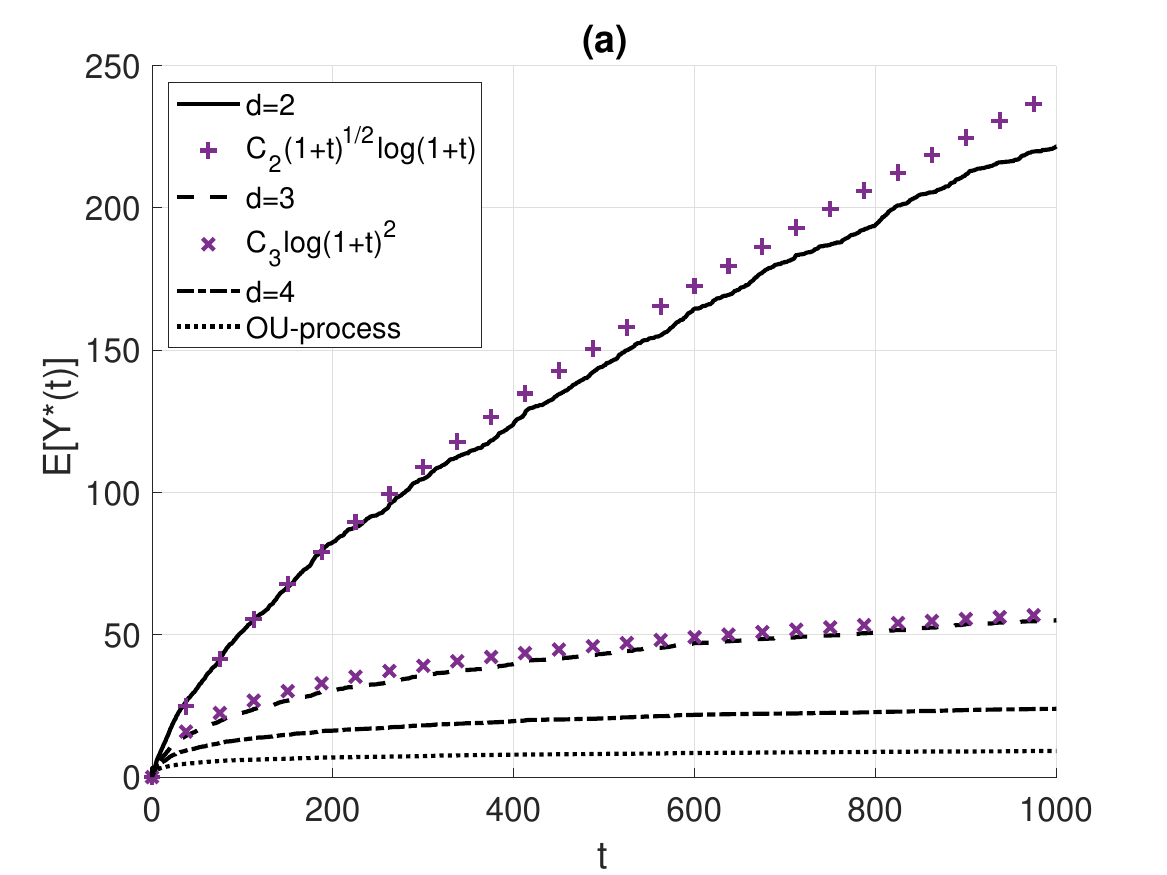}
\includegraphics[width=0.495\linewidth, trim = .5cm 0 .5cm 0, clip]{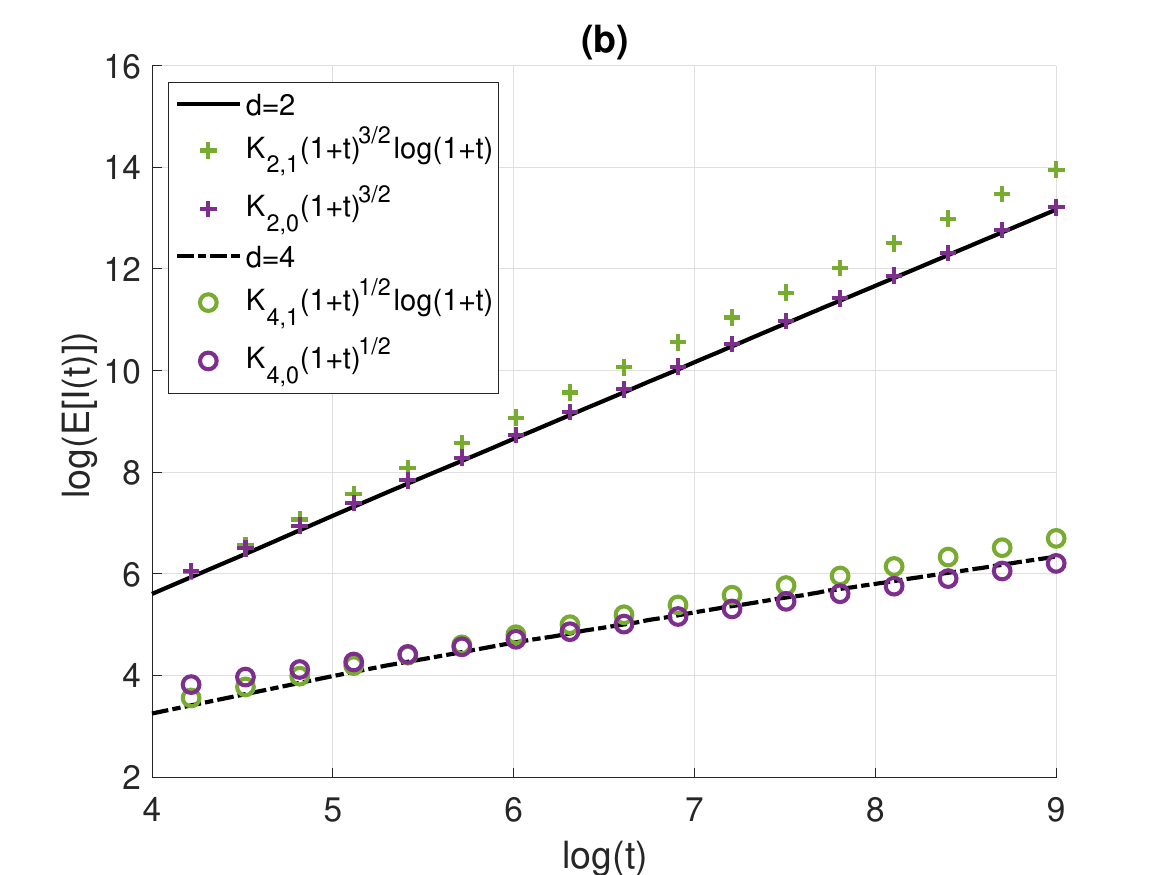}
    \caption{On the growth of stochastic convolutions and weighted integrals. \textbf{(a)} Average over 200 realisations of $Y^*(t)=\sup_{0\leq s\leq t} Y(s)^2$ 
    and $Y^*_{\rm ou}(t)=\sup_{0\leq s\leq t} Y_{\rm ou}(s)^2$;
    see \eqref{eq:Y(t)} and \eqref{eq:int:def:Y:ou}.
    The purple markers represent the estimates
    \eqref{eq:int:stoch:conv:growth} with associated constants $C_2=1.1$ and $C_3=1.2$.
     \textbf{(b)} Average over 200 realisations of $I(t)=\sup_{0\leq s\leq t}[\int_0^s (1+s-r)^{-\frac{d-1}{4}}|Y_{\rm ou}(r)|\,\mathrm dr]^2$.
     The green markers represent the $T$-dependence in 
     \eqref{eq:int:crude:est:theta:mix} with associated constants $K_{2,1}=0.175$ and $K_{4,1}=1.0$.
     The purple markers omit the logarithmic terms and feature the constants $K_{2,0}=0.75$ and $K_{4,0}=5.5.$ Notice that the latter do not appear to fully capture the growth of $I(t)$, showing that the logarithmic factors are indeed essential.
     }\label{fig:growth}
\end{figure}

For the odd cases $d \in \{3, 5\}$ this leads to insightful and tractable bounds for $d_{\rm ch}$ that can be used to estimate
\eqref{eq:int:dudley}. Indeed, picking $d = 3$ and assuming $t > s$,
we may compute
\begin{equation}
\begin{aligned}
d_{\rm ch}(t,s)^2
&=\log(t+1)+\log(s+1)-2\log\left(\frac{2+t-s-2\sqrt{1+t-s}}{2+t+s-2\sqrt{(1+t)(1+s)}}\right)\\
&\leq \log(1+\tfrac12(t-s)).
    \end{aligned}
\end{equation}
For $|t-s|\leq 2,$ this implies 
    $d_{\rm ch}(t,s)^2\leq \frac12|t-s|, $
while for $|t-s|\geq 2$ we have  $
    d_{\rm ch}(t,s)^2\leq \log(t-s).$
Subsequently, we see that the covering number $N(T, d_{\rm ch}, \nu)$
can be bounded
by $T/2\nu^2$ for $0<\nu\leq 1,$ by $T/\exp({\nu^2})$ for $1\leq \nu\leq \sqrt{\log(T)},$ and by 1 for $\nu\geq\sqrt{\log(T)}.$
This yields
    \begin{equation}
        \begin{aligned}\int_0^\infty \sqrt{\log(N(T,d_{\rm ch},\nu))}\,\mathrm d\nu&\leq  \int_0^1\sqrt{\log(T)-\log(2\nu^2)}\,\mathrm d\nu+ \int_1^{\sqrt{\log(T)}}\sqrt{\log(T)-\nu^2}\,\mathrm d\nu
    \\& \leq 2\sqrt{\log(T)}+\frac{\pi}{4}\log(T)
    \\& \leq 4\log(T),
\end{aligned}
    \end{equation}
    for $T\geq 2$, which is in line with \eqref{eq:int:stoch:conv:growth} for $p=1$.  
    
On the other hand, for    $d=5$  we similarly get 
     \begin{equation}
        \begin{aligned}
            d_{\rm ch}(t,s)^2&=\frac{t}{t+1}+\frac{s}{s+1}-2(t-s)^{-1}{\log\left((1+t-s)\frac{1+s}{1+t}\right)} \\
            &\leq \min\{|t-s|,2\},
        \end{aligned}
    \end{equation}
    which agrees (up to a constant) with the bound that can be obtained for the Ornstein--Uhlenbeck process $Y_{\rm ou}$; see
    e.g., \cite[Sec. 1]{hamster2020expstability}.
    In particular, $Y$ and $Y_{\rm ou}$ admit the same logarithmic growth rate
    \begin{equation}
    \mathbb E \sup_{0 \le t \le T} Y(t)^2 \sim \mathbb E \sup_{0 \le t \le T} Y_{\rm ou}(t)^2 \sim \log( T),
    \end{equation}
    which again agrees with \eqref{eq:int:stoch:conv:growth} for $p=1$.
    For $Y_{\rm ou}$  this
    can alternatively be established by
    examining crossing numbers \cite{pickands1969asymptotic} or analysing explicit probability distributions \cite{alili2005representations}; see also \cite{jia2020moderate}.
    

    
    
    
    

    In principle, these Dudley-type bounds can also be obtained for infinite-dimensional stochastic integrals
    such as \eqref{eq:int:stoch:conv:growth}
    by following the approach in \cite{bosch2024multidimensional,hamster2020expstability}. Nevertheless, in {\S}\ref{sec:supremum} we proceed with the more streamlined approach that was developed in \cite{bosch2025conditionalspeedshapecorrections},
    which leads to the same growth bounds. We refer
    to Figure \hyperref[fig:growth]{3(a)} for numerical 
    evidence in support of these results.
    

    \paragraph{Moments of maximal processes} 
    Recalling the decomposition \eqref{eq:pert_v},
    our main objective is to control the size of the quantity
    \begin{equation}
\begin{aligned}
    N_{k}(t)&=\|v(t)\|_{H^k(\mathbb R^d;\mathbb R^n)}^2+\|\theta(t)\|_{H^k(\mathbb R^{d-1};\mathbb R)}^2+\int_0^t(1+t-s)^{-(d-1)/4}\|v(s)\|_{H^{k+1}(\mathbb R^d;\mathbb R^n)}^2\mathrm ds\\&\qquad+\int_0^t(1+t-s)^{-(d-1)/4}\|v(s)\|_{H^{k}(\mathbb R^{d};\mathbb R^n)}\|\theta(s)\|_{H^{k}(\mathbb R^{d-1};\mathbb R)}\mathrm ds\\&\qquad\qquad+\int_0^t(1+t-s)^{-(d-1)/4}\|\nabla _y\theta(s)\|_{H^{k}(\mathbb R^{d-1};\mathbb R)}^2\mathrm ds,\label{eq:size:main}
    \end{aligned}
\end{equation}
which will allow us to study the related stopping time
\begin{equation}
t_{\rm st}(\eta;k)=\inf\{t\geq 0:N_{k}(t)>\eta\}.
\end{equation}

Let us stress the fact that  the second and third integral  terms in \eqref{eq:size:main} are  necessary, as these nontrivial quadratic terms appear explicitly in \eqref{eq:int:coupled:v:theta:ev}.  Furthermore, the $H^k$-control on  $\nabla_y \theta$  combined with the $H^{k+1}$-control on $v$ is required to prevent the blow-up of solutions
to \eqref{eq:int:coupled:v:theta:ev}, for which only local existence results are available; see \cite{agresti2024criticalNEW} and {\S}\ref{sec:evol:pert}. 
Generally speaking, the presence of the integrals in \eqref{eq:size:main} is related to the fact that the natural $H^k \to H^{k+1}$ semigroup bounds have a $\mathcal O(t^{-1/2})$-singularity for $0 < t \ll 1$ that is not integrable when squared, as is required when analysing the stochastic integrals. In {\S}\ref{sec:estimates:max:reg} we obtain several maximal regularity estimates where these weighted integrals are essential to remove the singularities. We emphasise that we cannot replace the $\| \nabla \theta\|_{H^k}$ term by  $\| \theta\|_{H^{k+1}}$,  since   estimates in dimensions 2 and 3 would then fail.
For the lower dimensions $2\leq d \le 5$, it is worth observing that even $H^{k+1}$-supremum bounds on $v$ and $\vartheta$ would not be enough to obtain $t$-uniform control on these integrals on account of the growth in \eqref{eq:int:def:d:det}.

In {\S}\ref{sec:stability} 
we establish the moment bound stated in Theorem \ref{thm:general}, which can be considered as the main result of this paper, and reads
 \begin{equation}
 \begin{aligned}\label{eq:24}
     \mathbb E\left[\sup_{0\leq t\leq t_{\rm st}(\eta;k)\wedge T}|N_{k}(t)|^p\right]&\leq K^p\Big[\|v(0)\|_{H^{k}(\mathbb R^{d};\mathbb R^n)}^{2p}+\|\theta(0)\|_{H^{k}(\mathbb R^{d-1};\mathbb R)}^{2p}\\&\qquad\qquad\qquad\qquad  
     +\sigma^{2p}\mathfrak d_{\rm det}(T)^{2p}(p^p+\log(T)^p)\Big],
     \end{aligned}
 \end{equation}
 for all integers $p \ge 1$.
 By exploiting Markov-type inequalities  in \cite{bosch2025conditionalspeedshapecorrections,bosch2024multidimensional,hamster2020expstability}, we can show that the exit probability increases  logarithmically for $d\geq 5$ and (at least) polynomially in time for $2\leq d\leq 4$, hence resulting into Theorem \ref{thm:main}.

    \paragraph{Outlook} 
    The main issue in dimensions $d \in\{2, 3, 4\}$ that limits the timescale over which stability can be maintained is that the phase $\vartheta(t)$ experiences|at lowest order in $\sigma$|deterministic growth.
    In principle, this lowest order behaviour leads to the formation of a localised deformed profile that can be explicitly computed. A possible next step is to linearise around this profile rather than the (rigid) planar wave \eqref{eq:int:planar:wave}. Naturally, this spawns many additional (time-dependent) terms which could be problematic to control, but it has the potential to open up the route towards longer timescales.  One could also attempt to incorporate weights into quantities like \eqref{eq:size:main} to study how much faster $\theta(t)$\linebreak grows compared to $v(t)$.
    

Returning to the setting of unequal diffusion coefficients, we are especially interested in the appearance of transverse instabilities \cite{hagberg1994complex}, which are attracting major interest since they
can lead to intriguing phenomena such as fingering \cite{carter2024stabilizing,carter2023criteria}.
Near the onset of these instabilities we expect that stochastic fluctuations could play an important role to (de)stabilise the underlying system \cite{appleby2008stabilization,mohammed2014multi,salins2023solutions}. In particular, 
bifurcations may now occur in different regions of the parameter space, which could lead to novel insights. Investigating this could be done either on the whole space 
$\mathbb R^d$
  or on a cylindrical domain, where the size of the torus may serve as an additional bifurcation parameter. 
The above connects to exit problems \cite{salins2023solutions}   and also 
has possible applications in optimal control \cite{de2023stochastic}.
Ultimately, one could extend this study to curved fronts or other  interfaces \cite{berestycki2002influence,funaki1999singular,guo2021curved,hagberg1994complex,niu2018curved,wang2012traveling}.

    \paragraph{Organisation} 
    We start in  {\S}\ref{sec:preliminaries} by stating our assumptions regarding the 
     nonlinearities $f$ and $g$ and the (spectral) properties of the planar  wave \eqref{eq:int:planar:wave}. We particularly focus on giving a rigorous definition for solutions to system \eqref{eq:u}, which is followed by our main result Theorem \ref{thm:general}.
    In {\S}\ref{sec:phase} we introduce our local phase tracking mechanism and provide the  relevant growth and Lipschitz estimates. In {\S}\ref{sec:evol:pert} we prove existence and uniqueness of the  perturbation system \eqref{eq:int:coupled:v:theta:ev} and relate this back to the original system. Working towards the nonlinear stability proof, we establish  growth rates for  the running supremum of  stochastic convolutions with respect to  the heat semigroup $e^{\Delta_yt}$ and $e^{(\mathcal L_{\rm tw}+\Delta_y)t}$ in {\S}\ref{sec:supremum}. In addition, we deduce  maximal regularity estimates to control polynomially weighted integrals
    such as those that appear in \eqref{eq:size:main}.
    As a final step, we derive in {\S}\ref{sec:det:bounds:critical} bounds for the corresponding deterministic convolutions,
    which are essential for the nonlinear stability proof that we provide in {\S}\ref{sec:stability}. 


\section{Main results}\label{sec:preliminaries}
The results in this paper concern the SPDE
\begin{equation}
    \mathrm du=[\Delta u+f(u)]\mathrm dt+\sigma g(u)\mathrm dW_t^Q,\label{eq:reaction:diff}
\end{equation}
which we consider in the Cartesian coordinate system $(x,y)\in\mathbb R\times \mathbb R^{d-1}=\mathbb R^d$ for some integer $d \ge 2$. We denote by $\mathrm dx$   the Lebesgue measure on $\mathbb R$ with respect to the first coordinate and $\mathrm dy$ the Lebesgue measure on $\mathbb R^{d-1}$ with respect to the transverse direction $y.$ Further, set $\mathrm d\mathbf x=\mathrm dx\mathrm dy$. We shall encounter the following  spaces and utilise the shorthands
\begin{equation}
    L^p_x= L^p(\mathbb R,\mathrm dx)^n=L^p(\mathbb R;\mathbb R^n),\quad L^p= L^p(\mathbb R^{d},\mathrm d\mathbf x)^n=L^p(\mathbb R^d;\mathbb R^n),\qquad 1\leq p\leq \infty,
\end{equation}
together with 
\begin{equation}
    L^p_y= L^p(\mathbb R^{d-1},\mathrm dy)=L^p(\mathbb R^{d-1};
    \mathbb R),\qquad  1\leq p\leq \infty,
\end{equation}
and likewise we define the Sobolev spaces $H^k_x$, $H^k$, and $H^k_y.$ Lastly, we also introduce the right-shift operators
\begin{equation}
    [T_{\varrho}z](x,y)=z(x-\varrho, y),  \qquad  \varrho \in \mathbb R,
\end{equation}
which hence act only on the $x$-coordinate.

We outline our main assumptions in {\S}\ref{sec:assumptions} for the deterministic terms and in {\S}\ref{subsec:mr:st} for the stochastic terms. Subsequently, we formulate our main results regarding existence and uniqueness of solutions in {\S}\ref{sec:character} which is followed by the persistence and  metastability of planar travelling waves in {\S}\ref{sec:main:result}.

\subsection{Deterministic assumptions}\label{sec:assumptions}
For our purposes here it is sufficient to impose local Lipschitz conditions on the nonlinearity $f$  and its derivatives. The quantities $u_\pm$
appearing in \eqref{eq:mr:f:vanish:u:pm}
will correspond to the limits of the waveprofile $\Phi_0$. In addition, the integer $k \ge 0$ is a smoothness 
parameter whose value will become clear from the context.

\begin{itemize}
    \item[(Hf)]  We have  $f\in C^{k+2}(\mathbb R^n;\mathbb R^n)$  
    with 
    \begin{equation}
    \label{eq:mr:f:vanish:u:pm}
        f(u_-)=f(u_+)=0
    \end{equation}
    for some pair $u_\pm\in\mathbb R^n.$ In addition, for any $N>0$ there is a  constant $K_f^N>0$  such that  
    \begin{equation}
        |f(u_A)-f(u_B)|+\ldots+|D^{k+2}f(u_A)-D^{k+2}f(u_B)|\leq K_f^N|u_A-u_B|\label{eq:mr:lipschitz:f}
    \end{equation} 
     for all  $u_A,u_B\in\mathbb R^{n}$ with $|u_A|\leq N$ and $|u_B|\leq N$.
\end{itemize}
We remark that in the prior works \cite{bosch2025conditionalspeedshapecorrections,bosch2024multidimensional,hamster2019stability,hamster2020diag,hamster2020expstability,hamster2020} global Lipschitz conditions
were imposed on the nonlinearities. This distinction is somewhat cosmetic, because our metastability results provide pointwise control on our solutions which means that we can safely modify the nonlinearities outside the region of interest to enforce global Lipschitz bounds.

Travelling wave solutions $u = \Phi_0(x - c_0 t)$ to system \eqref{eq:reaction:diff} 
with $\sigma = 0$
must satisfy the travelling wave ODE
\begin{equation}
    \label{eq:mr:trv:wave:ode}
    \Phi_0''+c_0\Phi_0'+f(\Phi_0)=0.
    \end{equation}
Linearising  \eqref{eq:mr:trv:wave:ode}  around the  wave $(\Phi_0,c_0)$ leads to the linear operator
\begin{equation}
\label{eq:mr:def:l:tw}
    \mathcal L_{\rm tw}:H^2_x\to L^2_x,\quad 
    [\mathcal L_{\rm tw}u](x)=u''(x)+c_0u'(x)+Df(\Phi_0(x))u(x),
\end{equation}
which has the standard kernel element $\mathcal L_{\rm tw}\Phi_0'=0$ associated to translational invariance. Its associated adjoint operator is given by
\begin{equation}
    \mathcal L_{\rm tw}^{\rm adj}:H^2_x\to L^2_x,\quad
    [\mathcal L_{\rm tw}^{\rm adj}w](x)=w''(x)-c_0w'(x)+Df(\Phi_0(x))^\top w(x),
\end{equation}
and it is easily verified that $\langle \mathcal L_{\rm tw}v,w\rangle_{L^2_x}=\langle v,\mathcal L_{\rm tw}^{\rm adj}w\rangle_{L^2_x}$ holds for $v,w\in H^2_x.$ 

We continue with a standard existence and spectral stability condition 
for the wave $(c_0, \Phi_0)$. 
In particular, we assume that $\Phi_0$ approaches its limits $u_\pm \in \mathbb R^n$ at an exponential rate;
a common assumption connected to asymptotic hyperbolicity \cite{kapitula2013spectral,sandstede2002stability} which holds in many applications. 
Further, we demand that the translational eigenvalue for $\mathcal{L}_{\rm tw}$ at zero is an isolated simple eigenvalue and that the remainder of the spectrum can be strictly bounded to the left of the imaginary axis, i.e., there is a spectral gap. 

\begin{itemize}
    \item[(HTw)] There exists a waveprofile $\Phi_0 \in C^{2}(\mathbb R;\mathbb R^n)$  and a wavespeed $c_0\in\mathbb R$ 
    that satisfy \eqref{eq:mr:trv:wave:ode}.
    In addition,     
    there is a constant $K>0$ together with exponents $\nu_\pm>0$ so that the bound
    \begin{equation}
        |\Phi_0(x)-u_-|+|\Phi_0'(\xi)|\leq Ke^{-\nu_-|x|}
    \end{equation}
    holds for all $x\leq 0,$ whereas the bound
    \begin{equation}
        |\Phi_0(x)-u_+|+|\Phi_0'(\xi)|\leq Ke^{-\nu_+|x|}
    \end{equation}
    holds for all $x\geq 0.$  Finally,
the linear operator $\mathcal L_{\rm tw}$ defined in \eqref{eq:mr:def:l:tw} has a simple eigenvalue at $\lambda=0$ and there exists a constant $\beta>0$ so that  $\mathcal L_{\rm tw}-\lambda$ is invertible for all $\lambda \in \mathbb C$ satisfying $\rm Re\, \lambda\geq -2\beta$. 
\end{itemize}

Assuming (Hf) and (HTw), 
we point out as in \cite{bosch2024multidimensional} that the operators $\mathcal L_{\rm tw}$ and $\mathcal L_{\rm tw}^{\rm adj}$ map  $H^{\ell+2}_x$ into $H^\ell_x$, for any  $0 \le \ell \le k+1 $.  The resolvent set of $\mathcal L_{\rm tw}$ restricted to the domain $H^{\ell+2}_x$  contains the resolvent set of $\mathcal L_{\rm tw}$ seen as the original operator with domain $H^2_x.$  Consequently, writing $P_{\mathrm{tw}}:H^k_x\to H^k_x$ for the spectral projection \cite{hamster2020expstability,lunardi2004linear} onto this neutral eigenvalue for $\mathcal{L}_{\mathrm{tw}}:H^{k+2}_x\to H^k$, we  obtain the identifications
\begin{equation}
\operatorname{ker}\left(\mathcal{L}_{\mathrm{tw}}\right)=\operatorname{span}\left\{\Phi_0^{\prime}\right\}\subset H_x^{k+3}, \quad \operatorname{ker}(\mathcal{L}_{\mathrm{tw}}^{\rm adj})=\operatorname{span}\left\{\psi_{\mathrm{tw}}\right\}\subset H_x^{k+3}, \quad P_{\mathrm{tw}} v=\left\langle v, \psi_{\mathrm{tw}}\right\rangle_{L^2_x} \Phi_0^{\prime},
\end{equation}
for some $\psi_{\rm tw}$ such that  
$\left\langle\Phi_0^{\prime}, \psi_{\mathrm{tw}}\right\rangle_{L^2_x}=1.$
In particular, we have  $|\Phi_0^{(\ell+1)}(x)|\to 0$  
 and $|\psi_{\rm tw}^{(\ell)}(x)|\to 0$ exponentially fast  as $|x|\to \infty$ for any $0\leq \ell\leq k+3$.  

Since $\mathcal L_{\rm tw}$ is a lower order perturbation of the  diffusion operator $\partial_x^2$, we see that 
$\mathcal L_{\rm tw}$ is sectorial in $L^2_x$ and $H^k_x$ (the latter  after restriction), and hence generates analytic semigroups
on these spaces, which we all denote by $(S_{\rm tw}(t))_{t\geq 0}$ since they agree where they overlap \cite{engel2000one,lunardi2018interpolation,book:Triebel}. Furthermore, we introduce  the linear operator
\begin{equation}\mathcal L:H^{2}\to L^2,\quad [\mathcal Lv](x,y)= [\mathcal L_{\rm tw}v(\cdot,y)](x)+[\Delta_y v(x,
    \cdot)](y),
    \end{equation}
where $\Delta_y$ denotes the Laplacian on $\mathbb R^{d-1}.$  Again, $\mathcal L$ is sectorial in both $L^2$ and $H^k,$ and its analytic semigroup (possibly after restriction) is denoted by $(S_L(t))_{t\geq 0}.$
In contrast to what is done in \cite{bosch2025conditionalspeedshapecorrections,bosch2024multidimensional}, yet in line with \cite{kapitula1997}, we trivially extend the projection operator $P_{\rm tw}$ into the transverse direction. We denote this operator again by $P_{\rm tw}$,  which acts on elements $v(x,y)$  on the first coordinate only, i.e.,
$[P_{\rm tw}v](x,y)=\langle v(\cdot, y),\psi_{\rm tw}\rangle_{L^2_x}\Phi_0'(x)$. In particular, $P_{\rm tw}$ is a bounded operator on $H^k.$    
We refer to  Appendix \ref{sec:prelim} for further properties of these spatially extended operators and their semigroups, including essential decay estimates. 



\subsection{Stochastic assumptions} 
\label{subsec:mr:st}
Let $\mathcal W$ be a real separable Hilbert space   with orthonormal basis $
(e_j)_{j\geq 0}$ and assume $Q\in\mathscr L(\mathcal W)$ to be a non-negative symmetric
operator. 
Consider the Hilbert space $\mathcal W_Q=Q^{1/2}(\mathcal W)$ endowed with its  natural inner product
 \begin{equation}
     \langle v,w\rangle_{\mathcal W_Q}=\langle Q^{-1/2}v,Q^{-1/2}w\rangle_{\mathcal W},
 \end{equation}
 which has $(\sqrt Qe_j)_{j\geq 0}$ as an orthonormal basis. Throughout this work, we neglect any possible zero element of the set $(\sqrt Qe_j)_{j\geq 0}$ caused by the fact that $Q$ is only a non-negative and not a positive operator.
 Following \cite{da2014stochastic,gawarecki2010stochastic,hairer2009introduction,hamster2020,karczewska2005stochastic,prevot2007concise}, we let $(\Omega,\mathcal F,\mathbb F,\mathbb P)$ be a filtered probability space
 and set to construct a cylindrical $Q$-Wiener process $W^Q=(W^Q_t)_{t\geq 0}$ that is adapted to the filtration $\mathbb F$. We consider a set $(\beta_j)_{j\geq 0}$ of independent standard Brownian motions adapted to $\mathbb F$ and write
 \begin{equation}
W_t^Q=\sum_{j=0}^\infty\sqrt{Q}e_j\beta_j(t),
\end{equation}
which converges in $L^2(\Omega;\mathcal{W}_{\rm ext})$ for some larger (abstract) space $\mathcal{W}\subset \mathcal{W}_{\rm ext}$ that is guaranteed to exist by the discussion in \cite{hairer2009introduction}; 
see also \cite[Sec. 5.1]{hamster2020} for  additional background information.

 For any Hilbert space $\mathcal{H}$ and $p \ge 2$,
we introduce the class of processes
\begin{equation}
\begin{aligned}
    \mathcal N^p([0,T];\mathbb F;HS(\mathcal W_Q;\mathcal{H}))&=\{B\in L^p(\Omega ; L^2([0, T] ; HS(\mathcal W_Q; \mathcal{H}))) :\\&\quad\quad\quad\quad\quad B\text{ has a progressively measurable version}\},
    \end{aligned}
\end{equation} 
 for which It\^o integrals with respect to $W^Q$ can be defined. Here the space $HS(\mathcal W_Q;\mathcal H)$ denotes the space of operators $B:\mathcal W_Q\to \mathcal H$ that are Hilbert--Schmidt, i.e., for which
\begin{equation}
    \|B\|_{HS(\mathcal W_Q;\mathcal H)}^2=\sum_{j=0}^\infty \|B[\sqrt Qe_j]\|_{\mathcal H}^2<\infty.
\end{equation}
Notice that $\|\cdot \|_{HS(\mathcal W_Q;\mathcal H)}$ is  independent of the orthonormal basis $(e_j)_{j\geq 0}$ and defines a norm on $HS(\mathcal W_Q;\mathcal H)$.
Lastly, 
we have the identity
\begin{equation}
    \int_0^tB(s)\mathrm dW_s^Q=\lim_{n\to\infty}\sum_{j=0}^n \int_0^t B(s)[\sqrt Qe_j]\mathrm d\beta_j(s),
\end{equation}
where the convergence is in $L^p(\Omega;\mathcal{H })$ \cite{karczewska2005stochastic}. 

We are now ready to   formulate an abstract condition to ensure that the stochastic terms in  \eqref{eq:u} and \eqref{eq:int:coupled:v:theta:ev} are well-defined. 
  That is, we make  sure that $Q$ is a covariance operator and that
  \begin{equation}
      g(u):\mathcal W_Q\to H^k,\quad \xi\mapsto g(u)[\xi],
  \end{equation}
  has the desired Hilbert--Schmidt properties for all $u$ in the affine spaces
\begin{equation}
    \mathcal U_{H^k}=\Phi_{\rm ref}+H^k,
\end{equation}
where we simply take $\Phi_{\rm ref}=\Phi_0$ as our reference function   for convenience.

\begin{itemize}
    \item[(HgQ)] The operator $Q\in \mathscr L (\mathcal W)$ is  non-negative and symmetric. The commutation relation 
     $    T_{\gamma}g(u)[\xi]=g(T_\gamma u)[T_{\gamma}\xi]$  
    holds for all $\gamma\in \mathbb R$ and $u\in\mathcal U_{H^k}$. In addition,  for any $N>0$  there exists a constant $K_{g}^N>0$ so that the bounds 
    \begin{equation}
        \|g(T_\gamma \Phi_0+v)\|_{HS(\mathcal W_Q;H^k)}\leq K_{g}^N \label{eq:Hg}
    \end{equation}
    and
    \begin{equation}
        \|g(T_\gamma \Phi_0+v)-g(T_\gamma \Phi_0+w)\|_{HS(\mathcal W_Q;H^k)}\leq K_{g}^N\|v-w\|_{H^k}\label{eq:HgLip}
    \end{equation}
    hold for all $\gamma \in \mathbb R$ and
    each pair $v,w\in H^k$ that satisfies $\|v\|_{H^k}\leq N$ and $\|w\|_{H^k}\leq N.$ 
\end{itemize}
Besides this $H^k$-based condition, we also need
to impose an integrability condition that will allow us to obtain the $L^1$-estimate in Proposition \ref{prop:nl:bnds}. The latter is necessary for the  stability analysis as it will enable us to exploit
the decay of the heat semigroup; see Corollary \ref{cor:algebraic:decay}.
\begin{itemize}
\item[(H$L^1$)] For any $N>0$ there exists a constant $K_{\rm int}^N>0$ for which we have the bound
     \begin{equation}
        \sum_{j=0}^\infty \|g(T_{\gamma}\Phi_0+v)[\sqrt Q{e_j}]^{\color{black}\top}T_{\gamma}\psi_{\rm tw}\|^2_{L^1\color{black}{(\mathbb R^d;\mathbb R)}}<K_{\rm int}^N,\label{eq:int:cond}
    \end{equation}
  for all $\gamma\in \mathbb R$, each  $v\in H^k$ satisfying $\|v\|_{H^k}\leq N,$ and  any\footnote{Actually, for our intents and purposes, it suffices to have this condition for at least one orthonormal basis, but we prefer to work with estimates that do not depend on the choice of the basis.} 
  orthonormal basis $(e_j)_{j\geq 0}$ of $\mathcal W$.
\end{itemize}

 Note that the value of \eqref{eq:int:cond} is basis dependent and may as well even be infinite. We introduce therefore for any Hilbert space $\mathcal X$ and Banach space $\mathcal Y$ the space of $2$-summing operators $\Pi_2(\mathcal X,\mathcal Y)$$ $, which consists of  all linear operators $T:\mathcal X\to \mathcal Y$ that satisfy
\begin{equation}
    \|T\|_{\Pi_2(\mathcal X;\mathcal Y)}=\sup\left\{\sqrt{\sum_{j=0}^\infty \|Tx_j\|_{\mathcal Y}^2}:(x_j)_{j\geq 0}\text{ is an orthonormal basis of $\mathcal X$}\right\}<\infty.\label{eq:nonorm}
\end{equation}
Observe that $\|\cdot\|_{\Pi_2(\mathcal X;\mathcal Y)}$ defines a norm on $\Pi_2(\mathcal X;\mathcal Y)$ and makes it a Banach space. 
This definition for the 2-summing norm coincides with the definitions in \cite[Sec. 8]{albiac2006topics} and \cite[Sec. 1.10]{johnson2001handbook}, since we restrict ourselves to the setting where $\mathcal X$ is Hilbert.   In particular,  $\Pi_2(\mathcal X,\mathcal Y)=HS(\mathcal X,\mathcal Y)$ is the space of Hilbert--Schmidt operators whenever $\mathcal Y$ is a Hilbert space. 

Thanks to the terminology above,  condition  (H$L^1$) can be reformulated as: For any $N>0$ there exists a constant $K_{\rm int}^N>0$ such that
\begin{equation}\|g(T_\gamma\Phi_0+v)[\,\cdot\,]^\top T_\gamma\psi_{\rm tw}\|_{\Pi_2(\mathcal W_Q;L^1(\mathbb R^d;\mathbb R))}^2<K_{\rm int}^N\end{equation} holds for all $\gamma\in \mathbb R$ and $v\in H^k$ satisfying $\|v\|_{H^k}\leq N.$
We refer to Appendix \ref{sec:about:noise} for several examples of triplets $(\mathcal{W}, Q,g)$
that satisfy the hypotheses (HgQ) and (H$L^1)$.


\subsection{Characterisation of solutions}\label{sec:character}
In order to rigorously discuss the concept of solutions to \eqref{eq:reaction:diff}, it turns out to be convenient to take a rather indirect approach. In particular, our main focus will be on the pair $(v, \vartheta)$ featuring in the decomposition \eqref{eq:pert_v},  which we directly prescribe by the non-autonomous evolution system
\begin{equation}
    \begin{aligned}\label{eq:pert:system}
        \mathrm dv&=[\mathcal Lv+\mathcal N_\sigma(v,\theta,\gamma)]\,\mathrm dt+\sigma\mathcal M(v,\theta)T_{-\gamma}\mathrm dW_t^Q,\\
        \mathrm  d\mathrm \theta&=[\Delta_y \theta+N_\sigma(v,\theta,\gamma)]\,\mathrm dt+\sigma M(v,\theta)T_{-\gamma}\mathrm dW_t^Q,
    \end{aligned}
\end{equation}
where $\gamma(t) = c_0 t$.
Recall  $\mathcal L=\mathcal L_{\rm tw}+\Delta_y$. The nonlinearities $\mathcal N_\sigma$, $\mathcal M$, $N_\sigma$, and $M$ are defined in {\S}\ref{sec:phase}, where we also  derive  system \eqref{eq:pert:system} in a natural|but non-rigorous|fashion from \eqref{eq:reaction:diff}
by imposing the constraint $P_{\rm tw}v(t)=\langle v(t),\psi_{\rm tw}\rangle _{L^2_x}=0$.

Our first result  shows that solutions to system \eqref{eq:pert:system} exist in a variational sense; we characterise this fully in {\S}{\ref{sec:evol:pert}}.
The dimension restriction $k>d/2+1$ enables us to fit our problem into the critical variational framework of Agresti and Veraar \cite{agresti2024criticalNEW}.
Note that the extra smoothness requirement is no big surprise as something similar is also needed in the deterministic approach  \cite{kapitula1997}.

\begin{proposition}[Perturbation system; see {\S}\ref{sec:evol:pert}]
\label{prop:var:higher:short}
Let $k>d/2+1$
and suppose  that   \textnormal{(Hf)}, \textnormal{(HTw)}, and \textnormal{(HgQ)} hold.
Further, fix $T>0$ and $0\leq \sigma\leq 1$. Then for any initial condition 
\begin{equation}
(v_0,\theta_0)\in H^k\times H^k_y,
\end{equation}
there exists a stopping time $\tau_{\infty}$
together with a variational solution $(v(t),\theta(t))$ to  \eqref{eq:pert:system} that is defined on $0\leq t<\tau_{\infty}\leq T$. 
Whenever  $\tau_{\infty}(\omega)<T$ holds
for some $\omega \in \Omega$,
the finite time  blow-up 
\begin{equation}
    \sup_{0\leq t<\tau}\big[\|v(t,\omega)\|_{H^k}^2+\|\theta(t,\omega)\|_{H^k}^2\big]+\int_0^\tau\big[\|v(s,\omega)\|_{H^{k+1}}^2+\|\theta(s,\omega)\|_{H^{k+1}_y}^2\big]\,\mathrm ds\to \infty
\end{equation}
occurs as $\tau\uparrow \tau_{\infty}(\omega)$.
\end{proposition}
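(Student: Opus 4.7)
The plan is to cast system \eqref{eq:pert:system} into the critical variational SPDE framework of Agresti--Veraar \cite{agresti2024criticalNEW} and invoke their local well-posedness theorem, which directly delivers a maximal variational solution together with the blow-up alternative. Specifically, I would work with the Gelfand triple
\[
V = H^{k+1}\times H^{k+1}_y \hookrightarrow H = H^k\times H^k_y \hookrightarrow V^*,
\]
treat the pair $(\mathcal L, \Delta_y)$ as the coercive leading operator, and regard $(\mathcal N_\sigma, N_\sigma)$ together with the noise coefficient $\sigma(\mathcal M, M)\cdot T_{-\gamma}$ as semilinear perturbations. The dimension restriction $k>d/2+1$ is exactly what makes this work, since the Sobolev embedding $H^k\hookrightarrow L^\infty\cap C^1$ and the associated Banach algebra structure of $H^k$ tame the quadratic and composition-type nonlinearities arising from the decomposition \eqref{eq:pert_v}.

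Verifying the framework's hypotheses splits into three pieces. First, since $\mathcal L = \Delta + c_0\partial_x + Df(\Phi_0)$ is a bounded, first-order perturbation of the Laplacian, standard integration by parts against $\partial^\alpha v$ for $|\alpha|\leq k$ yields a coercivity estimate of the form
\[
\langle -\mathcal L v, v\rangle_{H^k} \;\geq\; c\|v\|_{H^{k+1}}^2 - C\|v\|_{H^k}^2,
\]
and analogously for $\Delta_y$ on $H^k_y$. Second, local Lipschitz bounds on the drift nonlinearities follow from the fact that the expressions for $\mathcal N_\sigma$ and $N_\sigma$ derived in {\S}\ref{sec:phase} are built from composition terms of the form $f(\Phi_0(\cdot-\theta)+v) - f(\Phi_0)$ together with quadratic ingredients in $(v,\theta,\nabla_y\theta)$: combining (Hf) with the algebra structure of $H^k$ then delivers the required local Lipschitz estimates as maps $H\to H$ (or into $V^*$ after an integration by parts against a test function). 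Third, the commutation identity in (HgQ) and the fact that $T_{-\gamma(t)}$ is an $H^k$-isometry ensure that the noise coefficient $(v,\theta)\mapsto (\sigma\mathcal M T_{-\gamma}, \sigma M T_{-\gamma})$ takes values in $HS(\mathcal W_Q; H)$ and is locally Lipschitz in $(v,\theta)$, uniformly in $t\in[0,T]$.

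The main obstacle I anticipate is the careful bookkeeping required to recast the rather involved expressions for $\mathcal N_\sigma$ and $N_\sigma$|including the projection terms $\langle\cdot,\psi_{\rm tw}\rangle_{L^2_x}\Phi_0'$, the shift-induced corrections coming from $\Phi_0(\cdot-\theta)$, and the It\^o-to-Stratonovich corrections produced by commuting $T_{-\gamma}$ past the noise|into a form that manifestly satisfies the at-most-quadratic local growth tolerated by the critical variational theorem. Once this is in place, the Agresti--Veraar result produces the maximal stopping time $\tau_\infty \leq T$ and a variational solution $(v,\theta)$ on $[0,\tau_\infty)$, with the stated finite-time blow-up dichotomy being precisely the maximality criterion of that theorem.
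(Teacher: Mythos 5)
Your proposal takes essentially the same approach as the paper: both reduce the result to \cite[Thm.~3.3]{agresti2024criticalNEW} applied in the Gelfand triple $\mathcal V=H^{k+1}\times H^{k+1}_y \hookrightarrow \mathcal H=H^k\times H^k_y \hookrightarrow \mathcal V^*=H^{k-1}\times H^{k-1}_y$, with the blow-up dichotomy being exactly the maximality criterion of that theorem. Two minor points to sharpen: there is no It\^o-to-Stratonovich correction from commuting $T_{-\gamma(t)}$ past the noise since $\gamma(t)=c_0 t$ is deterministic (the trace term $\mathcal J_{\rm tr}$ in $\mathcal N_\sigma$ arises from the second-order $\vartheta$-derivative in It\^o's formula, not from the global shift), and the key feature of the critical framework that the paper exploits is that the drift nonlinearity need only be locally Lipschitz into the dual $\mathcal V^*$ rather than into $\mathcal H$---as encoded in \eqref{eq:nl:main:prp:bnd:n:caln:lip}, where the output and the $v$-increment are measured in $H^{k-1}$ while the $\vartheta$-increment is in $H^k_y$---so your suggested $\mathcal H\to\mathcal H$ Lipschitz bound is neither required nor available.
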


Completely in line with  \cite{bosch2025conditionalspeedshapecorrections,bosch2024multidimensional,hamster2019stability,hamster2020diag,hamster2020expstability,hamster2020}, we remark  that within our ``frozen'' frame \eqref{eq:pert_v}, the system $(v(t),\theta(t))$ is  decoupled|in some statistical sense|from the global phase process $\gamma(t)=c_0t$ whenever $Q$ is translation invariant in the $x$-direction. Indeed, in this case $T_{-\gamma}\mathrm dW_t^Q$ and $\mathrm dW_t^Q$ are indistinguishable and $\mathcal N_\sigma$ and $N_\sigma$ will no longer explicitly depend on $\gamma$.

To justify the formally obtained system \eqref{eq:pert:system}, we 
combine the pair $(v,\theta)$  obtained from Proposition \ref{prop:var:higher:short} to introduce the function
\begin{equation}
\label{eq:mr:def:u:from:v:theta}
u(t)=T_{\gamma(t)+\theta(t)}\Phi_0+T_{\gamma(t)}v(t).
\end{equation}
We use rigorous It\^o computations in {\S}\ref{sec:evol:pert} to show that $u$ solves \eqref{eq:reaction:diff} in an analytically weak sense.

\begin{proposition}[Original system; see {\S}\ref{sec:evol:pert}]
\label{prp:mr:an:sol:org:sys}
Consider the setting  of Proposition \ref{prop:var:higher:short}.   Then for any $\zeta\in C_c^\infty(\mathbb R^d;\mathbb R^n)$, the identity
\begin{equation}\begin{aligned}
\langle u(t),\zeta\rangle_{H^k}=\langle u(0),&\,\zeta\rangle_{H^k}+\int_0^t  \big[\langle u(s),\Delta \zeta\rangle_{H^k} +\langle f(u(s)),\zeta\rangle_{H^k}\big]\mathrm ds +\sigma\int_0^t \mathcal \langle g(u(s))\,\mathrm d  W_s^Q,\zeta\rangle_{H^k}\label{eq:weak:orig:u}
\end{aligned}
\end{equation}
holds $\mathbb P$-a.s.\ for all $0\leq t< \tau_{\infty}. $
\end{proposition}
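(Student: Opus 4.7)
My plan is to recover the weak form \eqref{eq:weak:orig:u} by applying It\^o's formula to the decomposition \eqref{eq:mr:def:u:from:v:theta}, treating $u(t)$ as a nonlinear functional of the variational pair $(v(t),\theta(t))$ whose dynamics are known from Proposition \ref{prop:var:higher:short}. Fix $\zeta\in C_c^\infty(\mathbb R^d;\mathbb R^n)$ and a localising stopping time, and pair each term:
\begin{equation*}
\langle u(t),\zeta\rangle_{H^k}=\langle T_{\gamma(t)+\theta(t)}\Phi_0,\zeta\rangle_{H^k}+\langle T_{\gamma(t)}v(t),\zeta\rangle_{H^k}.
\end{equation*}
The first inner product is essentially a pointwise integral in $y$ of $\Phi_0(\cdot-\gamma(t)-\theta(y,t))$ tested against translates of $\zeta$, while the second is linear in $v(t)$. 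This will let me differentiate in $t$ without leaving the variational framework.

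For the $\Phi_0$-piece I would write the map $\theta\mapsto T_{\gamma+\theta}\Phi_0$ pointwise in $y$, note that its first Fr\'echet derivative is $-T_{\gamma+\theta}\Phi_0'$ and its second is $T_{\gamma+\theta}\Phi_0''$, and then apply It\^o's formula driven by the second line of \eqref{eq:pert:system}. This yields
\begin{equation*}
\mathrm d\, T_{\gamma+\theta}\Phi_0 = -T_{\gamma+\theta}\Phi_0'\,[c_0+\Delta_y\theta+N_\sigma(v,\theta,\gamma)]\,\mathrm dt+\tfrac12 T_{\gamma+\theta}\Phi_0''\,\sigma^2\|M(v,\theta)T_{-\gamma}\|^2\,\mathrm dt-\sigma T_{\gamma+\theta}\Phi_0'\,M(v,\theta)T_{-\gamma}\,\mathrm dW_t^Q,
\end{equation*}
interpreted in the distributional sense in $y$ after pairing with $\zeta$. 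For the $v$-piece, since $\gamma(t)=c_0t$ is deterministic and smooth and $T_\gamma$ commutes with spatial operators, I can apply the product rule without any It\^o correction to obtain
\begin{equation*}
\mathrm d\, T_\gamma v = -c_0 (T_\gamma v)'\,\mathrm dt + T_\gamma[\mathcal Lv+\mathcal N_\sigma(v,\theta,\gamma)]\,\mathrm dt + \sigma T_\gamma \mathcal M(v,\theta)T_{-\gamma}\,\mathrm dW_t^Q.
\end{equation*}

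The core of the argument is then an identification step: I have to check that upon summing these two differentials and testing against $\zeta$, the resulting drift reduces to $\langle \Delta u+f(u),\zeta\rangle_{H^k}$ and the stochastic coefficient to $g(u)T_{-\gamma}\cdot T_\gamma=g(u)$. This reduces precisely to the algebraic identities that \emph{define} $\mathcal N_\sigma,N_\sigma,\mathcal M,M$ in {\S}\ref{sec:phase}, namely the constraint $P_{\rm tw}v(t)\equiv 0$ together with the formal derivation of \eqref{eq:pert:system} from \eqref{eq:reaction:diff} that the authors carry out there. In other words, the nonlinearities were engineered so that this cancellation is an identity, so one only has to read off the formulas and invoke $\mathcal L_{\rm tw}\Phi_0'+c_0\Phi_0'\cdot 0$-type computations (exploiting that $\mathcal L\Phi_0'=0$ and standard identities relating $\Delta_y[T_{\gamma+\theta}\Phi_0]$ to $\nabla_y\theta$ and $\Delta_y\theta$).

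The main obstacle I anticipate is rigorously justifying It\^o's formula applied to the nonlinear composition $\theta\mapsto T_{\gamma+\theta}\Phi_0$ in the variational setting of Proposition \ref{prop:var:higher:short}, since the natural state space for $\theta$ is only $H^k_y$ and $T_{\gamma+\theta(\cdot,t)}\Phi_0$ depends nonlinearly on a distribution-valued process. To handle this I would first test against $\zeta\in C_c^\infty$ (which sidesteps any $H^k$-differentiability issue, since $\langle T_{\gamma+\theta}\Phi_0,\zeta\rangle$ is a smooth real-valued functional of $\theta(\cdot,t)\in L^\infty_y$ thanks to $k>d/2+1$ and the Sobolev embedding into $C^0_y$), then apply the standard scalar It\^o formula fibrewise in $y$, and finally integrate in $y$ and pass to the limit using the uniform-in-$\omega$ bounds provided by the stopping times from Proposition \ref{prop:var:higher:short}. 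Localisation on $[0,\tau_\infty)$ then removes integrability issues and gives \eqref{eq:weak:orig:u} $\mathbb P$-a.s.\ up to $\tau_\infty$.
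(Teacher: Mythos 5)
Your proposal is essentially the same argument as the paper's: you apply It\^o's formula to the scalar functional $\langle u(t),\zeta\rangle_{H^k}=\psi_\zeta(v(t),\theta(t),\gamma(t))$ of the variational pair $(v,\theta)$, compute the first- and second-order Fr\'echet derivatives (the $v$-piece contributing no trace term since $\gamma(t)=c_0t$ is deterministic, the $\theta$-piece producing the $\tfrac12\sigma^2 T_{\gamma+\theta}\Phi_0''\sum_j M[\cdot]^2$ correction), and then rely on the built-in cancellations coming from the definitions of $\mathcal N_\sigma$, $N_\sigma$, $\mathcal M$, $M$, the identity \eqref{eq:Deltay:Ttheta}, and the travelling wave ODE --- exactly as the paper does in its proof of Proposition \ref{prop:system:z}.

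One small caution about your final rigorisation paragraph: once you test against $\zeta\in C_c^\infty$ you already have a real-valued $C^2$ functional of $(v,\theta,\gamma)$, and the correct move is to invoke a variational It\^o lemma for that scalar functional directly (this is what the paper does, citing \cite[Lem. 5.3]{hamster2020}). The alternative you also float --- applying It\^o ``fibrewise in $y$ and then integrating'' --- is not quite coherent as stated, because $W^Q$ is a single cylindrical Wiener process driving the whole pair $(v,\theta)$, not a $y$-indexed family of independent Brownian motions, so the martingale part does not decouple fibrewise. The ``test first, then apply scalar It\^o'' route you mention first is the right one; the fibrewise idea should be dropped.
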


  We cannot expect to interpret $u(t)$ directly as a solution  to \eqref{eq:reaction:diff} in the variational sense; this is due to the fact that  $f(\Phi_{0})$ is not integrable in the transverse direction,  let alone in $H^k(\mathbb R^d;\mathbb R^n).$ In case one is unsatisfied with this notion of a solution, we remark 
  that one can define 
  the stochastic process
\begin{equation}
    z(t) = T_{\gamma(t)}v(t)+T_{\gamma(t)+\theta(t)}\Phi_0-
    T_{\gamma(t)} \Phi_0,
\end{equation}
which is hence related to \eqref{eq:mr:def:u:from:v:theta} via
$u(t) = z(t) + T_{\gamma(t)} \Phi_0$,
and establish that $z(t)$ is a variational solution of
\begin{equation}
   \mathrm  dz = [\Delta z + f(T_{\gamma(t) }\Phi_0 + z ) - f(T_{\gamma(t)} \Phi_0) ] \, \mathrm dt + g(T_{\gamma(t)} \Phi_0 + z)\, \mathrm dW^Q_t.
\end{equation}
We do not proceed further in this direction.


\subsection{Moment bounds and probability tail estimates}\label{sec:main:result} The main result of this paper is that we can  control the size of the quantity
    \begin{equation}
\begin{aligned}
    N_{\mu;k}(t)&=\|v(t)\|_{H^k}^2+\|\theta(t)\|_{H^k_y}^2+\int_0^t(1+t-s)^{-\mu}\|v(s)\|_{H^{k+1}}^2\mathrm ds\\&\qquad+\int_0^t(1+t-s)^{-\mu}\|v(s)\|_{H^{k}}\|\theta(s)\|_{H^{k}}\mathrm ds+\int_0^t(1+t-s)^{-\mu}\|\nabla_y\theta(s)\|_{H^{k}_y}^2\mathrm ds,\label{eq:size:main2}
    \end{aligned}
\end{equation}
in terms of maximal moments, for any $\mu>0$ that satisfies the conditions in Table \ref{table:main:hfdst2}. This  allows us to formulate probability tail estimates for the behaviour of the associated stopping time
\begin{equation}
    t_{\rm st}(\eta;k)=\inf\{t\geq 0:N_{\mu;k}(t)>\eta\}.\label{eq:st:main}
\end{equation}
As explained in {\S}\ref{sec:intro},
both the first and third integral terms are required to prevent possible $L^2([0,T];$ $H^{k+1} \times H^{k+1}_y)$ blow-up of the perturbation pair $(v, \vartheta)$, while the second and third   play a key role in our nonlinear stability analysis. 
In particular,  we do not need the $H^{k+1}$-control on the perturbation $v$ for the latter, in contrast to the situation in our prior works \cite{bosch2025conditionalspeedshapecorrections,bosch2024multidimensional,hamster2020expstability} where the global stochastic phase shift required such additional regularity on $v$.

\begin{table}[t!]
\centering
\renewcommand{\arraystretch}{2} 

\begin{tabularx}{0.6\textwidth}{|C|C|C|C|C|C|}
\hline
 & $d>5$ & $d=5$ & $d=4$ & $d=3$ & $d=2$ \\[.1cm] \hline
$\mu$ & $\left(1, \frac{d-1}{4} \right]$ & $1$ & $\frac{3}{4}$ & $\frac{1}{2}$ & $\frac{1}{4}$ \\[.15cm] \hline
$\mathfrak d_{\rm stc}(T)$ & $1$ & $1$ & $1$ & $\log(T)$ & $T^{1/2}$ \\[.15cm] \hline
$\mathfrak d_{\rm det}(T)$ & $1$ & $\log(T)$ & $T^{1/4}$ & $T^{1/2}$ & $T^{3/4}$ \\[.15cm] \hline
\end{tabularx}
\caption{The parameter $\mu$ controls the algebraic decay rate of both $v(t)$ and $\theta(t)$ in \eqref{eq:size:main2}. The terms $\mathfrak d_{\rm stc}(T)$ and $\mathfrak d_{\rm det}(T)$  are defined and have been discussed in {\S}\ref{sec:intro}; they capture the growth of  $\sup_{0\leq t\leq T}\int_0^t(1+t-s)^{-2\mu}\mathrm ds$ and $\sup_{0\leq t\leq T}\int_0^t(1+t-s)^{-{\mu}}\mathrm ds$ with respect to $T$.} 
\label{table:main:hfdst2}
\end{table}

Our main result
here provides logarithmic or polynomial growth bounds
for the expectation of the maximal value that $N_{\mu;k}(t)$ attains as we increase $T$. 
This should be seen as a first counterpart to the results found in \cite[Thm. 1.1]{hamster2020expstability} as well as \cite[Prop. 2.5 and Thm. 2.6]{bosch2024multidimensional}, where (planar) waves evolve over the real line and a cylindrical domain, respectively. Notice that the bounds in \cite{hamster2020expstability} have been improved in \cite{bosch2025conditionalspeedshapecorrections,bosch2024multidimensional} so that it is in line with large deviations theory  and other comparable results regarding metastability (in reaction-diffusion systems); see \cite{adams2022isochronal,da2014stochastic,dembo2009large,eichinger2022multiscale,freidlin2012random,funaki1999singular,gnann2024solitary,inglis2016general,kruger2014front,kuehn2020travelling,kuehn2022stochastic,lang2016multiscale,lang2016l2,lord2012computing,maclaurin2023phase,salins2021systems,salins2021metastability,sowers1992large,stannat2013stability,stannat2014stability,swiȩch2009pde,theewis2024large,van2024noncommutative}  and the references therein. 

\begin{theorem}[see {\S}\ref{sec:stability}]\label{thm:general}
Let
$k>d/2+1$ 
and suppose that \textnormal{(Hf)}, \textnormal{(HTw)}, \textnormal{(HgQ)} and  \textnormal{(H$L^1$)} hold.  Pick two sufficiently small constants $\delta_\eta>0$ and $\delta_\sigma>0$. 
There exists a constant $K>0$ so that for any integer $T\geq 2,$ any $0<\eta<\delta_\eta,$ any $0\leq \sigma\leq \delta_\sigma,$ any integer $p\geq 1$, and any initial condition that satisfies       \begin{equation}
         \langle v(\cdot,y,0),\psi_{\rm tw}\rangle_{L_x^2}=0 \label{eq:ortho}
    \end{equation} 
    for all $y\in \mathbb R^{d-1},$ we have the moment estimates
 \begin{equation}
 \begin{aligned}\label{eq:general}
     \mathbb E\left[\sup_{0\leq t\leq t_{\rm st}(\eta;k)\wedge T}|N_{\mu;k}(t)|^p\right]&\leq K^p\Big[ 
\|v(0)\|_{H^k}^{2p} +\|\theta(0)\|_{H^k_y}^{2p} 
 +\sigma^{2p}\mathfrak d_{\rm det}(T)^{2p}(p^p+\log(T)^p)\Big].
     \end{aligned}
 \end{equation}
In particular, there exists a constant $\kappa\in (0,1)$ such that the probability bound
 \begin{equation}
     \mathbb P\left(\sup_{0\leq t\leq t_{\rm st}(\eta;k)\wedge T}|N_{\mu;k}(t)|>\eta\right)\leq 2T\exp\left(-\frac{\kappa\eta}{\sigma^2\mathfrak d_{\rm det}(T)^2}\right)\label{eq:25}
 \end{equation}
 holds whenever $\|v(0)\|_{H^k}^{2} +\|\theta(0)\|_{H^k_y}^{2} < \kappa \eta$.

\end{theorem}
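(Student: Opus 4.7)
The plan is to pass to the mild formulation of \eqref{eq:pert:system} by applying the semigroup $S_L(t)$ to the $v$-equation and $e^{\Delta_y t}$ to the $\theta$-equation, and then to estimate the resulting linear, deterministic, and stochastic convolution terms in $p$-th moment on the interval $[0, t_{\rm st}(\eta;k)\wedge T]$. Two structural features are crucial. First, the orthogonality condition \eqref{eq:ortho} places $v(0)$ into $\ker P_{\rm tw}$, a subspace that is invariant under the flow; on this subspace $S_L$ inherits the spectral gap of $\mathcal L_{\rm tw}$, so contributions from $v$ experience exponential damping in the $\mathcal L_{\rm tw}$-direction. Second, assumption (H$L^1$) permits the stochastic forcing in the $\theta$-equation, after projecting onto $\psi_{\rm tw}$, to be treated as $L^1$-valued, so that the algebraic decay $\|e^{\Delta_y t}h\|_{L^2_y}\lesssim (1+t)^{-(d-1)/4}\|h\|_{L^1_y}$ of the heat semigroup can be invoked as in Corollary~\ref{cor:algebraic:decay}.

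Inside the stopping time $t_{\rm st}(\eta;k)$ one has the a priori bound $\|v(t)\|_{H^k}, \|\theta(t)\|_{H^k_y}\le \eta$, which is what allows the nonlinearities $\mathcal N_\sigma, N_\sigma, \mathcal M, M$ from \S\ref{sec:phase} to be controlled in the schematic form suggested by \eqref{eq:int:coupled:v:theta:ev}, essentially $\sigma^2 + \|v\|\|\theta\| + \|\nabla_y\theta\|^2$ plus higher-order or already-captured terms. Inserting these bounds into the mild formulation and combining: (i) the stochastic convolution moment bounds from \S\ref{sec:supremum}, whose sharp $p^p + \log(T)^p$ dependence comes from a Talagrand-chaining argument of the type sketched around \eqref{eq:int:stoch:conv:growth}; (ii) the deterministic convolution bounds from \S\ref{sec:det:bounds:critical}, which produce the $\mathfrak d_{\rm det}(T)^{2p}$ factor; and (iii) the maximal regularity estimates from \S\ref{sec:estimates:max:reg}, which close the weighted integrals in \eqref{eq:size:main2}, we arrive at an inequality for $\mathbb E[\sup_{[0, t_{\rm st}\wedge T]} N_{\mu;k}^p]$ with $\mathcal O(\eta)$ cross-terms on the right-hand side. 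Choosing $\delta_\eta$ sufficiently small allows these to be absorbed into the left-hand side, yielding \eqref{eq:general}.

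The probability bound \eqref{eq:25} then follows from \eqref{eq:general} by Markov's inequality, $\mathbb P(\sup N_{\mu;k}>\eta)\le \eta^{-p}\mathbb E[\sup N_{\mu;k}^p]$, and optimising over the integer $p$. The natural choice $p\approx \lfloor \kappa\eta/(\sigma^2 \mathfrak d_{\rm det}(T)^2)\rfloor$ turns the $p^p$ contribution into a Gaussian-type tail $\exp(-\kappa\eta/(\sigma^2 \mathfrak d_{\rm det}(T)^2))$, while the residual $\log(T)^p$ contribution is absorbed into the $2T$ prefactor by the same device already exploited in \cite{bosch2025conditionalspeedshapecorrections,bosch2024multidimensional,hamster2020expstability}, namely a discretisation-plus-union-bound over integer times $n=0,\ldots,T$ combined with an auxiliary running-maximum estimate on each unit subinterval.

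The main obstacle I expect is handling the mixed term $\|v\|_{H^k}\|\theta\|_{H^k_y}$, which appears both as a source in the $\theta$-equation and as an integrand in the definition of $N_{\mu;k}$: it couples the $v$- and $\theta$-estimates, forcing a joint closure rather than a sequential one, and is precisely the mechanism behind the timescale restrictions \eqref{eq:int:restr:on:T:mix} highlighted in the introduction. Balancing this term against the algebraic decay with the specific exponent $\mu$ from Table~\ref{table:main:hfdst2}, while simultaneously keeping the $p^p+\log(T)^p$ dependence sharp and the dimensional factor $\mathfrak d_{\rm det}(T)^2$ correct across the regimes $d=2,3,4,5,>5$, is where the bulk of the technical work lies; in particular, one must avoid paying an extra $\mathfrak d_{\rm det}$ factor when the $\|v\|\|\theta\|$-integral is fed back into itself, which is what determines the precise appearance of $\mathfrak d_{\rm det}(T)^{2p}$ (rather than a higher power) in \eqref{eq:general}.
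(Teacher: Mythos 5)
Your plan matches the paper's strategy in \S\ref{sec:stability} essentially step for step: mild formulation, separate estimation of the linear, deterministic-nonlinear, and stochastic convolution contributions for $v$ and $\theta$, exploitation of the exponential decay on $\operatorname{ker} P_{\rm tw}$ and the $L^1$-to-$L^2_y$ algebraic decay of the heat semigroup, joint closure of the mixed term $\|v\|_{H^k}\|\theta\|_{H^k_y}$ with $\eta$-absorption, and finally an exponential Markov-type inequality with optimisation over $p$. One small conceptual correction: the paper does \emph{not} rely on \eqref{eq:ortho} to obtain exponential damping of $v$ --- the operator $P_{\rm tw}^\perp$ is inserted directly into the convolution terms $\mathcal Z_\#$, so decay holds for arbitrary initial data, and the orthogonality is imposed only to make the decomposition $u=v+T_{\gamma+\theta}\Phi_0$ (locally) unique, as the paper itself notes in Remark~\ref{remark:ortho} and at the start of \S\ref{sec:stability}.
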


    
We point out that by invoking the standard Markov inequality, the control of the second moment $(p=1)$ 
 alone  allows us to show that the exit probability increases logarithmically or polynomially in time; see also \cite[Sec. 2.3]{bosch2024multidimensional}.
 Returning to our main theorem in {\S}\ref{sec:intro}, we remark that the requirement for the initial condition $u(0)$ to be $\mathcal O(\eta)$-close to $\Phi_0(x)$ means that we should be able to 
 make the decomposition \begin{equation}u(0)=v(0)+T_{\theta(0)}\Phi_0\label{eq:decomposition}\end{equation}
 in such a way that $\|v(0)\|_{H^k}^2+\|\theta(0)\|_{H^k_y}^2< \kappa \eta$ holds. 
 This is similar to the requirement in \cite{kapitula1997}, yet\\[-.085cm] we  do not need the  $L^1_y$-control on  $\theta(0)$.

\begin{remark}\label{remark:ortho}
    Our proof will show that the orthogonality condition \eqref{eq:ortho} is not strictly necessary for \eqref{eq:25} to hold. We nevertheless include it here to ensure the (local) uniqueness of the decomposition 
    \eqref{eq:decomposition}|see \cite[Lem 2.2]{kapitula1997}|and preserve the one-to-one correspondence between the systems \eqref{eq:reaction:diff} and \eqref{eq:pert:system}.
    In the deterministic setting, \eqref{eq:ortho} together with the integrability $\theta(0) \in L^1_y$ are required to ensure that the contribution of the initial condition decays to zero. This also becomes relevant in the stochastic setting when studying the relative decay of $v(t)$ compared to $\theta(t)$ in greater detail or when performing asymptotic expansions in the spirit of \cite[Prop. 2.3 and 2.6]{bosch2025conditionalspeedshapecorrections}. 
    %
\end{remark}

 \begin{corollary} Consider the setting of Theorem \ref{thm:general}. Then for some $0<\kappa_b\leq \kappa$ we have
 \begin{equation}
     \mathbb P\left(\sup_{0\leq t\leq T}\|u(t)-\Phi_0(\cdot-c_0 t)\|_{H^k}^2>\eta\right)\leq 4T\exp\left(-\frac{\kappa_b\eta}{\sigma^2\mathfrak d_{\rm det}(T)^2}\right).
 \end{equation}
 \end{corollary}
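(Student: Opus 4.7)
The strategy is to reduce the statement to Theorem \ref{thm:general} by comparing $\|u(t)-\Phi_0(\cdot-c_0t)\|_{H^k}^2$ to $N_{\mu;k}(t)$ pointwise in $\omega$. Using the identity \eqref{eq:mr:def:u:from:v:theta}, the commutation $\|T_{c_0t}w\|_{H^k}=\|w\|_{H^k}$, and the elementary inequality $\|a+b\|^2\leq 2\|a\|^2+2\|b\|^2$, one writes
\begin{equation*}
    \|u(t)-T_{c_0t}\Phi_0\|_{H^k}^2
    \;\leq\; 2\|v(t)\|_{H^k}^2 \;+\; 2\,\bigl\|T_{\theta(t)}\Phi_0 - \Phi_0\bigr\|_{H^k}^2,
\end{equation*}
where in the second norm the shift is the $y$\nobreakdash-dependent function $\theta(t,\cdot)$ applied to the $y$\nobreakdash-independent profile $\Phi_0$, so $[T_{\theta(t)}\Phi_0-\Phi_0](x,y)=\Phi_0(x-\theta(t,y))-\Phi_0(x)$.

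The heart of the plan is the shift-comparison estimate: I would show that there exist constants $\delta_0>0$ and $K_0>0$ so that
\begin{equation*}
    \bigl\|T_{\theta}\Phi_0-\Phi_0\bigr\|_{H^k(\mathbb R^d;\mathbb R^n)}^2 \;\leq\; K_0\,\|\theta\|_{H^k_y}^2
    \qquad\text{whenever }\|\theta\|_{H^k_y}\leq \delta_0.
\end{equation*}
This is essentially a Moser-type composition estimate. Writing $\Phi_0(x-\theta(y))-\Phi_0(x)=-\int_0^1\Phi_0'(x-s\theta(y))\,\mathrm ds\,\theta(y)$ and differentiating with the chain and Faà di Bruno rules, one obtains finite sums of products of the form $\Phi_0^{(j)}(x-s\theta(y))$ times monomials in $\partial_y^{\alpha}\theta$. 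Since $\Phi_0^{(j)}\in H^k_x\hookrightarrow L^2_x\cap L^\infty_x$ and since $k>d/2+1>(d-1)/2$ gives $H^k_y\hookrightarrow L^\infty_y$ (so $\theta$ takes values in a bounded interval on which $\Phi_0^{(j)}(x-\cdot)$ remains controlled in $H^k_x$), Hölder and the standard product estimates in $H^k_y$ close the bound. The main technical obstacle is tracking the highest-order derivatives cleanly; one keeps the worst factor in $L^2_y$ and places all others in $L^\infty_y$.

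Combining the two bounds shows that, for $\|\theta(t)\|_{H^k_y}\leq\delta_0$,
\begin{equation*}
    \|u(t)-\Phi_0(\cdot-c_0t)\|_{H^k}^2 \;\leq\; C\bigl(\|v(t)\|_{H^k}^2+\|\theta(t)\|_{H^k_y}^2\bigr)\;\leq\; C\,N_{\mu;k}(t),
\end{equation*}
with $C=2\max\{1,K_0\}$. Shrinking $\delta_\eta$ if necessary so that $\eta<\delta_\eta$ forces $N_{\mu;k}(t_{\rm st}(\eta;k))\leq\eta\leq\delta_0^2$, this comparison is valid up to the stopping time $t_{\rm st}(\eta/C;k)$. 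Hence
\begin{equation*}
    \Bigl\{\sup_{0\le t\le T}\|u(t)-\Phi_0(\cdot-c_0t)\|_{H^k}^2>\eta\Bigr\}
    \;\subseteq\; \Bigl\{\sup_{0\le t\le t_{\rm st}(\eta/C;k)\wedge T}N_{\mu;k}(t)>\eta/C\Bigr\}\cup\{t_{\rm st}(\eta/C;k)\le T\}.
\end{equation*}
Both events are absorbed into a single $N_{\mu;k}$-exit event in view of the definition of $t_{\rm st}$, so applying \eqref{eq:25} with $\eta$ replaced by $\eta/C$ gives a bound of the form $2T\exp(-\kappa\eta/(C\sigma^2\mathfrak d_{\rm det}(T)^2))$.

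Finally, one must verify the initial-condition hypothesis $\|v(0)\|_{H^k}^2+\|\theta(0)\|_{H^k_y}^2<\kappa\eta/C$ from the assumed $\mathcal O(\eta)$-closeness of $u(0)$ to $\Phi_0$, using the decomposition discussed in Remark \ref{remark:ortho}; this costs at most another constant factor. Absorbing all these constants into a new $\kappa_b\in(0,\kappa]$ and picking up a harmless factor of $2$ (e.g.\ to cover the event that the decomposition fails at $t=0$) yields the stated bound with prefactor $4T$.
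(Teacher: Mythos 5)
Your proposal is correct and follows essentially the same route as the paper: decompose $u(t)-T_{c_0t}\Phi_0$ into the $v$-part and the profile-shift part $T_\theta\Phi_0-\Phi_0$, use the composition estimate $\|T_\theta\Phi_0-\Phi_0\|_{H^k}\lesssim\|\theta\|_{H^k_y}$ (which is precisely \eqref{eq:nl:bnd:prod:hky:hkx:lip} of the paper), and reduce to the exit bound \eqref{eq:25} for $N_{\mu;k}$. The only cosmetic difference is that you fold both pieces into a single comparison $\|u-T_{c_0t}\Phi_0\|^2\le C\,N_{\mu;k}$ and invoke \eqref{eq:25} once at level $\eta/C$, whereas the paper invokes it twice (once for the $v$-event at level $\eta/4$, once for the $\theta$-event at level $\eta/(4K_0^2)$) and sums the two probabilities, which is how the prefactor $4T$ arises.
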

 \begin{proof}
As a direct consequence of \eqref{eq:25},  we have the probability tail estimates
\begin{equation}
        \mathbb P\left(\sup_{0\leq t\leq T}\|u(x+c_0 t,y,t)-\Phi_0(x-\theta(y,t))\|_{H^k}^2>\eta\right)\leq 2T\exp\left(-\frac{\kappa\eta}{\sigma^2\mathfrak d_{\rm det}(T)^2}\right)
    \end{equation}
    together with
    \begin{equation}
        \mathbb P\left(\sup_{0\leq t\leq T}\|\theta(y,t)\|_{H^k_y}^2>\eta\right)\leq 2T\exp\left(-\frac{\kappa\eta}{\sigma^2\mathfrak d_{\rm det}(T)^2}\right).
    \end{equation}
 Since  $A\Rightarrow B$ implies $\mathbb P(A)\leq \mathbb P(B)$, we may use the triangle inequality to find
   \begin{align}
       \nonumber&\mathbb P\left(\sup_{0\leq t\leq T}\|u(t)-\Phi_0(\cdot-c_0 t)\|_{H^k}^2>\eta\right)\\&\nonumber\qquad \leq \mathbb P\left(\sup_{0\leq t\leq T}\big[\|u(t)-\Phi_0(\cdot-c_0 t-\theta(t))\|_{H^k}+\|\Phi_0(\cdot-c_0 t-\theta(t))-\Phi_0(\cdot -c_0 t)\|_{H^k}\big]^2>\eta\right)\\
       \nonumber&\qquad\leq \mathbb P\left(\sup_{0\leq t\leq T}\|u(t)-\Phi_0(\cdot-c_0 t-\theta(t))\|_{H^k}^2>\eta/4\right)\\\nonumber&\qquad\qquad\qquad+\mathbb P\left(\sup_{0\leq t\leq T}\|\Phi_0(\cdot-c_0 t-\theta(t))-\Phi_0(\cdot -c_0 t)\|_{H^k}^2>\eta/4\right)\\
       \nonumber &\qquad\leq \mathbb P\left(\sup_{0\leq t\leq T}\|u(t)-\Phi_0(\cdot-c_0 t-\theta(t))\|_{H^k}^2>\eta/4\right)+\mathbb P\left(\sup_{0\leq t\leq T}\|\theta(t)\|_{H^k_y}^2>\eta/4K_0^2\right)\\
        &\qquad\leq 4T\exp\left(-\frac{\kappa_b\eta}{\sigma^2\mathfrak d_{\rm det}(T)^2}\right)
        \label{eq:mr:prf:main:thm:bnd:4:T}
   \end{align}
   for some $0<\kappa_b\leq \kappa.$
   In the penultimate inequality we have exploited the translation invariance of integration and  the fact that
   \begin{equation}
       \|\Phi_0-\Phi_0(\cdot-\theta)\|_{H^k}\leq K_0\|\theta\|_{H^k_y}
   \end{equation}
    holds for some constant $K_0>0$, which depends only the value of $\|\Phi_0'\|_{H^k_x}$; see \eqref{eq:nl:bnd:prod:hky:hkx:lip}.  
 \end{proof}


\begin{proof}[Proof of Theorem \ref{thm:main}] 
    We now consider the timescales \eqref{eq:timescales} and introduce 
     the convenient notation
    \begin{equation}
        p( \eta, \sigma,r) = 4T(\eta/\sigma^2,r)\exp\left(-\frac{\kappa_b\eta}{\sigma^2\mathfrak d_{\rm det}\big(T(\eta/\sigma^2,r)\big)^2}\right).
    \end{equation}
We remark that for $\eta/\sigma^2 \le 1$ the desired bound is automatically satisfied, provided that $\kappa_b > 0$ is sufficiently small to ensure $8 \, \mathrm{exp}(-\kappa_b / 4) \ge 1$. In particular, from now on we assume $\eta / \sigma^2 \ge 1$. Moreover, for $T(\eta/\sigma^2,r) \le 2$, the bound follows directly from \eqref{eq:mr:prf:main:thm:bnd:4:T} by choosing $T = 2$.
We hence also assume $T(\eta/\sigma^2,r) \ge 2$ from now on and split our analysis into three separate cases.

   \textbf{{Case $\mathbf{\textit{d}>5}$}\,\,\,}
   Choosing $\kappa_c=3\kappa_b/4$ and recalling $\mathfrak{d}_{\rm det}(T) =1 $,
   the bound \eqref{eq:int:bnd:p:thm}
   follows directly from
   \eqref{eq:mr:prf:main:thm:bnd:4:T}
   upon noting that $1 \le (\eta/\sigma^2)^r \le (\eta/\sigma^2) $.

       \textbf{Case $\mathbf{\textit{d}=5}$\,\,\,} 
        First assume $r \in [\frac{1}{3}, 1]$.
        This allows us to compute
    \begin{equation}
    \sigma^2 \mathfrak{d}_{\rm det}\big(T(\eta/\sigma^2,r)\big)^2
    = \sigma^2 \log T(\eta/\sigma^2,r)^2
    = \sigma^2 \kappa_c^2 ( \eta/\sigma^2)^{1-r},
\end{equation}
   which implies 
\begin{equation}\begin{aligned}
       p(\eta,\sigma,r) &= 
    \mathrm{exp}\big( \kappa_c ( \eta/\sigma^2)^{(1-r)/2} \big) \mathrm{exp}\big( - \kappa_b \eta^r/( \kappa_c^2   \sigma^{2r} )\big).
\end{aligned}\end{equation}
The key point\footnote{This property explains why $r = 1/3$ is the critical value for $d =5$.
Similarly, for $d > 5$ one can interpret $r=1$ as  the critical value, since smaller values of $r$ do not result into longer timescales.} 
here is that $(1-r)/2 \le r$
holds for $r \ge \frac{1}{3}$. This yields
 $   1 \le (\eta / \sigma^2)^{(1-r)/2} \le (\eta  / \sigma^2)^r$
and hence
\begin{equation}\begin{aligned}
       p(\eta,\sigma,r) &\le 
    \mathrm{exp}\big(
      (\kappa_c -\kappa_b /\kappa_c^2  )  (\eta / \sigma^2)^r\big)
       \le \mathrm{exp}\left(
      -\frac{\kappa_b\eta^r}{4\sigma^{2r}}\right),
\end{aligned}\end{equation}
where the second bound follows by picking
$\kappa_c > 0$ to be sufficiently small.
Now for $0 < r \le \frac{1}{3}$, we note that our defined timescale is invariant 
while the probability bound increases as $r$ decreases. In particular, \eqref{eq:int:bnd:p:thm} remains valid, completing the proof for this case.

    \textbf{Case $\mathbf{2\leq \textit{d}\leq 4}$\,\,\,} 
    We first compute
    \begin{equation}
    \sigma^2 \mathfrak{d}_{\rm det}\big(T(\eta/\sigma^2,r)\big)^2
    = \sigma^2 T(\eta/\sigma^2,r)^{(5-d)/2}
    = \sigma^2 \kappa_c^{(5-d)/2}
    ( r \eta / \sigma^2)^{1-r}
    = \kappa_c^{(5-d)/2} \sigma^{2r} (r\eta)^{1-r},
\end{equation}
   which implies that
   \begin{equation}\begin{aligned}
       p(\eta,\sigma,r) &= \kappa_c 
       (r \eta/\sigma^2)^{\frac{2(1-r)}{5-d}}\exp\left(-\frac{{\kappa_b (r\eta)^r}}{\kappa_c^{(5-d)/2} r{\sigma^{2r}}}\right).\label{eq:first:bound:2d4}\\
\end{aligned}\end{equation}
We now make use of the  identity
   \begin{equation}
       z^pe^{-qz^{2r}}\leq \left(\frac{2p}{e(4-s)q r}\right)^{p/2r} e^{-sqz^{2r}/4},\quad z>0,\label{eq:power-exp}
   \end{equation}
   which holds for any $p,q,r>0$ and $0\leq s<4.$ 
   By picking $p = 4(1-r)/(5-d)$, $q = \kappa_b / (\kappa_c^{(5-d)/2} r)$,
$s= \kappa_c^{(5-d)/2} r^{1-r}$,  and  $z = \sqrt{r\eta}/\sigma$,
we hence find
\begin{equation}
p(\eta,\sigma,r) \le 
\left[ \kappa_c
 \left( \frac{8 (1-r)}{e(4 -\kappa_c^{(5-d)/2}r^{1-r})(5-d) \kappa_b } \right)^{ \frac{2 (1-r)}{(5-d)}} \right]^{1/r}
    \exp\left(-\frac{{\kappa_b \eta^r}}{4\sigma^{2r}}\right).
\end{equation}
Recalling that $0 < r \le 1$ and assuming a-priori that $\kappa_c \le 1$ yields
\begin{equation}
\begin{array}{lcl}
p(\eta,\sigma,r) &\le& 
\left[ \kappa_c
 \left( \dfrac{2 }{e(5-d) \kappa_b } \right)^{ \frac{2 (1-r)}{(5-d)}} \right]^{1/r} \exp\left(-\dfrac{{\kappa_b \eta^r}}{4\sigma^{2r}}\right)
 \\[0.2cm]
 &\le& 
 \left[ \kappa_c
 \max\left\{1, 
  \dfrac{2 }{e(5-d) \kappa_b }  \right\}^{ \frac{2 }{(5-d)}} \right]^{1/r} \exp\left(-\dfrac{{\kappa_b \eta^r}}{4\sigma^{2r}}\right).
\end{array}
\end{equation}
By choosing $\kappa_c > 0$ to be sufficiently small, we can ensure that the bracketed term is less than one for each $d \in \{2, 3, 4\}$ and each $0 < r \le 1$, which yields
\begin{equation}
    p(\eta,\sigma,r) \le \exp\left(-\frac{{\kappa_b \eta^r}}{4\sigma^{2r}}\right),
\end{equation}
as desired. Note that $\kappa_c$ does depend on $\kappa_b,$ just like for the cases above.
\end{proof}

\section{Nonlinearities}\label{sec:phase}
Our goal here is to provide definitions
for the nonlinearities appearing in \eqref{eq:pert:system}
and obtain a number of preliminary estimates for these terms, \pagebreak which we need for proving (local) existence and uniqueness as well as metastability. Starting with the former, 
we introduce a $C^\infty$-smooth  cut-off function
\begin{equation}
    \chi:\mathbb R\to[\tfrac14,\infty)\quad \textnormal{such that}\quad \chi(z)=\tfrac14,\quad z\leq \tfrac14,\quad \chi(z)=z,\quad z\geq \tfrac12,
\end{equation}
and subsequently define the  auxiliary function
\begin{equation}
\label{eq:nl:def:k:2}
\begin{array}{lcl}
 \Upsilon(\vartheta)  =  
-\dfrac{1}{\chi(\langle T_\theta\Phi_0',\psi_{\rm tw}\rangle_{L^2_x})}.
\end{array}
\end{equation}
Observe that $\Upsilon(0) = -1$ holds on account of the normalisation $\langle \Phi_0', \psi_{\rm tw} \rangle = 1$. To ensure that $\Upsilon$ is unaffected by the cut-off function, it suffices to take $\theta$   sufficiently
small in $H^k_y$ for $k > (d-1)/2$, as this implies pointwise control due to the Sobolev embedding $H^k_y \subset L^\infty_y$. This can be deduced from Lemmas \ref{lem:innerproduct:Hky} and \ref{lem:kappa:phi}.

We further define the expressions
\begin{equation}
\label{eq:nl:def:j:jtr}
    \begin{array}{lcl}
    \mathcal J(v,\theta)&=&
    f(v+T_\theta\Phi_0)-f(T_\theta\Phi_0)
    -Df(\Phi_0)v+|\nabla_y\theta|^2T_\theta\Phi_0'' ,
     \\[0.2cm]
   \mathcal J_{\rm tr}(v, \vartheta, \gamma)
    &=& - \frac{1}{2}T_\vartheta \Phi_0'' \Upsilon(\vartheta)^2\sum_{j=0}^\infty 
\langle g(v + T_{\vartheta} \Phi_0)T_{-\gamma}\sqrt Qe_j ,  \psi_{\rm tw} \rangle_{L^2_x}^2 ,
    \end{array}
\end{equation}
together with the notation
\begin{equation}
\textstyle\nabla_y\theta=(\partial_{x_2}\theta,\ldots,\partial_{x_d}\theta),\qquad|\nabla_y\theta|^2=\sum_{j=2}^d(\partial_{x_j}\theta)^2, 
\end{equation}
and remark  that $\mathcal J_{\rm tr}$ does not depend on the choice of the basis $(e_j)_{j\geq 0}$ of $\mathcal W$; see 
Lemma \ref{lem:nl:independence}.
The nonlinearities in \eqref{eq:pert:system}
can now be defined as
\begin{equation}
\label{eq:nl:def:n:m}
    \begin{array}{lcl}
    N_\sigma(v, \vartheta,\gamma) 
    & = & \Upsilon(\vartheta)  \langle \mathcal J(v, \vartheta) + \sigma^2 \mathcal J_{\rm tr}(v, \vartheta, \gamma),
    \psi_{\rm tw}
    \rangle_{L^2_x} ,
    \\[0.2cm]
    M(v, \vartheta)[\xi] & = & 
    \Upsilon(\vartheta) \langle g(v + T_{\vartheta} \Phi_0)[\xi] ,  \psi_{\rm tw} \rangle_{L^2_x} ,
    \end{array}
\end{equation}
for $v\in H^k$, $\theta\in H^k_y$, $\gamma\in\mathbb R$,  and $\xi\in\mathcal W_Q,$ together with
\begin{equation}
\label{eq:nl:def:cal:n:m}
    \begin{array}{lcl}
    \mathcal N_\sigma(v, \vartheta,\gamma)
     & = & 
     \mathcal J(v, \vartheta) + \sigma^2 \mathcal J_{\rm tr}(v, \vartheta, \gamma)
    + N_\sigma(v,\vartheta,\gamma) T_\vartheta \Phi_0' ,
    \\[0.2cm]
    \mathcal M(v, \vartheta)[\xi]
     & = & 
     g(v+T_\theta\Phi_0)[\xi]+M(v,\theta)[\xi]T_\theta\Phi_0' .
    \end{array}
\end{equation}
To appreciate these definitions, we note that
it is clear from construction that
\begin{equation}
    P_{\rm tw} \mathcal{N}_\sigma = 0,
    \qquad
    P_{\rm tw} \mathcal{M} = 0,
\end{equation}
whenever $\theta$ is sufficiently small in $H^k_y$ again.
The identity $P_{\rm tw} \mathcal{L}_{\rm tw} v = 0$ in combination with the commutation
relation\footnote{In this work|as in \cite{kapitula1997}|we exploit the crucial property that  the operators $P_{\rm tw}$ and $\Delta_y$ commute. This is   no longer the case when the diffusion coefficients are unequal, hence a different approach is needed. We claim that one could proceed as in \cite{hoffman2015multi}, provided that no transverse instabililities can occur. 
In particular, the (spectral) projections that we have introduced in \cite{bosch2025conditionalspeedshapecorrections,bosch2024multidimensional} for the cylindrical domain setting are slightly different, yet they also  commute with the corresponding linear operators. The latter is true even in the case of unequal diffusion coefficients. }  \begin{equation}P_{\rm tw}\Delta_yv=\Delta_yP_{\rm tw}v
\end{equation}
shows
that the (desired) orthogonality  $P_{\rm tw} v(t) = 0$ is maintained
for all $t> 0$ if $P_{\rm tw}v(0)=0$, and as long as the solution exists and $\| \vartheta(t) \|_{H^k_y}$ remains sufficiently small.  

To illustrate the connection
with the original problem \eqref{eq:reaction:diff},
we introduce the representation
\begin{equation}
    u(t) = \phi(v(t), \vartheta(t); t),\qquad \phi(v,\theta;t)=  T_{\gamma(t) + \vartheta} \Phi_0 + T_{\gamma(t)} v
\end{equation}
with $\gamma(t)=c_0 t$ and perform a formal computation involving   It{\^o}'s formula. 
Computing the required derivatives
\begin{equation}
\begin{array}{lcl}
    D_v \phi( v, \vartheta; t)[V] &=& T_{\gamma(t)}[V] ,
    \\[0.2cm]
    D_\vartheta \phi(v, \vartheta; t)[\Theta]
     & = & -\Theta T_{\gamma(t) + \vartheta}\Phi_0' ,
    \\[0.2cm]
    D^2_{\vartheta\vartheta} \phi(v, \vartheta; t)[\Theta,\Theta]
    & = & \Theta^2 T_{\gamma(t) + \vartheta}\Phi_0'',
\end{array}
\end{equation}
we arrive at 
\begin{equation}
\begin{aligned}
 \mathrm du  &= \textstyle \bigg[  - c_0 \partial_x u 
+ T_{\gamma(t)}\Big[ \mathcal{L} v + \mathcal{N}_\sigma(v,\theta,\gamma(t))
- T_{\vartheta} \Phi_0'(\Delta_y \vartheta + N_\sigma(v,\theta,\gamma(t)))
\\&\qquad\quad \textstyle + \frac{1}{2} \sigma^2 T_{\vartheta} \Phi_0'' \sum_{j=0}^\infty \big[M(v,\theta)[T_{-\gamma(t)}\sqrt{Q}e_j]\big]^2\Big]
\bigg] \, \mathrm dt 
+ \sigma T_{\gamma(t)} \big[ \mathcal{M}(v,\theta) - T_{\vartheta} \Phi_0'M(v,\theta)\big] \, \mathrm d W^Q_t.\label{eq:nonsimplified}
\end{aligned}
\end{equation}
Exploiting the structure of the nonlinearities in \eqref{eq:nl:def:n:m} and \eqref{eq:nl:def:cal:n:m} 
and the definition of the trace term $\mathcal{J}_{\rm tr}$, which can in fact be rewritten as
\begin{equation}
    \textstyle \mathcal{J}_{\rm tr}(v, \vartheta, \gamma)
    = -\frac{1}{2} T_{\vartheta} \Phi_0''\sum_{j=0}^\infty 
    \big[ M(v, \vartheta)[T_{-\gamma} \sqrt{Q} e_j] \big]^2,
\end{equation}
allows us to simplify \eqref{eq:nonsimplified} into
\begin{equation}
\begin{array}{lcl}
\mathrm du   =   \Big[  - c_0 \partial_x u 
+ T_{\gamma(t)}\big[ \mathcal{L} v + \mathcal{J}(v,\theta)
- T_{\vartheta} \Phi_0' \Delta_y \vartheta 
\big]\Big] \, \mathrm  dt
+ \sigma g(u)\,\mathrm  d W^Q_t.
\end{array}
\end{equation}
Importantly, after exploiting the identity
\begin{equation}
    \Delta_y T_{\theta}\Phi_0+ T_{\theta}\Phi_0'\Delta_y\theta=T_\theta\Phi_0''|\nabla_y\theta|^2,\label{eq:Deltay:Ttheta}
\end{equation}
together with the fact that $\Phi_0$ satisfies the travelling wave ODE in \eqref{eq:mr:trv:wave:ode}, we  recover
\begin{equation}
   \mathrm  du = [ \Delta u + f(u)] \, \mathrm dt + \sigma g(u) \, \mathrm dW^Q_t.
\end{equation}
These It\^o computations are made rigorous in {\S}\ref{sec:evol:pert}.

The main estimates of this section are summarised in the following two results. The fact that the right-hand side of \eqref{eq:nl:prop:nl:bnds:N:etc} does not contain any $\mathcal O(\|\vartheta\|_{H_y^k}^2)$ contributions
is of crucial importance\\[-0.065cm] for the stability argument in dimensions $2 \le d \le 5$. The Lipschitz bounds
in Proposition \ref{prop:nl:bnds:lip} will be used in {\S}\ref{sec:evol:pert} to establish well-posedness of \eqref{eq:pert:system}. Notice that
\eqref{eq:nl:main:prp:bnd:n:caln:lip} deliberately references the lower-order spaces $H^{k-1}_y$
and $H^{k-1}$ in order to align with the requirements
of the variational framework developed in \cite{agresti2024criticalNEW}. Finally, let us  point out that the assumption (H$L^1$) is only needed to obtain
the\linebreak $\Pi_2(\mathcal W_Q;L^1_y)$-estimate in 
\eqref{eq:nl:prp:bnd:m:cal:m:unif}. We exploit the latter in {\S}\ref{sec:stability} to prove stability.

\begin{proposition}\label{prop:nl:bnds}
 Pick $k >d/2$ and suppose that \textnormal{(Hf)}, \textnormal{(HTw)},  \textnormal{(HgQ)}, and \textnormal{(H$L^1$)} hold. For all $N>0$ there exists a   $K^N > 0$  so that
 for all $\gamma \in \mathbb R$, and every $v \in H^k$ and $\vartheta \in H^k_y$   with 
 $\| v \|_{H^k} \le N$ and $\|\theta\|_{H^k_y} \leq N$, we have the bounds
\begin{equation}
\label{eq:nl:prp:bnd:m:cal:m:unif}
    \begin{array}{lcl}
       \left\|M(v,\theta)T_{-\gamma}\right\|_{HS(\mathcal W_Q;H^k_y )}
       + \left\|M(v,\theta)T_{-\gamma}\right\|_{\Pi_2(\mathcal W_Q;L^1_y)} & \le & K^N,
       \\[0.2cm]
       \left\|\mathcal M(v,\theta)T_{-\gamma}\right\|_{HS(\mathcal W_Q;H^k )}
        & \le & K^N.
    \end{array}
\end{equation}
If in addition $\vartheta \in H^{k+1}_y$, then we also have the estimates
\begin{equation}
\label{eq:nl:prop:nl:bnds:N:etc}
    \begin{array}{lcl}
\left\| N_\sigma(v,\theta,\gamma)\right\|_{H^k _y}+\left\|N_\sigma(v,\theta,\gamma)\right\|_{L^1 _y}
     & \leq & K^N\left(\sigma^2 + \|v\|_{H^k}^2+ \|v\|_{H^k} \| \vartheta\|_{H^k_y}  + \|\nabla_y
     \theta\|_{H^{k} _y}^2\right)     , 
       \\[0.2cm]
       \left\| \mathcal N_\sigma(v,\theta,\gamma)\right\|_{H^k}
        & \le & K^N
        \left(\sigma^2 + \|v\|_{H^k}^2+ \|v\|_{H^k} \| \vartheta\|_{H^k_y}  + \|\nabla_y
     \theta\|_{H^{k} _y}^2\right), 
    \end{array}
\end{equation}
for any $\sigma \in \mathbb R$.
\end{proposition}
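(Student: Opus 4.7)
My plan is to handle the stochastic quantities $M,\mathcal M$ and the drift terms $N_\sigma, \mathcal N_\sigma$ in parallel, using the same three building blocks throughout: pointwise control of $\Upsilon(\theta)$, the translation commutation in (HgQ), and a handful of Sobolev product/composition estimates. The arguments separate cleanly into a preliminary lemma on $\Upsilon$, estimates that are linear in $g$ (for $M$ and $\mathcal M$), and estimates that are quadratic in the perturbation (for $N_\sigma$ and $\mathcal N_\sigma$).

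I would first establish that under $\|\theta\|_{H^k_y}\le N$ the scalar factor $\Upsilon(\theta)$ behaves nicely. Because $k>d/2>(d-1)/2$, the embedding $H^k_y\hookrightarrow L^\infty_y$ controls $\theta$ pointwise, so the smooth map $\Psi(\tau):=\langle T_\tau\Phi_0',\psi_{\rm tw}\rangle_{L^2_x}$ with $\Psi(0)=1$ yields the composition estimate $\|\Upsilon(\theta)+1\|_{H^k_y}\le K^N\|\theta\|_{H^k_y}$. The cut-off $\chi$ rules out any small-denominator issues, guaranteeing $\|\Upsilon(\theta)\|_{L^\infty_y}\le 4$ unconditionally. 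In particular, multiplication by $\Upsilon(\theta)$ is a bounded operation on $H^k_y$, using that $H^k_y$ is an algebra for $k>(d-1)/2$.

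For the bounds \eqref{eq:nl:prp:bnd:m:cal:m:unif}, the key manoeuvre is the commutation $g(u)[T_{-\gamma}\xi]=T_{-\gamma}g(T_\gamma u)[\xi]$, which together with $x$-translation invariance of $H^k$-norms lets me replace $g(v+T_\theta\Phi_0)T_{-\gamma}$ by $g(T_\gamma v+T_{\gamma+\theta}\Phi_0)$ inside any HS-sum. Writing the latter as $g(T_\gamma\Phi_0+\tilde v)$ with $\|\tilde v\|_{H^k}\le K^N(\|v\|_{H^k}+\|\theta\|_{H^k_y})$ (using the standard bound $\|T_\theta\Phi_0-\Phi_0\|_{H^k}\lesssim\|\theta\|_{H^k_y}$) brings (HgQ) into play. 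Passing to $M$ costs only an inner product against $\psi_{\rm tw}$ in $L^2_x$, and the Cauchy--Schwarz-based estimate $\|\langle G,\psi_{\rm tw}\rangle_{L^2_x}\|_{H^k_y}\lesssim \|G\|_{H^k}$ transfers the HS bound to $H^k_y$. The $\Pi_2(\mathcal W_Q;L^1_y)$ bound is obtained by the direct chain $\|M[T_{-\gamma}\xi_j]\|_{L^1_y}\le \|\Upsilon\|_{L^\infty_y}\|g(T_\gamma\Phi_0+\tilde v)[\xi_j]^\top T_\gamma\psi_{\rm tw}\|_{L^1(\mathbb R^d;\mathbb R)}$, at which point squaring and summing in $j$ invokes (H$L^1$) precisely as stated. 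The $\mathcal M$ estimate then follows from the triangle inequality together with a Moser-type product estimate of the form $\|h(y)\,T_\theta\Phi_0'\|_{H^k}\le K^N\|h\|_{H^k_y}$, which exploits the exponential decay of $\Phi_0'$ and its derivatives.

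For \eqref{eq:nl:prop:nl:bnds:N:etc}, the quadratic structure of $\mathcal J$ is the decisive algebraic observation. Writing
\begin{equation*}
f(v+T_\theta\Phi_0)-f(T_\theta\Phi_0)-Df(\Phi_0)v=\int_0^1\!\!\big[Df(T_\theta\Phi_0+sv)-Df(\Phi_0)\big]v\,\mathrm ds
\end{equation*}
and splitting $Df(T_\theta\Phi_0+sv)-Df(\Phi_0)$ through $Df(T_\theta\Phi_0)$ yields one term of order $v^2$ and one of order $v\cdot(T_\theta\Phi_0-\Phi_0)$, the latter controlled by $\|v\|_{H^k}\|\theta\|_{H^k_y}$ after applying Moser estimates together with $\|T_\theta\Phi_0-\Phi_0\|_{H^k}\lesssim\|\theta\|_{H^k_y}$. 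The explicit $|\nabla_y\theta|^2\,T_\theta\Phi_0''$ term produces the remaining $\|\nabla_y\theta\|_{H^k_y}^2$ contribution, while $\sigma^2\mathcal J_{\rm tr}$ supplies the explicit $\sigma^2$ once its HS-sum is bounded uniformly via (HgQ) and (after pairing with $\psi_{\rm tw}$) by Cauchy--Schwarz in $L^2_x$. Taking the inner product with $\psi_{\rm tw}$ transports $H^k$-bounds into $H^k_y$-bounds, and the $L^1_y$ bound follows either directly from the $H^k_y$ one (for the quadratic pieces) or from (H$L^1$) for the $\sigma^2$ piece. Finally, $\mathcal N_\sigma$ is obtained from $\mathcal J+\sigma^2\mathcal J_{\rm tr}$ plus $N_\sigma T_\theta\Phi_0'$ by reusing the same product estimate as for $\mathcal M$.

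The main obstacle I anticipate is bookkeeping rather than any single hard inequality: every time a derivative falls on $T_\theta\Phi_0$ the chain rule generates factors $(1+\|\theta\|_{H^k_y})^j$ as well as contractions of $\nabla_y\theta$ against derivatives of $\Phi_0$, and one must repeatedly absorb these into $K^N$ without accidentally creating a $\|\theta\|_{H^k_y}^2$ contribution (which would be fatal for the dimensions $2\le d\le 5$ analysis downstream). A closely related technical nuisance is that $T_\theta\Phi_0$ itself never lies in $H^k$, so every norm computation has to be pivoted to the difference $T_\theta\Phi_0-\Phi_0$ before any Sobolev estimate can be applied.
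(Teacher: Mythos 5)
Your overall route is essentially the paper's: establish pointwise and $H^k_y$ control of $\Upsilon(\vartheta)$ via Sobolev embedding and composition, use the (HgQ) commutation relation to replace the shifted argument $v+T_\vartheta\Phi_0$ translated by $T_{-\gamma}$ with $T_\gamma\Phi_0+\tilde v$ where $\|\tilde v\|_{H^k}\lesssim\|v\|_{H^k}+\|\vartheta\|_{H^k_y}$, and then split $\mathcal J$ into a $v^2$ piece, a $v\cdot(T_\vartheta\Phi_0-\Phi_0)$ piece, and the $|\nabla_y\vartheta|^2\,T_\vartheta\Phi_0''$ piece. Your Taylor-expansion split of $Df(T_\vartheta\Phi_0+sv)-Df(\Phi_0)$ through $Df(T_\vartheta\Phi_0)$ reproduces exactly the paper's decomposition $\mathcal J=\mathcal J_I+\mathcal J_{II}+\mathcal J_{III}$, and your remarks on not generating a $\|\vartheta\|_{H^k_y}^2$ term and on always pivoting to the difference $T_\vartheta\Phi_0-\Phi_0$ identify the two key sources of care in the argument.

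There is one step you wave away that does not hold as stated: you claim the $L^1_y$ bound on $N_\sigma$ ``follows directly from the $H^k_y$ one (for the quadratic pieces).'' Since $H^k(\mathbb R^{d-1})\not\hookrightarrow L^1(\mathbb R^{d-1})$, the $H^k_y$ estimate gives no $L^1_y$ control; a separate argument is required. The mechanism that actually works is to bound $\|N_\sigma\|_{L^1_y}\le\|\Upsilon(\vartheta)\|_\infty\,\|\mathcal J_{\rm tot}\|_{L^1(\mathbb R^d)}\,\|\psi_{\rm tw}\|_{\infty}$, and then establish $L^1(\mathbb R^d)$ bounds on each of the three $\mathcal J$-pieces individually. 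For $\mathcal J_I$ the pointwise quadratic estimate $|\mathcal J_I|\lesssim |v|^2$ gives $\|\mathcal J_I\|_{L^1}\lesssim\|v\|_{L^2}^2$; for $\mathcal J_{II}$ one applies Cauchy--Schwarz in $(x,y)$ jointly, $\|\mathcal J_{II}\|_{L^1}\le\|Df(T_\vartheta\Phi_0)-Df(\Phi_0)\|_{L^2}\,\|v\|_{L^2}\lesssim\|\vartheta\|_{L^2_y}\|v\|_{L^2}$; and for $\mathcal J_{III}$ the separable structure gives the exact identity $\|\mathcal J_{III}\|_{L^1}=\|\Phi_0''\|_{L^1_x}\|\nabla_y\vartheta\|_{L^2_y}^2$. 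The same issue recurs for the $L^1$ bound on $\mathcal J_{\rm tr}$: the estimate there uses $\|\Phi_0''\|_{L^1_x}\|M(v,\vartheta)T_{-\gamma}\|_{HS(\mathcal W_Q;H^k_y)}^2$, which traces back to (HgQ) rather than to (H$L^1$) as you suggest (H$L^1$ enters only the $\Pi_2(\mathcal W_Q;L^1_y)$ bound on $M$). These are small corrections but they must be supplied -- the $L^1_y$ half of the first estimate in \eqref{eq:nl:prop:nl:bnds:N:etc} does not follow for free from the $H^k_y$ half.
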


\begin{proposition}\label{prop:nl:bnds:lip}
Pick $k > d/2 + 1$ and 
suppose that \textnormal{(Hf)}, \textnormal{(HTw)}, and \textnormal{(HgQ)}  hold.
For all $N>0$ there exists a $K^N > 0$
so that for all  $\gamma \in \mathbb R$, and every  $v_A, v_B \in H^k$ and $\vartheta_A, \vartheta_B \in H^{k}_y$ 
 with
\begin{equation}
    \max \{ \| v_A \|_{H^k} ,  \| v_B\|_{H^k} , \| \vartheta_A \|_{H^k_y} , \| \vartheta_B \|_{H^k_y} \}
    \le N, 
\end{equation}
we have the bounds
\begin{equation}
\label{eq:nl:main:prp:bnd:m:calm:lip}
\begin{array}{lcl}
    \| M(v_A, \vartheta_A)T_{-\gamma} - M(v_B, \vartheta_B)T_{-\gamma} \|_{HS(\mathcal W_Q;H^k_y)} & \le & K^N\big[ \|v_A - v_B\|_{H^k} + \| \vartheta_A - \vartheta_B \|_{H^k_y}   \big] ,
\\[0.2cm]
    \| \mathcal M(v_A, \vartheta_A)T_{-\gamma} - \mathcal M(v_B, \vartheta_B)T_{-\gamma} \|_{HS(\mathcal W_Q;H^k)} & \le & K^N\big[ \|v_A - v_B\|_{H^k} + \| \vartheta_A - \vartheta_B \|_{H^k_y}   \big] ,
\end{array}
\end{equation}
together with 
\begin{equation}
\label{eq:nl:main:prp:bnd:n:caln:lip}
\begin{array}{lcl}
\| N_\sigma(v_A, \vartheta_A,\gamma) -  N_\sigma(v_B, \vartheta_B,\gamma) \|_{H^{k-1}_y}  
    &\le& K^N 
    \big[ \| v_A - v_B \|_{H^{k-1}} + \|\vartheta_A - \vartheta_B\|_{H^k_y}
    \big] ,
\\[0.2cm]
    \| \mathcal{N}_\sigma(v_A, \vartheta_A,\gamma) -  \mathcal{N}_\sigma(v_B, \vartheta_B,\gamma) \|_{H^{k-1}}  
    &\le& K^N 
    \big[ \| v_A - v_B \|_{H^{k-1}} + \|\vartheta_A - \vartheta_B\|_{H^k_y}
    \big],
\end{array}
\end{equation}
for any $0\leq \sigma\leq 1$. 
\end{proposition}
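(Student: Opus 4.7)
The plan is to reduce each of the four Lipschitz estimates to a collection of basic Lipschitz properties of the building blocks appearing in \eqref{eq:nl:def:n:m} and \eqref{eq:nl:def:cal:n:m}. Specifically, I would first establish auxiliary Lipschitz bounds in $H^k_y$ for the scalar factor $\theta \mapsto \Upsilon(\theta)$, together with Lipschitz bounds for the superposition operators $\theta \mapsto T_\theta\Phi_0^{(j)}$ with $j\in\{0,1,2\}$, viewed as maps from $H^k_y$ into $H^k$ or $H^{k-1}$. These follow from the chain rule together with the Sobolev embedding $H^k_y \hookrightarrow L^\infty_y$ (valid since $k > d/2 + 1 > (d-1)/2$), the exponential decay of $\Phi_0$ and its derivatives, and the fact that $\Upsilon$ is a smooth function of $\langle T_\theta\Phi_0',\psi_{\rm tw}\rangle_{L^2_x}$, which is itself Lipschitz in $\theta$.

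For the Hilbert--Schmidt estimates \eqref{eq:nl:main:prp:bnd:m:calm:lip}, I would use the telescoping split
\begin{equation*}
M(v_A,\theta_A) - M(v_B,\theta_B) = \big[\Upsilon(\theta_A) - \Upsilon(\theta_B)\big]\langle g(v_A + T_{\theta_A}\Phi_0)[\,\cdot\,], \psi_{\rm tw}\rangle_{L^2_x} + \Upsilon(\theta_B)\langle \big[g(v_A+T_{\theta_A}\Phi_0) - g(v_B+T_{\theta_B}\Phi_0)\big][\,\cdot\,], \psi_{\rm tw}\rangle_{L^2_x}.
\end{equation*}
The second term is handled via the Lipschitz assumption \eqref{eq:HgLip} in (HgQ), the triangle inequality, and the superposition Lipschitz bound for $T_\theta\Phi_0$ in $H^k$. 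The first term uses the auxiliary Lipschitz estimate on $\Upsilon$ together with the uniform Hilbert--Schmidt bound from \eqref{eq:nl:prp:bnd:m:cal:m:unif}. The estimate for $\mathcal{M}$ follows by the same splitting applied to the extra term $M(v,\theta) T_\theta\Phi_0'$ in \eqref{eq:nl:def:cal:n:m}, once more invoking the Lipschitz estimate for $T_\theta\Phi_0'$.

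The lower-order estimates \eqref{eq:nl:main:prp:bnd:n:caln:lip} on $N_\sigma$ and $\mathcal{N}_\sigma$ are more delicate because of the ``one-derivative-down'' asymmetry: differences are measured in $H^{k-1}$ and $v$-differences are allowed in $H^{k-1}$, while $\theta$-differences remain in $H^k_y$. I would split $N_\sigma$ along the decomposition $\mathcal{J} + \sigma^2 \mathcal{J}_{\rm tr}$ and treat each contribution separately. For the nonlinear $f$-contribution in $\mathcal{J}$, I would use the fundamental theorem of calculus to rewrite $v$-differences as $\int_0^1 Df(sv_A+(1-s)v_B + T_\theta\Phi_0)(v_A-v_B)\,ds$ and then invoke the algebra structure of $H^{k-1}$, which holds precisely because $k > d/2 + 1$, to distribute norms and absorb a factor involving $\|Df(\cdot)\|_{H^k}$ that is controlled by (Hf). The corresponding $\theta$-differences of the $f$-terms are reduced to Lipschitz estimates for $T_\theta\Phi_0$ and $Df(\Phi_0)$ in $H^k$, again via the fundamental theorem of calculus.

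The main obstacle, and the term that fundamentally dictates the asymmetric structure of \eqref{eq:nl:main:prp:bnd:n:caln:lip}, is the quadratic expression $|\nabla_y\theta|^2 T_\theta\Phi_0''$ in $\mathcal{J}$. Here one only has
\begin{equation*}
|\nabla_y\theta_A|^2 - |\nabla_y\theta_B|^2 = (\nabla_y\theta_A + \nabla_y\theta_B)\cdot \nabla_y(\theta_A - \theta_B),
\end{equation*}
and the factor $\nabla_y(\theta_A - \theta_B)$ can only be bounded in $H^{k-1}_y$ by $\|\theta_A - \theta_B\|_{H^k_y}$, which explains why the right-hand side of \eqref{eq:nl:main:prp:bnd:n:caln:lip} is forced to keep $\theta$-differences at the $H^k_y$ level. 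The algebra structure of $H^{k-1}_y$ is then used to bound the resulting triple product. The trace contribution $\sigma^2\mathcal{J}_{\rm tr}$ is handled by writing differences of squared quantities as $(M_A + M_B)(M_A - M_B)$ and reusing the Lipschitz estimate for $M$ already established in \eqref{eq:nl:main:prp:bnd:m:calm:lip}. Projecting the assembled bounds against $\psi_{\rm tw}$ (with the prefactor $\Upsilon$) yields the estimate on $N_\sigma$, while the same decomposition applied componentwise delivers the estimate on $\mathcal{N}_\sigma$ after accounting for the auxiliary $N_\sigma T_\theta\Phi_0'$ piece in its definition.
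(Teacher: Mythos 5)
Your proposal follows essentially the same route as the paper: telescoping splits of $M$, $\mathcal{M}$, $N_\sigma$, $\mathcal{N}_\sigma$ against the building blocks $\Upsilon$, $g$, $\mathcal{J}$ (decomposed into $\mathcal{J}_I$, $\mathcal{J}_{II}$, $\mathcal{J}_{III}$) and $\mathcal{J}_{\rm tr}$; Taylor/FTC representations for the $f$-terms; the algebra property of $H^{k-1}$ (valid precisely since $k>d/2+1$); and the observation that $|\nabla_y\vartheta|^2$ forces the $H^k_y$ placement on $\vartheta$-differences because its difference picks up $\nabla_y(\vartheta_A-\vartheta_B)$, which costs one derivative.

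One small but genuine bookkeeping point you gloss over: your two-term split leaves the second summand with the raw prefactor $\Upsilon(\vartheta_B)$. This function does \emph{not} lie in $H^k_y$ (it tends to $-1$ as $|y|\to\infty$, since $\vartheta_B(y)\to 0$), so the algebra bound $\|\phi\psi\|_{H^k_y}\lesssim\|\phi\|_{H^k_y}\|\psi\|_{H^k_y}$ cannot be applied directly. The paper sidesteps this by writing $\Upsilon(\vartheta_B)=-1+\big(\Upsilon(\vartheta_B)+1\big)$ and using \eqref{eq:nl:est:k2:hk}, which yields the three-term decomposition \eqref{eq:nl:lip:id:m}; this is why $\Upsilon$ is normalised so that $\Upsilon(0)=-1$. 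Your version still works, but you must justify that pointwise multiplication by $\Upsilon(\vartheta_B)$ is a bounded operator on $H^k_y$, with operator norm $\le 1 + K\|\Upsilon(\vartheta_B)+1\|_{H^k_y}$, which amounts to doing the same split implicitly. (The first summand is fine as written, since the \emph{difference} $\Upsilon(\vartheta_A)-\Upsilon(\vartheta_B)$ does decay and lies in $H^k_y$.)

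A presentational difference worth knowing: the paper first proves Lipschitz estimates for $N_\sigma$ and $\mathcal{N}_\sigma$ at the full $H^k$/$H^k_y$ level under the stronger hypothesis $\vartheta_A,\vartheta_B\in H^{k+1}_y$ (eqs.\ \eqref{eq:nl:n:sigma:lip}--\eqref{eq:nl:cal:n:lip}), and then obtains Proposition \ref{prop:nl:bnds:lip} by applying those lemmas with $k\mapsto k-1$, so that $\vartheta\in H^k_y$ suffices and $\|\nabla_y(\vartheta_A-\vartheta_B)\|_{H^{k-1}_y}$ is controlled by $\|\vartheta_A-\vartheta_B\|_{H^k_y}$. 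Your direct $H^{k-1}$-level argument reaches the same conclusion without the intermediate lemma, which is cleaner for a one-off but loses the reusable $H^k$-level estimate that the paper also needs elsewhere (cf.\ the well-posedness argument in \S\ref{sec:evol:pert}).
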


\subsection{Preliminaries}\label{sec:nonlin:prelim}

In this section we formulate several  preliminary estimates. We start by considering the behaviour of various $L^2_x$ inner-products and norms, which we view as functions of the transverse coordinate $y$.

\begin{lemma}\label{lem:innerproduct:Hky}
Pick $k \ge 0$ and suppose that \textnormal{(Hf)} and \textnormal{(HTw)} both hold. Then for any $w \in H^k$ we have the bound
\begin{equation}
\label{eq:nl:bnd:l2x:to:hk}
    \| \langle w , \psi_{\rm tw} \rangle_{L^2_x} \|_{H^k_y} \le \| w \|_{H^k} \| \psi_{\rm tw} \|_{L^2_x}.
\end{equation}
\end{lemma}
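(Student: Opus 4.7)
The result is essentially a Cauchy--Schwarz estimate in the $x$-variable applied slicewise in $y$, together with Fubini. The plan is as follows.

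First, for any multi-index $\alpha \in \mathbb N^{d-1}$ with $|\alpha| \le k$ acting only on the transverse coordinate $y \in \mathbb R^{d-1}$, I would show that
\begin{equation}
\partial_y^\alpha \langle w(\cdot,\cdot), \psi_{\rm tw} \rangle_{L^2_x}(y)
= \langle \partial_y^\alpha w(\cdot, y), \psi_{\rm tw} \rangle_{L^2_x}
\end{equation}
holds for a.e. $y \in \mathbb R^{d-1}$. For $w \in C_c^\infty(\mathbb R^d;\mathbb R^n)$ this is just differentiation under the integral sign; the general case $w \in H^k$ then follows by density, using that $\psi_{\rm tw} \in L^2_x$ makes the pairing $v \mapsto \langle v, \psi_{\rm tw}\rangle_{L^2_x}$ continuous from $L^2_x$ to $\mathbb R^n$. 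Note that $w \in L^2$ together with Fubini ensures $w(\cdot,y) \in L^2_x$ for a.e.\ $y$, so the inner product is well-defined pointwise in $y$.

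Second, I would apply Cauchy--Schwarz in $L^2_x$ pointwise in $y$:
\begin{equation}
\bigl| \partial_y^\alpha \langle w, \psi_{\rm tw}\rangle_{L^2_x}(y) \bigr|
\le \| \partial_y^\alpha w(\cdot, y) \|_{L^2_x} \, \| \psi_{\rm tw} \|_{L^2_x}.
\end{equation}
Squaring and integrating over $y \in \mathbb R^{d-1}$, then using Fubini to identify the resulting double integral with $\| \partial_y^\alpha w \|_{L^2}^2$, yields
\begin{equation}
\bigl\| \partial_y^\alpha \langle w, \psi_{\rm tw}\rangle_{L^2_x} \bigr\|_{L^2_y}^2
\le \| \psi_{\rm tw} \|_{L^2_x}^2 \, \| \partial_y^\alpha w \|_{L^2}^2.
\end{equation}

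Finally, I would sum over all $|\alpha| \le k$ (with $\alpha$ only in the $y$-direction). Since the full $H^k$ norm on $\mathbb R^d$ contains these $y$-only contributions as a subset of its terms, one obtains
\begin{equation}
\| \langle w, \psi_{\rm tw}\rangle_{L^2_x} \|_{H^k_y}^2
\le \| \psi_{\rm tw} \|_{L^2_x}^2 \sum_{|\alpha|\le k,\, \alpha \text{ in } y} \| \partial_y^\alpha w \|_{L^2}^2
\le \| \psi_{\rm tw} \|_{L^2_x}^2 \, \| w \|_{H^k}^2,
\end{equation}
which after taking square roots delivers \eqref{eq:nl:bnd:l2x:to:hk}. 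There is no real obstacle here; the only point requiring mild care is the interchange of $\partial_y^\alpha$ with the $L^2_x$ pairing for general $w \in H^k$, which is handled by the density argument mentioned above.
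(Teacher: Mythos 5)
Your proof is correct and takes essentially the same route as the paper: commute $\partial_y^\alpha$ with the $L^2_x$ pairing, apply Cauchy--Schwarz in $x$ pointwise in $y$, integrate over $y$, and sum over $|\alpha|\le k$. The paper states the interchange of derivative and pairing without comment, whereas you supply the density argument justifying it for general $w\in H^k$; this is a reasonable extra bit of care but does not change the argument.
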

\begin{proof}
For any multi-index $\alpha \in \mathbb Z^{d-1}_{\ge 0}$ with $|\alpha| \le k$,
we obtain the estimate
\begin{equation}
 |   \partial^\alpha_y \langle  w, \psi_{\rm tw} \rangle_{L^2_x} |^2
    = 
     | \langle  \partial^\alpha_y w, \psi_{\rm tw} \rangle_{L^2_x} |^2
\le 
\| \partial^\alpha_y w \|_{L^2_x}^2 \| \psi_{\rm tw} \|_{L^2_x}^2,
\end{equation}
that is, pointwise with respect to $y$.
Integrating over $y \in \mathbb R^{d-1}$
 yields
\begin{equation}
 \|   \partial^\alpha_y \langle  w, \psi_{\rm tw} \rangle_{L^2_x} \|^2_{L^2_y}
 \le \| \partial^\alpha_y w \|_{L^2}^2 \| \psi_{\rm tw} \|_{L^2_x}^2,
\end{equation}
which implies the stated bound.   
\end{proof}

\begin{lemma}
    For any  $\vartheta_A, \vartheta_B \in \mathbb R$ and any 
    $\zeta \in C(\mathbb R; \mathbb R^n)$ with $\zeta'\in L^2_x$ we have the bound
    \begin{equation}
       \label{eq:nl:lip:bnd:l2:x:diff:shift}
        \|T_{\vartheta_A} \zeta - T_{\vartheta_B} \zeta \|_{L^2_x} \le  | \vartheta_A - \vartheta_B | \| \zeta'\|_{L^2_x}.
    \end{equation}
\end{lemma}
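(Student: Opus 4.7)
The plan is to reduce the claim to the fundamental theorem of calculus combined with translation-invariance of the Lebesgue measure. Since $\zeta$ is continuous with $\zeta' \in L^2_x$ (interpreted as the weak derivative), $\zeta$ is locally absolutely continuous, so for almost every $x \in \mathbb R$ we have the pointwise identity
\begin{equation}
[T_{\vartheta_A}\zeta](x) - [T_{\vartheta_B}\zeta](x) = \zeta(x-\vartheta_A) - \zeta(x-\vartheta_B) = -\int_{\vartheta_B}^{\vartheta_A}\zeta'(x-s)\,\mathrm ds,
\end{equation}
where I have assumed without loss of generality that $\vartheta_A \ge \vartheta_B$ (otherwise swap the two and flip the sign).

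Taking $L^2_x$-norms and applying Minkowski's integral inequality moves the norm inside the $s$-integral, giving
\begin{equation}
\|T_{\vartheta_A}\zeta - T_{\vartheta_B}\zeta\|_{L^2_x} \le \int_{\vartheta_B}^{\vartheta_A} \|\zeta'(\cdot - s)\|_{L^2_x}\,\mathrm ds.
\end{equation}
Since the Lebesgue measure on $\mathbb R$ is translation invariant, $\|\zeta'(\cdot - s)\|_{L^2_x} = \|\zeta'\|_{L^2_x}$ for every $s$, and the integrand is constant. The integral then evaluates to $(\vartheta_A - \vartheta_B)\|\zeta'\|_{L^2_x} = |\vartheta_A - \vartheta_B|\|\zeta'\|_{L^2_x}$, which is the stated bound.

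I do not anticipate any serious obstacle. The only point requiring minor care is the justification of the fundamental theorem of calculus under the mild hypotheses on $\zeta$; if a reader prefers, one can first establish the bound for $\zeta$ smooth and compactly supported via the above computation, and then extend to the general case by density in the seminorm $\|\zeta'\|_{L^2_x}$ using that both sides of the inequality depend continuously on $\zeta$ through $\zeta'$ in $L^2_x$ alone.
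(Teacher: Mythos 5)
Your proof is correct and takes essentially the same approach as the paper: write the difference via the fundamental theorem of calculus, apply an integral-norm inequality, and invoke translation invariance of Lebesgue measure. The only cosmetic difference is that the paper parameterizes the segment over $[0,1]$ and applies Cauchy--Schwarz to the squared norm, whereas you integrate over $[\vartheta_B,\vartheta_A]$ and apply Minkowski's integral inequality directly; both routes give the identical sharp bound.
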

\begin{proof}
Exploiting the invariance of 
integration under translations,
a direct computation yields
    \begin{equation}
         \begin{aligned}
             \|T_{\vartheta_A} \zeta-T_{\vartheta_B} \zeta\|_{L^2_x}^2&=\int_{\mathbb R}|T_{\vartheta_A}\zeta(x)-T_{\vartheta_B}\zeta(x)|^2\,\mathrm dx\\
&=\int_{\mathbb R} \left|\int_0^1 (\vartheta_A-\vartheta_B) T_{\vartheta_B + s(\vartheta_A - \vartheta_B)}\zeta'(x)\,\mathrm ds\right|^2\mathrm dx\\
             &\leq (\theta_A-\theta_B)^2\int_0^1\int_{\mathbb R} \big(T_{\vartheta_B + s(\vartheta_A-\vartheta_B)}\zeta'(x)\big)^2\mathrm dx\,\mathrm ds\\
             &= |\vartheta_A-\vartheta_B|^2 \|\zeta'\|_{L^2_x}^2,
         \end{aligned}
     \end{equation}
as desired.
\end{proof}

\begin{corollary}
For any 
$\vartheta_A, \vartheta_B \in L^2_y$ and any $\zeta \in C(\mathbb R; \mathbb R^n)$ with $ \zeta' \in L^2_x$ we have
the bound
\begin{equation}
\label{eq:nl:bnd:shift:diff:l2:lip}
    \|T_{\vartheta_A} \zeta - T_{\vartheta_B}\zeta \|_{L^2} \le  \|\vartheta_A-\vartheta_B\|_{L^2_y}  \| \zeta'\|_{L^2_x}.
\end{equation}
\end{corollary}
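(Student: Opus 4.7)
The plan is to apply the previous lemma fibrewise in $y$ and then integrate in the transverse variable. Concretely, when $\vartheta_A,\vartheta_B \in L^2_y$ are functions of $y$, the shifted object $[T_{\vartheta_A}\zeta](x,y)$ is simply $\zeta(x-\vartheta_A(y))$, so for each fixed $y \in \mathbb R^{d-1}$ the function $x \mapsto \zeta(x-\vartheta_A(y)) - \zeta(x-\vartheta_B(y))$ is precisely of the form handled by \eqref{eq:nl:lip:bnd:l2:x:diff:shift}, with the real-number shifts $\vartheta_A(y)$ and $\vartheta_B(y)$.

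First, I would invoke \eqref{eq:nl:lip:bnd:l2:x:diff:shift} pointwise in $y$ to obtain
\begin{equation}
\int_{\mathbb R} |\zeta(x-\vartheta_A(y)) - \zeta(x-\vartheta_B(y))|^2 \,\mathrm dx \le |\vartheta_A(y)-\vartheta_B(y)|^2 \|\zeta'\|_{L^2_x}^2
\end{equation}
for almost every $y \in \mathbb R^{d-1}$. Next, by Fubini--Tonelli the squared $L^2$-norm of the difference decomposes as
\begin{equation}
\|T_{\vartheta_A}\zeta - T_{\vartheta_B}\zeta\|_{L^2}^2 = \int_{\mathbb R^{d-1}} \int_{\mathbb R} |\zeta(x-\vartheta_A(y)) - \zeta(x-\vartheta_B(y))|^2 \,\mathrm dx\,\mathrm dy,
\end{equation}
into which I can substitute the pointwise bound and pull the $y$-independent factor $\|\zeta'\|_{L^2_x}^2$ outside. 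The remaining integral is exactly $\|\vartheta_A-\vartheta_B\|_{L^2_y}^2$, and taking square roots yields the claimed estimate.

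There is no real obstacle here; the only things to check are that the relevant measurability and integrability are in place so that Fubini applies. Since $\zeta$ is continuous and $\vartheta_A,\vartheta_B$ are measurable, the integrand $(x,y)\mapsto |\zeta(x-\vartheta_A(y))-\zeta(x-\vartheta_B(y))|^2$ is jointly measurable, and the pointwise bound together with $\vartheta_A-\vartheta_B \in L^2_y$ provides a finite upper bound on its double integral, which justifies the interchange. Thus the corollary reduces to the $d=1$ statement via an essentially trivial slicing argument.
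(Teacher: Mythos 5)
Your proof is correct and follows the same route as the paper: apply the one‑dimensional Lipschitz bound \eqref{eq:nl:lip:bnd:l2:x:diff:shift} fibrewise in $y$, integrate over $\mathbb R^{d-1}$, and factor out $\|\zeta'\|_{L^2_x}^2$. The Fubini/measurability remark is a harmless extra justification for what the paper treats as an immediate computation.
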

\begin{proof}
A direct computation using \eqref{eq:nl:lip:bnd:l2:x:diff:shift} yields
    \begin{equation}
         \begin{aligned}
             \|T_{\vartheta_A} \zeta-T_{\vartheta_B} \zeta\|_{L^2}^2&=\int_{\mathbb R^{d-1}} \|T_{\vartheta_A(y)}\zeta-T_{\vartheta_B(y)}\zeta\|^2_{L^2_x} \,\mathrm dy
             \\
             &\leq \int_{\mathbb R^{d-1}}(\vartheta_A(y)-\vartheta_B(y))^2\|\zeta'\|_{L^2_x}^2 \, \mathrm d y,
         \end{aligned}
     \end{equation}
     which equals $ \| \vartheta_A - \vartheta_B \|_{L^2_y}^2  \|\zeta'\|_{L^2_x}^2 $ and proves the assertion.
\end{proof}

Our final two results concern products of $H^k_y$ functions
with $H^k$ functions that can be interpreted as $y$-dependent translations of functions that depend only on $x$. This latter condition is important, as it allows us to estimate $x$-integrals independently from their $y$-counterparts. The key ingredient behind our estimates is the inequality
\begin{equation}
\label{eq:nl:bnd:multilinear:general}
    \| \Lambda[ \partial^{\alpha_1} \phi_1, \ldots, \partial^{\alpha_\ell} \phi_{\ell}] \|_{L^2_y}
    \le  K |\Lambda|  \| \phi_1 \|_{H^k_y} \cdots \| \phi_\ell \|_{H^k_y},
\end{equation}
which holds for every $\ell$-linear map $\Lambda: (\mathbb R^{n})^{ \ell} \to \mathbb R^n$ and any tuple $(\phi_1, \ldots, \phi_\ell) \in (H^k)^{\ell}$, provided that $|\alpha_1| + \ldots + |\alpha_j| \le k$
    and $k > (d-1)/2$. Observe that $K>0$ does not depend on $\Lambda.$ This is related to the fact that $H^k_y$ is an algebra under multiplication for these values of $k$, in the sense that $\|\phi\psi\|_{H^k_y} \le K \|\phi\|_{H^k_y}\|\psi\|_{H^k_y}$; see 
\cite[Thm. 4.39]{adams2003sobolev}. Naturally,  analogous inequalities hold for $H^k$ provided that $k> d/2$.

\begin{lemma}\label{lem:kappa:phi}
Pick $k > (d-1)/2$. There exists a constant $K > 0$ so that for every $\kappa \in \mathbb R$,
any
$\phi,\theta_A,\theta_B\in H^k_y$, and any $\zeta \in C(\mathbb R; \mathbb R^n)$ with $ \zeta' \in H^k_x$,
we have the bound
\begin{equation}
\label{eq:nl:bnd:prod:hky:hkx:lip}
    \| ( \kappa + \phi) (T_{\vartheta_A} \zeta - T_{\vartheta_B} \zeta ) \|_{H^k}
    \le K \big( |\kappa| +  \| \phi \|_{H^k_y} \big) (1 + \|\vartheta_A\|^{k-1}_{H^{k}_y}+ \|\vartheta_B\|^k_{H^k_y} ) \|
    \| \vartheta_A - \vartheta_B \|_{H^k_y} \| \zeta' \|_{H^k_x}.
\end{equation}
\end{lemma}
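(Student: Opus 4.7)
The plan is to reduce everything to a fundamental-theorem-of-calculus expansion, applied pointwise in $y$, which writes
\begin{equation*}
T_{\vartheta_A(y)}\zeta(x) - T_{\vartheta_B(y)}\zeta(x) = -\bigl(\vartheta_A(y) - \vartheta_B(y)\bigr) \int_0^1 T_{\vartheta_s(y)}\zeta'(x) \, \mathrm ds, \qquad \vartheta_s := \vartheta_B + s(\vartheta_A - \vartheta_B),
\end{equation*}
exactly as in the derivation of \eqref{eq:nl:lip:bnd:l2:x:diff:shift}. Since $H^k$ is a Hilbert space, Minkowski's integral inequality reduces the task to an $s$-uniform $H^k$-estimate on the integrand $(\kappa + \phi)(\vartheta_A - \vartheta_B)\,T_{\vartheta_s}\zeta'$.

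I would then expand derivatives. For a multi-index $(\alpha_x, \alpha_y) \in \mathbb Z_{\ge 0} \times \mathbb Z^{d-1}_{\ge 0}$ with $|\alpha_x| + |\alpha_y| \le k$, exploit that both $\kappa + \phi$ and $\vartheta_A - \vartheta_B$ are independent of $x$ to push all $x$-derivatives onto the last factor, producing $T_{\vartheta_s}\zeta^{(|\alpha_x|+1)}$. A Leibniz expansion over the three factors in the $y$-directions, combined with Fa\`a di Bruno's formula applied to $\zeta^{(|\alpha_x|+1)}(x - \vartheta_s(y))$, rewrites the full derivative as a finite sum of products of the form
\begin{equation*}
\partial^{\alpha_1}_y(\kappa + \phi)(y)\;\partial^{\alpha_2}_y(\vartheta_A - \vartheta_B)(y)\;\Bigl(\prod_{i=1}^\ell \partial^{\beta_i}_y \vartheta_s(y)\Bigr)\,T_{\vartheta_s(y)}\zeta^{(|\alpha_x|+\ell+1)}(x),
\end{equation*}
subject to $|\alpha_1| + |\alpha_2| + \sum_i |\beta_i| = |\alpha_y|$, each $|\beta_i| \ge 1$, and $0 \le \ell \le |\alpha_y|$.

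For each such term I would integrate in $x$ first: shift invariance yields $\|T_{\vartheta_s(y)}\zeta^{(|\alpha_x|+\ell+1)}\|_{L^2_x} = \|\zeta^{(|\alpha_x|+\ell+1)}\|_{L^2_x} \le \|\zeta'\|_{H^k_x}$, since $|\alpha_x| + \ell \le |\alpha_x| + |\alpha_y| \le k$. What remains is an $L^2_y$ norm of a product of derivatives of $\kappa + \phi$, $\vartheta_A - \vartheta_B$, and $\vartheta_s$ whose total differentiation order is at most $k$. If $|\alpha_1| \ge 1$, the constant $\kappa$ drops out and the multilinear estimate \eqref{eq:nl:bnd:multilinear:general} applies directly. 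If $|\alpha_1| = 0$, I would instead bound $\|\kappa + \phi\|_{L^\infty_y} \le |\kappa| + K\|\phi\|_{H^k_y}$ via the Sobolev embedding $H^k_y \hookrightarrow L^\infty_y$ (the reason the hypothesis $k > (d-1)/2$ is imposed), and then apply \eqref{eq:nl:bnd:multilinear:general} to the remaining $\ell + 1$ factors.

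Combining these ingredients and using $\|\vartheta_s\|_{H^k_y} \le \|\vartheta_A\|_{H^k_y} + \|\vartheta_B\|_{H^k_y}$, each individual term is controlled by
\begin{equation*}
K\bigl(|\kappa| + \|\phi\|_{H^k_y}\bigr)\,\|\vartheta_A - \vartheta_B\|_{H^k_y}\,\|\zeta'\|_{H^k_x}\,\bigl(\|\vartheta_A\|_{H^k_y} + \|\vartheta_B\|_{H^k_y}\bigr)^\ell,
\end{equation*}
and summing over the finitely many multi-indices and over $\ell \in \{0, \ldots, k\}$ produces the claimed polynomial bound in $\|\vartheta_A\|_{H^k_y}$ and $\|\vartheta_B\|_{H^k_y}$. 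The one genuine obstacle is the case split for the constant $\kappa$, which fails to lie in $H^k_y$ on the unbounded domain $\mathbb R^{d-1}$; the $L^\infty_y$ embedding handles this cleanly and is in fact the sharp reason for the smoothness assumption.
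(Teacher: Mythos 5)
Your approach is correct but genuinely different from the paper's, and the comparison is worth spelling out. The paper applies Leibniz and the chain rule directly to $(\kappa + \phi)(T_{\vartheta_A}\zeta - T_{\vartheta_B}\zeta)$, which forces a telescoping of the shift difference and yields two distinct families of terms: one where $\vartheta_A - \vartheta_B$ appears as a $\partial_y^{\gamma}(\vartheta_A - \vartheta_B)$ factor (their $\mathcal I_I$), and one where the difference survives as $T_{\vartheta_A}\zeta^{(j)} - T_{\vartheta_B}\zeta^{(j)}$ (their $\mathcal I_{II}$), which then requires a separate invocation of the fundamental-theorem estimate \eqref{eq:nl:lip:bnd:l2:x:diff:shift}. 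You instead invoke that fundamental-theorem identity once, up front, converting the shift difference into $-(\vartheta_A - \vartheta_B)\int_0^1 T_{\vartheta_s}\zeta'\,\mathrm ds$, and then Minkowski's inequality plus a single Leibniz/Fa\`a-di-Bruno expansion produces one uniform family of terms. The $x$-shift invariance and the $H^k_y \hookrightarrow L^\infty_y$ embedding close the argument in essentially the same way. Your route is arguably cleaner: one term type instead of two, and no need to re-prove the Lipschitz estimate for the shift of $\zeta^{(j)}$ inside the expansion. Your handling of the constant $\kappa$ via the case split on $|\alpha_1|$ is also correct and pinpoints the role of $k > (d-1)/2$.

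One small observation, not a gap: your estimate lands at a marginally weaker polynomial. With the FTC linearization the Fa\`a-di-Bruno factors are $\partial_y^{\beta_i}\vartheta_s$, so the number $\ell$ of $\vartheta$-powers can go all the way up to $k$ (when $|\alpha_1| = |\alpha_2| = 0$), and you obtain degree $k$ in $\max(\|\vartheta_A\|_{H^k_y}, \|\vartheta_B\|_{H^k_y})$. The paper's decomposition is asymmetric by construction: in $\mathcal I_I$ one of the $\ell$ factors is already the difference, so only $\ell - 1 \le k-1$ factors carry $\vartheta_A$ or $\vartheta_B$, and in $\mathcal I_{II}$ only $\vartheta_B$ appears, which accounts for the stated $(1 + \|\vartheta_A\|^{k-1}_{H^k_y} + \|\vartheta_B\|^k_{H^k_y})$. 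Your bound is therefore one degree higher in $\|\vartheta_A\|$. Since the lemma is always applied on sets where both norms are a priori bounded by $N$, this costs nothing in the sequel, but it does mean you do not reproduce the exact polynomial structure stated in \eqref{eq:nl:bnd:prod:hky:hkx:lip}.
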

\begin{proof}
Pick a multi-index $\alpha \in \mathbb Z^{d-1}_{\ge 0}$  and an integer $i \ge 0$ with $|\alpha| + i \le k$. Then the derivative \begin{equation}\partial^\alpha_y \partial_x^i [ (\kappa+\phi) (T_{\vartheta_A} \zeta - T_{\vartheta_B} \zeta ) ]\end{equation}
can be expressed as a finite sum of expressions of two types.
Writing $\Pi$ for the pointwise product, the first type is given by
\begin{equation}
 \mathcal{I}_{I} =  \Pi[ \partial_y^\beta (\kappa + \phi)   , \partial_y^{\gamma_1} (\vartheta_A - \vartheta_B), \partial_y^{\gamma_2} \vartheta_{\# 2 }, \ldots , \partial_y^{\gamma_{\ell}} \vartheta_{\# \ell}] T_{\vartheta_A} \zeta^{(i+\ell)},
\end{equation}
with labels $\#_i \in \{A, B\}$
and multi-indices $\beta \in \mathbb Z^{d-1}_{\ge 0}$
and $\{ \gamma_i \}_{i=1}^{\ell} \in \mathbb Z^{d-1}_{\ge 0}$
that satisfy 
$|\gamma_i| \ge 1 $ for $1 \le i \le \ell \le |\alpha|$,
together with
\begin{equation}
 |\beta| + |\gamma_1| + \ldots + |\gamma_\ell| = |\alpha|.
\end{equation}
The second type is given by
\begin{equation}
 \mathcal{I}_{II} =   \Pi[ \partial^\beta_y (\kappa+\phi) , \partial_y^{\gamma_1} \vartheta_B, \ldots , \partial_y^{\gamma_{\ell}} \vartheta_B] [T_{\vartheta_A} \zeta^{(i+\ell)} - T_{\vartheta_B} \zeta^{(i+\ell)} ],
\end{equation}
with the same conditions on $\beta$ and $\{ \gamma_i \}_{i=1}^{\ell}$, but where now $\ell = 0$ is also allowed.

Exploiting the invariance of the $x$-integral with respect to the translation $T_{\vartheta_A}$, we obtain
\begin{equation}
    \| \mathcal{I}_{I} \|_{L^2}
      \le K (|\kappa| + \| \phi\|_{H^k_y} ) \| \vartheta_A - \vartheta_B \|_{H^k_y}\| \big( \| \vartheta_A\|_{H^k_y}^{\ell-1} + \| \vartheta_B\|_{H^k_y}^{\ell-1} \big) \| \zeta^{(i+\ell)} \|_{L^2_x},
\end{equation}
which can be absorbed in \eqref{eq:nl:bnd:prod:hky:hkx:lip}.
Appealing to \eqref{eq:nl:lip:bnd:l2:x:diff:shift}, 
we also obtain
\begin{equation}
    \| \mathcal{I}_{II} \|_{L^2}
          \le K (|\kappa| + \| \phi\|_{H^k_y} )\| \vartheta_A - \vartheta_B \|_{L^2_y}
          \| \vartheta_B\|_{H^k_y}^\ell \| \|\zeta^{(i + \ell + 1)} \|_{L^2_x},
\end{equation}
which completes the proof.
\end{proof}

\begin{lemma}
Pick $k > (d-1)/2$. There exists a constant $K > 0$ so that for every 
$\phi \in H^k_y$ and $ \zeta \in H^k_x$
we have
\begin{equation}
\label{eq:nl:bnd:prod:hky:hkx}
    \| \phi T_{\vartheta} \zeta \|_{H^k}
    \le K  \| \phi \|_{H^k_y} (1 + \|\vartheta\|^k_{H^k_y} ) \| \zeta\|_{H^k_x}.
\end{equation}
\end{lemma}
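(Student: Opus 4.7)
My plan is to follow the same differentiation strategy used in the preceding Lemma \ref{lem:kappa:phi}, but now in a pure product setting (no difference of translations), which should simplify things considerably.

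First, I would fix a multi-index $\alpha \in \mathbb Z^{d-1}_{\ge 0}$ and an integer $i \ge 0$ with $|\alpha| + i \le k$, and expand the mixed derivative $\partial^\alpha_y \partial^i_x[\phi\, T_\vartheta \zeta]$ via the product rule and the chain rule (noting $\partial^i_x T_\vartheta \zeta = T_\vartheta \zeta^{(i)}$). This produces a finite sum of Faà di Bruno--type terms of the form
\begin{equation}
\mathcal{I} = \Pi\bigl[\partial^\beta_y \phi,\, \partial^{\gamma_1}_y \vartheta,\, \ldots,\, \partial^{\gamma_\ell}_y \vartheta\bigr] \,T_\vartheta \zeta^{(i+\ell)},
\end{equation}
with $0 \le \ell \le |\alpha|$, each $|\gamma_j| \ge 1$, and $|\beta| + |\gamma_1| + \cdots + |\gamma_\ell| = |\alpha|$. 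Here $\Pi$ denotes the pointwise product.

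The key step is then to estimate the $L^2(\mathbb R^d)$-norm of each $\mathcal I$. Since the coefficient $\Pi[\cdots]$ depends only on $y$, Fubini and the translation invariance of the $x$-integral give
\begin{equation}
\|\mathcal{I}\|_{L^2}^2 = \int_{\mathbb R^{d-1}} |\Pi[\cdots](y)|^2 \, \|T_{\vartheta(y)}\zeta^{(i+\ell)}\|_{L^2_x}^2 \, \mathrm dy = \bigl\|\Pi[\cdots]\bigr\|_{L^2_y}^2 \, \|\zeta^{(i+\ell)}\|_{L^2_x}^2.
\end{equation}
The multilinear Sobolev-algebra estimate \eqref{eq:nl:bnd:multilinear:general}, valid since $k > (d-1)/2$ and $|\beta|+\sum|\gamma_j| \le k$, then bounds $\|\Pi[\cdots]\|_{L^2_y}$ by $K\,\|\phi\|_{H^k_y}\,\|\vartheta\|_{H^k_y}^\ell$, while clearly $\|\zeta^{(i+\ell)}\|_{L^2_x} \le \|\zeta\|_{H^k_x}$ as $i+\ell \le k$.

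Combining these gives $\|\mathcal{I}\|_{L^2} \le K\,\|\phi\|_{H^k_y}\,\|\vartheta\|_{H^k_y}^\ell\,\|\zeta\|_{H^k_x}$, and using $\|\vartheta\|_{H^k_y}^\ell \le 1 + \|\vartheta\|_{H^k_y}^k$ for $0 \le \ell \le k$, summing over the finitely many terms in the Faà di Bruno expansion and then over all $|\alpha|+i \le k$ yields the desired estimate \eqref{eq:nl:bnd:prod:hky:hkx}. I do not anticipate any real obstacle here: this proof is essentially the $\vartheta_A = \vartheta$, $\vartheta_B$-absent version of the preceding lemma, with only the type-I terms appearing and no Lipschitz-in-$\vartheta$ estimate needed. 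The translation invariance of the $x$-integral is what decouples the $x$- and $y$-norms and makes the final bound depend on $\zeta$ only through $\|\zeta\|_{H^k_x}$.
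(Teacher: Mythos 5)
Your proof is correct and takes essentially the same approach as the paper's: both expand $\partial^\alpha_y \partial^i_x[\phi\, T_\vartheta\zeta]$ into the same Faà di Bruno--type terms $\Pi[\partial^\beta_y\phi,\partial^{\gamma_1}_y\vartheta,\ldots,\partial^{\gamma_\ell}_y\vartheta]\,T_\vartheta\zeta^{(i+\ell)}$, exploit translation invariance of the $x$-integral to decouple the $x$- and $y$-norms, and invoke the multilinear estimate \eqref{eq:nl:bnd:multilinear:general}. You merely spell out a few steps (the Fubini computation, the bound $\|\vartheta\|_{H^k_y}^\ell \le 1 + \|\vartheta\|_{H^k_y}^k$) that the paper leaves implicit.
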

\begin{proof}
Pick a multi-index $\alpha \in \mathbb Z^{d-1}_{\ge 0}$ and an integer $i \ge 0$ with $|\alpha| + i \le k$. Again writing $\Pi$ for the pointwise product,
we observe that $\partial^\alpha_y \partial_x^i [ \phi T_{\vartheta} \zeta]$
can be expressed as a finite sum of terms of the form
\begin{equation}
\mathcal{I} =     \Pi[ \partial^\beta_y \phi , \partial_y^{\gamma_1} \vartheta, \ldots , \partial_y^{\gamma_{\ell}} \vartheta] T_{\vartheta} \zeta^{(i+\ell)},
\end{equation}
with $0 \le \ell \le |\alpha|$
and
multi-indices $\beta \in \mathbb Z^{d-1}_{\ge 0}$
and $\{ \gamma_i \}_{i=1}^{\ell} \in \mathbb Z^{d-1}_{\ge 0}$
that satisfy 
$|\gamma_i| \ge 1 $ for integers $i \in \{1, \ldots, \ell\}$,
together with
\begin{equation}
 |\beta| + |\gamma_1| + \ldots + |\gamma_\ell| = |\alpha|.
\end{equation}
Exploiting the invariance of the $x$-integral with respect to the translation $T_{\vartheta}$,
we obtain
\begin{equation}
    \| \mathcal I \|_{L^2}
    \le  K \| \phi\|_{H^k_y} \| \vartheta\|_{H^k_y}^{\ell}  \| \zeta^{(i+\ell)} \|_{L^2_x},
\end{equation}
which can be absorbed in \eqref{eq:nl:bnd:prod:hky:hkx}.
\end{proof}

\subsection{Proofs of Propositions \ref{prop:nl:bnds} and \ref{prop:nl:bnds:lip}}

We shall now systematically derive
bounds for the expressions \eqref{eq:nl:def:k:2} and \eqref{eq:nl:def:j:jtr} that appear in the definitions
\eqref{eq:nl:def:n:m} and \eqref{eq:nl:def:cal:n:m},
enabling us to establish the desired main estimates. Let us remark that several of the uniform bounds for the deterministic quantities are similar to those in \cite{kapitula1997}. We start by considering the auxiliary function $\Upsilon$, which has $\Upsilon(0) = -1$. In particular, the bound \eqref{eq:nl:est:k2:hk:lip} below
allows us to conclude that
\begin{equation}
\label{eq:nl:est:k2:hk}
    \| \Upsilon(\vartheta) + 1\|_{H^k_y} \le K^N 
\end{equation}
holds for some $K^N>0$ for all $\vartheta \in H^k_y$ with $\|\vartheta\|_{H^k_y} \le N$.

\begin{lemma}
\label{lem:nl:bnds:K2}
Pick $k > (d-1)/2$ 
and suppose that \textnormal{(Hf)} and \textnormal{(HTw)} both hold. 
There for all $N>0$ there exists a constant $K^N > 0$ so that the bound
\begin{equation}
\label{eq:nl:est:k2:hk:lip}
        \| \Upsilon(\vartheta_A) - \Upsilon(\vartheta_B) \|_{H^k_y}
    \le K^N  \| \vartheta_A - \vartheta_B \|_{H^k_y}
\end{equation}
holds for all $\vartheta_A, \vartheta_B \in H^k_y$ with $\|\vartheta_A\|_{H^k_y} \le N$ and $\|\vartheta_B\|_{H^k_y} \le N$.
\end{lemma}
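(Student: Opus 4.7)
The plan is to reduce the bound to a scalar Nemytskii composition estimate on $H^k_y$, which is an algebra for $k > (d-1)/2$. First I would introduce the smooth scalar function
\begin{equation}
 g_{\rm tw}(\vartheta) := \langle T_\vartheta\Phi_0', \psi_{\rm tw}\rangle_{L^2_x} = \int_{\mathbb R} \Phi_0'(x-\vartheta)\psi_{\rm tw}(x)\,\mathrm dx,
\end{equation}
which satisfies $g_{\rm tw}(0) = 1$ and has all derivatives bounded on $\mathbb R$ by the exponential decay of $\Phi_0'$ and $\psi_{\rm tw}$ from \textnormal{(HTw)} (derivatives in $\vartheta$ land on $\Phi_0^{(j+1)}$, which is still rapidly decaying). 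Composing with the cut-off then yields a smooth scalar map $F: \mathbb R \to \mathbb R$ defined by $F(\vartheta) := -1/\chi(g_{\rm tw}(\vartheta))$ with bounded derivatives on $\mathbb R$, since $\chi \ge 1/4$. The key observation is that $\Upsilon(\vartheta)(y) = F(\vartheta(y))$, so the problem reduces to bounding $\|F\circ\vartheta_A - F\circ\vartheta_B\|_{H^k_y}$ for a fixed smooth scalar $F$.

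Next I would expand the difference via the fundamental theorem of calculus:
\begin{equation}
 F(\vartheta_A(y)) - F(\vartheta_B(y)) = \bigl(\vartheta_A(y) - \vartheta_B(y)\bigr)\,R(y), \qquad R(y) := \int_0^1 F'\bigl(s\vartheta_A(y) + (1-s)\vartheta_B(y)\bigr)\,\mathrm ds,
\end{equation}
and decompose $R = F'(0) + \widetilde R$. The constant piece gives $\|F'(0)(\vartheta_A - \vartheta_B)\|_{H^k_y} \le |F'(0)|\,\|\vartheta_A - \vartheta_B\|_{H^k_y}$ for free. For the $y$-dependent remainder $\widetilde R$, I would apply a Moser/Faà di Bruno estimate: each derivative $\partial_y^\alpha \widetilde R$ is a finite sum of products of lower-order derivatives of $\vartheta_A, \vartheta_B$ multiplied by bounded scalar functions $F^{(j+1)}$ evaluated along convex combinations, and the Sobolev algebra property (see \cite[Thm.~4.39]{adams2003sobolev}) yields $\|\widetilde R\|_{H^k_y} \le K^N$ whenever $\|\vartheta_A\|_{H^k_y}, \|\vartheta_B\|_{H^k_y} \le N$. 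Invoking the algebra bound once more to handle the product with $\vartheta_A - \vartheta_B$ delivers \eqref{eq:nl:est:k2:hk:lip}.

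I do not expect any genuine obstacle beyond bookkeeping. The only point requiring care is that the pointwise arguments $s\vartheta_A(y) + (1-s)\vartheta_B(y)$ appearing inside $F^{(j)}$ must remain in a fixed bounded set, which follows from the Sobolev embedding $H^k_y \subset L^\infty_y$ and the a priori norm bound $\|\vartheta_{A,B}\|_{H^k_y} \le N$. With this uniform $L^\infty$-control in hand, the constant $K^N$ depends only on $N$ together with finitely many sup-norms of derivatives of $F$ on a fixed compact interval, which is precisely the structural content claimed by the lemma.
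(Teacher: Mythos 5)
Your proof is correct and follows essentially the same route as the paper: both identify $\Upsilon$ as a Nemytskii operator built from a scalar map $F$ with uniformly bounded derivatives (thanks to the exponential localisation of $\Phi_0'$ and $\psi_{\rm tw}$ from (HTw) together with the lower bound $\chi \ge 1/4$), and both then pass from the scalar estimate to $H^k_y$ via the Sobolev algebra property and the $H^k_y \hookrightarrow L^\infty_y$ embedding. The only cosmetic difference is that you make the composition estimate self-contained through the fundamental theorem of calculus, the splitting $R = F'(0) + \widetilde R$, and a Faà di Bruno expansion, whereas the paper reaches the same Moser-type bound by citing a previously established Nemytskii Lipschitz lemma from \cite{bosch2025conditionalspeedshapecorrections}.
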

\begin{proof}
Consider the scalar function \begin{equation}\Upsilon_{I}: \mathbb R \to\mathbb R, \quad \varrho \mapsto \langle T_{\varrho} \Phi_0', \psi_{\rm tw} \rangle_{L^2_x},\end{equation} which 
satisfies $\Upsilon_{I}(0) = 0$ and has
derivatives $
    \Upsilon_{I}^{(j)}(\varrho) = \langle T_{\varrho} \Phi_0^{(1 + j)}, \psi_{\rm tw} \rangle_{L^2_x},$
for $1 \le j \le k+1$. It is  $C^{k+1}$-smooth with uniformly bounded derivatives. In addition, let us introduce the scalar functions
\begin{equation}
\begin{aligned}
    &\Upsilon_{II}: \mathbb R\to [-\tfrac34,\infty), \quad z\mapsto \chi(z)-1,\\
    &\Upsilon_{III}: [-\tfrac{3}{4}, \infty)\to \mathbb [-3,1], \quad z  \mapsto 1 -1/(1 + z),
\end{aligned}\end{equation}
which are $C^\infty$-smooth. 
Observe that for $\theta\in H^k_y$, the identity
\begin{equation}
    \Upsilon(\vartheta) + 1 = \Upsilon_{III}(\Upsilon_{II}(\Upsilon_{I}(\vartheta)))
\end{equation}
holds, pointwise with respect to $y$. The function $\mathbb R\ni \varrho \mapsto \Upsilon(\varrho)+1 \in [-2,2]$ is globally Lipschitz, including its first $k$ derivatives. An appeal to \cite[eq. (4.8)--(4.10)]{bosch2025conditionalspeedshapecorrections}
with $\Phi=0$ and $j=0$
 yields the bound
\begin{equation}
    \| \Upsilon(\vartheta_A) - \Upsilon(\vartheta_B) \|_{H^k_y}\le 
    K \| \vartheta_A - \vartheta_B \|_{H^k} ( 1 +  \|\vartheta_A\|_{H^k_y}^{k-1} + \|\vartheta_B\|_{H^k_y}^k ),
\end{equation}    
 for some $K>0,$ which in turn can be simplified to \eqref{eq:nl:est:k2:hk:lip}
on account of the a priori bounds on $\vartheta_A$ and $\vartheta_B$.
\end{proof}

We now turn to the nonlinearity $g$ that drives the noise in the original system \eqref{eq:reaction:diff}. The main point of the estimates below is to apply 
the assumptions \textnormal{(HgQ)} and \textnormal{(H$L^1$)} to the situation where
the global phase-shift $T_{\gamma} \Phi_0$ is replaced by $y$-dependent shifts.

\begin{lemma}\label{lem:gHS:pert}
     Let $k >d/2$ and suppose that \textnormal{(HTw)} and \textnormal{(HgQ)} hold.
     For all $N>0$ there exists a constant $K^N>0$ so that 
     for all $\gamma \in \mathbb R$, and every $v_A, v_B \in H^k$ and $\vartheta_A, \vartheta_B \in H^k_y$ with 
     \begin{equation}
         \max\{ \| v_A \|_{H^k}, \|v_B\|_{H^k}, \|\vartheta_A\|_{H^k_y},
          \|\vartheta_B\|_{H^k_y} \} \le N,
     \end{equation}
     we have the bound
\begin{equation} 
\label{eq:gHSLip:pert}
\|[g(v_A+T_{\vartheta_A}\Phi_0)-g(v_B+T_{\vartheta_B}\Phi_0)]T_{-\gamma}\|_{HS(\mathcal W_Q;H^k)}
\leq K^N\big[ \|v_A-v_B\|_{H^k}+\|\vartheta_A-\vartheta_B\|_{H^k_y} \big].
\end{equation}
\end{lemma}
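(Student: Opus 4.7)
The plan is to reduce the estimate to the Lipschitz bound \eqref{eq:HgLip} from (HgQ) by first removing the translation $T_{-\gamma}$ and then rewriting both arguments of $g$ in the form $T_\gamma \Phi_0 + (\text{something small in } H^k)$. For the first step, I would apply the commutation relation in (HgQ) to get the identity $g(u)T_{-\gamma} = T_{-\gamma} g(T_\gamma u)$, so that
\begin{equation}
[g(v_A+T_{\vartheta_A}\Phi_0)-g(v_B+T_{\vartheta_B}\Phi_0)]T_{-\gamma}
= T_{-\gamma}\big[g(T_\gamma v_A+T_{\gamma+\vartheta_A}\Phi_0)-g(T_\gamma v_B+T_{\gamma+\vartheta_B}\Phi_0)\big].
\end{equation}
Since $T_{-\gamma}$ is an isometry on $H^k$, the factor of $T_{-\gamma}$ can be dropped when taking the $HS(\mathcal W_Q;H^k)$-norm.

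For the second step, I would write $T_\gamma v_{\#}+T_{\gamma+\vartheta_{\#}}\Phi_0 = T_\gamma\Phi_0+\hat v_{\#}$ with
\begin{equation}
\hat v_{\#} = T_\gamma v_{\#} + \big(T_{\vartheta_{\#}}(T_\gamma\Phi_0)-T_\gamma\Phi_0\big),\qquad \#\in\{A,B\}.
\end{equation}
Applying Lemma \ref{lem:kappa:phi} with $\kappa=1$, $\phi=0$, $\zeta=T_\gamma\Phi_0$ (noting $\|T_\gamma\Phi_0'\|_{H^k_x}=\|\Phi_0'\|_{H^k_x}<\infty$ by the exponential decay in (HTw)) yields both a uniform bound $\|\hat v_{\#}\|_{H^k}\le N'(N)$ and the Lipschitz-type difference estimate
\begin{equation}
\|\hat v_A-\hat v_B\|_{H^k}\le \|v_A-v_B\|_{H^k}+\tilde K^N\|\vartheta_A-\vartheta_B\|_{H^k_y},
\end{equation}
upon using the a priori bounds $\|\vartheta_A\|_{H^k_y},\|\vartheta_B\|_{H^k_y}\le N$ to absorb the polynomial factor $(1+\|\vartheta_A\|^{k-1}_{H^k_y}+\|\vartheta_B\|^k_{H^k_y})$ into the constant.

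Now the Lipschitz bound \eqref{eq:HgLip} from (HgQ), applied at radius $N'$ with $\gamma$ fixed, directly gives
\begin{equation}
\|g(T_\gamma\Phi_0+\hat v_A)-g(T_\gamma\Phi_0+\hat v_B)\|_{HS(\mathcal W_Q;H^k)}\le K_g^{N'}\|\hat v_A-\hat v_B\|_{H^k},
\end{equation}
which combined with the preceding step yields \eqref{eq:gHSLip:pert} with a new constant $K^N$ depending only on $N$ and on fixed data such as $\|\Phi_0'\|_{H^k_x}$. The main subtlety in this argument is the \emph{correct bookkeeping} of the shift by $\Phi_0$: because $\Phi_0\notin H^k$, one cannot directly estimate $T_{\gamma+\vartheta_A}\Phi_0-T_{\gamma+\vartheta_B}\Phi_0$ naively, and it is essential that Lemma \ref{lem:kappa:phi} controls the $H^k$-norm of such a difference in terms of $\|\Phi_0'\|_{H^k_x}$ together with the $H^k_y$-norms of $\vartheta_A$ and $\vartheta_B$ only. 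Beyond this, the argument is essentially an application of (HgQ) after a change of origin in the affine space $\mathcal U_{H^k}$.
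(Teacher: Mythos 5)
Your argument is correct and follows essentially the same path as the paper's own proof: apply the commutation relation to move the translation $T_{-\gamma}$ to the $H^k$ side where it is an isometry, rewrite both arguments of $g$ as $T_\gamma\Phi_0+\hat v_{\#}$, use Lemma \ref{lem:kappa:phi} to get uniform $H^k$-bounds on $\hat v_{\#}$ and a Lipschitz estimate for $\hat v_A-\hat v_B$ in terms of $\|v_A-v_B\|_{H^k}+\|\vartheta_A-\vartheta_B\|_{H^k_y}$, and then invoke \eqref{eq:HgLip}. The paper carries out the same steps (with $\tilde v_{\#}$ in place of $\hat v_{\#}$), so there is nothing to flag.
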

\begin{proof}
For any $\xi \in \mathcal W_Q$, we define
\begin{equation}
    \Delta_{AB} g[\xi] =  [g(v_A+T_{\vartheta_A}\Phi_0)-g(v_B+T_{\vartheta_B}\Phi_0)]T_{-\gamma} \xi,
\end{equation}
and use the commutation relation in (HgQ) to compute
\begin{equation}
T_{\gamma} \Delta_{AB} g[\xi] = [g(T_\gamma v_A + T_{\vartheta_A + \gamma } \Phi_0)
- g( T_{\gamma} v_B + T_{\vartheta_B + \gamma} \Phi_0 ) ]\xi
\end{equation}
In particular, writing
\begin{equation}
    \tilde{v}_A = T_{\gamma} v_A + T_{\vartheta_A + \gamma} \Phi_0 - T_{\gamma} \Phi_0, \qquad 
    \tilde{v}_B = T_{\gamma} v_B + T_{\vartheta_B + \gamma} \Phi_0 - T_{\gamma} \Phi_0, 
\end{equation}
we see that
\begin{equation}
T_{\gamma} \Delta_{AB} g[\xi] = [g(T_\gamma \Phi_0 + \tilde{v}_A)
- g( T_{\gamma} \Phi_0 + \tilde{v}_B) ]\xi.
\end{equation}
Applying \eqref{eq:nl:bnd:prod:hky:hkx:lip} we find
constants $C_1^N > 0$ and $C_2^N > 0$ for which the uniform bounds
\begin{equation}
    \| \tilde{v}_A \|_{H^k} \le \|v_A\|_{H^k} + C_1^N \| \vartheta_A\|_{H^k_y}
    \le C_2^N,
    \qquad 
     \| \tilde{v}_B \|_{H^k} \le \|v_B\|_{H^k} + C_1^N \| \vartheta_B\|_{H^k_y}
     \le C_2^N
\end{equation}
and the Lipschitz estimate
\begin{equation}
    \| \tilde{v}_A - \tilde{v}_B \|_{H^k} \le \|v_A - v_B\|_{H^k} + C_1^N \| \vartheta_A - \vartheta_B\|_{H^k_y}
\end{equation}
hold. In here, we utilise the property $\| T_{-\gamma} v\|_{H^k}=\| v\|_{H^k}$, which holds for any $v\in H^k$  as a result of the translation invariance of integration. Applying \eqref{eq:HgLip}, we may hence compute
\begin{equation}
\begin{array}{lcl}
    \| \Delta_{AB} g \|_{HS(\mathcal W_Q; H^k)}^2
    & = & \sum_{j=0}^\infty \| \Delta_{AB} g[ \sqrt{Q} e_j] \|_{H^k}^2
    \\[0.2cm]
    & = & \sum_{j=0}^\infty \| T_{\gamma} \Delta_{AB} g[ \sqrt{Q} e_j] \|_{H^k}^2
    \\[0.2cm]
    & \le & C_3^N \| \tilde{v}_A - \tilde{v}_B \|_{H^k}^2 
    \\[0.2cm]
    & \le & C_4^N [ \|v_A - v_B\|_{H^k} + \| \vartheta_A - \vartheta_B\|_{H^k_y} ]^2
\end{array}
\end{equation}
for some $C_3^N > 0$ and $C_4^N > 0$, completing the proof.
\end{proof}

\begin{corollary}
\label{cor:gHS:pert}
     Let $k >d/2$ and suppose that \textnormal{(HTw)}, \textnormal{(HgQ)}, 
     and \textnormal{(H$L^1$)} hold. For all $N>0$ there exists a constant $K^N>0$ so that 
     for all $\gamma \in \mathbb R$, and every $v\in H^k$ and $\vartheta\in H^k_y$ with 
     $\|v\|_{H^k} \le N$ and $\| \vartheta \|_{H^k_y} \le N$,
     we have the bounds
\begin{equation} 
\label{eq:nl:g:unif:bnd}
\begin{array}{lcl}
\|g(v+T_{\vartheta}\Phi_0)T_{-\gamma}\|_{HS(\mathcal W_Q;H^k)}
& \leq & K^N,
\\[0.2cm]
\|\psi_{\rm tw} ^\top g(v+T_{\vartheta}\Phi_0)T_{-\gamma}\|_{\Pi_2(\mathcal W_Q;L^1(\mathbb R^d;\mathbb R))}
& \leq & K^N.
\end{array}
    \end{equation}
\end{corollary}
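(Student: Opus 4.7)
The plan is to reduce both estimates to the corresponding hypotheses \textnormal{(HgQ)} and \textnormal{(H$L^1$)} by using the commutation relation in \textnormal{(HgQ)} to convert the $y$-dependent shift $T_\vartheta\Phi_0$ into an $H^k$-perturbation of the global shift $T_\gamma\Phi_0$. The key preliminary observation is that, since $T_{\vartheta+\gamma}\Phi_0 = T_\gamma T_\vartheta \Phi_0$, we can write
\begin{equation}
  T_\gamma\bigl[g(v+T_\vartheta\Phi_0)T_{-\gamma}\bigr][\xi] = g(T_\gamma\Phi_0 + \tilde v)[\xi],
  \qquad \tilde v := T_\gamma\bigl[v+T_\vartheta\Phi_0 - \Phi_0\bigr].
\end{equation}
Since $T_\gamma$ is an isometry on $H^k$, Lemma \ref{lem:kappa:phi} applied with $\kappa=1$, $\phi=0$, $\vartheta_B=0$, $\zeta=\Phi_0$ (using $\Phi_0'\in H^k_x$, which follows from the exponential decay of $\Phi_0'$ in \textnormal{(HTw)}) yields $\|T_\vartheta\Phi_0 - \Phi_0\|_{H^k} \le C^N$ whenever $\|\vartheta\|_{H^k_y}\le N$. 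Combined with $\|v\|_{H^k}\le N$, this gives a uniform bound $\|\tilde v\|_{H^k}\le C_1^N$.

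For the first inequality in \eqref{eq:nl:g:unif:bnd}, expand the Hilbert--Schmidt norm and insert $T_\gamma T_{-\gamma}=\mathrm{Id}$, which is an $H^k$-isometry, to obtain
\begin{equation}
  \|g(v+T_\vartheta\Phi_0)T_{-\gamma}\|_{HS(\mathcal W_Q;H^k)}^2
  = \sum_{j=0}^\infty \|g(T_\gamma\Phi_0+\tilde v)[\sqrt Q e_j]\|_{H^k}^2
  = \|g(T_\gamma\Phi_0+\tilde v)\|_{HS(\mathcal W_Q;H^k)}^2,
\end{equation}
and then invoke \textnormal{(HgQ)} with the parameter $C_1^N$ in place of $N$. (Alternatively, combine Lemma \ref{lem:gHS:pert} taken at $(v_B,\vartheta_B)=(0,0)$ with a triangle inequality against $\|g(\Phi_0)T_{-\gamma}\|_{HS}$, which itself is bounded by \textnormal{(HgQ)}.)

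For the second inequality, use the translation invariance of the $L^1$-integral on $\mathbb R^d$ and the fact that $\psi_{\rm tw}$ depends only on $x$:
\begin{equation}
  \|\psi_{\rm tw}^\top g(v+T_\vartheta\Phi_0)T_{-\gamma}[\sqrt Q e_j]\|_{L^1(\mathbb R^d;\mathbb R)}
  = \|g(T_\gamma\Phi_0+\tilde v)[\sqrt Q e_j]^\top T_\gamma\psi_{\rm tw}\|_{L^1(\mathbb R^d;\mathbb R)}.
\end{equation}
Squaring, summing in $j$, and applying \textnormal{(H$L^1$)} with $v=\tilde v$ (and $\|\tilde v\|_{H^k}\le C_1^N$) bounds the sum by $K_{\mathrm{int}}^{C_1^N}$. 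Since \textnormal{(H$L^1$)} holds for \emph{every} orthonormal basis of $\mathcal W$, this bound is in fact a bound on the $\Pi_2(\mathcal W_Q;L^1(\mathbb R^d;\mathbb R))$-norm, as required by the definition \eqref{eq:nonorm}.

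The only real bookkeeping task is verifying the uniform estimate $\|\tilde v\|_{H^k}\le C_1^N$; the main obstacle, if any, is confirming that translation invariance of the $L^1$-norm in the transverse direction still applies after the $x$-shift, which is immediate once one notes that $T_\gamma$ acts only on the first coordinate and $\psi_{\rm tw}$ does not depend on $y$.
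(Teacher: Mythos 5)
Your argument is correct and follows the paper's own proof almost line for line: both use the commutation relation to rewrite $g(v+T_\vartheta\Phi_0)T_{-\gamma}$ in terms of $g(T_\gamma\Phi_0+\tilde v)$ with $\tilde v = T_\gamma(v + T_\vartheta\Phi_0 - \Phi_0)$, bound $\|\tilde v\|_{H^k}$ uniformly via the shift estimate \eqref{eq:nl:bnd:prod:hky:hkx:lip}, and then read off the two bounds from \eqref{eq:Hg} and \eqref{eq:int:cond} respectively. The only cosmetic difference is that you spell out the parameter choices in Lemma \ref{lem:kappa:phi} explicitly, where the paper invokes it more tersely.
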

\begin{proof}
Picking any $\xi \in \mathcal W_Q$ and writing
\begin{equation}
    \tilde{v} =  T_{\gamma} v + T_{\vartheta + \gamma} \Phi_0 - T_{\gamma} \Phi_0,
\end{equation}
we note that
\begin{equation}
    g(v + T_{\vartheta} \Phi_0) [T_{-\gamma} \xi] = 
    T_{-\gamma} \big[g( T_{\gamma} v + T_{\vartheta + \gamma} \Phi_0) [\xi]\big]
    = T_{-\gamma} \big[g( T_\gamma \Phi_0 + \tilde{v} )[\xi]\big]
\end{equation}
holds, and similarly we have
\begin{equation}
    \psi_{\rm tw}^\top  g(v + T_{\vartheta} \Phi_0) [T_{-\gamma} \xi]
    = T_{-\gamma} \big[   g( T_\gamma \Phi_0 + \tilde{v} )[\xi] ^\top T_{\gamma} \psi_{\rm tw} \big].
\end{equation}
By applying \eqref{eq:nl:bnd:prod:hky:hkx:lip} we find the uniform bound
\begin{equation}
    \| \tilde{v} \|_{H^k} \le \| v\|_{H^k} + C_1^N \| \vartheta \|_{H^k_y} \le C_2^N,
\end{equation}
for some constants $C_1^N > 0$ and $C_2^N > 0$, allowing us to read off
the desired bounds directly from \eqref{eq:Hg} and \eqref{eq:int:cond}, respectively.
\end{proof}

We now turn to the nonlinearities $M$ and $\mathcal M$,
noting that the former can be written as
\begin{equation}
\label{eq:nl:decomp:M}
    M = (\Upsilon(\vartheta) + 1) \langle g(v + T_{\vartheta} \Phi_0), \psi_{\rm tw} \rangle_{L^2_x}
    - \langle g(v + T_{\vartheta} \Phi_0), \psi_{\rm tw} \rangle_{L^2_x}.
\end{equation}
In view of \eqref{eq:nl:est:k2:hk}, this representation allows us to exploit the fact that $H^k_y$ is an algebra.

\begin{lemma}\label{lem:nl:m}
 Let $k >d/2$ and suppose that \textnormal{(Hf)}, \textnormal{(HTw)},  \textnormal{(HgQ)}, 
 and \textnormal{(H$L^1$)}  hold. For all $N>0$ there exists a constant $K^N > 0$
 so that for all $\gamma \in \mathbb R$, and every $v \in H^k$ and  $\vartheta \in H^k_y$ with $\|v\|_{H^k} \le N$ and $\|\vartheta\|_{H^k_y} \le N$,
 we have the bounds
\begin{equation}
\label{eq:nl:bnds:m:cal:m:unif}
    \begin{array}{lcl}
       \left\|M(v,\theta)T_{-\gamma}\right\|_{HS(\mathcal W_Q;H^k_y )}
       + \left\|M(v,\theta)T_{-\gamma}\right\|_{\Pi_2(\mathcal W_Q;L^1_y)} & \le & K^N ,
       \\[0.2cm]
       \left\|\mathcal M(v,\theta)T_{-\gamma}\right\|_{HS(\mathcal W_Q;H^k )}
        & \le & K^N.
    \end{array}
\end{equation}
\end{lemma}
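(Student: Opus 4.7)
The plan is to bound each quantity by assembling the preliminary estimates already at our disposal, splitting $M$ via the decomposition \eqref{eq:nl:decomp:M}, namely $M(v,\vartheta) = (\Upsilon(\vartheta)+1)\langle g(v+T_\vartheta\Phi_0),\psi_{\rm tw}\rangle_{L^2_x} - \langle g(v+T_\vartheta\Phi_0),\psi_{\rm tw}\rangle_{L^2_x}$, to circumvent the fact that $\Upsilon(\vartheta)$ itself is not in $H^k_y$ while $\Upsilon(\vartheta)+1$ is, by \eqref{eq:nl:est:k2:hk}.

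For the $HS(\mathcal W_Q;H^k_y)$ estimate on $MT_{-\gamma}$, I would first apply Lemma \ref{lem:innerproduct:Hky} pointwise in $\xi\in\mathcal W_Q$ to get $\|\langle g(v+T_\vartheta\Phi_0)T_{-\gamma}[\xi],\psi_{\rm tw}\rangle_{L^2_x}\|_{H^k_y} \le \|\psi_{\rm tw}\|_{L^2_x}\|g(v+T_\vartheta\Phi_0)T_{-\gamma}[\xi]\|_{H^k}$, then use the algebra property of $H^k_y$ (valid since $k>d/2>(d-1)/2$) together with \eqref{eq:nl:est:k2:hk} to absorb the factor $\Upsilon(\vartheta)+1$. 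Squaring and summing over an orthonormal basis of $\mathcal W_Q$ and appealing to the first line of \eqref{eq:nl:g:unif:bnd} in Corollary \ref{cor:gHS:pert} then closes this part.

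For the $\Pi_2(\mathcal W_Q;L^1_y)$ estimate on $MT_{-\gamma}$, I would first note that $\Upsilon(\vartheta)$ itself (without the $+1$ trick) is uniformly bounded pointwise on $\mathbb R^{d-1}$ by $4$ thanks to the cut-off $\chi\ge 1/4$, so multiplication by $\Upsilon(\vartheta)$ is a bounded operation on $L^1_y$. Then by Fubini's theorem and the triangle inequality for integrals, $\|\langle g(v+T_\vartheta\Phi_0)T_{-\gamma}[\xi],\psi_{\rm tw}\rangle_{L^2_x}\|_{L^1_y} \le \|\psi_{\rm tw}^\top g(v+T_\vartheta\Phi_0)T_{-\gamma}[\xi]\|_{L^1(\mathbb R^d;\mathbb R)}$, which upon squaring and summing over any orthonormal basis is controlled by the second line of \eqref{eq:nl:g:unif:bnd}. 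The basis-independence built into the definition \eqref{eq:nonorm} then upgrades the $HS$-type sum to the required $\Pi_2$-norm.

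For the $HS(\mathcal W_Q;H^k)$ estimate on $\mathcal MT_{-\gamma} = g(v+T_\vartheta\Phi_0)T_{-\gamma} + (MT_{-\gamma})\,T_\vartheta\Phi_0'$, the first summand is controlled directly by Corollary \ref{cor:gHS:pert}. For the second summand, which is a product of the $y$-only function $M(v,\vartheta)T_{-\gamma}[\xi]$ and the shifted $x$-profile $T_\vartheta\Phi_0'$, I would invoke \eqref{eq:nl:bnd:prod:hky:hkx} with $\zeta=\Phi_0'\in H^k_x$ (this regularity is available thanks to (HTw) together with the exponential decay bounds, upgraded via repeated differentiation of the travelling wave ODE), yielding $\|M(v,\vartheta)T_{-\gamma}[\xi]\,T_\vartheta\Phi_0'\|_{H^k}\le K\|M(v,\vartheta)T_{-\gamma}[\xi]\|_{H^k_y}(1+\|\vartheta\|_{H^k_y}^k)\|\Phi_0'\|_{H^k_x}$. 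Squaring, summing over the basis, and feeding in the just-established $HS(\mathcal W_Q;H^k_y)$ bound on $MT_{-\gamma}$ together with the a priori bound $\|\vartheta\|_{H^k_y}\le N$ completes the estimate. No real obstacle is expected here — the whole argument is a linear assembly of prior lemmas — the only mild subtlety being the Fubini step that passes from the joint $L^1(\mathbb R^d)$ bound supplied by (H$L^1$) to the $L^1_y$ bound on the $L^2_x$-inner product.
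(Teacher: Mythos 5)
Your proposal is correct and follows the paper's proof almost verbatim: the same decomposition \eqref{eq:nl:decomp:M} paired with \eqref{eq:nl:bnd:l2x:to:hk} and \eqref{eq:nl:est:k2:hk} for the $H^k_y$ estimate, the same $\|\Upsilon(\vartheta)\|_\infty$/Fubini argument for the $L^1_y$ estimate, the same product estimate \eqref{eq:nl:bnd:prod:hky:hkx} for $\mathcal M$, and the same final step of squaring and summing over a basis $(\sqrt{Q}e_j)_{j\ge0}$ and invoking Corollary \ref{cor:gHS:pert}. The explicit observation that $|\Upsilon(\vartheta)|\le 4$ via the cut-off $\chi\ge 1/4$ is a nice clarification that the paper leaves implicit in writing $\|\Upsilon(\vartheta)\|_\infty$.
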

\begin{proof}
For any $\xi \in \mathcal W_Q$, 
we may apply \eqref{eq:nl:bnd:l2x:to:hk}
and \eqref{eq:nl:est:k2:hk}
to the decomposition \eqref{eq:nl:decomp:M} to obtain 
\begin{equation}
\begin{array}{lcl}
    \| M(v, \vartheta)[T_{-\gamma} \xi] \|_{H^k_y}
    & \le & C_1^N
    \big( \| \Upsilon(\vartheta) + 1 \|_{H^k_y} + 1 \big)
    \| g(v  + T_\vartheta \Phi_0)[T_{-\gamma} \xi] \|_{H^k} \|\psi_{\rm tw}\|_{L^2_x}
    \\[0.2cm]
    & \le & C_2^N \| g(v  + T_\vartheta \Phi_0)[T_{-\gamma} \xi] \|_{H^k},
\end{array}
\end{equation}
for some $C_1^N > 0$ and $C_2^N>0$. Furthermore, a direct estimate implies
\begin{equation}
\begin{array}{lcl}
    \| M(v, \vartheta)[T_{-\gamma} \xi] \|_{L^1_y} 
    & \le & \|\Upsilon(\vartheta)\|_{\infty}
    \| \langle g(v + T_{\vartheta} \Phi_0)[T_{-\gamma} \xi] , \psi_{\rm tw} \rangle_{L^2_x} \|_{L^1_y} \\[0.2cm]
    & \le & \|\Upsilon(\vartheta)\|_{\infty}
    \|  g(v + T_{\vartheta} \Phi_0)[T_{-\gamma} \xi]^\top  \psi_{\rm tw} \|_{L^1(\mathbb R^d;\mathbb R)} .
\end{array}
\end{equation}
Similarly, we may apply the product estimate
\eqref{eq:nl:bnd:prod:hky:hkx} to the definition \eqref{eq:nl:def:cal:n:m}
to obtain
\begin{equation}
    \| \mathcal{M}(v, \vartheta) T_{-\gamma} \xi \|_{H^k}
     \le  \| g( v + T_{\vartheta} \Phi_0)T_{-\gamma} \xi \|_{H^k}
    + C_3^N \| M(v, \vartheta) T_{-\gamma} \xi \|_{H^k_y} \| \Phi_0'\|_{H^k_x},
\end{equation} for some $C_3^N>0.$
The desired bounds are then inferred from \eqref{eq:nl:g:unif:bnd} by taking $\xi = \sqrt{Q} e_j$, squaring, and summing over all $j \ge 0$, where $(e_j)_{j\geq 0}$ is any orthonormal basis of $\mathcal W$.
\end{proof}

We are now ready to establish Lipschitz estimates for $M$ and $\mathcal{M}$. 
In order to exploit the algebra structure of $H^k_y$, we will use
the convenient representation
\begin{equation}
\label{eq:nl:lip:id:m}
\begin{aligned}
    M(v_A, \vartheta_A)-M(v_B, \vartheta_B)
    &= \big(\Upsilon(\vartheta_A)- \Upsilon( \vartheta_B) \big)  \langle g(v_A + T_{\vartheta_A} \Phi_0), \psi_{\rm tw} \rangle_{L^2_x}
    \\&  \qquad
    + (\Upsilon(\vartheta_B) + 1)
    \langle g(v_A + T_{\vartheta_A} \Phi_0) - 
    g(v_B + T_{\vartheta_B} \Phi_0) , \psi_{\rm tw} \rangle_{L^2_x}
    \\ & \qquad 
    - \langle g(v_A + T_{\vartheta_A} \Phi_0) - 
    g(v_B + T_{\vartheta_B} \Phi_0) , \psi_{\rm tw} \rangle_{L^2_x},
\end{aligned}
\end{equation}
together with its counterpart
\begin{equation}
\label{eq:nl:lip:id:cal:m}
\begin{aligned}
    \mathcal M(v_A, \vartheta_A)
    - \mathcal M(v_B, \vartheta_B)
& = g(v_A + T_{ \vartheta_A} \Phi_0) - g(v_B + T_{\vartheta_B} \Phi_0)
\\ & \qquad
+ \big( M(v_A, \vartheta_A) - M(v_B, \vartheta_B) \big) T_{\vartheta_A} \Phi_0'
\\& \qquad
 + M(v_B, \vartheta_B) \big( T_{\vartheta_A}  - T_{\vartheta_B} \big) \Phi_0'.
\end{aligned}
\end{equation}

\begin{lemma}
 Let $k >d/2$ and suppose that \textnormal{(Hf)}, \textnormal{(HTw)},  and \textnormal{(HgQ)} hold. There exists a constant $K^N > 0$
so that
     for all $\gamma \in \mathbb R$, and every   $v_A, v_B \in H^k$ and $\vartheta_A, \vartheta_B \in H^k_y$ with 
     \begin{equation}
         \max\{ \| v_A \|_{H^k}, \|v_B\|_{H^k} , \|\vartheta_A\|_{H^k_y},
          \|\vartheta_B\|_{H^k_y} \} \le N,
     \end{equation}
we have the bounds     
\begin{equation}
\label{eq:nl:bnd:m:calm:lip}
\begin{array}{lcl}
    \| [M(v_A, \vartheta_A) - M(v_B, \vartheta_B)]T_{-\gamma} \|_{HS(\mathcal W_Q;H^k_y)} & \le & K^N\big[ \|v_A - v_B\|_{H^k} + \| \vartheta_A - \vartheta_B \|_{H^k_y}   \big] ,
\\[0.2cm]
    \| [\mathcal M(v_A, \vartheta_A) - \mathcal M(v_B, \vartheta_B)] T_{-\gamma} \|_{HS(\mathcal W_Q;H^k)} & \le & K^N\big[ \|v_A - v_B\|_{H^k} + \| \vartheta_A - \vartheta_B \|_{H^k_y}   \big].
\end{array}
\end{equation}
\end{lemma}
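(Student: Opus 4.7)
The plan is to exploit the two algebraic decompositions \eqref{eq:nl:lip:id:m} and \eqref{eq:nl:lip:id:cal:m} in combination with the fact that $H^k_y$ is a Banach algebra for $k > (d-1)/2$, together with all Lipschitz and uniform bounds that we have already assembled. The basic idea is that, since the $\Upsilon$ and $M$ factors depend only on $(v,\vartheta)$ and not on the Wiener increment $\xi$, one can estimate each term pointwise in $\xi \in \mathcal{W}_Q$ in the appropriate function-space norm, then square and sum over an orthonormal basis of $\mathcal{W}_Q$ to obtain the Hilbert--Schmidt bound.

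For the first estimate in \eqref{eq:nl:bnd:m:calm:lip}, I would fix $\xi \in \mathcal{W}_Q$ and bound the three summands in \eqref{eq:nl:lip:id:m} in $H^k_y$-norm after applying $T_{-\gamma}$. The first summand factors as an $H^k_y$-product of $\Upsilon(\vartheta_A) - \Upsilon(\vartheta_B)$, which is Lipschitz by Lemma~\ref{lem:nl:bnds:K2}, with $\langle g(v_A + T_{\vartheta_A}\Phi_0)[T_{-\gamma}\xi] , \psi_{\rm tw}\rangle_{L^2_x}$, whose $H^k_y$-norm is dominated via Lemma~\ref{lem:innerproduct:Hky} by $\|g(v_A + T_{\vartheta_A}\Phi_0)[T_{-\gamma}\xi]\|_{H^k}$; after squaring and summing over $\xi = \sqrt{Q}e_j$, the uniform bound \eqref{eq:nl:g:unif:bnd} from Corollary~\ref{cor:gHS:pert} controls this factor. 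The second summand is handled similarly, bounding $\|\Upsilon(\vartheta_B) + 1\|_{H^k_y}$ uniformly via \eqref{eq:nl:est:k2:hk} and combining Lemma~\ref{lem:innerproduct:Hky} with the Lipschitz bound \eqref{eq:gHSLip:pert} from Lemma~\ref{lem:gHS:pert} after summation. The third summand follows directly from Lemma~\ref{lem:innerproduct:Hky} and Lemma~\ref{lem:gHS:pert}.

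For the second estimate in \eqref{eq:nl:bnd:m:calm:lip}, I would analogously split the right-hand side of \eqref{eq:nl:lip:id:cal:m} into three contributions, each evaluated at $T_{-\gamma}\xi$. The first contribution is controlled directly by Lemma~\ref{lem:gHS:pert} after summation. For the second contribution I would combine the $M$-Lipschitz estimate just proved (applied pointwise in $\xi$ in the $H^k_y$-norm) with the product bound \eqref{eq:nl:bnd:prod:hky:hkx}, using $\|\Phi_0'\|_{H^k_x} < \infty$ from (HTw) and the a priori bound on $\|\vartheta_A\|_{H^k_y}$. For the third contribution I would use the uniform $H^k_y$-bound on $M(v_B,\vartheta_B)T_{-\gamma}\xi$ from Lemma~\ref{lem:nl:m} together with the Lipschitz-in-shift estimate \eqref{eq:nl:bnd:prod:hky:hkx:lip} from Lemma~\ref{lem:kappa:phi} applied with $\kappa = 0$, $\phi = M(v_B,\vartheta_B)[T_{-\gamma}\xi]$ and $\zeta = \Phi_0'$. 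Squaring and summing over $\xi = \sqrt{Q}e_j$ yields the desired HS-bound.

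The main obstacle is simply bookkeeping: one must take care that the $T_{-\gamma}$ factor is carried correctly through the decompositions (in particular using the commutation relation from (HgQ) to reduce to the setting in which the earlier lemmas are formulated), and that the product estimates of Lemma~\ref{lem:kappa:phi} are applied pointwise in $\xi$ \emph{before} taking the Hilbert--Schmidt sum, so that the $H^k_y$-factors in $\vartheta$ can be pulled out in front. No new analytical ingredient beyond the algebra property of $H^k_y$ and the previously established bounds is needed.
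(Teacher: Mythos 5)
Your proposal is correct and follows essentially the same approach as the paper's proof: decompose via \eqref{eq:nl:lip:id:m} and \eqref{eq:nl:lip:id:cal:m}, estimate term by term using the $H^k_y$-algebra structure together with the bounds from Lemmas \ref{lem:nl:bnds:K2}, \ref{lem:gHS:pert}, \ref{lem:nl:m}, \ref{lem:kappa:phi}, and Corollary \ref{cor:gHS:pert}, and pass to the Hilbert--Schmidt norm by squaring and summing over the basis. The only cosmetic difference is that the paper groups the last two summands of \eqref{eq:nl:lip:id:m} into a single term with a constant absorbing $\|\Upsilon(\vartheta_B)+1\|_{H^k_y}+1$, whereas you treat them separately.
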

\begin{proof}
Upon writing
\begin{equation}
    \Delta_{AB} M = [M(v_A, \vartheta_A) - M(v_B, \vartheta_B)]T_{-\gamma},
\end{equation}
we may use the representation \eqref{eq:nl:lip:id:m} 
together with the bounds \eqref{eq:nl:bnd:l2x:to:hk} and \eqref{eq:nl:est:k2:hk} to obtain
\begin{equation}
    \begin{array}{lcl}
    \| \Delta_{AB} M \|_{HS(\mathcal W_Q; H^k_y)}
    & \le & C_1^N \| \Upsilon(\vartheta_A)- \Upsilon( \vartheta_B) \|_{H^k_y}
\|g(v_A + T_{\vartheta_A} \Phi_0)T_{-\gamma}\|_{HS(\mathcal W_Q;H^k)}
\\[0.2cm]
& &  \qquad
+\,C_1^N 
 \| [g(v_A + T_{\vartheta_A} \Phi_0) - 
    g(v_B + T_{\vartheta_B} \Phi_0)]T_{-\gamma} \|_{HS(\mathcal W_Q;H^k)},
    \end{array}
\end{equation}
for some $C_1^N > 0$. The desired 
estimate now follows 
 from \eqref{eq:nl:est:k2:hk:lip},
 \eqref{eq:gHSLip:pert}, and \eqref{eq:nl:g:unif:bnd}.
Turning to the difference
\begin{equation}
    \Delta_{AB} \mathcal M = [ \mathcal M(v_A, \vartheta_A) - \mathcal M(v_B, \vartheta_B)]T_{-\gamma} ,
\end{equation}
we invoke \eqref{eq:nl:bnd:prod:hky:hkx:lip}
and 
\eqref{eq:nl:bnd:prod:hky:hkx}
together with the a priori bounds on $\vartheta_A$ and $\vartheta_B$
to compute
\begin{equation}
\begin{array}{lcl}
   \| \Delta_{AB} \mathcal{M} \|_{HS(\mathcal W_Q; H^k)}
& \le & \|g(v_A + T_{ \vartheta_A} \Phi_0) - g(v_B + T_{\vartheta_B} \Phi_0) \|_{HS(\mathcal W_Q; H^k)}
\\[0.2cm]
& & \qquad
+ \,C_2^N\, \| M(v_A, \vartheta_A) - M(v_B, \vartheta_B) \|_{HS(\mathcal W_Q; H^k_y)} \| \Phi_0' \|_{H^k_x}
\\[0.2cm]
& & \qquad
 + \,C_2^N \,\| M(v_B, \vartheta_B) \|_{HS(\mathcal W_Q; H^k)}
 \| \vartheta_A - \vartheta_B\|_{H^k_y} \|\Phi_0''\|_{H^k_x},
\end{array}
\end{equation} for some constant $C_2^N>0.$
Applying \eqref{eq:gHSLip:pert}, \eqref{eq:nl:bnds:m:cal:m:unif},
and the first bound in \eqref{eq:nl:bnd:m:calm:lip}
leads to the second bound.
\end{proof}

We proceed with the function $\mathcal{J}$, defined in \eqref{eq:nl:def:j:jtr}, for which we introduce the expressions
\begin{equation}
\begin{array}{lcl}
    \mathcal{J}_I(v, \vartheta) & = &
      f(v+T_\theta\Phi_0)-f(T_\theta\Phi_0)-Df(T_\theta\Phi_0)v ,
    \\[0.2cm]
    \mathcal{J}_{II}(v, \vartheta) & = &
     [Df(T_\theta\Phi_0)-Df(\Phi_0)]v ,
    \\[0.2cm]
    \mathcal{J}_{III}(\vartheta) & = &
    |\nabla_y\theta|^2T_\theta\Phi_0'',
\end{array}
\end{equation}
allowing us to write
\begin{equation}
    \mathcal{J} = \mathcal{J}_{I} + \mathcal{J}_{II} + \mathcal{J}_{III} .
\end{equation}
Setting out to establish Lipschitz bounds for $\mathcal{J}_I$, we introduce the notation
\begin{equation}
\begin{array}{lcl}
    \Delta_{AB;v} \mathcal{J}_I[\vartheta] 
    &=& \mathcal{J}_I(v_A, \vartheta) - \mathcal{J}_I(v_B, \vartheta) ,
\\[0.2cm]
  \Delta_{AB;\vartheta} \mathcal{J}_I[v]
   & = & \mathcal{J}_I(v, \vartheta_A) - \mathcal{J}_I(v, \vartheta_B) ,
\end{array}
\end{equation}
yielding
\begin{equation}
    \mathcal{J}_I(v_A, \vartheta_A) - \mathcal{J}_I(v_B, \vartheta_B)
    = \Delta_{AB;v}\mathcal{J}_I[\vartheta_A]
    + \Delta_{AB;\vartheta}\mathcal{J}_I[v_B].
\end{equation}
Applying Taylor's theorem results into the representation
\begin{equation}
\label{eq:nl:def:d:j1:ab:v}
\begin{aligned} 
 \Delta_{AB;v} \mathcal{J}_I[\vartheta] 
 & = 
  \int_0^1 Df( T_{\vartheta} \Phi_0 + v_B + t (v_A - v_B)) [v_A - v_B] \, \mathrm dt - Df( T_{\vartheta} \Phi_0) [v_A - v_B]
 \\
 & =  
 \int_0^1 \int_0^1 D^2 f\big( T_{\vartheta} \Phi_0 + s(v_B + t(v_A - v_B) \big)[ v_B + t(v_A - v_B),  v_A - v_B] \, \mathrm ds \, \mathrm dt,
\end{aligned}
\end{equation}
together with its counterpart
\begin{align} \label{eq:nl:def:d:j1:ab:theta}
 \Delta_{AB;\vartheta} \mathcal{J}_I[v] 
 & = 
     \int_0^1 \big( Df ( T_{\vartheta_A} \Phi_0 + tv)[v] - 
             Df ( T_{\vartheta_B} \Phi_0 + tv)[v]  \big) \, \mathrm dt
             - Df ( T_{\vartheta_A} \Phi_0 )[v]  
             + Df ( T_{\vartheta_B} \Phi_0 )[v] \nonumber
\\ 
&  =  
    \int_0^1 \int_0^1 D^2f ( T_{\vartheta_B} \Phi_0 + tv + s( T_{\vartheta_A} \Phi_0 - T_{\vartheta_B}\Phi_0))[T_{\vartheta_A} \Phi_0 - T_{\vartheta_B}\Phi_0,v]  \, \mathrm ds \, \mathrm dt\nonumber 
\\& \qquad\qquad\qquad\quad
             - \int_0^1 D^2f ( T_{\vartheta_B} \Phi_0 + s( T_{\vartheta_A} \Phi_0 - T_{\vartheta_B}\Phi_0)[T_{\vartheta_A} \Phi_0 - T_{\vartheta_B}\Phi_0,v]  
             \, \mathrm ds .
\end{align} 
The latter expression could in principle be reduced further to a single triple-integral involving third derivatives of $f$. However, we refrain from doing so, since it would  require additional smoothness on $f$ while it is also not necessary for our purposes in this paper.

\begin{lemma}\label{lem:d2:f:v}
Let $k > d/2$ and assume that \textnormal{(Hf)} and \textnormal{(HTw)} hold.
For all $N>0$ there exists a constant $K^N > 0$ so that for any $\vartheta \in H^k_y$ and $w, v_1, v_2 \in H^k$ with $\| \vartheta \|_{H^k_y} \le N$ and $\|w\|_{H^k} \le N$ we have the bound
\begin{equation}
\label{eq:nl:est:d2:f:v1:v2}
   \| D^2 f( T_{\vartheta} \Phi_0 + w) [v_1, v_2 ] \|_{H^k}
   \le K^N \|v_1\|_{H^k} \|v_2\|_{H^k} .
\end{equation}    
\end{lemma}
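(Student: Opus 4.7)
The plan is to mimic the strategy used in the proofs of the product estimates in Lemmas~\ref{lem:kappa:phi} and the one after, namely to decompose an arbitrary derivative via Leibniz and Faà di Bruno, and then bound each resulting term using the fact that $H^k$ is a multiplication algebra for $k>d/2$. Throughout, set $U = T_\vartheta \Phi_0 + w$ for brevity.

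First, I would observe that $\|U\|_{L^\infty}$ is uniformly controlled in terms of $N$: Sobolev embedding $H^k\subset L^\infty$ gives $\|w\|_{L^\infty}\le C\|w\|_{H^k}\le CN$, and likewise $\|\vartheta\|_{L^\infty_y}\le CN$, so $\|U\|_{L^\infty}\le \|\Phi_0\|_\infty+CN$. Assumption~(Hf) then yields
\begin{equation}
\|D^{2+m}f(U)\|_{L^\infty}\le K^N\qquad (0\le m\le k).
\end{equation}
Next, for a fixed multi-index $\alpha$ with $|\alpha|\le k$, I would apply the Leibniz rule to write $\partial^\alpha[D^2f(U)[v_1,v_2]]$ as a finite linear combination of expressions of the form
\begin{equation}
\bigl(\partial^{\alpha_0}[D^2 f(U)]\bigr)\cdot\partial^{\alpha_1}v_1\cdot\partial^{\alpha_2}v_2,\qquad \alpha_0+\alpha_1+\alpha_2=\alpha.
\end{equation}
A Faà di Bruno expansion then rewrites $\partial^{\alpha_0}[D^2 f(U)]$ as a sum of terms $D^{2+m}f(U)\cdot\prod_{s=1}^m \partial^{\beta_s}U$, with $m\le|\alpha_0|$, each $|\beta_s|\ge 1$, and $\sum_s|\beta_s|=|\alpha_0|$.

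The remaining task is to bound each such product in $L^2(\mathbb R^d)$. I would split $\partial^{\beta_s}U=\partial^{\beta_s}w+\partial^{\beta_s}T_\vartheta\Phi_0$. For the first piece $\partial^{\beta_s}w$ is a derivative of an $H^k$-function, while the second piece is, by the chain rule, a finite sum of products
\begin{equation}
(\partial^{\gamma_1}\vartheta)\cdots(\partial^{\gamma_\ell}\vartheta)\,T_\vartheta\Phi_0^{(r+\ell)},\qquad \sum_i|\gamma_i|+r=|\beta_s|,\quad |\gamma_i|\ge 1,
\end{equation}
which is precisely the structure already analysed in the proofs of Lemmas~\ref{lem:kappa:phi} and the product lemma that follows it: the shifted profiles $\Phi_0^{(r+\ell)}$ lie in $H^k_x$ (with uniform bounds, by HTw and the induction $\Phi_0^{(j)}=-c_0\Phi_0^{(j-1)}-[f(\Phi_0)]^{(j-1)}$), while the $\partial^{\gamma_i}\vartheta$ factors are absorbed into $\|\vartheta\|_{H^k_y}^{\ell}\le N^\ell$. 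Consequently, after collecting all contributions, each term has been reduced to a product of factors that individually lie in $H^k$ (or $H^k_y$) with norms bounded either by $\|v_j\|_{H^k}$, by $N$, or by a constant depending only on $\Phi_0$ and $f$.

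To close the argument, I would invoke the standard multilinear Moser/Kato--Ponce type estimate
\begin{equation}
\Bigl\|\prod_{j}\partial^{\gamma_j}u_j\Bigr\|_{L^2}\le C\prod_j\|u_j\|_{H^k},\qquad \sum_j|\gamma_j|\le k,
\end{equation}
which is valid for $k>d/2$ by Sobolev embedding and Hölder (one factor gets the top-order derivative and sits in $L^2$, while every other factor uses the Sobolev embedding $H^k\hookrightarrow L^\infty$). Combined with the $L^\infty$-bound on $D^{2+m}f(U)$, this yields $\|\partial^\alpha[D^2 f(U)[v_1,v_2]]\|_{L^2}\le K^N\|v_1\|_{H^k}\|v_2\|_{H^k}$ for every $|\alpha|\le k$, and summing over $\alpha$ gives \eqref{eq:nl:est:d2:f:v1:v2}. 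The main obstacle is purely the bookkeeping around the $y$-derivatives of $T_\vartheta\Phi_0$: one must ensure that the polynomial dependence on $\|\vartheta\|_{H^k_y}$ generated by the chain rule is absorbed into the $N$-dependent constant, which is already taken care of by the a priori bound $\|\vartheta\|_{H^k_y}\le N$ and the decomposition strategy pioneered in Lemma~\ref{lem:kappa:phi}.
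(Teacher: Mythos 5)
Your proposal is correct and follows essentially the same approach as the paper's proof: a Leibniz/Faà di Bruno expansion, a case analysis on whether the derivatives hitting $T_\vartheta\Phi_0$ contain a $y$-component (in which case $\partial^{\beta}T_\vartheta\Phi_0 = \partial^{\beta}[T_\vartheta\Phi_0 - \Phi_0]$ is controlled in $L^2(\mathbb R^d)$ via the Lipschitz product estimate \eqref{eq:nl:bnd:prod:hky:hkx:lip}, while pure $x$-derivatives give an $L^\infty$-bounded $T_\vartheta\Phi_0^{(r)}$ with $r\ge1$), and then the multilinear algebra estimate \eqref{eq:nl:bnd:multilinear:general}. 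The one imprecision is the phrase that "each term has been reduced to a product of factors that individually lie in $H^k$": the factor $T_\vartheta\Phi_0^{(r+\ell)}$ is not in $H^k(\mathbb R^d)$ (no $y$-decay) and must be pulled out in $L^\infty$, which your final paragraph in fact does correctly.
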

\begin{proof}
Choosing a multi-index $\alpha \in \mathbb Z^{d}_{\ge 0}$,
we note that $\partial^\alpha D^2 f( T_{\vartheta} \Phi_0 + w) [v_1, v_2 ]$ can be expressed as a finite sum of terms of the form
\begin{equation}
\mathcal{I} =    D^{2 + \ell} f( T_{\vartheta} \Phi_0 + w )
    [ \partial^{\beta_1} [ T_{\vartheta} \Phi_0 + w] ,
    \ldots ,\partial^{\beta_{\ell}} [ T_{\vartheta} \Phi_0 + w], \partial^{\beta_{\ell +1}}  v_1, \partial^{\beta_{\ell+2}} v_2 ],
\end{equation}
with $0 \le \ell \le |\alpha|$ and multi-indices $\{ \beta_i\}_{i=1}^{\ell+2} \in \mathbb Z^d_{\ge 0}$ that satisfy
$|\beta_i| \ge 1$ for integers $i \in \{1, \ldots, \ell\}$ together with
\begin{equation}
    |\beta_1| + \ldots + |\beta_{\ell + 2}| = |\alpha|.
\end{equation}
Fix any $i \in \{1, \ldots, \ell\}$. If $\partial^{\beta_i} = \partial_x^{|\beta_i|}$,
then we find
\begin{equation}
    \| \partial^{\beta_i} T_{\vartheta} \Phi_0 \|_{\infty}
    =  \|\Phi_0^{(|\beta_i|)} \|_{\infty} < \infty.
\end{equation}
On the other hand, if $\partial^{\beta_i}$ contains derivatives related to $y$, we deduce that
\begin{equation}
    \| \partial^{\beta_i} T_{\vartheta} \Phi_0 \|_{L^2}
    = \| \partial^{\beta_i} [T_{\vartheta} \Phi_0 - \Phi_0 ] \|_{L^2}
    \le K \|\vartheta\|_{H^k_y}(1 +  \|\vartheta\|_{H^k_y}^{k-1} )\|\Phi_0'\|_{H^k_x} \le  C_1^N
\end{equation}
holds for some $C_1^N > 0$,
as a result of \eqref{eq:nl:bnd:prod:hky:hkx:lip}
and the a priori bound on $\vartheta$.
In particular, using the a priori bound on $w$, we obtain
\begin{equation}
    \| \mathcal{I} \|_{L^2} \le C_2^N \| v_1 \|_{H^k} \|v_2 \|_{H^k},
\end{equation}
for some $C_2^N > 0$, from which the claim follows.
\end{proof}

\begin{lemma}
\label{lem:nl:bnds:j:i}
Let $k > d/2$ and assume that \textnormal{(Hf)} and \textnormal{(HTw)} hold. For all $N>0$ there exists a constant $K^N > 0$
so that for any $v \in H^k$ and $\vartheta \in H^k_y$ with $\|v\|_{H^k} \le N$ and $\|\vartheta\|_{H^k_y} \le N$
we have the bounds 
\begin{equation}
\label{eq:nl:bnd:j:i:l1:hk}
\begin{array}{lcl}
\| \mathcal{J}_I(v, \vartheta) \|_{L^1}
& \le & K^N \|v \|_{L^2}^2, \\[0.2cm]
\| \mathcal{J}_I(v, \vartheta) \|_{H^k} 
&\le& K^N \| v \|_{H^k}^2 ,
\end{array}
\end{equation}
while for any $v_A, v_B \in H^k$ and $\vartheta_A, \vartheta_B \in H^k_y$ with \begin{equation}
    \max\{ \|v_A\|_{H^k}, \|v_B\|_{H^k}, \|\vartheta_A\|_{H^k_y}, \|\vartheta_B\|_{H^k_y} \} \le N
\end{equation}  
we have
\begin{equation}
\label{eq:nl:j:i:lip}
    \| \mathcal{J}_I(v_A, \vartheta_A) -  \mathcal{J}_I(v_B, \vartheta_B) \|_{H^k}  
    \le K^N \big[ \|v_A\|_{H^k} + \|v_B\|_{H^k} \big]
    \big[ \| v_A - v_B \|_{H^k} + \|\vartheta_A - \vartheta_B\|_{H^k}
    \big].
\end{equation}
\end{lemma}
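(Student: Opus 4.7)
The plan is to use the integral remainder formulas already provided in \eqref{eq:nl:def:d:j1:ab:v} and \eqref{eq:nl:def:d:j1:ab:theta}, and reduce everything to the pointwise and $H^k$-bounds on $D^2 f$ delivered by Lemma \ref{lem:d2:f:v}.

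For the two pointwise/global bounds in \eqref{eq:nl:bnd:j:i:l1:hk}, I would specialise \eqref{eq:nl:def:d:j1:ab:v} by taking $v_A=v$ and $v_B=0$, which (since $\mathcal{J}_I(0,\vartheta)=0$) gives the clean representation
\begin{equation*}
\mathcal{J}_I(v,\vartheta)=\int_0^1\!\!\int_0^1 D^2 f\bigl(T_\vartheta\Phi_0+stv\bigr)[tv,v]\,\mathrm ds\,\mathrm dt.
\end{equation*}
For the $H^k$-bound I would then invoke Lemma \ref{lem:d2:f:v} with $w=stv$, $v_1=tv$, $v_2=v$, pull the integral in $s,t$ outside the norm, and bound the integrand by $K^N\|v\|_{H^k}^2$. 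For the $L^1$-bound, I would instead work pointwise: by the Sobolev embedding $H^k\subset L^\infty$ (since $k>d/2$) and the a priori bounds, the argument $T_\vartheta\Phi_0+stv$ lies in a fixed compact set of $\mathbb R^n$, so (Hf) yields $|D^2 f|\le K^N$ there; consequently $|\mathcal{J}_I(v,\vartheta)(x,y)|\le K^N|v(x,y)|^2$, and integrating in $\mathbf x$ gives the $L^1$-estimate with $\|v\|_{L^2}^2$ on the right-hand side.

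For the Lipschitz estimate \eqref{eq:nl:j:i:lip}, I would split
\begin{equation*}
\mathcal{J}_I(v_A,\vartheta_A)-\mathcal{J}_I(v_B,\vartheta_B)=\Delta_{AB;v}\mathcal{J}_I[\vartheta_A]+\Delta_{AB;\vartheta}\mathcal{J}_I[v_B]
\end{equation*}
and treat the two pieces separately. The first term is handled exactly as above using the representation \eqref{eq:nl:def:d:j1:ab:v}: applying Lemma \ref{lem:d2:f:v} with $v_1=v_B+t(v_A-v_B)$ and $v_2=v_A-v_B$ produces the factor $(\|v_A\|_{H^k}+\|v_B\|_{H^k})\|v_A-v_B\|_{H^k}$. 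For the $\vartheta$-piece I would start from \eqref{eq:nl:def:d:j1:ab:theta} and apply Lemma \ref{lem:d2:f:v} (with base translation $\vartheta=\vartheta_B$ and $w=tv_B+s(T_{\vartheta_A}\Phi_0-T_{\vartheta_B}\Phi_0)$, whose $H^k$-norm stays bounded thanks to \eqref{eq:nl:bnd:prod:hky:hkx:lip}) to produce
\begin{equation*}
\bigl\|D^2 f(\cdots)[T_{\vartheta_A}\Phi_0-T_{\vartheta_B}\Phi_0,\,v_B]\bigr\|_{H^k}\le K^N\,\|T_{\vartheta_A}\Phi_0-T_{\vartheta_B}\Phi_0\|_{H^k}\,\|v_B\|_{H^k}.
\end{equation*}
A second application of \eqref{eq:nl:bnd:prod:hky:hkx:lip} (taking $\kappa=1,\phi=0,\zeta=\Phi_0$, and using the a priori bounds on $\vartheta_A,\vartheta_B$) controls the translation difference by $K^N\|\vartheta_A-\vartheta_B\|_{H^k_y}\|\Phi_0'\|_{H^k_x}$, giving the required factor.

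The only mildly delicate point will be the $\vartheta$-piece: the ``difference'' inside $D^2 f$ there is $T_{\vartheta_A}\Phi_0-T_{\vartheta_B}\Phi_0$, which is \emph{not} an $H^k$-perturbation in the sense covered directly by Lemma \ref{lem:d2:f:v}'s hypothesis on $w$. I will simply absorb it into the interpolation variable $w$ after checking (via \eqref{eq:nl:bnd:prod:hky:hkx:lip}) that its $H^k$-norm is bounded by a constant depending only on $N$; then I can apply Lemma \ref{lem:d2:f:v} with $v_1=T_{\vartheta_A}\Phi_0-T_{\vartheta_B}\Phi_0\in H^k$ and $v_2=v_B$. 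All other steps are direct consequences of integration under the norm and the algebra estimates already established in {\S}\ref{sec:nonlin:prelim}.
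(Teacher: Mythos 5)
Your proposal is correct and takes essentially the same route as the paper: you deduce the $H^k$-bound from the integral remainder representation \eqref{eq:nl:def:d:j1:ab:v} and Lemma \ref{lem:d2:f:v}, the $L^1$-bound from the pointwise estimate $|\mathcal{J}_I|\le K|v|^2$, and the Lipschitz bound by treating the two pieces of the $v$/$\vartheta$ split via \eqref{eq:nl:def:d:j1:ab:v}--\eqref{eq:nl:def:d:j1:ab:theta} together with \eqref{eq:nl:est:d2:f:v1:v2} and \eqref{eq:nl:bnd:prod:hky:hkx:lip}. The only cosmetic difference is ordering: the paper proves the Lipschitz estimate first and then reads off the $H^k$-bound by setting $v_B=0$; your ``mildly delicate point'' is in fact already fully covered by Lemma \ref{lem:d2:f:v} as stated, since both $w$ and $v_1$ are allowed to be arbitrary $H^k$ elements.
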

\begin{proof}
Starting with the Lipschitz bound \eqref{eq:nl:j:i:lip},
we utilise \eqref{eq:nl:bnd:prod:hky:hkx:lip} and \eqref{eq:nl:est:d2:f:v1:v2} to the representations \eqref{eq:nl:def:d:j1:ab:v} and \eqref{eq:nl:def:d:j1:ab:theta}
to find
\begin{equation}
\begin{array}{lcl}
 \|    \Delta_{AB;v} \mathcal{J}_I[\vartheta_A] \|_{H^k}  &\le & C_1^N \big[ \|v_A\|_{H^k} + \|v_B\|_{H^k} \big] \| v_A - v_B \|_{H^k} ,
\\[0.2cm]
\| \Delta_{AB;\vartheta} \mathcal{J}_I[v_B]  \|_{H^k}
&\le & C_2^N 
    \| T_{\vartheta_A} \Phi_0 - T_{\vartheta_B} \Phi_0 \|_{H^k} \| v_B\|_{H^k}
\\[0.2cm]
& \le & 
     C_3^N \| \vartheta_A - \vartheta_B \|_{H^k} \|v_B\|_{H^k}
\end{array}
\end{equation}
for some $C_1^N > 0$, $C_2^N > 0$, and $C_3^N > 0$, which can be absorbed in the stated estimate.
The $H^k$-bound in \eqref{eq:nl:bnd:j:i:l1:hk}
follows directly from \eqref{eq:nl:j:i:lip} by noting  $\mathcal{J}_I(0, \vartheta) = 0$, while the 
remaining $L^1$-bound follows from the pointwise
estimate $|\mathcal{J}_I(v, \vartheta)| \le K |v|^2$, for some $K>0$.
\end{proof}

\begin{lemma}
Let $k > d/2$ and assume that \textnormal{(Hf)} and \textnormal{(HTw)} hold. For all $N>0$ there exists a constant $K^N > 0$
so that for any $v \in H^k$ and $\vartheta \in H^k_y$ with $\|v\|_{H^k} \le N$ and $\|\vartheta\|_{H^k_y} \le N$
we have the bounds 
\begin{equation}
\label{eq:nl:bnd:j:ii:l1:hk}
\begin{array}{lcl}
\| \mathcal{J}_{II}(v, \vartheta) \|_{L^1}
& \le & K^N \| \vartheta \|_{L^2_y} \| v \|_{L^2}, \\[0.2cm]
\| \mathcal{J}_{II}(v, \vartheta) \|_{H^k} 
&\le& K^N \| \vartheta \|_{H^k_y} \| v \|_{H^k} ,
\end{array}
\end{equation}
while for any $v_A, v_B \in H^k$ and $\vartheta_A, \vartheta_B \in H^k_y$ with \begin{equation}
    \max\{ \|v_A\|_{H^k}, \|v_B\|_{H^k}, \|\vartheta_A\|_{H^k_y}, \|\vartheta_B\|_{H^k_y} \} \le N
\end{equation}  
we have
\begin{equation}
\label{eq:nl:j:ii:lip}
    \| \mathcal{J}_{II}(v_A, \vartheta_A) -  \mathcal{J}_{II}(v_B, \vartheta_B) \|_{H^k}  
    \le K^N \big[ \|v_A\|_{H^k} + \|\vartheta_B\|_{H^k} \big]
    \big[ \| v_A - v_B \|_{H^k} + \|\vartheta_A - \vartheta_B\|_{H^k}
    \big] .
\end{equation}
\end{lemma}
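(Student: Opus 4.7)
The plan is to exploit the identity
\begin{equation}
\mathcal{J}_{II}(v,\vartheta) = [Df(T_\vartheta \Phi_0) - Df(\Phi_0)] v
= \int_0^1 D^2 f\big(\Phi_0 + s(T_\vartheta \Phi_0 - \Phi_0)\big)[T_\vartheta \Phi_0 - \Phi_0 , v] \, \mathrm ds,
\end{equation}
which follows from Taylor's theorem and turns $\mathcal{J}_{II}$ into a product of the ``small'' factor $T_\vartheta \Phi_0 - \Phi_0$ with $v$, weighted by a uniformly bounded $D^2 f$-factor. This representation parallels the treatment of $\mathcal{J}_I$ in the preceding lemma and is the key to extracting the correct scaling in both $\vartheta$ and $v$.

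The $L^1$ bound is the easiest step. The a priori assumption (Hf) yields a uniform pointwise bound $|D^2 f(\cdot)| \le K$ on the region determined by $N$ and $\Phi_0$, so $|\mathcal{J}_{II}(v,\vartheta)| \le K |T_\vartheta \Phi_0 - \Phi_0|\,|v|$ pointwise. A Cauchy--Schwarz estimate in $L^2$ together with the Lipschitz-in-shift bound \eqref{eq:nl:bnd:shift:diff:l2:lip} applied with $\vartheta_B = 0$ gives $\|T_\vartheta \Phi_0 - \Phi_0\|_{L^2} \le \|\vartheta\|_{L^2_y} \|\Phi_0'\|_{L^2_x}$, from which \eqref{eq:nl:bnd:j:ii:l1:hk} follows. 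For the $H^k$ bound, I would repeat essentially the argument of Lemma \ref{lem:d2:f:v}: expand $\partial^\alpha$ into a finite sum of terms of the form
\begin{equation}
D^{2+\ell} f(\cdots)\big[\partial^{\beta_1}[\Phi_0 + s(T_\vartheta \Phi_0 - \Phi_0)], \ldots, \partial^{\beta_{\ell}}[\cdots], \partial^{\beta_{\ell+1}}[T_\vartheta \Phi_0 - \Phi_0], \partial^{\beta_{\ell+2}} v\big],
\end{equation}
and then estimate the ``small'' factor in $L^2$ via \eqref{eq:nl:bnd:prod:hky:hkx:lip} (giving $\|T_\vartheta \Phi_0 - \Phi_0\|_{H^k} \le K^N \|\vartheta\|_{H^k_y}$) while treating the remaining factors exactly as in the proof of Lemma \ref{lem:d2:f:v}. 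Integrating in $s$ preserves the estimate, yielding $\|\mathcal{J}_{II}(v,\vartheta)\|_{H^k} \le K^N \|\vartheta\|_{H^k_y} \|v\|_{H^k}$.

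For the Lipschitz bound \eqref{eq:nl:j:ii:lip}, I would use the splitting
\begin{equation}
\mathcal{J}_{II}(v_A,\vartheta_A) - \mathcal{J}_{II}(v_B,\vartheta_B)
= [Df(T_{\vartheta_A}\Phi_0) - Df(T_{\vartheta_B}\Phi_0)] v_A
+ [Df(T_{\vartheta_B}\Phi_0) - Df(\Phi_0)](v_A - v_B).
\end{equation}
The second term is handled directly by the $H^k$ estimate already proved, applied to $\mathcal{J}_{II}(v_A - v_B, \vartheta_B)$, giving a bound of order $\|\vartheta_B\|_{H^k_y}\|v_A - v_B\|_{H^k}$. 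For the first term, a further application of Taylor yields
\begin{equation}
Df(T_{\vartheta_A}\Phi_0) - Df(T_{\vartheta_B}\Phi_0)
= \int_0^1 D^2 f\big(T_{\vartheta_B}\Phi_0 + s(T_{\vartheta_A}\Phi_0 - T_{\vartheta_B}\Phi_0)\big)[T_{\vartheta_A}\Phi_0 - T_{\vartheta_B}\Phi_0, \cdot] \, \mathrm ds,
\end{equation}
and the same multilinear expansion as in the $H^k$ step, combined with the Lipschitz-in-shift estimate \eqref{eq:nl:bnd:prod:hky:hkx:lip} to control $\|T_{\vartheta_A}\Phi_0 - T_{\vartheta_B}\Phi_0\|_{H^k}$ by $\|\vartheta_A - \vartheta_B\|_{H^k_y}$, produces a bound of order $\|v_A\|_{H^k}\|\vartheta_A - \vartheta_B\|_{H^k_y}$. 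Summing the two pieces yields \eqref{eq:nl:j:ii:lip}.

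The main technical obstacle is bookkeeping the multilinear expansion in the $H^k$-bound: the composition $\Phi_0 + s(T_\vartheta \Phi_0 - \Phi_0)$ has $x$-only pieces (harmless, bounded pointwise) mixed with genuinely $y$-dependent pieces (bounded via \eqref{eq:nl:bnd:multilinear:general} and \eqref{eq:nl:bnd:prod:hky:hkx:lip}), and one has to ensure that at least one ``small'' factor ($T_\vartheta\Phi_0 - \Phi_0$ or, in the Lipschitz step, its difference) survives in the final multilinear term. This was already set up carefully for Lemma \ref{lem:nl:bnds:j:i}, so the argument transposes with only minor modifications.
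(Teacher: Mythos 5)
Your proof is correct, but it takes a noticeably longer route than the paper's. The key simplification the paper exploits, and which you do not use, is that $Df(T_\vartheta \Phi_0)$ is \emph{literally} the translate $T_\vartheta [Df(\Phi_0)]$, because $Df(\Phi_0)$ depends only on $x$ and the shift acts in $x$ only. So $\mathcal{J}_{II}(v,\vartheta) = [T_\vartheta \zeta - \zeta]\,v$ with $\zeta = Df(\Phi_0) \in H^k_x$ and $\zeta' = D^2f(\Phi_0)\Phi_0'$, and after the same splitting
\begin{equation*}
[ Df( T_{\vartheta_A} \Phi_0) - Df (T_{\vartheta_B} \Phi_0) ] v_A
    + [Df(T_{\vartheta_B} \Phi_0) - Df(\Phi_0) ] (v_A - v_B)
\end{equation*}
one can estimate the shift-difference factor directly from Lemma \ref{lem:kappa:phi} (estimate \eqref{eq:nl:bnd:prod:hky:hkx:lip}, with $\kappa=1$, $\phi=0$) and multiply by $v_A$ or $v_A - v_B$ using the algebra property of $H^k$; no Taylor expansion and no invocation of Lemma \ref{lem:d2:f:v} are needed. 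Your Taylor-with-integral-remainder treatment, imported from the $\mathcal{J}_I$ argument, also works: the base point $\Phi_0 + s(T_\vartheta\Phi_0 - \Phi_0)$ fits into Lemma \ref{lem:d2:f:v} with $w = s(T_\vartheta\Phi_0 - \Phi_0)$, whose $H^k$-norm is uniformly controlled by \eqref{eq:nl:bnd:prod:hky:hkx:lip}, and your bookkeeping of the surviving small factor is sound. What the paper's route buys is the disappearance of the $s$-integral, of $D^2 f$ at a $y$-dependent base point, and of the whole multilinear bookkeeping: the constant comes out cleanly as $\|D^2f(\Phi_0)\Phi_0'\|_{H^k_x}$. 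One further structural difference: the paper proves the Lipschitz estimate \eqref{eq:nl:j:ii:lip} first and then reads off the $H^k$ bound by taking $v_A=0$, $\vartheta_A = \vartheta_B$, whereas you establish the $H^k$ bound independently; both orderings are valid.
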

\begin{proof}
Starting with the Lipschitz estimate \eqref{eq:nl:j:ii:lip},
we write
\begin{equation}
\begin{array}{lcl}
\Delta_{AB} \mathcal{J}_{II} 
 &= & \mathcal{J}_{II}(v_A, \vartheta_A) -  \mathcal{J}_{II}(v_B, \vartheta_B)
\\[0.2cm]
& = & [ Df( T_{\vartheta_A} \Phi_0) - Df (T_{\vartheta_B} \Phi_0) ] v_A
    + [Df(T_{\vartheta_B} \Phi_0) - Df(\Phi_0) ] (v_A - v_B).
\end{array}
\end{equation}
Applying \eqref{eq:nl:bnd:prod:hky:hkx:lip} and exploiting the algebra structure of 
$H^k$, we compute
\begin{equation}
    \| \Delta_{AB} \mathcal{J}_{II} \|_{H^k} \le 
    K \| D^2 f(\Phi_0) \Phi_0' \|_{H^k(\mathbb R;\mathbb R^{n\times n})}
    \big[
    \|\vartheta_A - \vartheta_B\|_{H^k_y}
    \|v_A\|_{H^k}
    + \| \vartheta_B\|_{H^k_y} \|v_A-v_B\|_{H^k}\big],
\end{equation}
for some constant $K^N>0,$ which implies the stated bound after updating $K^N>0$. The $H^k$-bound in
\eqref{eq:nl:bnd:j:ii:l1:hk} follows  from
\eqref{eq:nl:j:ii:lip} by taking $v_A = 0$ and $\vartheta_A = \vartheta_B$.
Finally, we use
\eqref{eq:nl:bnd:shift:diff:l2:lip} to compute
\begin{equation}
\begin{array}{lcl}
 \| \mathcal{J}_{II}(v, \vartheta) \|_{L^1} 
 & \le & 
 \| Df(T_{\vartheta} \Phi_0) - Df(\Phi_0) \|_{L^2(\mathbb R^d;\mathbb R^{n\times n})} \|v \|_{L^2}   
\\[0.2cm]
& \le &  \| \vartheta\|_{L^2_y} \| D^2f(\Phi_0) \Phi_0' \|_{L^2(\mathbb R;\mathbb R^{n\times n})} \|v \|_{L^2},
\end{array}
\end{equation}
which provides the $L^1$-bound in \eqref{eq:nl:bnd:j:ii:l1:hk}.
\end{proof}

\begin{lemma}
Let $k > d/2$ and assume that \textnormal{(Hf)} and \textnormal{(HTw)} hold. For all $N>0$ there exists a constant $K^N > 0$
so that for any  $\vartheta \in H^{k+1}_y$ with $\|\vartheta\|_{H^k_y} \le N$
we have the bounds 
\begin{equation}
\label{eq:nl:bnd:j:iii:l1:hk}
\begin{array}{lcl}
\| \mathcal{J}_{III}( \vartheta) \|_{L^1}
& \le & K^N \| \nabla_y \vartheta \|_{L^2_y}^2, \\[0.2cm]
\| \mathcal{J}_{III}( \vartheta) \|_{H^k} 
&\le& K^N \| \nabla_y \vartheta \|_{H^k_y}^2,
\end{array}
\end{equation}
while for any  $\vartheta_A, \vartheta_B \in H^k_y$ with 
$\max\{ \|\vartheta_A\|_{H^k_y}, \|\vartheta_B\|_{H^k_y} \} \le N $  
we have
\begin{equation}
\label{eq:nl:j:iii:lip}
\begin{aligned} 
    \| \mathcal{J}_{III}( \vartheta_A) -  \mathcal{J}_{III}( \vartheta_B) \|_{H^k}  
    & \le  K^N  \| \nabla_y \vartheta_B \|_{H^k_y}^2 \|\vartheta_A - \vartheta_B\|_{H^k}
    \\
    &  \qquad
    +K^N  ( \|\nabla_y \vartheta_A \|_{H^k_y} + \| \nabla_y \vartheta_B\|_{H^k} )\|\nabla_y \vartheta_A - \nabla_y \vartheta_B\|_{H^k_y} .
\end{aligned}
\end{equation}
\end{lemma}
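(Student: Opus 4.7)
The plan is to exploit the product structure $\mathcal J_{III}(\vartheta) = |\nabla_y \vartheta|^2 \, T_\vartheta \Phi_0''$, where the first factor depends only on $y$ and the second is a $y$-dependent $x$-translation of a function of $x$ alone. This matches exactly the setting of the product estimates \eqref{eq:nl:bnd:prod:hky:hkx:lip} and \eqref{eq:nl:bnd:prod:hky:hkx} from {\S}\ref{sec:nonlin:prelim}. Before invoking these, I would observe that the travelling wave ODE \eqref{eq:mr:trv:wave:ode} together with (Hf) and (HTw) guarantees that $\Phi_0''$, and in fact $\Phi_0^{(j)}$ for $0 \le j \le k+3$, decay exponentially at $\pm \infty$. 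In particular $\Phi_0'' \in L^1_x \cap H^k_x$ and $\Phi_0''' \in H^k_x$, so all $H^k_x$-norms appearing below are finite.

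For the two bounds in \eqref{eq:nl:bnd:j:iii:l1:hk}, the $L^1$-estimate follows by Fubini, since
\begin{equation}
\|\mathcal J_{III}(\vartheta)\|_{L^1}
= \int_{\mathbb R^{d-1}} |\nabla_y \vartheta(y)|^2 \int_{\mathbb R} |\Phi_0''(x - \vartheta(y))|\,\mathrm dx \,\mathrm dy
= \|\Phi_0''\|_{L^1_x}\, \|\nabla_y \vartheta\|_{L^2_y}^2,
\end{equation}
while the $H^k$-bound is obtained by applying \eqref{eq:nl:bnd:prod:hky:hkx} with $\phi = |\nabla_y \vartheta|^2$ and $\zeta = \Phi_0''$. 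Here one uses the algebra property $\||\nabla_y\vartheta|^2\|_{H^k_y} \le K \|\nabla_y \vartheta\|_{H^k_y}^2$ (valid for $k > (d-1)/2$; see \cite[Thm. 4.39]{adams2003sobolev}) and the a priori bound $\|\vartheta\|_{H^k_y} \le N$ to absorb the $(1 + \|\vartheta\|_{H^k_y}^k)$-factor into $K^N$.

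For the Lipschitz-type estimate \eqref{eq:nl:j:iii:lip}, I would split
\begin{equation}
\mathcal J_{III}(\vartheta_A) - \mathcal J_{III}(\vartheta_B)
= \big(|\nabla_y \vartheta_A|^2 - |\nabla_y \vartheta_B|^2\big) T_{\vartheta_A}\Phi_0''
+ |\nabla_y \vartheta_B|^2 \big(T_{\vartheta_A} - T_{\vartheta_B}\big)\Phi_0'',
\end{equation}
and use the polarisation identity
\begin{equation}
|\nabla_y \vartheta_A|^2 - |\nabla_y \vartheta_B|^2
= (\nabla_y \vartheta_A + \nabla_y \vartheta_B) \cdot (\nabla_y \vartheta_A - \nabla_y \vartheta_B).
\end{equation}
The first resulting term is controlled via \eqref{eq:nl:bnd:prod:hky:hkx} (with $\phi$ taken to be the above pointwise $y$-product, estimated through the $H^k_y$-algebra property), giving the term with $\|\nabla_y \vartheta_A - \nabla_y \vartheta_B\|_{H^k_y}$. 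The second term is controlled via \eqref{eq:nl:bnd:prod:hky:hkx:lip} with $\phi = |\nabla_y \vartheta_B|^2$, $\kappa = 0$, and $\zeta = \Phi_0''$ (so $\zeta' = \Phi_0''' \in H^k_x$), producing the term with $\|\nabla_y \vartheta_B\|_{H^k_y}^2 \|\vartheta_A - \vartheta_B\|_{H^k_y}$, after absorbing $(1 + \|\vartheta_A\|_{H^k_y}^{k-1} + \|\vartheta_B\|_{H^k_y}^k)$ into $K^N$.

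There is no genuine obstacle here: the decomposition is structurally identical to the ones already used for $\mathcal J_I$ and $\mathcal J_{II}$, and the required regularity of $\Phi_0''$ and $\Phi_0'''$ comes for free from the travelling wave ODE. The only point worth checking carefully is that the splitting above places $\nabla_y \vartheta$-differences on the factor $(|\nabla_y\vartheta_A|^2 - |\nabla_y\vartheta_B|^2)$ rather than on the shift $T_{\vartheta_A} - T_{\vartheta_B}$, which is essential to avoid spurious $\|\vartheta_A - \vartheta_B\|_{H^{k+1}_y}$ contributions and keep the final bound at the claimed level.
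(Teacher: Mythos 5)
Your proof is correct and takes essentially the same route as the paper: the identical polarisation-based splitting, with the cross-difference term handled by \eqref{eq:nl:bnd:prod:hky:hkx} and the shift-difference term by \eqref{eq:nl:bnd:prod:hky:hkx:lip}, plus the direct Fubini computation for the $L^1$-bound. The only (inconsequential) difference is that the paper deduces the $H^k$-bound in \eqref{eq:nl:bnd:j:iii:l1:hk} by setting $\vartheta_B = 0$ in the Lipschitz estimate, whereas you derive it directly from the product bound.
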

\begin{proof}
Upon writing
\begin{equation}
\begin{array}{lcl}
    \Delta_{AB} \mathcal{J}_{III}
    & = &
      \mathcal{J}_{III}( \vartheta_A) -  \mathcal{J}_{III}( \vartheta_B)
    \\[0.2cm]
    & = & (\nabla_y  \vartheta_A + \nabla_y \vartheta_B)\cdot (\nabla_y \vartheta_A - \nabla_y \vartheta_B)\cdot  T_{\vartheta_A} \Phi_0'
    + | \nabla_y \vartheta_B |^2 (T_{\vartheta_A} \Phi_0'- T_{\vartheta_B} \Phi_0'),
\end{array}
\end{equation}
we may use  \eqref{eq:nl:bnd:prod:hky:hkx:lip} and \eqref{eq:nl:bnd:prod:hky:hkx}
to compute
\begin{equation}
\begin{array}{lcl}
    \| \Delta_{AB} \mathcal{J}_{III} \|_{H^k} 
    & \le & 
K^N \|\nabla_y  \vartheta_A + \nabla_y \vartheta_B\|_{H^k_y} \| \nabla_y \vartheta_A - \nabla_y \vartheta_B \|_{H^k_y}\|\Phi_0'\|_{H^k_x}
\\[0.2cm]
& & \qquad
    + \,K^N \| \nabla_y \vartheta_B \|^2_{H^k_y} 
    \| \vartheta_A - \vartheta_B\|_{H^k_y} \| \Phi_0''\|_{H^k_x},
\end{array}
\end{equation}
for some $K^N>0,$ which establishes \eqref{eq:nl:j:iii:lip}  after updating $K^N>0$ when necessary.
The $H^k$-bound in \eqref{eq:nl:bnd:j:iii:l1:hk} follows directly
from \eqref{eq:nl:j:iii:lip} by taking $\vartheta_B = 0$,
while the identity
\begin{equation}
    \| \mathcal{J}_{III}(\vartheta) \|_{L^1} = 
     \|\Phi_0''\|_{L^1_x} \| \nabla_y \vartheta \|_{L^2_y}^2
\end{equation}
immediately confirms the $L^1$-bound in \eqref{eq:nl:bnd:j:iii:l1:hk}.
\end{proof}

We are now ready to consider the final
ingredient $\mathcal{J}_{\rm tr}$. Before diving into the relevant estimates of this function, let us first   point out that its definition  does not depend on the choice of the basis. This statement can either be concluded from the evolution system derivation in {\S}\ref{sec:evol:pert} or from a direct computation which we  demonstrate below. Lemma \ref{lem:nl:bnds:j:tr} tells us when $\mathcal J_{\rm tr}$ is well-defined in $H^k$.
\begin{lemma}\label{lem:nl:independence}
    Suppose that \textnormal{(Hf)}, \textnormal{(HTw)},  and \textnormal{(HgQ)} hold. The definition of
    \begin{equation}
        \mathcal J_{\rm tr}(v,\theta,\gamma)=- \frac{1}{2}T_\vartheta \Phi_0'' \Upsilon(\vartheta)^2\sum_{j=0}^\infty 
\langle g(v + T_{\vartheta} \Phi_0)T_{-\gamma}\sqrt Qe_j ,  \psi_{\rm tw} \rangle_{L^2_x}^2,
    \end{equation}
    when well-defined, is  independent of the orthonormal basis $(e_j)_{j\geq 0}$ of $\mathcal W$.
\end{lemma}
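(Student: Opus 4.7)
The plan is to peel off the basis-independent prefactor and then recognise the remaining sum as the squared Hilbert--Schmidt norm of a single real-valued functional on $\mathcal{W}_Q$, evaluated pointwise in $y$. Since Hilbert--Schmidt norms are intrinsic to the underlying Hilbert space, no basis appears in the end.

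First, I would note that $-\tfrac12 T_\vartheta \Phi_0''(x)\Upsilon(\vartheta)(y)^2$ depends only on $v$, $\vartheta$, and the deterministic waveprofile, so the basis plays no role there. It therefore suffices to show that
\begin{equation}
\Sigma(y) \;=\; \sum_{j=0}^\infty \langle g(v+T_\vartheta\Phi_0)[T_{-\gamma}\sqrt{Q}e_j],\,\psi_{\rm tw}\rangle_{L^2_x}(y)^{2}
\end{equation}
is independent of the orthonormal basis $(e_j)_{j\ge 0}$ of $\mathcal{W}$ at each fixed $y\in\mathbb R^{d-1}$.

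Next, for fixed $y$ I would introduce the linear functional
\begin{equation}
\Lambda_y: \mathcal{W}_Q \to \mathbb R,\qquad \Lambda_y(\xi) \;=\; \langle g(v+T_\vartheta\Phi_0)[T_{-\gamma}\xi],\,\psi_{\rm tw}\rangle_{L^2_x}(y).
\end{equation}
Since $k > d/2$, the Sobolev embedding $H^k_y \hookrightarrow L^\infty_y$ makes pointwise evaluation at $y$ well-defined; combined with Lemma \ref{lem:innerproduct:Hky} and (HgQ), $\Lambda_y$ is a bounded linear functional on $\mathcal{W}_Q$. By construction,
\begin{equation}
\Sigma(y) \;=\; \sum_{j=0}^\infty |\Lambda_y(\sqrt{Q}e_j)|^{2}.
\end{equation}
Since $(\sqrt{Q}e_j)_{j\ge 0}$ (ignoring zero elements, per the convention fixed in \S\ref{subsec:mr:st}) is an orthonormal basis of $\mathcal{W}_Q$, this sum equals the squared Hilbert--Schmidt norm $\|\Lambda_y\|_{HS(\mathcal{W}_Q;\mathbb R)}^{2}$. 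It is a standard fact that this norm is independent of the chosen orthonormal basis of $\mathcal{W}_Q$, and hence independent of $(e_j)_{j\ge 0}$.

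Combining the two observations, $\mathcal{J}_{\rm tr}(v,\vartheta,\gamma)(x,y) = -\tfrac12 T_\vartheta\Phi_0''(x)\Upsilon(\vartheta)(y)^{2}\,\Sigma(y)$ is independent of the basis at every $(x,y)$, which proves the claim. The only non-routine point is verifying that pointwise evaluation makes sense and yields a bounded functional on $\mathcal{W}_Q$; this is handled by the Sobolev embedding together with the Hilbert--Schmidt bound from (HgQ), and is the step I would write out most carefully.
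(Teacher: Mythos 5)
Your proof is correct, and it takes a genuinely different (and arguably cleaner) route than the paper's. The paper argues by an explicit change-of-basis computation: it writes a second orthonormal basis $(f_j)$ as $f_j = \sum_i U_{ij} e_i$ with $U$ unitary, expands $\sum_j \langle Zf_j,\psi_{\rm tw}\rangle^2$ as a double sum, and uses $\sum_j U_{ij}U_{i'j} = \delta_{ii'}$ to collapse it back. You instead peel off the scalar prefactor, fix $y$, and recognise the remaining sum as $\sum_j |\Lambda_y(\sqrt{Q}e_j)|^2$, i.e.\ the squared norm of the linear functional $\Lambda_y \in \mathcal W_Q^*$ (equivalently its Hilbert--Schmidt norm into $\mathbb R$), via Riesz representation and Parseval. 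Since $(\sqrt{Q}e_j)_j$ is an orthonormal basis of $\mathcal W_Q$ for every orthonormal basis $(e_j)_j$ of $\mathcal W$ (modulo the zero elements, as the paper's convention handles), and since the dual/HS norm is an intrinsic, basis-free quantity, independence follows immediately. What your route buys is conceptual economy: no index manipulation, and the argument makes transparent that the relevant object is just a norm in $\mathcal W_Q^*$ evaluated pointwise in $y$. What the paper's route buys is self-containment: it needs nothing beyond the orthogonality relation for the unitary matrix, whereas your version silently relies on the Riesz identification, the Parseval identity, and the well-definedness of pointwise evaluation (which you rightly flag as the only non-routine point, and correctly attribute to the Sobolev embedding $H^k_y\hookrightarrow L^\infty_y$ together with (HgQ) and Lemma \ref{lem:innerproduct:Hky}). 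Both arguments are complete and valid.
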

\begin{proof}
   Fix an orthonormal basis $(e_j)_{j\geq 0}$ of $\mathcal W$. For   any other  basis $(f_j)_{j\geq 0}$,  there exists a unitary transformation $U\in\mathscr L(\mathcal W)$ such that $f_j=Ue_j.$ In other words, we have
    \begin{equation}
        Uw=\sum_{i=0}^\infty\sum_{j=0}^\infty U_{ij}\langle w,e_i\rangle_{\mathcal W}f_j,\quad  U_{ij}=\langle e_i,f_j\rangle_{\mathcal W},\quad f_j=\sum_{i=0}^\infty U_{ij}e_i, \quad w\in\mathcal W.
    \end{equation}
    Introduce the shorthand $Z=g(v+T_\theta\Phi_0)T_{-\gamma}\sqrt{Q}.$ This yields
    \begin{equation}
        \begin{array}{lcl}
            \sum_{j=0}^\infty 
\langle Zf_j ,  \psi_{\rm tw} \rangle_{L^2_x}^2 & =
             & \sum_{j=0}^\infty \sum_{i=0}^\infty \sum_{i'=0}^\infty U_{ij}U_{i'j} 
\langle Ze_i ,  \psi_{\rm tw} \rangle_{L^2_x}\langle Ze_{i'} ,  \psi_{\rm tw} \rangle_{L^2_x}\\[0.2cm]
&=&  \sum_{i=0}^\infty \sum_{i'=0}^\infty  
\langle Ze_i ,  \psi_{\rm tw} \rangle_{L^2_x}\langle Ze_{i'} ,  \psi_{\rm tw} \rangle_{L^2_x}\sum_{j=0}^\infty U_{ij}U_{i'j}\\[0.2cm]
&=&  \sum_{i=0}^\infty  
\langle Ze_i ,  \psi_{\rm tw} \rangle_{L^2_x}^2,\\
        \end{array}
    \end{equation}
    utilising the fact that $\sum_{j=0}^\infty U_{ij}U_{i'j}=\delta_{ii'}$ holds. From this computation  the assertion follows.
\end{proof}

Recall that \eqref{eq:nl:def:j:jtr}
can be recast in the form
\begin{equation}
    \mathcal{J}_{\rm tr}(v, \vartheta, \gamma)
    = -\frac{1}{2}\sum_{j=0}^\infty 
    T_{\vartheta} \Phi_0'' \big[ M(v, \vartheta)[T_{-\gamma} \sqrt{Q} e_j] \big]^2.
\end{equation}
In order to facilitate our Lipschitz computations,
we introduce the expressions
\begin{equation}
\begin{array}{lcl}
    \Delta_{AB;I} \mathcal J_{\rm tr} &=& \frac{1}{2} 
    \sum_{j=0}^\infty 
    \big[ 
    T_{\vartheta_A} \Phi_0''  - T_{\vartheta_B} \Phi_0'' 
    \big] \big[ M(v_A, \vartheta_A) [T_{-\gamma} \sqrt{Q} e_j] \big]^2 ,
\\[0.2cm]
    \Delta_{AB;II} \mathcal J_{\rm tr} &=& \frac{1}{2} 
    \sum_{j=0}^\infty 
    \big[ 
     T_{\vartheta_B} \Phi_0'' 
    \big] \big[ M(v_A, \vartheta_A) [T_{-\gamma} \sqrt{Q} e_j] 
     + M(v_B, \vartheta_B) [T_{-\gamma} \sqrt{Q} e_j]
    \big]
\\[0.2cm]
& & \qquad \qquad \times \big[
       M(v_A, \vartheta_A) [T_{-\gamma} \sqrt{Q} e_j] 
     - M(v_B, \vartheta_B) [T_{-\gamma} \sqrt{Q} e_j]
    \big] ,
\end{array}
\end{equation}
which allow us to write
\begin{equation}
    \mathcal J_{\rm tr} (v_A,\vartheta_A,\gamma)
    - \mathcal J_{\rm tr} (v_B,\vartheta_B,\gamma)
    = 
    \Delta_{AB;I} \mathcal{J}_{\rm tr} + \Delta_{AB;II} \mathcal{J}_{\rm tr} .
\end{equation}

\begin{lemma}
\label{lem:nl:bnds:j:tr}
Let $k > d/2$
and suppose that \textnormal{(Hf)}, \textnormal{(HTw)},  and \textnormal{(HgQ)} hold.
For all $N>0$ there exists a constant   $K^N > 0$
so that for all $\gamma \in \mathbb R$, and every $v \in H^k$ and  $\vartheta \in H^k_y$ with $\|v\|_{H^k} \le N$ and $\|\vartheta\|_{H^k_y} \le N$,
we have the bounds 
\begin{equation}
\label{eq:nl:bnd:j:tr:l1:hk}
\begin{array}{lcl}
\| \mathcal{J}_{\rm tr}(v, \vartheta,\gamma) \|_{L^1}
& \le & K^N , \\[0.2cm]
\| \mathcal{J}_{\rm tr}(v, \vartheta,\gamma) \|_{H^k} 
&\le& K^N ,
\end{array}
\end{equation}
while for any $v_A, v_B \in H^k$ and $\vartheta_A, \vartheta_B \in H^k_y$ with \begin{equation}
    \max\{ \|v_A\|_{H^k}, \|v_B\|_{H^k} ,\|\vartheta_A\|_{H^k_y}, \|\vartheta_B\|_{H^k_y} \} \le N, 
\end{equation}  
we have
\begin{equation}
\label{eq:nl:j:tr:lip}
    \| \mathcal{J}_{\rm tr}(v_A, \vartheta_A) -  \mathcal{J}_{\rm tr}(v_B, \vartheta_B) \|_{H^k}  
    \le K^N 
    \big[ \| v_A - v_B \|_{H^k} + \|\vartheta_A - \vartheta_B\|_{H^k}
    \big] .
\end{equation}
\end{lemma}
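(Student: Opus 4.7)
The strategy is to exploit the representation
\begin{equation*}
\mathcal{J}_{\rm tr}(v,\vartheta,\gamma) \;=\; -\tfrac{1}{2}\,T_\vartheta \Phi_0'' \cdot \Sigma(v,\vartheta,\gamma), \qquad \Sigma(v,\vartheta,\gamma)(y) \;:=\; \sum_{j=0}^{\infty} \bigl[M(v,\vartheta)[T_{-\gamma}\sqrt Q e_j](y)\bigr]^2,
\end{equation*}
and reduce everything to the product estimates \eqref{eq:nl:bnd:prod:hky:hkx} and \eqref{eq:nl:bnd:prod:hky:hkx:lip}, together with the uniform and Lipschitz bounds on $M$ provided by Lemma \ref{lem:nl:m} and \eqref{eq:nl:bnd:m:calm:lip}. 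Note that $\Sigma$ is a (real-valued) function of $y$ only, while $T_\vartheta \Phi_0''$ is a $y$-dependent shift of the purely $x$-dependent function $\Phi_0''$, so the setup matches  \eqref{eq:nl:bnd:prod:hky:hkx} precisely.

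The first step is to show that, for $k>(d-1)/2$, the algebra structure of $H^k_y$ (used componentwise) yields the inequality
\begin{equation*}
\Bigl\|\sum_{j=0}^\infty f_j^2\Bigr\|_{H^k_y} \;\le\; C \sum_{j=0}^\infty \|f_j\|_{H^k_y}^2
\end{equation*}
for every square-summable family $\{f_j\}\subset H^k_y$, and similarly $\|\sum_j f_j^2\|_{L^1_y} = \sum_j \|f_j\|^2_{L^2_y}$. Applying this with $f_j = M(v,\vartheta)[T_{-\gamma}\sqrt Q e_j]$ and using Lemma \ref{lem:nl:m} gives $\|\Sigma(v,\vartheta,\gamma)\|_{H^k_y}\le C(K^N)^2$, and similarly $\|\Sigma\|_{L^1_y}\le (K^N)^2$ by the embedding $H^k_y\hookrightarrow L^2_y$. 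The bounds \eqref{eq:nl:bnd:j:tr:l1:hk} then follow immediately: for the $H^k$-bound one invokes \eqref{eq:nl:bnd:prod:hky:hkx} with $\phi = \Sigma$ and $\zeta = \Phi_0''$, while for the $L^1$-bound a direct Fubini computation gives $\|\mathcal{J}_{\rm tr}\|_{L^1} = \tfrac12 \|\Phi_0''\|_{L^1_x}\|\Sigma\|_{L^1_y}$, using the translation invariance of the $x$-integral.

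For the Lipschitz estimate \eqref{eq:nl:j:tr:lip} I would  handle the two pieces $\Delta_{AB;I}\mathcal{J}_{\rm tr}$ and $\Delta_{AB;II}\mathcal{J}_{\rm tr}$ separately. The term $\Delta_{AB;I}$ has the form $(T_{\vartheta_A}\Phi_0'' - T_{\vartheta_B}\Phi_0'')\cdot \Sigma(v_A,\vartheta_A,\gamma)$; applying the Lipschitz product estimate \eqref{eq:nl:bnd:prod:hky:hkx:lip} with $\kappa=0$, $\phi=\Sigma(v_A,\vartheta_A,\gamma)$, and $\zeta = \Phi_0''$, and using the $H^k_y$-bound on $\Sigma$ just established, gives a contribution of size $K^N\|\vartheta_A-\vartheta_B\|_{H^k_y}$. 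For $\Delta_{AB;II}$, I would rewrite the sum as
\begin{equation*}
\sum_{j=0}^\infty \bigl(M(v_A,\vartheta_A)[\,\cdot\,] + M(v_B,\vartheta_B)[\,\cdot\,]\bigr)\bigl(M(v_A,\vartheta_A)[\,\cdot\,] - M(v_B,\vartheta_B)[\,\cdot\,]\bigr)[T_{-\gamma}\sqrt Q e_j],
\end{equation*}
then apply the $H^k_y$-algebra property termwise followed by Cauchy--Schwarz on the $j$-sum to bound its $H^k_y$-norm by $C\|M(v_A,\vartheta_A)T_{-\gamma} + M(v_B,\vartheta_B)T_{-\gamma}\|_{HS(\mathcal W_Q;H^k_y)}\cdot \|[M(v_A,\vartheta_A)-M(v_B,\vartheta_B)]T_{-\gamma}\|_{HS(\mathcal W_Q;H^k_y)}$. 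Lemma \ref{lem:nl:m} bounds the first factor uniformly by $2K^N$, while \eqref{eq:nl:bnd:m:calm:lip} bounds the second by $K^N(\|v_A-v_B\|_{H^k}+\|\vartheta_A-\vartheta_B\|_{H^k_y})$. A final application of \eqref{eq:nl:bnd:prod:hky:hkx} with $\zeta=\Phi_0''$ transfers this $H^k_y$-bound to the $H^k$-bound for $\Delta_{AB;II}\mathcal{J}_{\rm tr}$.

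I expect the only slightly subtle point to be the first step: justifying the termwise $H^k_y$-algebra inequality for the infinite sum $\sum_j f_j^2$. This is standard but requires a short Fatou/monotone-convergence argument applied to the partial sums and the Leibniz expansion of $\partial_y^\alpha(f_j^2)$; once this is in place, the rest of the proof consists purely of reassembling the earlier lemmas.
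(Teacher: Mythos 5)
Your proposal is correct and is in substance the same as the paper's proof: both use the representation $\mathcal{J}_{\rm tr} = -\tfrac12 T_\vartheta\Phi_0''\cdot\Sigma$ with $\Sigma=\sum_j (M[T_{-\gamma}\sqrt Q e_j])^2$, bound $\Sigma$ in $H^k_y$ and $L^1_y$ via the algebra structure of $H^k_y$ and the Hilbert--Schmidt bound on $M$ from Lemma \ref{lem:nl:m}, transfer to $H^k(\mathbb R^d)$ via the product estimate \eqref{eq:nl:bnd:prod:hky:hkx}, and then handle the Lipschitz estimate by splitting into the pieces $\Delta_{AB;I}$ (treated with \eqref{eq:nl:bnd:prod:hky:hkx:lip}) and $\Delta_{AB;II}$ (treated by factoring the difference of squares, applying the $H^k_y$-algebra property termwise, and then Cauchy--Schwarz on the $j$-sum against \eqref{eq:nl:bnds:m:cal:m:unif} and \eqref{eq:nl:bnd:m:calm:lip}). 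The only cosmetic difference is that you bundle the infinite sum into a single named function $\Sigma$ before invoking the product estimates, whereas the paper carries the sum explicitly through each inequality; your flagged ``subtle point'' about justifying the termwise algebra inequality for the infinite sum is indeed needed, and the paper uses it implicitly in the same places.
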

\begin{proof}
A direct computation yields
\begin{equation}
\begin{array}{lcl}
    \|\mathcal{J}_{\rm tr}(v, \vartheta, \gamma)\|_{L^1}
    &\le& \frac{1}{2} \| \Phi_0''\|_{L^1_x} \sum_{j=0}^\infty 
     \| M(v, \vartheta)[T_{-\gamma} \sqrt{Q} e_j] \|_{L^2_y}^2
     \\&\le& \frac{1}{2} \| \Phi_0''\|_{L^1_x}  \| M(v, \vartheta) T_{-\gamma} \|_{HS(\mathcal W_Q;H^k_y)}^2,
     \end{array}
\end{equation}
while \eqref{eq:nl:bnd:prod:hky:hkx} together with the algebra structure of $H^k_y$ 
allows us to compute
\begin{equation}
\begin{array}{lcl}
    \|\mathcal{J}_{\rm tr}(v, \vartheta, \gamma)\|_{H^k}
    & \le & C_1^N \|\Phi_0''\|_{H^k_x} \sum_{j=0}^\infty 
     \| M(v, \vartheta)[T_{-\gamma} \sqrt{Q} e_j] \|_{H^k_y}^2
     \\[0.2cm]
     & = & C_1^N
     \|\Phi_0''\|_{H^k_x} \| M(v, \vartheta) T_{-\gamma} \|_{HS(\mathcal W_Q;H^k_y)}^2,
\end{array}
\end{equation}
for some $C_1^N>0.$
In particular, the bounds in \eqref{eq:nl:bnd:j:tr:l1:hk}
follow directly from the estimate \eqref{eq:nl:bnds:m:cal:m:unif}.

Turning to the Lipschitz estimate, we first
use  \eqref{eq:nl:bnd:prod:hky:hkx:lip}
together with the algebra structure of $H^k_y$ to obtain
\begin{equation}
\begin{array}{lcl}
  \|  \Delta_{AB;I} \mathcal{J}_{\rm tr} \|_{H^k} & \le &
C_2^N \| \Phi_0''' \|_{H^k_x}
  \sum_{j =0}^\infty
  \|M(v_A, \vartheta_A)[T_{-\gamma} \sqrt{Q} e_j] \|_{H^k_y}^2
\\[0.2cm]
& = & 
C_2^N \| \Phi_0''' \|_{H^k_x}
  \|M(v_A, \vartheta_A)T_{-\gamma} \|_{HS(\mathcal W_Q;H^k_y)}^2,
\end{array}
\end{equation}
for some $C_2^N>0.$
In addition, applying \eqref{eq:nl:bnd:prod:hky:hkx}
together with Cauchy-Schwarz gives us
\begin{align}
  \|  \Delta_{AB;II} \mathcal{J}_{\rm tr} \|_{H^k} &\nonumber\textstyle \le 
  C_3^N \| \Phi_0''\|_{H^k_x}
  \sum_{j=0}^\infty 
  \big[ \|M(v_A, \vartheta_A)T_{-\gamma}\sqrt{Q} e_j \|_{H^k_y} +  \|M(v_B, \vartheta_B)T_{-\gamma}\sqrt{Q} e_j \|_{H^k_y} \big]
\\
&\nonumber\textstyle \qquad \qquad \times
  \| [M(v_A, \vartheta_A) - M(v_B, \vartheta_B)]T_{-\gamma} \sqrt{Q} e_j \|_{HS(\mathcal W_Q;H^k_y)}
\\ 
& \le \nonumber\textstyle
 C_3^N \| \Phi_0''\|_{H^k_x}
  \big[ \sum_{j=0}^\infty 
  \big[ \|M(v_A, \vartheta_A)T_{-\gamma}\sqrt{Q} e_j \|_{H^k_y}^2 +  \|M(v_B, \vartheta_B)T_{-\gamma}\sqrt{Q} e_j \|_{H^k_y}^2 \big] \big]^{1/2}
\\\nonumber
& \nonumber\textstyle \qquad \qquad \times
  \big[ \sum_{j=0}^\infty \| [M(v_A, \vartheta_A) - M(v_B, \vartheta_B)]T_{-\gamma} \sqrt{Q} e_j \|_{HS(\mathcal W_Q;H^k_y)}^2   \big]^{1/2}
\\ \nonumber
& \le \nonumber
  C_3^N \| \Phi_0''\|_{H^k_x} \big[ \|M(v_A, \vartheta_A)T_{-\gamma} \|_{HS(\mathcal W_Q;H^k_y)}
  + \|M(v_A, \vartheta_A) T_{-\gamma}\|_{HS(\mathcal W_Q;H^k_y)}
  ]
\\ 
&   \qquad \qquad \times \| [M(v_A, \vartheta_A) - M(v_B, \vartheta_B)] T_{-\gamma}\|_{HS(\mathcal W_Q;H^k_y)}, 
\end{align}
for some $C_3^N>0.$
The bound \eqref{eq:nl:j:tr:lip} now follows from \eqref{eq:nl:bnds:m:cal:m:unif} and \eqref{eq:nl:bnd:m:calm:lip}.
\end{proof}

Our final task here is to consider the functions $N_\sigma$ and $\mathcal{N}_\sigma$.
For convenience, we write
\begin{equation}
    \mathcal{J}_{\rm tot}(v, \theta,\gamma, \sigma) = \mathcal{J}(v, \vartheta) + \sigma^2 \mathcal{J}_{\rm tr}(v, \theta, \gamma),
\end{equation}
which allows us to recast \eqref{eq:nl:def:n:m}
in the form
\begin{equation}
    N_\sigma(v, \vartheta,\gamma) = \Upsilon(\vartheta) \langle \mathcal{J}_{\rm tot}(v, \theta,\gamma,\sigma) , \psi_{\rm tw} \rangle_{L^2_x}
\end{equation}
and obtain the representation
\begin{equation}
\label{eq:nl:id:lip:n:sigma}
\begin{aligned}
    N_\sigma(v_A, \vartheta_A,\gamma)-N_\sigma(v_B, \vartheta_B,\gamma)
    &=(\Upsilon(\vartheta_A)- \Upsilon( \vartheta_B))  \langle \mathcal{J}_{\rm tot}(v_A, \vartheta_A,\gamma, \sigma), \psi_{\rm tw} \rangle_{L^2_x}
    \\& 
    \,\,\,\,\,+ \Upsilon(\vartheta_B) 
    \langle \mathcal{J}_{\rm tot}(v_A, \vartheta_A,\gamma, \sigma) - 
    \mathcal{J}_{\rm tot}(v_B, \vartheta_B,\gamma, \sigma) , \psi_{\rm tw} \rangle_{L^2_x} .
\end{aligned}
\end{equation}

\begin{lemma}\label{lem:nl:bnds:n:sigma}
 Let $k >d/2$  and suppose that \textnormal{(Hf)}, \textnormal{(HTw)},  and \textnormal{(HgQ)}  hold. For all $N>0$ there exists a constant  $K^N > 0$ so that for all $\gamma \in \mathbb R$,  and every $v \in H^k$ and
  $\vartheta \in H^{k+1}_y$ with $\|v\|_{H^k} \le N $ and $\| \vartheta\|_{H^k_y} \le N$,
 we have the bounds
 \begin{equation}
 \label{eq:nl:bnds:n:cal:n:unif}
 \begin{array}{lcl}
 \left\| N_\sigma(v,\theta,\gamma)\right\|_{H^k _y}+\left\|N_\sigma(v,\theta,\gamma)\right\|_{L^1 _y}
     & \leq & K^N\left(\sigma^2 + \|v\|_{H^k}^2+ \|v\|_{H^k} \| \vartheta\|_{H^k_y}  + \|\nabla_y
     \theta\|_{H^{k} _y}^2\right) ,
\\[0.2cm]
     \left\|\mathcal N_\sigma(v,\theta,\gamma)\right\|_{H^k}
     & \leq & K^N\left(\sigma^2 + \|v\|_{H^k}^2+ \|v\|_{H^k} \| \vartheta\|_{H^k_y}  + \|\nabla_y
     \theta\|_{H^{k} _y}^2\right),
\end{array}
 \end{equation}
 for any $\sigma\in \mathbb R.$
  \end{lemma}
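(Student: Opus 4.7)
The plan is to combine the $H^k$ and $L^1$ estimates for the four constituent pieces $\mathcal{J}_I, \mathcal{J}_{II}, \mathcal{J}_{III}, \mathcal{J}_{\rm tr}$ already obtained in Lemmas \ref{lem:nl:bnds:j:i}--\ref{lem:nl:bnds:j:tr}, package them through the inner product against $\psi_{\rm tw}$, and multiply by the Lipschitz factor $\Upsilon(\vartheta)$. Concretely, I first set $\mathcal{J}_{\rm tot}(v,\vartheta,\gamma,\sigma) = \mathcal{J}_I(v,\vartheta) + \mathcal{J}_{II}(v,\vartheta) + \mathcal{J}_{III}(\vartheta) + \sigma^2 \mathcal{J}_{\rm tr}(v,\vartheta,\gamma)$ and observe that adding the three $L^1$/$H^k$ bounds in \eqref{eq:nl:bnd:j:i:l1:hk}, \eqref{eq:nl:bnd:j:ii:l1:hk}, \eqref{eq:nl:bnd:j:iii:l1:hk} together with the uniform bound \eqref{eq:nl:bnd:j:tr:l1:hk} yields
\begin{equation}
\| \mathcal{J}_{\rm tot} \|_{H^k} + \| \mathcal{J}_{\rm tot} \|_{L^1}
\le C^N \big( \sigma^2 + \|v\|_{H^k}^2 + \|v\|_{H^k} \|\vartheta\|_{H^k_y} + \|\nabla_y \vartheta\|_{H^k_y}^2 \big),
\end{equation}
where we absorb the $L^2$-type $L^1$-estimates into their $H^k$-counterparts using the a priori bound $N$.

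Next, for $N_\sigma = \Upsilon(\vartheta) \langle \mathcal{J}_{\rm tot}, \psi_{\rm tw}\rangle_{L^2_x}$, I split the estimation into two parts. For the $L^1_y$-bound, I use the pointwise inequality $|\langle \mathcal{J}_{\rm tot}(\cdot, y, \cdot), \psi_{\rm tw} \rangle_{L^2_x}| \le \|\psi_{\rm tw}\|_{L^\infty_x} \|\mathcal{J}_{\rm tot}(\cdot, y, \cdot)\|_{L^1_x}$ (valid since $\psi_{\rm tw}$ decays exponentially by (HTw)), together with $\|\Upsilon(\vartheta)\|_{L^\infty_y} \le 4$ which follows from the cut-off construction $\chi \ge 1/4$. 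Integrating over $y$ yields $\|N_\sigma\|_{L^1_y} \le 4 \|\psi_{\rm tw}\|_{L^\infty_x} \|\mathcal{J}_{\rm tot}\|_{L^1}$. For the $H^k_y$-bound, I write $\Upsilon(\vartheta) = -1 + (\Upsilon(\vartheta)+1)$ and invoke the algebra structure of $H^k_y$ (valid since $k > d/2 \ge (d-1)/2$) together with \eqref{eq:nl:est:k2:hk} and Lemma \ref{lem:innerproduct:Hky} to get
\begin{equation}
\|N_\sigma\|_{H^k_y} \le C \big( 1 + \|\Upsilon(\vartheta)+1\|_{H^k_y} \big) \|\langle \mathcal{J}_{\rm tot}, \psi_{\rm tw} \rangle_{L^2_x}\|_{H^k_y} \le C^N \|\mathcal{J}_{\rm tot}\|_{H^k} \|\psi_{\rm tw}\|_{L^2_x}.
\end{equation}

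Finally, for $\mathcal{N}_\sigma = \mathcal{J}_{\rm tot} + N_\sigma \, T_\vartheta \Phi_0'$, I bound the first summand directly and estimate the product via \eqref{eq:nl:bnd:prod:hky:hkx}, which gives $\|N_\sigma \, T_\vartheta \Phi_0'\|_{H^k} \le K \|N_\sigma\|_{H^k_y} (1 + \|\vartheta\|_{H^k_y}^k)\|\Phi_0'\|_{H^k_x}$, and this is absorbed into $C^N \|N_\sigma\|_{H^k_y}$ using the a priori bound on $\vartheta$. Combining everything delivers \eqref{eq:nl:bnds:n:cal:n:unif}. The proof is essentially a bookkeeping exercise; the only mildly subtle point is the $L^1_y$-bound, where I must use the $L^\infty_x$-decay of $\psi_{\rm tw}$ (rather than a Cauchy--Schwarz in $L^2_x$) to convert $L^1$-control on $\mathcal{J}_{\rm tot}$ into $L^1_y$-control of the projection.
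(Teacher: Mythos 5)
Your proof is correct and follows essentially the same route as the paper: bound $\mathcal{J}_{\rm tot}$ by summing the four constituent estimates, get the $L^1_y$-bound via the pointwise $\|\psi_{\rm tw}\|_{\infty}$-estimate in $x$, get the $H^k_y$-bound via \eqref{eq:nl:bnd:l2x:to:hk} and the algebra structure of $H^k_y$ together with \eqref{eq:nl:est:k2:hk}, and handle $\mathcal{N}_\sigma$ with the product estimate \eqref{eq:nl:bnd:prod:hky:hkx}. The only cosmetic difference is that you make the uniform bound $\|\Upsilon(\vartheta)\|_\infty \le 4$ explicit via the cut-off $\chi \ge 1/4$, whereas the paper leaves it as $\|\Upsilon(\vartheta)\|_\infty$.
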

\begin{proof}
We first note that a direct computation yields
\begin{equation}
    \|N_\sigma(v, \vartheta,\gamma)\|_{L^1_y} \le 
    \| \Upsilon(\vartheta)\|_{\infty} 
    \|\mathcal{J}_{\rm tot}(v, \vartheta, \gamma,\sigma)\|_{L^1} \|\psi_{\rm tw}\|_{\infty},
\end{equation}
while \eqref{eq:nl:bnd:l2x:to:hk} together with the algebra structure of $H^k_y$
imply that
\begin{equation}
    \|N_\sigma(v, \vartheta,\gamma)\|_{H^k_y} \le \big( 1 + \|\Upsilon(\vartheta) + 1 \|_{H^k_y} \big) \|\mathcal{J}_{\rm tot}(v, \vartheta, \gamma,\sigma)\|_{H^k}
     \|\psi_{\rm tw}\|_{L^2_x} .
\end{equation}
In addition, bound
\eqref{eq:nl:bnd:prod:hky:hkx}
yields
\begin{equation}
 \begin{array}{lcl}
 \|\mathcal{N}_\sigma(v, \vartheta,\gamma) \|_{H^k}
 \le \| \mathcal{J}_{\rm tot}(v, \vartheta,\gamma,\sigma) \|_{H^k}
  + C^N \|N_\sigma(v, \vartheta,\gamma) \|_{H^k_y} \|\Phi_0'\|_{H^k_x},
 \end{array}
\end{equation}
for some $C^N>0.$
The stated bounds hence follows from 
\eqref{eq:nl:est:k2:hk},
\eqref{eq:nl:bnd:j:i:l1:hk},
\eqref{eq:nl:bnd:j:ii:l1:hk},
\eqref{eq:nl:bnd:j:iii:l1:hk},
and
\eqref{eq:nl:bnd:j:tr:l1:hk}.
\end{proof}

\begin{lemma}
 Let $k >d/2$ and suppose that \textnormal{(Hf)}, \textnormal{(HTw)},  and \textnormal{(HgQ)} hold. For all $N>0$ there exist a constant $K^N > 0$
so that
     for all $\gamma \in \mathbb R$, and every   $v_A, v_B \in H^k$ and $\vartheta_A, \vartheta_B \in H^{k+1}_y$ with 
     \begin{equation}
         \max\{ \| v_A \|_{H^k}, \|v_B\|_{H^k}  , \|\vartheta_A\|_{H^k_y},
          \|\vartheta_B\|_{H^k_y} \} \le N,
     \end{equation}
we have the bound
\begin{align}
\label{eq:nl:n:sigma:lip} 
    \| N_\sigma(v_A, \vartheta_A,\gamma) -  N_\sigma(v_B, \vartheta_B,\gamma) \|_{H^k_y}  
    &\le K^N 
    \big[ \sigma^2 +  \|v_A\|_{H^k} + \|v_B\|_{H^k} + \|\vartheta_B\|_{H^k_y} + \| \nabla_y \vartheta_B\|^2_{H^k_y}]\nonumber
    \\ & \qquad \qquad \times
    \big[ \| v_A - v_B \|_{H^k} + \|\vartheta_A - \vartheta_B\|_{H^k_y}
    \big]
\\ & \qquad + K^N  \big[\|\nabla_y \vartheta_A \|_{H^k_y} + \| \nabla \vartheta_B\|_{H^k_y} \big]\|\nabla_y \vartheta_A - \nabla_y \vartheta_B\|_{H^k_y},\nonumber
\end{align}
together with
\begin{align}
\label{eq:nl:cal:n:lip} 
    \nonumber \| \mathcal{N}_\sigma(v_A, \vartheta_A,\gamma) -  \mathcal{N}_\sigma(v_B, \vartheta_B,\gamma) \|_{H^k}  
    &\le K^N 
    \big[ \sigma^2 +  \|v_A\|_{H^k} + \|v_B\|_{H^k}
     + \|\vartheta_B\|_{H^k_y}
     + \| \nabla_y \vartheta_B\|^2_{H^k_y
    }]
    \\ 
     &  \qquad \qquad \times
    \big[ \| v_A - v_B \|_{H^k} + \|\vartheta_A - \vartheta_B\|_{H^k_y}
    \big]
\\
& \qquad + K^N  \big[ \|\nabla_y \vartheta_A \|_{H^k_y} + \| \nabla_y \vartheta_B\|_{H^k_y} \big]\|\nabla_y \vartheta_A - \nabla_y \vartheta_B\|_{H^k_y},\nonumber 
\end{align}
for any $\sigma\in\mathbb R.$
\end{lemma}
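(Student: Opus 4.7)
The plan is to reduce both bounds to the Lipschitz estimates for the constituent building blocks established earlier in this subsection, namely Lemma \ref{lem:nl:bnds:K2} for $\Upsilon$, Lemmas \ref{lem:nl:bnds:j:i}--\ref{lem:nl:bnds:j:tr} for $\mathcal{J}_I$, $\mathcal{J}_{II}$, $\mathcal{J}_{III}$ and $\mathcal{J}_{\rm tr}$, and the uniform bounds from Lemma \ref{lem:nl:bnds:n:sigma}, combined with the product-type inequalities \eqref{eq:nl:bnd:l2x:to:hk}, \eqref{eq:nl:bnd:prod:hky:hkx:lip} and \eqref{eq:nl:bnd:prod:hky:hkx}. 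The key organising tools are the splittings \eqref{eq:nl:id:lip:n:sigma} for $N_\sigma$ and the analogue of \eqref{eq:nl:lip:id:cal:m} for $\mathcal{N}_\sigma$.

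First I would tackle \eqref{eq:nl:n:sigma:lip}. Starting from \eqref{eq:nl:id:lip:n:sigma}, I treat the two summands separately. For the first summand, I apply \eqref{eq:nl:bnd:l2x:to:hk} pointwise in $y$ (together with the algebra structure of $H^k_y$ for $k > (d-1)/2$) to extract $\|\mathcal{J}_{\rm tot}(v_A, \vartheta_A, \gamma, \sigma)\|_{H^k}$, and then use Lemma \ref{lem:nl:bnds:K2} to bound $\|\Upsilon(\vartheta_A) - \Upsilon(\vartheta_B)\|_{H^k_y}$ by $K^N \|\vartheta_A - \vartheta_B\|_{H^k_y}$. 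The $H^k$ norm of $\mathcal{J}_{\rm tot}$ is then handled by the uniform bounds \eqref{eq:nl:bnd:j:i:l1:hk}, \eqref{eq:nl:bnd:j:ii:l1:hk}, \eqref{eq:nl:bnd:j:iii:l1:hk}, and \eqref{eq:nl:bnd:j:tr:l1:hk}, producing the required $\sigma^2 + \|v_A\|_{H^k} + \|\vartheta_B\|_{H^k_y} + \|\nabla_y \vartheta_B\|^2_{H^k_y}$ type prefactor (after bounding $\|v_A\|^2 \le N\|v_A\|$ and $\|v_A\|\|\vartheta_A\| \le N\|\vartheta_A\|$ by means of the a priori bounds). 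For the second summand I use the uniform bound \eqref{eq:nl:est:k2:hk} on $\Upsilon(\vartheta_B)$ together with \eqref{eq:nl:bnd:l2x:to:hk} applied to the differences; the term-by-term Lipschitz bounds \eqref{eq:nl:j:i:lip}, \eqref{eq:nl:j:ii:lip}, \eqref{eq:nl:j:iii:lip}, and $\sigma^2$ times \eqref{eq:nl:j:tr:lip} then collectively yield exactly the right-hand side of \eqref{eq:nl:n:sigma:lip}, where the $\nabla_y$-valued contribution in the second line is inherited verbatim from \eqref{eq:nl:j:iii:lip}.

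For \eqref{eq:nl:cal:n:lip} I would use the decomposition
\begin{equation}
\begin{aligned}
\mathcal{N}_\sigma(v_A, \vartheta_A, \gamma) - \mathcal{N}_\sigma(v_B, \vartheta_B, \gamma)
&= \mathcal{J}_{\rm tot}(v_A, \vartheta_A, \gamma, \sigma) - \mathcal{J}_{\rm tot}(v_B, \vartheta_B, \gamma, \sigma) \\
&\quad + \big[N_\sigma(v_A, \vartheta_A, \gamma) - N_\sigma(v_B, \vartheta_B, \gamma)\big] T_{\vartheta_A} \Phi_0' \\
&\quad + N_\sigma(v_B, \vartheta_B, \gamma)\,[T_{\vartheta_A} - T_{\vartheta_B}]\Phi_0',
\end{aligned}
\end{equation}
mirroring \eqref{eq:nl:lip:id:cal:m}. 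The first line is already controlled by the $H^k$ Lipschitz bounds \eqref{eq:nl:j:i:lip}--\eqref{eq:nl:j:tr:lip}. The second line is handled by combining \eqref{eq:nl:bnd:prod:hky:hkx} (applied with $\phi = N_\sigma(v_A, \vartheta_A, \gamma) - N_\sigma(v_B, \vartheta_B, \gamma)$, $\zeta = \Phi_0'$ and $\vartheta = \vartheta_A$) with the just-established bound \eqref{eq:nl:n:sigma:lip}. The third line is handled by \eqref{eq:nl:bnd:prod:hky:hkx:lip} applied with $\phi = N_\sigma(v_B, \vartheta_B, \gamma)$ and $\zeta = \Phi_0'$, combined with the uniform $H^k_y$-bound on $N_\sigma$ from Lemma \ref{lem:nl:bnds:n:sigma}; this produces a contribution that factors as (size of $N_\sigma$) $\cdot\,\|\vartheta_A - \vartheta_B\|_{H^k_y}$, which is subsumed into the right-hand side of \eqref{eq:nl:cal:n:lip} after one more application of the a priori bounds.

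The main obstacle is bookkeeping: one must carefully preserve the $\|\nabla_y \vartheta\|_{H^k_y}$-structure inherited from $\mathcal{J}_{III}$ rather than sloppily absorbing it into $\|\vartheta\|_{H^{k+1}_y}$, since the sharp gradient-only form is what drives the stability analysis in {\S}\ref{sec:stability} through the quantity $N_{\mu;k}$ defined in \eqref{eq:size:main2}. Apart from this, no new analytic ingredient beyond those already assembled is required; the proof is essentially a weighted combination of previously established Lipschitz estimates.
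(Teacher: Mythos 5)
Your plan follows the paper's own proof almost exactly: split $N_\sigma$ via \eqref{eq:nl:id:lip:n:sigma}, handle $\Upsilon$ with \eqref{eq:nl:est:k2:hk} and \eqref{eq:nl:est:k2:hk:lip}, appeal to the $\mathcal J$-Lipschitz and uniform lemmas, then decompose $\mathcal N_\sigma - \mathcal N_\sigma$ as $\Delta\mathcal J_{\rm tot} + (\Delta N_\sigma)T_{\vartheta_A}\Phi_0' + N_\sigma(v_B,\vartheta_B)[T_{\vartheta_A}-T_{\vartheta_B}]\Phi_0'$ and finish with \eqref{eq:nl:bnd:prod:hky:hkx}, \eqref{eq:nl:bnd:prod:hky:hkx:lip}, \eqref{eq:nl:bnds:n:cal:n:unif}, and \eqref{eq:nl:n:sigma:lip}. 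This is the same route, and it is sound.

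One small imprecision worth fixing: in the first summand of \eqref{eq:nl:id:lip:n:sigma} you bound $\|\mathcal J_{\rm tot}(v_A,\vartheta_A,\gamma,\sigma)\|_{H^k}$ by the uniform estimates, which yields $A$-indexed quantities only, i.e.\ $\sigma^2 + \|v_A\|_{H^k}^2 + \|v_A\|_{H^k}\|\vartheta_A\|_{H^k_y} + \|\nabla_y\vartheta_A\|_{H^k_y}^2$. You write that this "produces the required $\sigma^2 + \|v_A\| + \|\vartheta_B\| + \|\nabla_y\vartheta_B\|^2$-type prefactor" after the absorptions $\|v_A\|^2 \le N\|v_A\|$ and $\|v_A\|\|\vartheta_A\| \le N\|\vartheta_A\|$, but neither of those turns $\|\nabla_y\vartheta_A\|_{H^k_y}^2$ into the $\|\nabla_y\vartheta_B\|_{H^k_y}^2$ that appears on the right-hand side of \eqref{eq:nl:n:sigma:lip} (and $\|\vartheta_A\|$ is not in the stated prefactor either; that one should be $\|v_A\|\|\vartheta_A\| \le N\|v_A\|$). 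The missing step is the triangle-inequality split
\[
\|\nabla_y\vartheta_A\|_{H^k_y}^2 \,\|\vartheta_A - \vartheta_B\|_{H^k_y}
\;\le\;
2\|\nabla_y\vartheta_B\|_{H^k_y}^2\|\vartheta_A-\vartheta_B\|_{H^k_y}
\;+\;
4N\bigl(\|\nabla_y\vartheta_A\|_{H^k_y}+\|\nabla_y\vartheta_B\|_{H^k_y}\bigr)\|\nabla_y\vartheta_A-\nabla_y\vartheta_B\|_{H^k_y},
\]
using $\|\nabla_y(\vartheta_A-\vartheta_B)\|^2 \le (\|\nabla_y\vartheta_A\|+\|\nabla_y\vartheta_B\|)\|\nabla_y(\vartheta_A-\vartheta_B)\|$ and $\|\vartheta_A-\vartheta_B\|_{H^k_y}\le 2N$. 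This lands exactly in the two-line structure of \eqref{eq:nl:n:sigma:lip}, and is also silently used by the paper; it is worth making explicit so that the reader sees why the gradient contributions cannot simply be folded into the first bracket.
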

\begin{proof}
Upon writing
\begin{equation}
    \Delta_{AB} N = [N_\sigma(v_A, \vartheta_A,\gamma) - N_\sigma(v_B, \vartheta_B,\gamma)],
\end{equation}
we may use the representation \eqref{eq:nl:id:lip:n:sigma} 
together with the bounds \eqref{eq:nl:bnd:l2x:to:hk} and \eqref{eq:nl:est:k2:hk} to obtain
\begin{equation}
    \begin{array}{lcl}
    \| \Delta_{AB} N \|_{ H^k_y}
    & \le & C_1^N \| \Upsilon(\vartheta_A)- \Upsilon( \vartheta_B) \|_{H^k_y}
\|\mathcal{J}_{\rm tot}(v_A,\vartheta_A, \gamma, \sigma)\|_{H^k}
\\[0.2cm]
& &  \qquad
+ \,C_1^N 
 \| \mathcal{J}_{\rm tot}(v_A, \vartheta_A, \gamma,\sigma)
 - \mathcal{J}_{\rm tot}(v_B, \vartheta_B, \gamma,\sigma)
 \|_{H^k}
    \end{array}
\end{equation}
for some $C_1^N > 0$. The desired 
estimate now follows 
 from the bounds in 
 \eqref{eq:nl:est:k2:hk} and \eqref{eq:nl:est:k2:hk:lip}  together with the estimates
 in Lemmas \ref{lem:nl:bnds:j:i}--\ref{lem:nl:bnds:j:tr}.
 Turning to the difference
\begin{equation}
    \Delta_{AB} \mathcal N =  \mathcal N_\sigma(v_A, \vartheta_A,\gamma) - \mathcal N_\sigma(v_B, \vartheta_B,\gamma),
\end{equation}
we invoke \eqref{eq:nl:bnd:prod:hky:hkx:lip} and \eqref{eq:nl:bnd:prod:hky:hkx}
together with the a priori bounds on $\vartheta_A$ and $\vartheta_B$
to compute
\begin{equation}
\begin{array}{lcl}
   \| \Delta_{AB} \mathcal{N} \|_{ H^k}
& \le & 
\| \mathcal{J}_{\rm tot}(v_A, \vartheta_A,\gamma,\sigma) 
- \mathcal{J}_{\rm tot}(v_B, \vartheta_B,\gamma,\sigma) \|_{ H^k}
\\[0.2cm]
& & \qquad
+\, C_2^N\| N_\sigma(v_A, \vartheta_A,\gamma) - N_\sigma(v_B, \vartheta_B,\gamma) \|_{H^k_y} \| \Phi_0' \|_{H^k_x}
\\[0.2cm]
& & \qquad
 + \,C_2^N \| N_\sigma(v_B, \vartheta_B,\gamma) \|_{ H^k_y}
 \| \vartheta_A - \vartheta_B\|_{H^k_y} \|\Phi_0''\|_{H^k_x},
\end{array}
\end{equation}
for some $C_2^N>0.$
Applying 
\eqref{eq:nl:j:i:lip}, \eqref{eq:nl:j:ii:lip},
\eqref{eq:nl:j:iii:lip}, \eqref{eq:nl:j:tr:lip},
\eqref{eq:nl:bnds:n:cal:n:unif},
and \eqref{eq:nl:n:sigma:lip}
leads to the desired estimate.
\end{proof}

\begin{proof}[Proof of Proposition \ref{prop:nl:bnds}]
These bounds follow from
\eqref{eq:nl:bnds:m:cal:m:unif} and
\eqref{eq:nl:bnds:n:cal:n:unif}.
\end{proof}

\begin{proof}[Proof of Proposition \ref{prop:nl:bnds:lip}]
These bounds follow from
\eqref{eq:nl:bnd:m:calm:lip},
\eqref{eq:nl:n:sigma:lip},
and
\eqref{eq:nl:cal:n:lip}.
\end{proof}

\section{Evolution of the perturbation}\label{sec:evol:pert}
In this section, we first establish  the existence and uniqueness of solutions to \eqref{eq:pert:system} in the variational sense. Subsequently, we  justify that the obtained solution pair $(v,\theta)$ describes the evolution of the wave in \eqref{eq:reaction:diff} in an appropriate way.  In particular, we prove  Propositions \ref{prop:var:higher:short} and \ref{prp:mr:an:sol:org:sys}.



With regard to the first result, we proceed by searching for solutions  $(v, \theta)$  measured with respect to the Gelfand triple $(\mathcal V,\mathcal{H},\mathcal V^*)$ given by
    \begin{equation}
    \label{eq:var:weak:setting}
        \mathcal V^*=H^{k-1}\times H^{k-1}_y,\quad \mathcal{H}=H^k\times H^k_y,\quad \mathcal V= H^{k+1}\times H^{k+1}_y. 
    \end{equation}
    The associated inner products are 
    \begin{equation}
    \begin{aligned}\langle (v,\theta),(w,\eta)\rangle_{\mathcal V}&=\langle v,w\rangle_{H^{k+1}}+\langle \theta,\eta\rangle_{H^{k+1}_y},\\\langle (v,\theta),(w,\eta)\rangle_{\mathcal{H}}&=\langle v,w\rangle_{H^k}+\langle \theta,\eta\rangle_{H^k_y},\end{aligned}
    \end{equation}
    while the duality pairing 
\begin{equation}\langle (v,\theta),(w,\eta)\rangle_{\mathcal V^*;\mathcal V}=\langle v,w\rangle_{H^{k-1};H^{k+1}}+\langle \theta,\eta\rangle_{H^{k-1}_y;H^{k+1}_y},
    \end{equation} 
acts on the components as
\begin{equation}
\label{eq:var:dual:pairing:h:km1:kp1}
    \langle v, w \rangle_{H^{k-1};
    H^{k+1}}
    =  \langle v, w \rangle_{H^{k-1}}
     - \sum_{|\alpha| = k-1 } \langle \partial^\alpha v , \partial^\alpha \Delta w \rangle_{L^2},
\end{equation}
with the analogous definition for $\langle \theta,\eta\rangle_{H^{k-1}_y;H^{k+1}_y}$.

Indeed, one can follow 
\cite{agresti2024criticalNEW,bosch2024multidimensional,brezis2011functional,krylov1981stochastic} to show
that for $k \ge 1$ this bilinear map 
allows $H^{k-1}$
to be interpreted as the dual of $H^{k+1}$. 
Furthermore, the diffusion operator $\Delta$ can|as usual|be seen as an element of $\mathscr L(H^{k+1};H^{k-1})$, while definition \eqref{eq:var:dual:pairing:h:km1:kp1}
allows us to write
\begin{equation}
\label{eq:var:delta:v:w:hkm1:hkp1}
    \langle \Delta v,w\rangle_{H^{k-1};H^{k+1}} = -\langle \nabla v, \nabla w \rangle_{H^k}=-\langle v,w\rangle_{H^{k+1}}+\langle v,w\rangle_{L^2},
\end{equation}
for any pair $v,w\in H^{k+1}$.
This can be seen as an alternative yet equivalent definition for $\Delta$ from  $H^{k+1}$ into its dual.  Similar remarks hold for $H^{k+1}_y$  with   $\Delta_y$. Finally, to make sense of \eqref{eq:Hk*}, we emphasise the relation
\begin{equation}
    \langle \mathcal Lv,w\rangle_{H^{k-1};H^{k+1}}=-\langle \nabla v,\nabla w\rangle_{H^k}+c_0\langle \partial_x v,w\rangle_{H^k}+\langle Df(\Phi_0)v,w\rangle_{H^k},
\end{equation}
for $v,w\in H^{k+1}.$ 

In order to acquire a (local) existence and uniqueness result for 
system 
\eqref{eq:pert:system}, we invoke the critical variational framework in \cite{agresti2024criticalNEW}. An important feature of this framework is that it only requires local Lipschitz estimates for the deterministic nonlinearity in the $\mathcal V^*$-norm rather than the $\mathcal H$-norm \cite[Assumption 3.1 part (3)]{agresti2024criticalNEW}. This enables us to cope with the   nonlinearities $N_\sigma$ and $\mathcal N_\sigma$ in \eqref{eq:nl:def:n:m} and \eqref{eq:nl:def:cal:n:m}, respectively;  see Proposition \ref{prop:nl:bnds:lip}. 

\begin{proposition}
\label{prop:var:higher:full}
Let $k>d/2+1$
and suppose  that   \textnormal{(Hf)}, \textnormal{(HTw)}, and \textnormal{(HgQ)} hold.
Further, fix $T>0$ and $0\leq \sigma\leq 1$. Then for any initial condition 
\begin{equation}
(v_0,\theta_0)\in H^k\times H^k_y,
\end{equation}
there exists an increasing sequence of stopping times $(\tau_\ell)_{\ell\geq 0}$ and a stopping time $\tau_{\infty}$, with  $\tau_\ell\to \tau_{\infty}$ and $0<\tau_{\infty}\leq T$ $\mathbb P$-a.s., together with progressively measurable maps
\begin{equation}
    v:[0,T]\times \Omega\to H^k,\quad \theta:[0,T]\times \Omega\to H^k_y,
\end{equation}
that satisfy the following properties:
\begin{enumerate}[\rm (i)]
    \item For almost every $\omega\in\Omega$, the map
    $ 
        t\mapsto (v(t,\omega),\theta(t,\omega))
    $
    is of class $C([0,\tau_{\infty}(\omega));H^{k}\times H^k_y);$ 
   \item We have the integrability condition $(v,\theta)\in L^2(\Omega;L^2([0,\tau_\ell];H^{k+1}\times H^{k+1}_y))$,  for any $\ell\geq 0;$
   \item The $H^k$-valued and $H^k_y$-valued identities\footnote{At first, the identities in \eqref{eq:Hk*} and \eqref{eq:Hk*:theta} should be understood as an equality in $[H^{k+1}]^*\equiv  H^{k-1}$ and $[H^{k+1}_y]^*\equiv  H^{k-1}_y$, respectively, but by (ii) we can conclude that in fact we have equality in $H^k$ and $H^{k}_y$, respectively.} 
   \begin{equation}
       \begin{aligned}
           v(t)=v(0)+\int_0^t [\mathcal L v(s)+\mathcal N_\sigma(v(s),\theta(s),\gamma(s))]\mathrm ds +\sigma \int_0^t\mathcal M(v(s),\theta(s))T_{-\gamma(s)}\mathrm 
 dW_s^Q,
       \end{aligned}\label{eq:Hk*}
   \end{equation}
and
   \begin{equation}\label{eq:Hk*:theta}
\theta(t)=\theta(0)+\int_0^t[\Delta_y\theta(s)+N_\sigma(v(s),\theta(s),\gamma(s))]\mathrm ds+\sigma\int_0^t M(v(s),\theta(s))T_{-\gamma(s)}\mathrm dW_s^Q,
   \end{equation} 
  hold $\mathbb P$-a.s. for all $0\leq t<\tau_{\infty};$
   \item Suppose there are  other progressively measurable maps $\tilde v$ and $\tilde \theta$ that satisfy \textnormal{(i)--(iii)} with another stopping time $\tilde \tau_{\infty}$ and localising sequence $(\tilde \tau_\ell)_{\ell \ge 0}$. Then for almost every $\omega\in\Omega,$ we have  $\tilde \tau_{\infty}(\omega)\leq \tau_{\infty}(\omega)$ together with 
   \begin{equation}
      \quad \tilde v(t,\omega)=v(t,\omega)\quad\text{and}\quad \tilde \theta(t,\omega)=\theta(t,\omega),\quad \text{for all}\quad 0\leq t< \tilde \tau_{\infty}(\omega);
   \end{equation}
  \item Writing 
   \begin{equation}
  \mathcal Z_\ell=\sup_{0\leq t<\tau_\ell}\big[\|v(t)\|_{H^k}^2+\|\theta(t)\|_{H^k_y}^2\big]+\int_0^{\tau_\ell}\big[\|v(t)\|_{H^{k+1}}^2 +\|\theta(t)\|_{H^{k+1}_y}^2\big]\,\mathrm dt,
   \end{equation}
   for all $\ell\geq 0,$
   we have $\mathcal Z_{\ell}^2\leq \ell$ together with
   \begin{equation}
     \textstyle   \mathbb P\left(\tau_{\infty}<T,\quad  \sup_{\ell\geq 0}\mathcal Z^2_\ell <\infty\right)=0.
   \end{equation}
\end{enumerate}
\end{proposition}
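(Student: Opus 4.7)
The plan is to recast system \eqref{eq:pert:system} as a single abstract stochastic evolution equation on the Gelfand triple $(\mathcal V,\mathcal H,\mathcal V^*)$ defined in \eqref{eq:var:weak:setting} and to apply the critical variational framework of Agresti and Veraar \cite{agresti2024criticalNEW}. Writing $U=(v,\vartheta)$, one defines the linear operator $A\in\mathscr L(\mathcal V,\mathcal V^*)$ by $A(v,\vartheta)=(\mathcal L v,\Delta_y\vartheta)$, the drift $F:\mathcal H\to \mathcal V^*$ by $F(U)(t)=(\mathcal N_\sigma(v,\vartheta,\gamma(t)),N_\sigma(v,\vartheta,\gamma(t)))$, and the diffusion $G:\mathcal H\to HS(\mathcal W_Q;\mathcal H)$ by $G(U)(t)=\sigma(\mathcal M(v,\vartheta)T_{-\gamma(t)},M(v,\vartheta)T_{-\gamma(t)})$, where $\gamma(t)=c_0t$ is a smooth deterministic function.

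First I would verify the abstract structural hypotheses. The pairing \eqref{eq:var:dual:pairing:h:km1:kp1} (together with its $y$-counterpart) identifies $\mathcal V^*$ as the dual of $\mathcal V$ with pivot $\mathcal H$; boundedness $\|A U\|_{\mathcal V^*}\lesssim \|U\|_{\mathcal V}$ is immediate since $\mathcal L$ is a lower-order perturbation of $\Delta$. Coercivity follows from \eqref{eq:var:delta:v:w:hkm1:hkp1} combined with integration-by-parts on $\mathcal L_{\rm tw}$ and Young's inequality, yielding an estimate of the form $\langle AU,U\rangle_{\mathcal V^*;\mathcal V}\le -\tfrac12\|U\|_{\mathcal V}^2+C\|U\|_{\mathcal H}^2$. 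The $\mathcal V^*$-Lipschitz bound \eqref{eq:nl:main:prp:bnd:n:caln:lip} in Proposition \ref{prop:nl:bnds:lip} supplies the local Lipschitz continuity of $F$ from $\mathcal H$ into $\mathcal V^*$, which is precisely the weaker condition allowed by \cite[Assumption 3.1(3)]{agresti2024criticalNEW}; the Lipschitz bound \eqref{eq:nl:main:prp:bnd:m:calm:lip} together with the uniform bound \eqref{eq:nl:prp:bnd:m:cal:m:unif} from Proposition \ref{prop:nl:bnds} furnishes the corresponding Lipschitz and growth conditions for $G$ as a map into $HS(\mathcal W_Q;\mathcal H)$.

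With these inputs in place, I would invoke the local existence and uniqueness theorem of \cite{agresti2024criticalNEW} to produce a maximal (variational) solution $U$ on a random interval $[0,\tau_{\infty})$, satisfying claims (i)--(iv). The path regularity (i) follows from the variational It\^o formula applied to $\|U(t)\|_{\mathcal H}^2$ together with the $\mathcal V$-integrability from (ii) on each compact sub-interval. The localising sequence is built by setting $\tau_\ell=\inf\{t\ge 0:\mathcal Z(t)\ge\sqrt{\ell}\}\wedge T$, where $\mathcal Z(t)$ equals the expression under the supremum and integral in (v), so that $\mathcal Z_\ell^2\le\ell$ by construction. Uniqueness in (iv) is a direct consequence of the $\mathcal V^*$-Lipschitz bound applied to the difference of two solutions, combined with the coercivity estimate and Gronwall's lemma on a common sub-interval.

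The blow-up alternative (v) is then obtained by the standard explosion argument: on the event $\{\tau_\infty<T,\ \sup_\ell\mathcal Z_\ell<\infty\}$ the solution is uniformly bounded in $\mathcal H$ up to $\tau_\infty$ and lies in $L^2([0,\tau_\infty);\mathcal V)$, so the local existence theorem applied at time $\tau_\infty$ produces a strict extension, contradicting maximality. The main obstacle will be checking that the quadratic nonlinearities $\|v\|_{H^k}^2$, $\|v\|_{H^k}\|\vartheta\|_{H^k_y}$, and $\|\nabla_y\vartheta\|_{H^k_y}^2$ in \eqref{eq:nl:prop:nl:bnds:N:etc} sit inside the (sub)critical growth class of the Agresti--Veraar framework; here the choice $k>d/2+1$ is essential, because it makes $H^{k-1}$ embed into $L^\infty$ and lets $H^k_y$ act as a multiplier algebra, so that one genuine derivative of $\mathcal V$-regularity compensates each quadratic nonlinear factor in the $\mathcal V^*$-norm.
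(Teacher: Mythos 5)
Your proposal follows the same route as the paper: both cast \eqref{eq:pert:system} as an abstract stochastic evolution equation on the Gelfand triple \eqref{eq:var:weak:setting} and invoke the critical variational framework of Agresti and Veraar \cite[Thm.~3.3]{agresti2024criticalNEW}, with the Lipschitz estimates of Propositions \ref{prop:nl:bnds} and \ref{prop:nl:bnds:lip} feeding into the abstract hypotheses and the localising sequence defined via the quantity in (v). The paper's proof is terser—it cites the theorem and defines the stopping times without spelling out coercivity, the Gronwall uniqueness argument, or the explosion alternative, which are all furnished directly by the cited result—whereas you unpack these verification steps; the substance is the same.
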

\begin{proof}
We  can appeal to
\cite[Thm. 3.3]{agresti2024criticalNEW} with the Gelfand triple 
 \eqref{eq:var:weak:setting}.
This yields the existence and uniqueness of a maximal solution $((v,\theta),\sigma_\infty)$ with a corresponding localising sequence $(\sigma_\ell)_{\ell\geq 0}$.
Let us define the stopping times $
    \tau_\ell=\sigma_\ell \wedge \sigma_\ell,$
    where
    \begin{equation}
       \textstyle  \sigma_\ell^*=\inf\{t\geq 0:\sup_{0\leq s<t}\big[\|v(s)\|_{H^k}^2+\|\theta(s)\|_{H^k_y}^2\big]+\int_0^{t}\big[\|v(s)\|_{H^{k+1}}^2 +\|\theta(s)\|_{H^{k+1}_y}^2\big]\,\mathrm ds\}.
    \end{equation}
    The blow-up criterion of $\sigma_\infty$ in \cite[Thm. 3.3]{agresti2024criticalNEW} tells us that $\sigma_\ell^* \to \sigma_\infty $ holds. We set $\tau_{\infty}=\sigma_\infty$, 
    from which (i)--(v) follows.
\end{proof}
\begin{remark}\label{remark:statistically}
    Several examples in Appendix \ref{sec:about:noise} consider the covariance operator $Q$ to be translation invariant  in the $x$-direction. The terms $T_{-\gamma(s)}\mathrm dW_s^Q$ and $\mathrm dW_s^Q$ are then statistically speaking interchangeable.  Indeed, by defining the  stochastic process 
\begin{equation}
    \widetilde W_t=\sum_{j=0}^\infty\int_0^tT_{-\gamma(s)}\sqrt{Q}{e_j}\mathrm d\beta_j(s),
\end{equation}
as done in \cite{bosch2024multidimensional,hamster2020}, we observe that $T_{-\gamma(s)}\mathrm dW_s^Q$ can be replaced by $\mathrm d\widetilde W_s^Q$ because $T_{-\gamma(s)}$ are isometries and that   $\widetilde W_t^Q$
is indistinguishable from  $W_t^Q$.
\end{remark}


We shall now demonstrate how to relate the pair $(v,\theta)$ in Proposition  \ref{prop:var:higher:full} to a solution $u$ of the original system \eqref{eq:reaction:diff}. We achieve this by showing that the stochastic process
\begin{equation}
z(t)=T_{\gamma(t)}v(t)+T_{\gamma(t)+\theta(t)}\Phi_0-\Phi_{\rm ref},\label{eq:z(t)}
\end{equation}
with $\gamma(t)=c_0t$ and $\Phi_{\rm ref}$ as in {\S}\ref{sec:assumptions}, solves the equation
\begin{equation}
    \mathrm dz=[\Delta z+\Phi_{\rm ref}''+f(\Phi_{\rm ref}+z)]\,\mathrm dt+g(\Phi_{\rm ref}+z)\,\mathrm dW_t^Q
\end{equation}
in the analytically weak sense. This  implies that $u(t)=z(t)+\Phi_{\rm ref}$ solves \eqref{eq:reaction:diff} in the analytically weak sense. We achieve this by means of the
 rigorous It\^o computation below.

\begin{proposition}\label{prop:system:z}
   Consider the setting  of Proposition \ref{prop:var:higher:full}.   For any $\zeta\in C_c^\infty(\mathbb R^d;\mathbb R^n)$, the identity
\begin{equation}\begin{aligned}
\langle z(t),\zeta\rangle_{H^k}=\langle z(0),&\,\zeta\rangle_{H^k}+\int_0^t  \big[\langle z(s),\Delta \zeta\rangle_{H^k} +\langle \Phi_{\rm ref}''+f(\Phi_{\rm ref}+z(s)),\zeta\rangle_{H^k}\big]\mathrm ds\\&\qquad\qquad\qquad+\sigma\int_0^t \mathcal \langle g(\Phi_{\rm ref}+z(s))\,\mathrm d  W_s^Q,\zeta\rangle_{H^k}\label{eq:weak:original}
\end{aligned}
\end{equation}
holds $\mathbb P$-a.s. for all $0\leq t< \tau_{\infty}. $
\end{proposition}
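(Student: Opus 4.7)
The plan is to fix an arbitrary test function $\zeta\in C_c^\infty(\mathbb R^d;\mathbb R^n)$ and apply It\^o's formula component-wise to the smooth, deterministically time-dependent functional
\begin{equation*}
\Phi_\zeta(v,\vartheta;t) = \langle T_{\gamma(t)}v,\zeta\rangle_{H^k} + \langle T_{\gamma(t)+\vartheta}\Phi_0 - \Phi_{\rm ref},\zeta\rangle_{H^k},
\end{equation*}
evaluated along the variational solution pair $(v(t),\vartheta(t))$ of \eqref{eq:Hk*}--\eqref{eq:Hk*:theta}. Since $\zeta$ is smooth and compactly supported, integration by parts transforms every spatial derivative in the drift onto $\zeta$, and $\Phi_\zeta$ is Fr\'echet-smooth in $(v,\vartheta)$ with the explicit derivatives already collected in \S\ref{sec:phase}, namely $D_v\Phi_\zeta[V] = \langle T_{\gamma(t)}V,\zeta\rangle_{H^k}$, $D_\vartheta\Phi_\zeta[\Theta] = -\langle\Theta\, T_{\gamma(t)+\vartheta}\Phi_0',\zeta\rangle_{H^k}$, and $D^2_{\vartheta\vartheta}\Phi_\zeta[\Theta,\Theta] = \langle \Theta^2 T_{\gamma(t)+\vartheta}\Phi_0'',\zeta\rangle_{H^k}$. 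The explicit time dependence contributes $\partial_t\Phi_\zeta = -c_0\langle \partial_x u(t),\zeta\rangle_{H^k}$.

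The next step is to substitute the evolution equations \eqref{eq:Hk*} and \eqref{eq:Hk*:theta} into the It\^o expansion, which produces exactly the identity \eqref{eq:nonsimplified} tested against $\zeta$ (all $H^k$-pairings with $\zeta$ are well-defined thanks to Proposition \ref{prop:nl:bnds}, which also justifies term-by-term integration of the trace series against $\zeta$). Using the definitions \eqref{eq:nl:def:n:m}--\eqref{eq:nl:def:cal:n:m} of $\mathcal N_\sigma,N_\sigma,\mathcal M,M$, the crucial cancellation between $\sigma^2\mathcal J_{\rm tr}$ inside $\mathcal N_\sigma$ and the quadratic-variation correction $\tfrac12\sigma^2T_{\vartheta}\Phi_0''\sum_j(M(v,\vartheta)[T_{-\gamma}\sqrt Qe_j])^2$ coming from $D^2_{\vartheta\vartheta}\Phi_\zeta$ collapses the drift to $\langle -c_0\partial_x u + T_{\gamma(t)}[\mathcal L v + \mathcal J(v,\vartheta) - T_\vartheta\Phi_0'\Delta_y\vartheta],\zeta\rangle_{H^k}$, while the martingale part consolidates into $\sigma\langle g(u(t))\,\mathrm dW_t^Q,\zeta\rangle_{H^k}$. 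The geometric identity \eqref{eq:Deltay:Ttheta} together with the travelling wave ODE \eqref{eq:mr:trv:wave:ode} then rewrites the bracketed drift as $\Delta u + f(u) - c_0\partial_x u$.

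Finally, since $u(t) = \Phi_{\rm ref} + z(t)$ with $\Phi_{\rm ref} = \Phi_0$ depending only on $x$, we have $\Delta u = \Delta z + \Phi_{\rm ref}''$, and after a spatial integration by parts $\langle \Delta z,\zeta\rangle_{H^k} = \langle z,\Delta\zeta\rangle_{H^k}$, so the proposed identity \eqref{eq:weak:original} emerges upon cancelling the explicit $-c_0\partial_x u$ contribution against the temporal derivative of $\gamma(t)$ already accounted for. Restricting to the stopping times $\tau_\ell \uparrow \tau_\infty$ ensures all integrands lie in the appropriate Bochner spaces so that the It\^o formula applies on $[0,\tau_\ell]$; passing $\ell\to\infty$ delivers the identity $\mathbb P$-a.s.\ on $[0,\tau_\infty)$.

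The main obstacle is the rigorous application of It\^o's formula to the composite functional $\vartheta\mapsto\langle T_{\gamma(t)+\vartheta(\cdot)}\Phi_0,\zeta\rangle_{H^k}$, which is nonlinear through the $y$-dependent translation. The Lipschitz and smoothness estimates required to justify the second-order expansion in the direction of $\mathrm dW_t^Q$ follow from \eqref{eq:nl:bnd:prod:hky:hkx:lip} and the algebra property of $H^k_y$ for $k>(d-1)/2$, but care must be taken that the cut-off function $\chi$ in \eqref{eq:nl:def:k:2} is inactive along the solution, which is guaranteed by the Sobolev embedding $H^k_y\hookrightarrow L^\infty_y$ after choosing $\tau_\ell$ small enough to enforce $\|\vartheta(t)\|_{H^k_y}$ below the $\chi$-threshold; an additional localisation argument may therefore be needed before passing to the limit $\ell\to\infty$.
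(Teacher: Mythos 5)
Your proposal is correct and takes essentially the same route as the paper's proof: apply It\^o's formula to a smooth functional of $(v,\vartheta)$ that represents $\langle z(t),\zeta\rangle_{H^k}$, feed in the evolution system \eqref{eq:Hk*}--\eqref{eq:Hk*:theta}, and observe that the It\^o trace cancels $\sigma^2\mathcal J_{\rm tr}$ while the identities \eqref{eq:Deltay:Ttheta} and the travelling wave ODE collapse the remaining drift to $\Delta u + f(u)$. The only organisational difference is that you absorb $\gamma(t)=c_0 t$ into an explicit time dependence of $\Phi_\zeta$ and work with $H^k$ pairings against the smooth compactly supported $\zeta$, whereas the paper keeps $\gamma$ as a third argument of $\psi_\zeta$ (so that $\gamma'(t)=c_0$ enters through the Fr\'echet derivative in $\gamma$) and, to fit the variational framework, writes $\psi_{1;\zeta}$ and $\psi_{3;\zeta}$ in terms of the dual pairing $\langle\cdot,\cdot\rangle_{H^{k-1};H^{k+1}}$ with $T_{-\gamma}\zeta$, which is the cleaner way to justify the It\^o formula when the drift $\mathcal L v + \mathcal N_\sigma$ a priori lives only in $H^{k-1}$. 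Your device of integrating spatial derivatives by parts onto $\zeta$ accomplishes the same thing in practice.

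One remark to dispel your final worry: no additional localisation to keep the cut-off $\chi$ inactive is needed. The functional $\Phi_\zeta$ (equivalently $\psi_\zeta$) involves only $T_{\gamma(t)}v$ and $T_{\gamma(t)+\vartheta}\Phi_0$ and contains no cut-off at all; $\chi$ enters only through $\Upsilon$, hence through the \emph{coefficients} $N_\sigma, \mathcal N_\sigma, M, \mathcal M$ in the evolution system. The cancellations you use are algebraic identities built directly into those definitions --- namely $\mathcal N_\sigma - N_\sigma T_\vartheta\Phi_0' = \mathcal J + \sigma^2\mathcal J_{\rm tr}$ and, via the rewritten form $\mathcal J_{\rm tr} = -\tfrac12 \sum_j T_\vartheta\Phi_0''\bigl[M(v,\vartheta)[T_{-\gamma}\sqrt Q e_j]\bigr]^2$, the exact cancellation against the quadratic-variation correction --- and these hold for every $(v,\vartheta)$, whether or not the argument of $\chi$ falls below $1/2$. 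The travelling-wave ODE identity $T_\vartheta\Phi_0''+c_0T_\vartheta\Phi_0'+f(T_\vartheta\Phi_0)=0$ and \eqref{eq:Deltay:Ttheta} are likewise unconditional. Thus the identity \eqref{eq:weak:original} holds on all of $[0,\tau_\infty)$ directly, without any extra localisation step tied to the size of $\|\vartheta(t)\|_{H^k_y}$.
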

\begin{proof}
    Define the  functionals
    \begin{equation}
    \psi_{1;\zeta}:H^{k-1}\times \mathbb R\to \mathbb R,\quad  \psi_{2;\zeta}:\mathbb R\to \mathbb R,\quad  \psi_{3;\zeta}:H^{k-1}_y\times \mathbb R\to \mathbb R,
\end{equation}
by
\begin{equation}
\begin{aligned}
    \psi_{1;\zeta}(v,\gamma)&=\langle v,T_{-\gamma}\zeta\rangle_{H^{k-1};H^{k+1}},\\
    \psi_{2;\zeta}(\gamma)&=\langle T_\gamma\Phi_0-\Phi_{\rm ref},\zeta\rangle_{H^k},\\
   \psi_{3;\zeta}(\theta,\gamma)&=\langle T_\theta\Phi_0-\Phi_0,T_{-\gamma}\zeta\rangle_{H^{k-1};H^{k+1}}.
\end{aligned}
\end{equation}
Upon introducing the functional $\psi_\zeta:H^{k-1}\times H^{k-1}_y\times \mathbb R\to \mathbb R$, given by
\begin{equation}
\psi_\zeta(v,\theta,\gamma)=\psi_{1,\zeta}(v,\gamma)+\psi_{2,\zeta}(\gamma)+\psi_{3,\zeta}(\theta,\gamma),
\end{equation}
 and moving all $\gamma$-translations to the left of the inner product, we 
 obtain the identity
\begin{equation}
    \langle z(t),\zeta\rangle_{H^k}=\psi_{\zeta}\big(v(t),\theta(t),\gamma(t)\big).
\end{equation}
We note that $\phi_{1;\zeta}, \phi_{2;\zeta}$ and $\phi_{3;\zeta}$ are $C^2$-smooth, with first-order Fr\'echet derivatives given by
\begin{equation}
\begin{aligned}
    D\psi_{1;\zeta}(v,\gamma)[V,\Gamma]&=\langle V,T_{-\gamma}\zeta\rangle_{H^{k-1};H^{k+1}}+\Gamma\langle v,T_{-\gamma}\zeta'\rangle_{H^{k-1};H^{k+1}},\\
    D\psi_{2;\zeta}(\gamma)[\Gamma]&=\Gamma\langle \Phi_{0},T_{-\gamma}\zeta'\rangle_{H^k},\\
D\psi_{3;\zeta}(\theta,\gamma)[\Theta,\Gamma]&=-\langle \Theta T_\theta\Phi_{0}',T_{-\gamma}\zeta\rangle_{H^{k-1};H^{k+1}}+\Gamma\langle T_\theta\Phi_0-\Phi_0,T_{-\gamma}\zeta'\rangle_{H^{k-1};H^{k+1}},
\end{aligned}
\end{equation}
together with the second-order expressions
\begin{equation}
\begin{aligned}
    D^2\psi_{1;\zeta}(v,\gamma)[V,\Gamma][V,\Gamma]&=2\Gamma\langle V,T_{-\gamma}\zeta'\rangle_{H^{k-1};H^{k+1}}+\Gamma^2\langle v,T_{-\gamma}\zeta''\rangle_{H^{k-1};H^{k+1}},\\
D^2\psi_{2;\zeta}(\gamma)[\Gamma][\Gamma]&=\Gamma^2\langle \Phi_{0},T_{-\gamma}\zeta''\rangle_{H^k},\\
    D^2\psi_{3;\zeta}(\theta,\gamma)[\Theta,\Gamma][\Theta,\Gamma]&=\langle \Theta^2 T_\theta\Phi_{0}'',T_{-\gamma}\zeta\rangle_{H^{k-1};H^{k+1}}+2\Gamma\langle \Theta T_\theta\Phi_0',T_{-\gamma}\zeta'\rangle_{H^{k-1};H^{k+1}}\\&\qquad +\Gamma^2\langle T_\theta\Phi_0.T_{-\gamma}\zeta''\rangle_{H^{k-1};H^{k+1}}.
\end{aligned}
\end{equation}
In a similar fashion as \cite[Lem. 5.3]{hamster2020}, we 
may use It\^o's formula to obtain
\begin{equation}
    \begin{aligned}
        \langle z(t),\zeta\rangle_{H^k}&=\langle z(0),\zeta\rangle_{H^k}+\int_0^t \big[\mathcal D_1(v,\theta,\gamma) +\tfrac{1}2\sigma^2 \mathcal T(v,\theta,\gamma)\big]\mathrm ds+\sigma \int_0^t \mathcal D_2(v,\theta,\gamma)T_{-\gamma} \mathrm dW_s^Q,
    \end{aligned}
\end{equation}
in which the deterministic contribution  given by
\begin{equation}
\begin{aligned}
    \mathcal D_1(v,\theta,\gamma)&=D\psi_{\zeta}(v,\theta,\gamma)[\mathcal Lv+\mathcal N_\sigma(v,\theta),\Delta_y\theta+N_\sigma(v,\theta),c_0]\\
    & =\langle \mathcal Lv+\mathcal N_\sigma(v,\theta)-[\Delta_y\theta+N_\sigma(v,\theta)]T_\theta\Phi_0',T_{-\gamma}\zeta\rangle_{H^{k-1};H^{k+1}}\\
&\qquad+c_0\langle v+T_\theta \Phi_0,T_{-\gamma }\zeta'\rangle_{H^{k-1};H^{k+1}}\\
    & =\langle \Delta(v+T_\theta\Phi_0),T_{-\gamma} \zeta\rangle _{H^{k-1};H^{k+1}}+\langle f(v+T_\theta\Phi_0),T_{-\gamma}\zeta\rangle_{H^k}\\
    &\qquad+ \sigma^2\langle \mathcal J_{\rm tr}(v,\theta,\gamma)T_\theta\Phi_0'',T_{-\gamma}\zeta\rangle_{H^k},
    \end{aligned}
\end{equation}
on account of the identities \eqref{eq:Deltay:Ttheta} and $T_\theta\Phi_0''+c_0T_\theta\Phi_0'+f(T_\theta\Phi_0)=0$, and with $\mathcal J_{\rm tr}$ defined in \eqref{eq:nl:def:j:jtr}.
The stochastic contribution reads
\begin{equation}
\begin{aligned}
    \mathcal D_2(v,\theta,\gamma)[\xi]&=D\psi_{\zeta}(v,\theta,\gamma)[\mathcal M(v,\theta)\xi,M(v,\theta)\xi,0]\\
    & =\langle \mathcal   M(v,\theta)[\xi]-M(v,\theta)[\xi] T_\theta\Phi_0',T_{-\gamma}\zeta\rangle_{H^{k-1};H^{k+1}}\\
    & =\langle g(v+T_\theta\Phi_0)\xi,T_{-\gamma}\zeta\rangle_{H^k},
    \end{aligned}
\end{equation}
and   the trace component is given by 
\begin{equation}
\label{eq:ev:trace}
\begin{aligned}
    \mathcal T(v,\theta,\gamma)&=\textstyle  \sum_{j=0}^\infty D^2\psi_\zeta(v,\theta,\gamma)[Z_j,Z_j]\\
&=\textstyle \sum_{j=0}^\infty\langle M(v,\theta)[T_{-\gamma}\sqrt{Q}e_j]^2T_\theta\Phi_0'',T_{-\gamma}\zeta\rangle _{H^k}\\
&=-2\langle \mathcal J_{\rm tr}(v,\theta,\gamma),T_{-\gamma}\zeta\rangle _{H^k},
    \end{aligned}
\end{equation}
where we introduced the shorthand notation $Z_j=(\mathcal M(v,\theta)T_{-\gamma}\sqrt Qe_j,M(v,\theta)T_{-\gamma}\sqrt Qe_j,0).$ Moving all $\gamma$-translations to the left of the inner products proves \eqref{eq:weak:original}.
\end{proof}
\begin{remark}
    The Fr\'echet derivatives above allow actually for even more general phase tracking mechanisms, e.g., when one would like to incorporate both a  global phase and a local phase.
\end{remark}

\begin{proof}[Proof of Proposition \ref{prop:var:higher:short}]
This follows directly from Proposition \ref{prop:var:higher:full}.
\end{proof}
\begin{proof}[Proof of Proposition \ref{prp:mr:an:sol:org:sys}]
  This follows directly from Proposition \ref{prop:system:z}.
\end{proof}

\section{Supremum bounds for stochastic integrals}
\label{sec:supremum}

In this section we obtain several growth bounds for the stochastic solutions that we will encounter during our stability analysis in {\S}\ref{sec:stability}. The main new feature compared to previous works is that the semigroup $S_H(t)$|
generated by $\Delta_y$|features algebraic decay and that we use algebraic weights in our maximal regularity bounds. 

\paragraph{Convolutions with \texorpdfstring{$S_H(t)$}{SH(t)}}
In this section, we  obtain growth bounds for  the  supremum over $[0,T]$ of the stochastic convolution
\begin{equation}
\mathcal E_B(t)=\int_0^t S_H(t-s)  B(s)\mathrm dW_s, \label{eq:EB(t)}
\end{equation}
where $S_H(t)$ denotes the heat semigroup on $\mathbb R^{d-1}$. 
We further determine that the running supremum  of the integral\footnote{We  claim that   $\int_0^t(1+t-s)^{-\mu}\| \mathcal E_B(s)\|_{H^{k+1}_y}^2\mathrm ds$ grows too fast in dimensions 2 and 3 to obtain \eqref{eq:E_B}.}
\begin{equation}\mathcal J_B(t)=\int_0^t(1+t-s)^{-\mu}\|\nabla_y \mathcal E_B(s)\|_{H^{k}_y}^2\mathrm ds, \label{eq:JB}
\end{equation}
grows with the same rate as  \eqref{eq:EB(t)} for any $0<\mu\leq (d-1)/2$. Observe that $(d-1)/2$ is the algebraic decay rate of the heat semigroup squared. The supremum
bounds mentioned above are  obtained by 
imposing the following a priori pathwise estimates on $B$, in line with the other works \cite{bosch2025conditionalspeedshapecorrections,bosch2024multidimensional,hamster2020expstability}. Let us point out that    \eqref{eq:theta:star} below implies  $B\in \mathcal{N}^p\big([0, T] ; \mathbb F  ; H S(\mathcal W_Q;H^k_y  ) \big)$ for all $p\geq 2.$



\begin{itemize}
    \item[(HB)] For the process $B \in \mathcal{N}^2\big([0, T] ; \mathbb F  ; H S(\mathcal W_Q;H^k_y  ) \big)$ 
there exists a constant $\Theta_*>0$ 
such that 
the pathwise bound 
\begin{equation}
\int_0^t (1+t-s)^{-\mu}\big[\|B(s)\|_{H S(\mathcal W_Q;H^k_y  )}^2+\|B(s)\|_{\Pi_2(\mathcal W_Q;L^1_y)}^2\big] \mathrm d s \leq \Theta_*^2,
\label{eq:theta:star}
\end{equation}
holds for all $0 \leq t \leq T$.
\end{itemize}




\begin{proposition}[{see {\S}\ref{sec:estimates:max:reg}}] \label{prop:E_B}
Assume that  $T \ge 2$ is an integer  and let $\mu\in (0,\frac{d-1}{2}]$ for $d\geq 2$.
There exists a constant $K_{\rm gr} > 0$ that does not depend on $T$
so that for any process $B$ that satisfies \textnormal{(HB)}, and every integer $p \ge 1$,
we have the growth estimate
\begin{equation}
\mathbb E \sup _{0 \leq t \leq T}\left\|\mathcal{E}_B(t)\right\|_{H^k_y }^{2p}
+ \mathbb E\sup_{0\leq t\leq T}\mathcal J_B(t)^{p}
\leq K^{2p}_{\rm gr} \Theta_*^{2p} (p^p+\log (T)^p).
\label{eq:E_B}
\end{equation}
\end{proposition}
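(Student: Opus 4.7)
The plan is to follow the streamlined strategy of \cite{bosch2025conditionalspeedshapecorrections}: establish pointwise moment bounds by the It\^o isometry together with a two-regime semigroup interpolation, then upgrade to the running supremum via a time discretisation with a logarithmic union bound, and finally treat the weighted quadratic functional $\mathcal J_B$ by combining Fubini with deterministic $L^2$-maximal regularity for the heat semigroup.

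For the pointwise bound, the Burkholder–Davis–Gundy inequality reduces estimating $\mathbb E\|\mathcal E_B(t)\|_{H^k_y}^{2p}$ to controlling $\int_0^t \|S_H(t-s)B(s)\|_{HS(\mathcal W_Q;H^k_y)}^2\,\mathrm ds$. The key ingredient is the interpolation inequality
\begin{equation*}
\|S_H(u)f\|_{H^k_y}^2 \le K(1+u)^{-\mu}\bigl[\|f\|_{H^k_y}^2 + \|f\|_{L^1_y}^2\bigr],\qquad 0 < \mu \le (d-1)/2,
\end{equation*}
which follows from contractivity of $S_H$ on $H^k_y$ when $u\le 1$ and from the Gaussian kernel bound $\|\partial_y^\alpha S_H(u)\|_{L^1_y\to L^2_y}\lesssim u^{-(d-1)/4-|\alpha|/2}$ when $u\ge 1$. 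Summing over an orthonormal basis of $\mathcal W_Q$ and invoking \textnormal{(HB)} pathwise yields $\mathbb E\|\mathcal E_B(t)\|_{H^k_y}^{2p} \le K^p p^p\,\Theta_*^{2p}$. To upgrade to the supremum over $[0,T]$, I discretise along the integer grid, apply Chebyshev's inequality with a $q$-th moment for $q \sim p + \log T$ to extract a sub-Gaussian tail, and conclude via a union bound over $\mathcal O(T)$ grid points. Oscillation inside each sub-interval $[t_j,t_{j+1}]$ is controlled by splitting $\mathcal E_B(t) = S_H(t-t_j)\mathcal E_B(t_j) + \int_{t_j}^{t}S_H(t-s)B(s)\,\mathrm dW_s$ and applying the pointwise bound afresh to the fresh increment while using $H^k_y$-continuity of $S_H$ in time on the frozen past.

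For the weighted integral $\mathcal J_B$, Fubini and the It\^o isometry yield
\begin{equation*}
\mathbb E\,\mathcal J_B(t) = \int_0^t\!\!\int_r^t (1+t-s)^{-\mu}\|\nabla_y S_H(s-r)B(r)\|_{HS(\mathcal W_Q;H^k_y)}^2\,\mathrm ds\,\mathrm dr.
\end{equation*}
Commuting $\nabla_y$ with $S_H$ and with up to $k$ tangential derivatives, and splitting the inner $s$-integral at $u^* = (t-r)/2$, I invoke the deterministic $L^2$-maximal regularity identity $\int_0^\infty\|\nabla_y S_H(u)g\|_{L^2_y}^2\,\mathrm du\le\tfrac12\|g\|_{L^2_y}^2$ (via the spectral identity $|\xi|^2\int_0^\infty e^{-2u|\xi|^2}\mathrm du=\tfrac12$) on the small-$u$ side, and the Gaussian derivative bound $\|\nabla_y S_H(u)\|_{L^1_y\to L^2_y}\lesssim u^{-1/2-(d-1)/4}$ on the large-$u$ side to generate the weight $(1+t-r)^{-\mu}$. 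The inner double integral is thus bounded by a constant times $(1+t-r)^{-\mu}[\|B(r)\|_{HS}^2+\|B(r)\|_{\Pi_2;L^1_y}^2]$, which \textnormal{(HB)} then absorbs. The supremum-in-$t$ and higher-moment upgrades proceed as in the preceding step, using the monotonicity $\int_0^{t_j}(1+t-s)^{-\mu}X_s\,\mathrm ds \le \mathcal J_B(t_j)$ for $t\ge t_j$ (with $X_s = \|\nabla_y \mathcal E_B(s)\|_{H^k_y}^2$) to reduce to controlling $\max_j \mathcal J_B(t_j)^p$ plus a controllable interval-overshoot term $\int_{t_j}^{t_{j+1}} X_s\,\mathrm ds$.

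The principal obstacle is the supremum step: pushing the naive polynomial-$T$ factor that would arise from a direct moment sum across grid points down to the sharp $\log(T)^p$ rate requires the careful Chebyshev-plus-union-bound argument with the optimally tuned moment $q \sim p + \log T$. This chaining-type mechanism, which is precisely what the streamlined framework of \cite{bosch2025conditionalspeedshapecorrections} is designed to deliver, is what distinguishes the sharpened rates obtained here from cruder polynomial alternatives and is the essential ingredient allowing the $\mathfrak d_{\rm stc}$ growth factor to reflect only the weighted-time-integral behaviour rather than a brute power of $T$.
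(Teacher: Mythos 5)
Your treatment of $\mathbb E\sup_t\|\mathcal E_B(t)\|^{2p}_{H^k_y}$ is sound and close in spirit to the paper: the paper also splits into a short-time and a long-time contribution on unit blocks, uses the contractive maximal inequality on each block, and upgrades to a logarithmic supremum bound via a maximum-of-moments lemma (rather than your Chebyshev/union-bound phrasing, but these are the same mechanism). Your $p=1$ bound for $\mathcal J_B$ via Fubini, the small-$u$/large-$u$ split at $u^*=(t-r)/2$, and the spectral identity $\int_0^\infty\|\nabla_y S_H(u)g\|_{L^2_y}^2\,\mathrm du\le\tfrac12\|g\|_{L^2_y}^2$ is also correct and captures the deterministic (quadratic-variation) part of the paper's bilinear-form decomposition.

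The gap is in passing to $p\ge 2$ moments and the running supremum for $\mathcal J_B$. You write that these ``proceed as in the preceding step,'' but no actual mechanism is given, and the obvious candidates fail. Fubini plus the It\^o isometry only produces the $p=1$ expectation. A Minkowski-in-$L^p(\Omega)$ argument would require a uniform-in-$s$ bound on $\mathbb E\|\nabla_y\mathcal E_B(s)\|^{2p}_{H^k_y}$, which does not hold: by BDG this needs $\int_0^s\|\nabla_y S_H(s-r)B(r)\|_{HS}^2\,\mathrm dr$ to be bounded pathwise, and because $B(r)$ depends on $r$ the spectral maximal-regularity identity cannot be used to kill the non-integrable $(s-r)^{-1}$ singularity as it did under your Fubini rearrangement; for constant $B$ this integral diverges. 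Your grid monotonicity does successfully reduce $\sup_t\mathcal J_B(t)$ to $\max_j \mathcal J_B(t_j)$, but this does not repair the absence of a per-point higher-moment bound to feed into the union bound.

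What is actually needed, and what the paper supplies, is a mild It\^o formula applied to $\|\nabla_y\mathcal E_B(s)\|^2_{H^k_y}$: after reversing integration order, $\mathcal J_B(t)$ decomposes into a purely deterministic quadratic-variation piece (your Fubini term) plus a genuine stochastic integral whose integrand couples $\mathcal E_B(r)$ with $B(r)$. BDG on that stochastic integral produces, after a Young-type split, exactly the term $\mathbb E\sup_{r\le t}\|\mathcal E_B(r)\|_{H^k_y}^{2p}$ on the right-hand side of the maximal-regularity estimate (Proposition~\ref{prop:max:reg:main:bnd}), which is then closed using the already-proven $\mathcal E_B$ supremum bound. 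In particular, the bound on $\mathbb E\sup_t\mathcal J_B(t)^p$ is not independent of the $\mathcal E_B$ supremum bound but is chained to it through this martingale cross term. Without recognising this cross-coupling, the $p\ge 2$ and supremum parts of the estimate for $\mathcal J_B$ cannot be closed along the route you propose.
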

We  primarily follow the approach of 
\cite[Sec. 3]{bosch2024multidimensional}  to prove Proposition \ref{prop:E_B} and implement several improvements that were introduced in \cite{bosch2025conditionalspeedshapecorrections}. The algebraic decay requires subtle modifications to the latter procedure at several points, which we highlight below. 

It turns out that in {\S}\ref{sec:stability} we will need to satisfy the  
stricter  condition
\begin{equation}
    \mu=\tfrac{d-1}{4},\quad 2\leq d\leq 5,\quad\text{and}\quad  1<\mu\leq \tfrac{d-1}{4},\quad d> 5,\label{eq:mu:regime}
\end{equation}
  in order to be able to deal with the  dynamical flow.
When $\|B(s)\|_{HS(\mathcal W_Q;H^k_y)}$ and $\|B(s)\|_{\Pi_2(\mathcal W_Q;L^1_y)}$ are uniformly bounded in $[0,T] $ by some constant that does not depend on $T$, which is the case in {\S}\ref{sec:stability}, then  condition (HB) is  satisfied with a $T$-independent $\Theta_*$  only if  $\mu>1$. Recall that the terms $\mathfrak d_{\rm stc}(T)$ and $\mathfrak d_{\rm det}(T)$ are defined in \eqref{eq:int:def:d:stc}
and \eqref{eq:int:def:d:det}
and can also be found in Table \ref{table:main:hfdst2}.

\begin{corollary}\label{cor:E_B}
    Assume that $T \ge 2$ is an integer and consider $\Lambda_*>0$. 
There exists a constant $K_{\rm gr} > 0$ that does not depend on $T$
so that for any process $B$ that satisfies
\begin{equation}
     \|B(t)\|_{HS(\mathcal W_Q;H^k_y)}^2\leq \Lambda_*^2,\qquad \|B(t)\|_{\Pi_2(\mathcal W_Q;L^1_y)}^2\leq \Lambda_*^2,\quad 0\leq t\leq T,
\end{equation}and every integer $p \ge 1$,
we have the growth bound
\begin{equation}
\mathbb E \sup _{0 \leq t \leq T}\left\|\mathcal{E}_B(t)\right\|_{H^k_y }^{2p}
+ \mathbb E\sup_{0\leq t\leq T}\mathcal J_B(t)^{p}
\leq K^{2p}_{\rm gr} \Lambda_*^{2p}\mathfrak d_{\rm stc}(T)^p(p^p+\log(T)^p)\label{eq:RHS57}
\end{equation}
for $\mu=\frac{d-1}2$, while for $\mu$ in the parameter regime \eqref{eq:mu:regime} we have
\begin{equation}
\label{eq:RHS5}\mathbb E\sup_{0\leq t\leq T}\mathcal J_B(t)^{p}
\leq K^{2p}_{\rm gr} \Lambda_*^{2p}\mathfrak d_{\rm det}(T)^p(p^p+\log(T)^p).
\end{equation}
\end{corollary}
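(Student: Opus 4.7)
The plan is to derive Corollary \ref{cor:E_B} as a direct consequence of Proposition \ref{prop:E_B}, by verifying hypothesis (HB) with an explicit $\Theta_*$ determined from the pointwise estimates on $B$ together with an elementary integral computation. Since the weight $(1+t-s)^{-\mu}$ in \eqref{eq:theta:star} decouples from $B$, the hypothesis (HB) reduces to a scalar integral estimate.

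Concretely, the pointwise bounds assumed in the corollary yield
\[
\int_0^t(1+t-s)^{-\mu}\bigl[\|B(s)\|_{HS(\mathcal W_Q;H^k_y)}^2+\|B(s)\|_{\Pi_2(\mathcal W_Q;L^1_y)}^2\bigr]\,\mathrm ds \le 2\Lambda_*^2\, I_\mu(t),
\]
with
\[
I_\mu(t) = \int_0^t(1+t-s)^{-\mu}\,\mathrm ds = \int_1^{1+t}u^{-\mu}\,\mathrm du.
\]
A direct evaluation then gives $I_\mu(t)\le 1/(\mu-1)$ when $\mu>1$, $I_\mu(t)\le \log(1+T)$ when $\mu=1$, and $I_\mu(t)\le T^{1-\mu}/(1-\mu)$ when $0<\mu<1$. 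Comparing these three sub-cases one-by-one against the definitions \eqref{eq:int:def:d:stc} and \eqref{eq:int:def:d:det} yields, uniformly in $t\in[0,T]$, the bound $I_\mu(t)\le C_d\,\mathfrak d_{\rm stc}(T)$ for $\mu=(d-1)/2$ and $I_\mu(t)\le C_d\,\mathfrak d_{\rm det}(T)$ for $\mu$ in the parameter regime \eqref{eq:mu:regime}, for some constant $C_d>0$ depending only on $d$.

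Combining these two ingredients, hypothesis (HB) is satisfied with $\Theta_*^2\le 2C_d\,\Lambda_*^2\,\mathfrak d_{\rm stc}(T)$ in the first case and with $\Theta_*^2\le 2C_d\,\Lambda_*^2\,\mathfrak d_{\rm det}(T)$ in the second. Substituting into \eqref{eq:E_B} of Proposition \ref{prop:E_B} and absorbing the factor $(2C_d)^p$ into $K_{\rm gr}^{2p}$ immediately delivers \eqref{eq:RHS57} and \eqref{eq:RHS5}. Observe that $\mathcal E_B(t)$ itself does not depend on $\mu$; the sharpest estimate for it is obtained by applying the proposition with the largest admissible $\mu=(d-1)/2$, which explains why the $\mathcal E_B$-bound appears only in \eqref{eq:RHS57} and not in \eqref{eq:RHS5}.

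There is no genuine obstacle in this argument: once Proposition \ref{prop:E_B} is in hand, the corollary follows from bookkeeping, and the only care required is the case analysis on $d$ needed to recognise the scalar integral $I_\mu(T)$ as the tabulated growth function $\mathfrak d_{\rm stc}(T)$ or $\mathfrak d_{\rm det}(T)$.
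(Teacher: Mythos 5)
Your proof is correct and takes the natural route: you verify condition (HB) with $\Theta_*^2 \sim \Lambda_*^2\, I_\mu(T)$, identify the scalar integral $I_\mu(T)=\int_0^T(1+r)^{-\mu}\,\mathrm dr$ with the tabulated growth rate ($\mathfrak d_{\rm stc}$ when $\mu=\tfrac{d-1}{2}$, $\mathfrak d_{\rm det}$ in the regime \eqref{eq:mu:regime}, both of which lie inside the admissible window $(0,\tfrac{d-1}{2}]$ of Proposition \ref{prop:E_B}), and substitute into \eqref{eq:E_B}. This is exactly what the surrounding text of the paper indicates: the discussion immediately before the corollary points out that condition (HB) reduces to computing this weighted integral when $B$ is uniformly bounded, and your observation that $\mathcal E_B$ is $\mu$-independent explains, as the paper also notes, why its supremum only needs to be recorded under the sharpest admissible choice $\mu=\tfrac{d-1}{2}$.

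One small stylistic point: writing ``(HB) is satisfied with $\Theta_*^2 \le 2C_d\Lambda_*^2\mathfrak d_{\rm stc}(T)$'' is slightly awkward — you mean that the left-hand side of \eqref{eq:theta:star} is bounded by $2C_d\Lambda_*^2\mathfrak d_{\rm stc}(T)$, so you may \emph{take} $\Theta_*^2=2C_d\Lambda_*^2\mathfrak d_{\rm stc}(T)$ and invoke \eqref{eq:E_B}. The substance is unchanged.
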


     
    Note that  $\mathcal E_B(t)$ in the setting of Corollary $\ref{cor:E_B}$ is independent of $\mu.$  It is also worthwhile to point out that in the absence of the supremum, we find that 
    $\mathbb E[\mathcal J_B(t)]$ is bounded by
    \begin{equation}\mathfrak d_{\rm stc}(T)^p(p^p+\log(T)^p)+\mathfrak d_{\rm det}(T)^pp^p;\end{equation}see {\S}\ref{sec:estimates:max:reg}. But once we consider the running supremum of $\mathcal J_B(t)$, we run into the  additional growth term $\mathfrak d_{\rm det}(T)^p\log(T)^p$. 

\paragraph{Convolutions with \texorpdfstring{$S_L(t)$}{SL(t)}}

Let $S_{\rm tw}(t)$ and $S_L(t)$
denote the analytic semigroups generated by the linear operators $\mathcal L_{\rm tw}$  and $\mathcal L=\mathcal L_{\rm tw}+\Delta_y$, respectively. Note that $S_L(t)=S_{\rm tw}(t)S_H(t)=S_H(t)S_{\rm tw}(t).$ In addition to the analysis above, we also obtain
estimates for the stochastic convolution
\begin{equation}
\label{eq:cnv:def:z:x}
\mathcal Z_X(t)=\int_0^t S_L(t-s) P^\perp_{\rm tw} X(s)\mathrm dW_s,
\end{equation}
together with the associated weighted integral
\begin{equation}
\label{eq:cnv:def:i:x}
   \mathcal I_X(t)=\int_0^t(1+t-s)^{-\mu}\|\mathcal Z_X(s)\|_{H^{k+1}}^2\mathrm ds,  
\end{equation}
which is defined for every $\mu>0.$
Here $P_{\rm tw}^\perp = I-P_{\rm tw}:H^k\to H^k$ denotes the complement of the spectral projection of $\mathcal L_{\rm tw}$ which has been trivially extended to the transverse coordinate, i.e.,
\begin{equation}
    [P_{\rm tw}v](x,y)=\langle v(\cdot,y),\psi_{\rm tw}\rangle_{L^2_x}\Phi_0'(x).
\end{equation}
We impose the following condition on $X$.
\begin{itemize}
    \item[(HX)] For the process $X \in \mathcal{N}^2\big([0, T] ; \mathbb F  ; H S(\mathcal W_Q;H^k  ) \big)$ 
there exists a constant $\Xi_{*}>0$ such that 
the pathwise bound
\begin{equation}
\int_0^t (1+t-s)^{-\mu}\|X(s)\|_{H S(\mathcal W_Q;H^k  )}^2 \mathrm d s \leq \Xi_{*}^2
\label{eq:HB}
\end{equation}
holds for all $0 \leq t \leq T$.
\end{itemize}

This stochastic convolution \eqref{eq:cnv:def:z:x} is very similar to the ones we  have encountered before \cite{bosch2025conditionalspeedshapecorrections,bosch2024multidimensional,hamster2019stability,hamster2020diag,hamster2020expstability,hamster2020},
but we woud like to emphasise that (HX) does not contain a smoothening assumption in the spirit of \cite[condition (HB)]{bosch2024multidimensional}. In addition,
the algebraic weight in \eqref{eq:cnv:def:i:x} is novel.

\begin{proposition}[{see {\S}\ref{sec:spectral:gap}}] 
\label{prop:IX}
Suppose that \textnormal{(Hf)} and \textnormal{(HTw)} hold.   Assume that $T \ge 2$ is an integer and let $\mu>0$.
There exists a constant $K_{\rm gr} > 0$ that does not depend on $T$
so that for any process $X$  that satisfies \textnormal{(HX)}, and every integer $p \ge 1$,
we have the supremum bound
\begin{equation}
    \mathbb E \sup _{0 \leq t \leq T}\left\|\mathcal{Z}_X(t)\right\|_{H^k }^{2p}
    + \mathbb E \sup _{0 \leq t \leq T}\left\|\mathcal{I}_X(t)\right\|_{H^k }^{2p}
\leq K^{2p}_{\rm gr} \Xi_{*}^{2p} (p^p+\log (T)^p).\label{eq:IX(t)}
\end{equation}
\end{proposition}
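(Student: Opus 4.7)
The plan is to adapt the proof of Proposition \ref{prop:E_B} while exploiting the crucial structural simplification that, on the range of $P^\perp_{\rm tw}$, the semigroup $S_L(t) = S_{\rm tw}(t) S_H(t) = S_H(t) S_{\rm tw}(t)$ inherits the exponential decay rate $\beta > 0$ from the spectral gap in \textnormal{(HTw)}. First I would record the operator bounds
\begin{equation*}
\|S_L(t) P^\perp_{\rm tw}\|_{\mathscr L(H^k)} \le K e^{-\beta t}, \qquad \|S_L(t) P^\perp_{\rm tw}\|_{\mathscr L(H^k, H^{k+1})} \le K(1 + t^{-1/2}) e^{-\beta t},
\end{equation*}
together with the Hilbert-space maximal-regularity identity $\int_0^\infty \|S_L(u) P^\perp_{\rm tw} \xi\|^2_{H^{k+1}} du \le C \|\xi\|^2_{H^k}$, both direct consequences of the spectral gap together with the analyticity of $\mathcal L$.

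With these ingredients in hand, the supremum bound on $\|\mathcal Z_X(t)\|_{H^k}^{2p}$ follows essentially verbatim from the one-dimensional argument in \cite{hamster2020expstability} and its streamlined version in \cite{bosch2025conditionalspeedshapecorrections}: the It\^o isometry combined with exponential decay yields the pointwise moment bound $\mathbb E \|\mathcal Z_X(t)\|_{H^k}^{2p} \le (Cp)^p \Xi_*^{2p}$ via \textnormal{(HX)}, and the logarithmic $\log T$ factor in the supremum arises from the standard dyadic chaining argument using that the canonical Gaussian metric $d(s,t)^2 = \mathbb E\|\mathcal Z_X(s) - \mathcal Z_X(t)\|_{H^k}^2$ is uniformly bounded in $s,t$ thanks to the exponential decay. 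For the weighted integral $\mathcal I_X(t)$, after an It\^o--Fubini reordering the expectation is majorised by
\begin{equation*}
\int_0^t \mathbb E \|X(r)\|_{HS(\mathcal W_Q; H^k)}^2 \left( \int_0^{t-r} (1+t-r-u)^{-\mu} \|S_L(u) P^\perp_{\rm tw}\|^2_{\mathscr L(H^k, H^{k+1})} du \right) dr.
\end{equation*}
I would split the inner $u$-integral at $u = 1$: on $[0,1]$ the maximal-regularity identity above absorbs the non-integrable $u^{-1}$ singularity, while on $[1, t-r]$ the exponential decay provides a rapidly vanishing tail; both contributions yield a bound of the form $C(1+t-r)^{-\mu}\|X(r)\|_{HS(\mathcal W_Q; H^k)}^2$, after which \textnormal{(HX)} gives $\mathbb E \mathcal I_X(t) \le C \Xi_*^2$ uniformly in $t$. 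Passing to the supremum and higher $p$ is then handled by Gaussian hypercontractivity applied conditionally on $X$ together with the same dyadic chaining scheme as used for $\mathcal Z_X$.

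The hard part will be the weighted maximal-regularity step: the pointwise operator bound $\|S_L(u) P^\perp_{\rm tw}\|^2_{\mathscr L(H^k, H^{k+1})} \sim u^{-1}$ as $u \downarrow 0$ is not integrable, so the short-time contribution cannot be controlled by operator-norm estimates alone and must instead rely on the Hilbert-space orthogonality inherent in the It\^o integral. Weaving this identity together with the algebraic weight $(1+t-s)^{-\mu}$ and \textnormal{(HX)} without generating a spurious polynomial-in-$T$ factor is the delicate point, since the clean $p^p + \log T^p$ growth depends on both the exponential semigroup decay and on the careful exploitation of the weight.
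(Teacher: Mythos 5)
Your treatment of the $\mathcal Z_X$ part is essentially the paper's: split short-time/long-time and use the maximal inequality and the spectral-gap decay, respectively. (The paper splits $[0,T]$ into unit intervals rather than running a dyadic chaining argument, but the effect is the same, and both yield the $p^p+\log(T)^p$ growth.) Your first-moment strategy for $\mathcal I_X(t)$ is also workable if cleaned up: on $u\in[0,1]$ the operator-norm majorisation must \emph{not} be performed — one keeps the vector-valued expression $\sum_j\|S_L(u)P^\perp_{\rm tw}X(r)\sqrt{Q}e_j\|^2_{H^{k+1}}$ and invokes the square-function estimate (which gives integrability at $u=0$), while on $u\ge 1$ the operator-norm bound $\|S_L(u)P^\perp_{\rm tw}\|_{\mathscr L(H^k,H^{k+1})}\le Me^{-\beta u}$ suffices. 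Your display already replaces the HS norm by $\|X(r)\|_{HS}\|S_L(u)P^\perp_{\rm tw}\|^2_{\mathscr L}$ globally, after which the square-function identity can no longer be appealed to; the two steps must be taken in the opposite order.

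The genuine gap is your passage from $\mathbb E\,\mathcal I_X(t)\lesssim\Xi_*^2$ to the claimed bound on $\mathbb E\sup_{0\le t\le T}\mathcal I_X(t)^p$. You propose ``Gaussian hypercontractivity applied conditionally on $X$,'' but this is not available here: the process $X$ in (HX) is adapted to the filtration $\mathbb F$, so it depends on the driving noise, and the stochastic integral $\int_0^t S_L(t-s)P^\perp_{\rm tw}X(s)\,\mathrm dW^Q_s$ is \emph{not} a Gaussian process conditionally on $X$. Conditioning on the whole path of $X$ determines part of the future of $W^Q$ and destroys the It\^o structure. Nor is $t\mapsto \mathcal I_X(t)$ a semimartingale, so chaining based on incremental moment bounds does not transfer directly from $\mathcal Z_X$.

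The paper circumvents this entirely by applying the mild It\^o formula to $\|\mathcal Z_X(s)\|^2_{H^{k+1}}$ (in the equivalent inner product $\langle\langle\cdot,\cdot\rangle\rangle_{H^{k+1}}$), then using the identity for $\frac{\mathrm d}{\mathrm dt}\langle S_L(t-s)v,S_L(t-s')w\rangle_{H^k}$ to integrate by parts inside the weighted integral. This cancels the singular $H^{k+1}$ quadratic form and yields a \emph{pathwise} decomposition
\begin{equation*}
\mathcal I_X(t)\,\lesssim\,\sup_{0\le r\le t}\|\mathcal Z_X(r)\|^2_{H^k}\;+\;\text{(a forward stochastic integral)},
\end{equation*}
both terms of which admit uniform-in-$t$ $p$-th moment estimates of the required form. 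This decomposition is what gives the full $\mathbb E\sup[\mathcal I_X]^p$ bound with the $p^p+\log(T)^p$ growth, and you would need to reproduce something of this kind; the first-moment argument alone does not extend.
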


We further formulate a result analogous to Corollary \ref{cor:E_B} for 
the situation that $X$ is uniformly bounded. 
The bound \eqref{eq:cnv:growth:bnd:exp:bnd:with:sup}
is sharper than one would obtain from a direct application of 
\eqref{eq:IX(t)}. This is related to the fact that 
the exponential decay allows the $(1-t-s)^{-\mu}$ weight to be replaced by $e^{-\epsilon(t-s)}$.
On the other hand, the estimate
\eqref{eq:cnv:l:cor:weighted:RHS5} does follow directly 
from \eqref{eq:IX(t)}, but involves
the term $\mathfrak d_{\rm det}(T)$ again, as in \eqref{eq:RHS5}.
This difference plays an important role in the estimates in {\S}\ref{sec:est:mixed:type}. 

\begin{proposition}[{see {\S}\ref{sec:spectral:gap}}] \label{prp:cnv:bnd:x:unif}
    Assume that $T \ge 2$ is an integer and consider $\Lambda_*>0$. 
There exists a constant $K_{\rm gr} > 0$ that does not depend on $T$
so that for any process $X$ that satisfies
\begin{equation}
     \|X(t)\|_{HS(\mathcal W_Q;H^k)}^2\leq \Lambda_*^2,\quad 0\leq t\leq T,
\end{equation}and every integer $p \ge 1$,
we have the growth bound
\begin{equation}
\mathbb E \sup _{0 \leq t \leq T}\left\|\mathcal{Z}_X(t)\right\|_{H^k }^{2p}
\leq 
K^{2p}_{\rm gr} \Lambda_*^{2p}(p^p+\log(T)^p) ,
\label{eq:cnv:growth:bnd:exp:bnd:with:sup}
\end{equation}
while for $\mu$ in the parameter regime \eqref{eq:mu:regime} we have
\begin{equation}
    \label{eq:cnv:l:cor:weighted:RHS5}
 \mathbb E\sup_{0\leq t\leq T}\mathcal I_X(t)^{p}
\leq K^{2p}_{\rm gr} \Lambda_*^{2p}\mathfrak d_{\rm det}(T)^p(p^p+\log(T)^p).
\end{equation}
\end{proposition}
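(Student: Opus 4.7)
The plan is to derive both estimates by specialising Proposition \ref{prop:IX}, which only requires checking the integrability hypothesis (HX) and reading off the corresponding $\Xi_*$. The uniform bound on $X$ makes this computation entirely explicit, with the weight $(1+t-s)^{-\mu}$ in (HX) doing all the work.

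For the first bound \eqref{eq:cnv:growth:bnd:exp:bnd:with:sup}, the crucial observation is that the process $\mathcal Z_X(t)$ itself carries no $\mu$-dependence, so we are free to invoke Proposition \ref{prop:IX} with any convenient parameter $\bar\mu>0$. Choosing any $\bar\mu>1$ (for instance $\bar\mu = 2$) and using $\|X(s)\|_{HS(\mathcal W_Q;H^k)}\leq \Lambda_*$, we obtain
\begin{equation}
\int_0^t (1+t-s)^{-\bar\mu} \|X(s)\|_{HS(\mathcal W_Q;H^k)}^2 \, \mathrm ds
\;\le\; \Lambda_*^2 \int_0^t (1+t-s)^{-\bar\mu}\, \mathrm ds \;\le\; \frac{\Lambda_*^2}{\bar\mu - 1},
\end{equation}
so that (HX) is satisfied with $\Xi_*^2 \lesssim \Lambda_*^2$ uniformly in $T$. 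Plugging into \eqref{eq:IX(t)} then yields exactly \eqref{eq:cnv:growth:bnd:exp:bnd:with:sup}. Heuristically, this reflects the remark preceding the statement: although $\mathcal L_{\rm tw}$ acts only on the $x$-coordinate, the exponential decay of $S_{\rm tw}(t)P_{\rm tw}^\perp$ with rate $\beta$ guaranteed by (HTw) suppresses contributions from early times in \eqref{eq:cnv:def:z:x}, which is exactly what is captured by choosing $\bar\mu$ as large as we like.

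For the second bound \eqref{eq:cnv:l:cor:weighted:RHS5}, we use the $\mu$ stipulated by the regime \eqref{eq:mu:regime}. The uniform bound on $X$ together with the very definition of $\mathfrak d_{\rm det}(T)$ in \eqref{eq:int:def:d:det} gives
\begin{equation}
\int_0^t (1+t-s)^{-\mu} \|X(s)\|_{HS(\mathcal W_Q;H^k)}^2 \, \mathrm ds
\;\le\; \Lambda_*^2 \int_0^t (1+t-s)^{-\mu}\, \mathrm ds \;\lesssim\; \Lambda_*^2\, \mathfrak d_{\rm det}(T),
\end{equation}
for all $0\le t\le T$, covering uniformly both the subcritical case $d>5$ with $\mu>1$ (where $\mathfrak d_{\rm det}(T)=1$) and the critical/supercritical cases $2\le d\le 5$ (where the weighted integral saturates the corresponding power or logarithm in $T$). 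Hypothesis (HX) then holds with $\Xi_*^2 \lesssim \Lambda_*^2 \mathfrak d_{\rm det}(T)$, and applying the $\mathcal I_X$-part of \eqref{eq:IX(t)} produces \eqref{eq:cnv:l:cor:weighted:RHS5}.

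The work is essentially bookkeeping, so there is no serious obstacle; the only subtle point to emphasise in the write-up is that the $\mu$ appearing in \eqref{eq:cnv:growth:bnd:exp:bnd:with:sup} (implicitly, via the convolution $\mathcal Z_X$) is unrelated to the $\mu$ of part (ii), so that the two estimates genuinely correspond to two different applications of Proposition \ref{prop:IX} with different $\Xi_*$'s, rather than to a single inequality cascading into a weaker companion.
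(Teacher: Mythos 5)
Your argument is correct, but for the first bound it takes a genuinely different route from the paper. The paper proves \eqref{eq:cnv:growth:bnd:exp:bnd:with:sup} as ``a direct corollary of Proposition~\ref{prop:ZX}'', whose hypothesis (HExp) uses an exponential weight $e^{-\epsilon(t-s)}$; the uniform bound on $X$ gives $\Xi_{\rm exp}^2 \lesssim \Lambda_*^2/\epsilon$ and the claim follows. You instead reuse Proposition~\ref{prop:IX}, exploiting the observation that the process $\mathcal Z_X$ itself carries no $\mu$-dependence, and then choose a fixed $\bar\mu>1$ so that (HX) holds with a $T$-independent $\Xi_*$. Both routes land on the same estimate; your version has the small advantage of needing only one proposition for both parts, while the paper's version makes explicit that the exponential decay of $S_L(t)P_{\rm tw}^\perp$ is the mechanism at play (in fact the remark preceding the proposition, saying that \eqref{eq:cnv:growth:bnd:exp:bnd:with:sup} ``is sharper than one would obtain from a direct application of \eqref{eq:IX(t)}'', should be read as referring to applying \eqref{eq:IX(t)} with the $\mu$ from \eqref{eq:mu:regime}; your freedom to pick a different $\bar\mu$ for the $\mathcal Z_X$ component is precisely what bypasses that loss). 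Your treatment of \eqref{eq:cnv:l:cor:weighted:RHS5} coincides with the paper's, and the closing remark about decoupling the two values of $\mu$ correctly identifies the only subtlety.
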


\subsection{Convolution estimates for \texorpdfstring{$S_H(t)$}{SH(t)}}\label{sec:est:stoch:conv}
Proving the growth estimate \eqref{eq:E_B:only} can be achieved completely in line with \cite{bosch2024multidimensional,hamster2020expstability}  by following a chaining principle \cite{dirksen2015tail,talagrand2005generic} to control the long-time behaviour. Nevertheless, we  show this estimate by means of a more direct approach in which we split $[0,T]$ into intervals of length 1; see also \cite{bosch2025conditionalspeedshapecorrections}.
In particular, we consider the splitting
\begin{equation}
\mathcal{E}_B(t)=\mathcal{E}_B^{\mathrm{lt}}(t)+\mathcal{E}_B^{\mathrm{sh}}(t),
\end{equation}
where the long-time (lt) and short-time (sh) contributions are given by
\begin{equation}
\mathcal{E}_B^{\mathrm{lt}}(t)=\int_0^{\lfloor t\rfloor} S_H(t-s) B(s)\, \mathrm d W_s^Q\quad\text{and} \quad \mathcal{E}_B^{\mathrm{sh}}(t)=\int_{\lfloor t\rfloor}^t S_H(t-s)B(s)\, \mathrm d W_s^Q,
\end{equation}
respectively, where $\lfloor t\rfloor =i$ for $i\leq t<i+1$ and  any $i\in\mathbb Z_{\geq 0}.$ 
For the short-term integral, we rely on  the maximal inequality \eqref{eq:maximal:ineq:SH}.

\begin{lemma}[short-term bound]
     \label{lem:sh}  
     Assume that  $T \ge 2$ is an integer  and let $\mu>0$. There exists a constant $K_{\rm sh}>0$ that does not depend on $T$,
     so that for any process $B$ that satisfies
     \textnormal{(HB)}, and every integer $p\geq 1$, we have the supremum  bound
    \begin{equation}
        \mathbb E\sup_{0\leq t\leq T}\|\mathcal E_B^{\rm sh}(t)\|_{H^k_y}^{2p}\leq (p^p+\log(T)^p)K_{\rm sh}^{2p}\Theta_*^{2p}.
    \end{equation}
\end{lemma}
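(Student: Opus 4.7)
The plan is to split the time interval $[0,T]$ into $T$ unit-length subintervals $\{[i,i+1]\}_{i=0}^{T-1}$, control the running supremum of $\|\mathcal E_B^{\rm sh}(t)\|_{H^k_y}^{2q}$ on each such subinterval via the maximal inequality \eqref{eq:maximal:ineq:SH}, and then convert the resulting naive union-bound factor of $T$ into the sharper factor $\log(T)^p$ by means of a moment-interpolation trick in the spirit of \cite{bosch2025conditionalspeedshapecorrections}. The essential observation is that on each subinterval $\mathcal E_B^{\rm sh}$ starts afresh at $s=i$ and only runs for one unit of time, so the short-time semigroup structure can be exploited directly.

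Concretely, I would first fix an integer $q\ge 1$ and apply \eqref{eq:maximal:ineq:SH} on each subinterval to obtain
\begin{equation*}
\mathbb E\sup_{i\le t\le i+1}\|\mathcal E_B^{\rm sh}(t)\|_{H^k_y}^{2q}
\le (C_1 q)^q \Big(\int_i^{i+1}\|B(s)\|_{HS(\mathcal W_Q;H^k_y)}^2\,\mathrm ds\Big)^q
\end{equation*}
for an absolute constant $C_1>0$ (the factor $q^q$ encapsulating the usual Burkholder constants for $2q$-th moments). Since $(1+t-s)^{-\mu}\ge 2^{-\mu}$ for all $i\le s\le t\le i+1$, evaluating \textnormal{(HB)} at $t=i+1$ yields the uniform estimate
\begin{equation*}
\int_i^{i+1}\|B(s)\|_{HS(\mathcal W_Q;H^k_y)}^2\,\mathrm ds \le 2^\mu\Theta_*^2,
\end{equation*}
so that $\mathbb E M_i^{2q}\le (C_2 q)^q\Theta_*^{2q}$ with $C_2=2^\mu C_1$, where $M_i := \sup_{i\le t\le i+1}\|\mathcal E_B^{\rm sh}(t)\|_{H^k_y}$.

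The remaining task is to control $\mathbb E[\max_{0\le i\le T-1} M_i^{2p}]$. Using the elementary bound $(\max_i M_i)^{2p}\le (\sum_i M_i^{2q})^{p/q}$, valid for any $q\ge p$, followed by Jensen's inequality applied to the concave map $x\mapsto x^{p/q}$, one obtains
\begin{equation*}
\mathbb E \max_{0\le i\le T-1} M_i^{2p}
\le \Big(\sum_{i=0}^{T-1}\mathbb E M_i^{2q}\Big)^{p/q}
\le \big(T\cdot (C_2 q)^q\Theta_*^{2q}\big)^{p/q}
= T^{p/q}(C_2 q)^p\Theta_*^{2p}.
\end{equation*}
Choosing $q=\max\{p,\lceil\log T\rceil\}$ forces $T^{p/q}\le e^p$ and $(C_2 q)^p\le (2C_2)^p(p^p+\log(T)^p)$, which produces the desired estimate upon absorbing all constants into a suitable $K_{\rm sh}$.

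The main subtlety is the final interpolation step, which converts the crude union-bound factor $T$ into a $\log T$ factor; this crucially requires having the $2q$-th moment estimate on each subinterval available for all integers $q\ge 1$ simultaneously, together with the freedom to tune $q$ in terms of $T$. Everything else is a bookkeeping exercise with constants that are manifestly independent of $T$, as claimed.
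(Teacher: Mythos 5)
Your proof is correct and follows the same route as the paper: partition $[0,T]$ into unit intervals, apply the maximal inequality \eqref{eq:maximal:ineq:SH} together with the a priori bound (HB) to get $\mathbb E M_i^{2q}\lesssim (Cq)^q\Theta_*^{2q}$ uniformly in $i$, and then convert the naive union-bound factor of $T$ into $p^p+\log(T)^p$. The only difference is that the paper outsources this last step to \cite[Cor.\ B.3]{bosch2024multidimensional} (equivalently \cite[Cor.\ 2.4]{hamster2020expstability}), whereas you supply the underlying $L^q$-interpolation argument---$(\max_i M_i)^{2p}\le\bigl(\sum_i M_i^{2q}\bigr)^{p/q}$ followed by Jensen and the choice $q=\max\{p,\lceil\log T\rceil\}$---in full, which is exactly what that cited corollary encapsulates.
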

\begin{proof}
    Applying the maximal inequality \eqref{eq:maximal:ineq:SH} yields 
    \begin{equation}
    \begin{aligned}
        \mathbb E\sup_{i\leq t\leq i+1}\|\mathcal E_B^{\rm sh}(t)\|_{H^k_y}^{2p}&\leq K_H^{2p}p^p\mathbb E\left[\int_i^{i+1}\|B(s)\|_{HS(\mathcal W_Q;H^k_y)}^2\,\mathrm ds\right]^p\\
&\leq 2^{\mu p}K_H^{2p}p^p \mathbb E\left[\int_i^{i+1}(1+(i+1)-s)^{-\mu}\|B(s)\|_{HS(\mathcal W_Q;H^k_y)}^2\,\mathrm ds\right]^p\\
&\leq 2^{\mu p}K_H^{2p}p^p\Theta_*^{2p},
\end{aligned}
    \end{equation}
    on account of  \eqref{eq:theta:star}. Observing that
    \begin{equation}
        \sup_{0\leq t\leq T}\|\mathcal E_B^{\rm sh}(t)\|_{H^k_y}^{2p}\leq \max_{i\in\{0,\ldots,T-1\}}\sup_{i\leq t\leq i+1}\|\mathcal E_B^{\rm sh}(t)\|_{H^k_y}^{2p}
    \end{equation}
    and
     exploiting the estimate on maximum expectations in either \cite[Cor. B.3]{bosch2024multidimensional} or \cite[Cor. 2.4]{hamster2020expstability},
     we arrive at the assertion with $K_{\rm sh}^2=(24e)2^\mu K_H^2$.
\end{proof}
For the  long-term integral,  inequality \eqref{eq:maximal:ineq:SH} is unsuitable to retrieve a suitable bound, since it disregards all decay properties of the semigroup. We proceed by using  \eqref{eq:decay:H} to exploit the decay of the semigroup.

\begin{lemma}[long-term bound]\label{lem:Talagrand}
Assume that  $T \ge 2$ is an integer  and let $\mu\in (0,\frac{d-1}{2}]$ with $d\geq 2$. There exists a constant $K_{\mathrm{lt}} > 0$ that does  not depend on $T$, so that
     for any process $B$ that satisfies \textnormal{(HB)}, and any integer $p \geq 1$,  we have the supremum bound
\begin{equation}
\mathbb E \sup _{0 \leq t \leq T}\left\|\mathcal{E}_B^{\mathrm{lt}}(t)\right\|_{H^k_y}^{2 p} \leq\left(p^p+\log (T)^p\right) K_{\mathrm{lt}}^{2 p} \Theta_*^{2 p} .
\end{equation}

\end{lemma}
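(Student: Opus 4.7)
The plan follows the pattern of the short-term estimate, but exploits algebraic semigroup decay to keep the BDG integrand $T$-uniform. The structural observation is that for every integer $0\le i\le T-1$ and every $t\in[i,i+1)$, the semigroup property yields $\mathcal{E}_B^{\mathrm{lt}}(t) = S_H(t-i)\,\mathcal{E}_B^{\mathrm{lt}}(i)$. Since $\Delta_y$ commutes with partial derivatives in $y$ and generates an $L^2_y$-contraction semigroup, $S_H$ is also a contraction on $H^k_y$, which gives the pointwise reduction
\begin{equation*}
\sup_{0\le t\le T}\|\mathcal{E}_B^{\mathrm{lt}}(t)\|_{H^k_y} \le \max_{0\le i\le T}\|\mathcal{E}_B^{\mathrm{lt}}(i)\|_{H^k_y}
\end{equation*}
of the continuous supremum to a maximum over $O(T)$ discrete times.

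For each integer $i$, Burkholder--Davis--Gundy produces
\begin{equation*}
\mathbb E\|\mathcal{E}_B^{\mathrm{lt}}(i)\|_{H^k_y}^{2p} \le K_{\mathrm{BDG}}^{2p}\,p^p\,\mathbb E\Bigl[\int_0^i \|S_H(i-s)B(s)\|_{HS(\mathcal W_Q;H^k_y)}^2\,\mathrm ds\Bigr]^p.
\end{equation*}
The crucial ingredient is then the $L^1_y\cap H^k_y\to H^k_y$ decay estimate
\begin{equation*}
\|S_H(\tau)f\|_{H^k_y}^2 \le K(1+\tau)^{-(d-1)/2}\bigl(\|f\|_{H^k_y}^2 + \|f\|_{L^1_y}^2\bigr),
\end{equation*}
which I would obtain by interpolating the $L^2_y$-contraction property with the Gaussian-kernel bound $\|\partial^\alpha p_\tau\|_{L^2_y}\lesssim \tau^{-(d-1)/4-|\alpha|/2}$ for $\tau\ge1$ combined with Young's convolution inequality. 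Applied to each basis vector $\sqrt{Q}e_j$ and summed in $j$, this converts the Hilbert--Schmidt integrand into $K^2(1+(i-s))^{-(d-1)/2}\bigl(\|B(s)\|_{HS(\mathcal W_Q;H^k_y)}^2+\|B(s)\|_{\Pi_2(\mathcal W_Q;L^1_y)}^2\bigr)$. The hypothesis $\mu\le(d-1)/2$ now allows this weight to be dominated by $(1+(i-s))^{-\mu}$, and (HB) directly yields the $T$-uniform moment bound $\mathbb E\|\mathcal{E}_B^{\mathrm{lt}}(i)\|_{H^k_y}^{2p}\le (K_{\mathrm{BDG}}K)^{2p}p^p\Theta_*^{2p}$.

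Finally, I would feed these uniform moment estimates into the maximum-of-expectations bound from \cite[Cor.\ 2.4]{hamster2020expstability} (equivalently \cite[Cor.\ B.3]{bosch2024multidimensional}), optimising the auxiliary Hölder exponent as $q=p+\log T$ to extract the advertised factor $p^p+\log(T)^p$. The main obstacle is matching the $\mu$-weight in (HB) against the intrinsic algebraic rate of $S_H$: this is exactly why $\mu\le(d-1)/2$ is imposed and why the $L^1_y$-summing ingredient in (HB) is indispensable, since the pure $H^k_y$-contraction alone would yield no decay in $\tau$ and the BDG integrand would grow linearly in $T$.
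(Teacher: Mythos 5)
Your proof is correct and follows essentially the same route as the paper: you discretise the long-term convolution over unit intervals, exploit the $H^k_y$-contraction of $S_H$ to pass to a max over integer times, apply the BDG-type bound \eqref{eq:decay:H} at each integer, invoke the $(1+\tau)^{-(d-1)/2}$ decay on the HS/$\Pi_2$ integrand (the content of Corollary \ref{cor:algebraic:decay}) and compare against $(1+\tau)^{-\mu}$ via $\mu\le(d-1)/2$, then feed the $T$-uniform moments into the maximum-of-expectations lemma. The only cosmetic difference is that you re-derive the semigroup decay estimate from the Gaussian kernel rather than quoting Corollary \ref{cor:algebraic:decay} directly.
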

\begin{proof} For $t\geq 0$ we have
\begin{equation}
    \mathcal E_{B}^{\rm lt}(t)=S_H(t-\lfloor t\rfloor)\int_0^{\lfloor t\rfloor}S_H(\lfloor t\rfloor -s)B(s)\,\mathrm dW_s^Q.
\end{equation} 
We invoke estimate \eqref{eq:decay:H} to find
\begin{equation}
    \begin{aligned}
        \mathbb E\sup_{i\leq t\leq i+1}\left\|\mathcal{E}_B^{\mathrm{lt}}(t)\right\|_{H^k_y}^{2 p}&\leq \sup_{i\leq t\leq i+1}\|S_H(t-i)\|_{\mathscr L(H^k_y)}^{2p}\mathbb E\left\|\int_0^iS_H(i-s)B(s)\,\mathrm dW_s^Q\right\|_{H^k_y}^{2p}\\
        &\leq K_H^2p^p\mathbb E\left[\int_0^i\left\|S_H(i-s)B(s)\right\|_{HS(\mathcal W_Q;H^k_y)}^{2}\mathrm ds\right]^p.
    \end{aligned}
\end{equation}
Here we implicitly used the fact that $S_H(t)$ is a contractive semigroup; see Lemma \ref{lem:theta}. Observe that Corollary \ref{cor:algebraic:decay} together with condition \eqref{eq:theta:star} implies
\begin{equation}
\begin{aligned}
        &\mathbb E\sup_{i\leq t\leq i+1}\left\|\mathcal{E}_B^{\mathrm{lt}}(t)\right\|_{H^k_y}^{2 p}\\&\qquad\leq K_H^{2p}M_{\rm alg}^{2p}p^p\mathbb E\Bigg[\int_0^i(1+i-s)^{-\frac{d-1}2}\big[\left\|B(s)\right\|_{HS(\mathcal W_Q;H^k_y)}+\left\|B(s)\right\|_{\Pi_2(\mathcal W_Q;L^1_y)}\big]^2\mathrm ds\Bigg]^p\\&\qquad\leq 2^pK_H^{2p}M_{\rm alg}^{2p}p^p\Theta_*^2,
        \end{aligned}
\end{equation}
due to the  elementary inequality $(a+b)^2\leq 2a^2+2b^2.$ Similar reasoning as in the proof of Lemma \ref{lem:sh} allows us to conclude that the assertion holds with $K_{\rm lt}^2=48eK_H^2M_{\rm alg}^2.$
\end{proof}
\begin{corollary}\label{cor:E_B:only}
Assume that  $T \ge 2$ is an integer  and let $\mu\in (0,\frac{d-1}{2}]$ with $d\geq 2$.
There exists a constant $K_{\rm gr} > 0$ that does not depend on $T$
so that for any process $B$ that satisfies \textnormal{(HB)}, and every integer $p \ge 1$,
we have the supremum bound
\begin{equation}
\mathbb E \sup _{0 \leq t \leq T}\left\|\mathcal{E}_B(t)\right\|_{H^k_y }^{2p}
\leq K^{2p}_{\rm gr} \Theta_*^{2p} (p^p+\log (T)^p).\label{eq:E_B:only}
\end{equation}
\end{corollary}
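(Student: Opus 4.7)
The claim follows essentially immediately by combining the short-term bound in Lemma \ref{lem:sh} with the long-term bound in Lemma \ref{lem:Talagrand}. The plan is to exploit the decomposition $\mathcal E_B(t) = \mathcal E_B^{\rm sh}(t) + \mathcal E_B^{\rm lt}(t)$ that has already been introduced and reduce the supremum estimate on $\mathcal E_B$ to the sum of the two estimates that are available.

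More precisely, I would apply the triangle inequality in $H^k_y$ followed by the elementary inequality $(a+b)^{2p}\le 2^{2p-1}(a^{2p}+b^{2p})$ to write
\begin{equation}
\sup_{0\le t\le T}\|\mathcal E_B(t)\|_{H^k_y}^{2p}
\le 2^{2p-1}\Big[\sup_{0\le t\le T}\|\mathcal E_B^{\rm sh}(t)\|_{H^k_y}^{2p}+\sup_{0\le t\le T}\|\mathcal E_B^{\rm lt}(t)\|_{H^k_y}^{2p}\Big].
\end{equation}
Taking expectations and applying Lemmas \ref{lem:sh} and \ref{lem:Talagrand} (both of which are valid under the standing hypotheses, since $\mu\in(0,(d-1)/2]$ lies in the intersection of their respective parameter ranges), yields
\begin{equation}
\mathbb E\sup_{0\le t\le T}\|\mathcal E_B(t)\|_{H^k_y}^{2p}
\le 2^{2p-1}(K_{\rm sh}^{2p}+K_{\rm lt}^{2p})\,\Theta_*^{2p}\,(p^p+\log(T)^p).
\end{equation}
Absorbing everything into a single constant, for instance $K_{\rm gr}^2 = 2(K_{\rm sh}^2+K_{\rm lt}^2)$, delivers the stated estimate. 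Since both $K_{\rm sh}$ and $K_{\rm lt}$ are independent of $T$, so is $K_{\rm gr}$.

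There is no real obstacle here; the genuine work has already been carried out in the two lemmas. The only minor subtlety worth flagging is that the range $\mu\in(0,(d-1)/2]$ in the statement is exactly what is needed to apply the algebraic decay estimate from Corollary \ref{cor:algebraic:decay} in the long-term bound, whereas the short-term bound tolerates any $\mu>0$; both constraints are compatible with the hypothesis of the corollary.
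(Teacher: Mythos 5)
Your proposal is correct and matches the paper's own proof, which likewise just combines Lemma \ref{lem:sh} and Lemma \ref{lem:Talagrand}; you have simply spelled out the triangle-inequality and constant-bookkeeping steps that the paper leaves implicit.
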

\begin{proof}
   Combining the short-time result in Lemma \ref{lem:sh} with the long-time result in Lemma \ref{lem:Talagrand} yields the supremum bound.
\end{proof}



\subsection{Maximal regularity estimates for \texorpdfstring{$S_H(t)$}{SH(t)}}\label{sec:estimates:max:reg}
Our main result here  states that in a certain sense the integrated control
over the $H^k_y$-norm of first-order derivatives
of the convolution is assured
when the supremum of the stochastic convolution is well-behaved. The proposition below  can be seen as an adaptation of \cite[Prop 3.11]{bosch2024multidimensional} to the present setting involving algebraic decay.

\begin{proposition}[maximal regularity estimate]
\label{prop:max:reg:main:bnd}
Suppose $T>0$  and let $\mu\in (0,\frac{d-1}{2}]$ for $d\geq 2$.
There exists a constant $K_{\rm mr} > 0$ that does not depend on $T$
so that for any process
\begin{equation}
    B \in \mathcal N^{2p}\big( [0,T]; \mathbb F ; HS(\mathcal{W}_Q; H^k_y) \big),
\end{equation}
and every integer $p \ge 1$, we have the bound
\begin{equation}
\label{eq:fw:max:reg:bnd:j:perp:b}
\begin{aligned}
\mathbb E  [\mathcal J_B(t)]^{p}
 &\le  
    K_{\rm mr}^{p} \, \mathbb E   \sup_{0 \le r \le t} 
      \|\mathcal{E}_B(r)\|_{H^k_y}^{2p} 
    + p^{p} K_{\rm mr}^{p} \mathbb E
    \left[\int_0^t (1+t-r)^{-\mu} \| B(r) \|_{HS(\mathcal W_Q;H^k_y)}^2 \, \mathrm dr \right]^{p} \\
    & \qquad\qquad\,\,+ p^{p} K_{\rm mr}^{p} \mathbb E
    \left[\int_0^t (1+t-r)^{-\mu} \| B(r) \|_{\Pi_2(\mathcal{W}_Q;L^1_y )}^2 \, \mathrm dr \right]^{p},
\end{aligned}
\end{equation}
for any $0\leq t\leq T.$
\end{proposition}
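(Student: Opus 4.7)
The plan is to derive the claim via an It\^{o}-type energy identity for the weighted norm $(1+t-s)^{-\mu}\|\mathcal E_B(s)\|_{H^k_y}^2$, viewed as a real semimartingale in $s\in[0,t]$ with $t$ fixed. Since $\mathcal E_B$ is the variational solution of $\mathrm d\mathcal E_B = \Delta_y \mathcal E_B\,\mathrm ds + B\,\mathrm dW_s^Q$ with $\mathcal E_B(0)=0$, on the Gelfand triple $H^{k+1}_y\subset H^k_y \subset H^{k-1}_y$, the standard energy identity (combined with $\langle \mathcal E_B, \Delta_y \mathcal E_B\rangle_{H^k_y}=-\|\nabla_y\mathcal E_B\|_{H^k_y}^2$) gives
\[
\mathrm d\|\mathcal E_B(s)\|_{H^k_y}^2 = \bigl[-2\|\nabla_y\mathcal E_B(s)\|^2_{H^k_y} + \|B(s)\|^2_{HS(\mathcal W_Q;H^k_y)}\bigr]\mathrm ds + 2\langle \mathcal E_B(s), B(s)\,\mathrm dW^Q_s\rangle_{H^k_y}.
\]

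Multiplying by the deterministic weight $(1+t-s)^{-\mu}$ via the product rule and integrating over $[0,t]$ yields the pathwise identity
\[
\|\mathcal E_B(t)\|_{H^k_y}^2 + 2\mathcal J_B(t) = \mu\int_0^t (1+t-s)^{-\mu-1}\|\mathcal E_B(s)\|_{H^k_y}^2\,\mathrm ds + \int_0^t (1+t-s)^{-\mu}\|B(s)\|^2_{HS}\,\mathrm ds + 2 M_t,
\]
where $M_t = \int_0^t (1+t-s)^{-\mu}\langle \mathcal E_B(s), B(s)\,\mathrm dW_s^Q\rangle_{H^k_y}$. The first term on the right is controlled pointwise by $\sup_{0\le s\le t}\|\mathcal E_B(s)\|^2_{H^k_y}$ thanks to $\mu\int_0^t(1+t-s)^{-\mu-1}\,\mathrm ds = 1-(1+t)^{-\mu}\le 1$, so it absorbs into the first term on the right-hand side of the proposition after raising to the $p$-th power and taking expectations; the second term is directly the $\|B\|^2_{HS}$-weighted integral that appears in the conclusion.

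For the martingale $M_t$ I apply the Burkholder--Davis--Gundy inequality together with the basis-level Cauchy--Schwarz bound $\sum_j\langle \mathcal E_B, B\sqrt Q e_j\rangle^2_{H^k_y}\le \|\mathcal E_B\|^2_{H^k_y}\|B\|^2_{HS}$ to obtain
\[
\mathbb E|M_t|^p \le C^p p^{p/2}\,\mathbb E\biggl[\int_0^t (1+t-s)^{-2\mu}\|\mathcal E_B(s)\|^2_{H^k_y}\|B(s)\|^2_{HS}\,\mathrm ds\biggr]^{p/2}.
\]
Using $(1+t-s)^{-2\mu}\le (1+t-s)^{-\mu}$ (which holds since $1+t-s\ge 1$), pulling $\sup_{s}\|\mathcal E_B(s)\|^2_{H^k_y}$ outside the integral, and then applying Cauchy--Schwarz in $\Omega$ followed by Young's inequality $ab\le \tfrac12 a^2 + \tfrac12 b^2$ splits the result into a multiple of $\mathbb E\sup_{r}\|\mathcal E_B(r)\|^{2p}_{H^k_y}$ plus a $p^p$-multiple of $\mathbb E\bigl[\int_0^t(1+t-s)^{-\mu}\|B\|^2_{HS}\,\mathrm ds\bigr]^p$, using $p^{p/2}\le p^p$. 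Collecting the three contributions via $(A+B+C)^p\le 3^{p-1}(A^p+B^p+C^p)$ establishes the desired estimate with only the $H^k_y$-norm appearing on the right; the $\Pi_2(\mathcal W_Q;L^1_y)$-term in the statement is not generated by this argument, but is harmless to include since it is non-negative and its presence makes the bound directly compatible with hypothesis \textnormal{(HB)}.

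The main technical obstacle will be justifying the It\^{o} expansion rigorously, since $\mathcal E_B$ is only pathwise in $H^k_y$ while $\Delta_y$ is unbounded from $H^k_y$ into itself. The cleanest route is to invoke the It\^{o} formula for the square norm on the middle space of the Gelfand triple $H^{k+1}_y\subset H^k_y\subset H^{k-1}_y$, as available in the variational framework of \cite{agresti2024criticalNEW,prevot2007concise}; this requires the path regularity $\mathcal E_B\in L^2(\Omega; L^2(0,t;H^{k+1}_y))$, which in turn follows from the unweighted version of the energy identity above applied to bounded progressively measurable truncations $B^{(n)}$ of $B$ (e.g., finite-dimensional Galerkin projections). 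The general case is then recovered by approximation in the $\mathcal N^{2p}$-norm, continuity of the stochastic convolution in its integrand, and Fatou's lemma on both sides.
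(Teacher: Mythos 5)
Your proposal is correct and takes a genuinely different route from the paper. The paper applies the \emph{mild} It\^o formula to $\|\nabla_y\mathcal E_B(s)\|_{H^k_y}^2$, reverses the order of integration, and controls the resulting bilinear forms $\mathcal B(s_2',s_2,s_1',s_1)$ via the algebraic decay estimates in Corollary \ref{cor:algebraic:decay} (this is Lemma \ref{lem:max:reg:bnds:j:delta}); that is precisely where the $L^1_y$-control of $B$ and the restriction $\mu\le\frac{d-1}{2}$ enter. You instead invoke the \emph{variational} It\^o formula for $\|\mathcal E_B(s)\|_{H^k_y}^2$ on the Gelfand triple $H^{k+1}_y\subset H^k_y\subset H^{k-1}_y$, multiply by the deterministic weight $(1+t-s)^{-\mu}$, and apply BDG to the resulting real martingale. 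This is more elementary: no semigroup decay is invoked (only the energy dissipation $\langle\Delta_y v,v\rangle_{H^{k-1}_y;H^{k+1}_y}=-\|\nabla_y v\|_{H^k_y}^2$), so the $\Pi_2(\mathcal W_Q;L^1_y)$-term on the right-hand side becomes genuinely superfluous and the constraint $\mu\le\frac{d-1}{2}$ is not needed at all. What the paper's approach buys in return is uniformity: the identical bilinear-form machinery is re-used in {\S}\ref{sec:spectral:gap} for $S_L(t)=S_H(t)S_{\rm tw}(t)$, whose generator is not self-adjoint on $H^k$ (the energy relation there carries a bounded correction $\mathcal A$), and the mild It\^o formula gives a single framework covering both propositions.

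One small imprecision you should fix: after BDG and Cauchy--Schwarz you have a term of the form $p^{p/2}\mathbb E[A^{p/2}D^{p/2}]$ with $A=\sup_s\|\mathcal E_B(s)\|_{H^k_y}^2$ and $D=\int_0^t(1+t-s)^{-\mu}\|B(s)\|_{HS}^2\,\mathrm ds$. Applying $ab\le\tfrac12 a^2+\tfrac12 b^2$ symmetrically would leave a $p^{p/2}$ factor in front of $\mathbb E A^p$, which the stated bound does not allow (and which would contaminate the downstream moment estimates). The fix is the weighted split $p^{p/2}A^{p/2}D^{p/2}=A^{p/2}(pD)^{p/2}\le\tfrac12\big(A^{p}+p^{p}D^{p}\big)$ — exactly the trick the paper uses in \eqref{eq:splitting}. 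With that correction your argument goes through. Your remarks on justifying the variational It\^o formula (path regularity $\mathcal E_B\in L^2(\Omega;L^2(0,t;H^{k+1}_y))$ via Galerkin truncation and Fatou) are the right way to close the rigour gap and are consistent with the variational setting already in place in {\S}\ref{sec:evol:pert}.
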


The key towards coping with the first-order derivatives of the convolution is the identity
\begin{equation}
    \begin{aligned}
    \frac{\mathrm d}{\mathrm dt}\langle S_H(t-s)v,S_H(t-s')w\rangle_{H^k_y}&=-2\langle \nabla_yS_H(t-s)v,\nabla_yS_H(t-s')w\rangle_{H^k_y},  
    \end{aligned}
    \end{equation}
for any $t,s,s'\geq 0$ that satisfy $t>s$ and $t>s'.$
Let us introduce
the bilinear forms
\begin{equation}
\label{eq:fw:max:reg:def:j}
	\mathcal{B}(s_2',s_2, s_1', s_1)[v, w]
	= \int_{s_1'}^{s_2} (1+s_2'-s)^{-\mu}   \langle \nabla_y S_H(s-s_1) v,
	\nabla_y S_H(s-s_1')  w \rangle_{H^{k}_y} \mathrm ds,
\end{equation}
for any pair $v,w\in H^{k+1}_y$ and  any $0 \leq s_1\leq s_1'\leq s_2\leq s_2'\leq T$ fixed. Performing an integration by parts,
we obtain the decomposition
\begin{equation}
    \mathcal{B} (s_2',s_2,s_1', s_1)[v,w]
    =  \mathcal{B}_1(s_2',s_2,s_1',s_1)[v,w]
    + \mathcal{B}_2(s_2',s_2,s_1',s_1)[v,w]\label{eq:Jtotal}
\end{equation}
in terms of the integral form
\begin{align}
\mathcal{B}_1(s_2',s_2,s_1', s_1)[v,w]
    & =  \textstyle \frac{\mu}{2} \int_{s_1'}^{s_2}  (1+s_2'-s)^{-(\mu+1)}
          \langle S_H(s-s_1) v,   S_H(s-s_1') w \rangle_{H^k_y}
     \, \mathrm ds,
    \end{align}
     together with its boundary counterpart
     \begin{equation}
     \begin{aligned}
\textstyle
     \mathcal{B}_2(s_2',s_2,s_1',s_1)[v,w] &  =  \textstyle  
-\frac{1}{2}(1+s_2'-s_2)^{-\mu} \langle S_H(s_2-s_1) v, S_H(s_2- s_1') w \rangle_{H^k_y} \\&\textstyle\qquad\qquad+ \frac{1}{2} (1+ s_2'-s_1')^{-\mu}\langle  S_H(s_1'-s_1)v,  w \rangle_{H^k_y}.
\end{aligned} 
\end{equation}
The main point is that these expressions can be bounded using $H^k_y$-norms and $L^1_y$-norms only.  In this section, it suffices to determine estimates where $s_1'=s_1$ and  $s_2'=s_2$;  in {\S}\ref{sec:det:bounds:critical} we encounter this bilinear form in its full form.
We shall see that for certain future estimates we do not want to end up with an $L^1_y$-norm  in both $v$ and $w$, and therefore we provide the alternative estimate \eqref{eq:J-slower}. 

\begin{lemma}
\label{lem:max:reg:bnds:j:delta} 
Suppose $T>0$  and let $\mu\in(0,\frac{d-1}{2}]$ for $d\geq 2.$
There exists a constant $K_{\mathcal B}>0$  that
does not depend on $T$ so that  we have the estimate
\begin{equation}
    \begin{array}{lcl}
    | \mathcal B(s_2,s_2,s_1,s_1)[v, w] | \label{eq:J-faster}
     & \le & K_{\mathcal B} (1+s_2-s_1)^{-\mu} \big[\|v\|_{L^1_y}+\|v\|_{H^k_y}\big]\big[|w\|_{L^1_y}+\|w\|_{H^k_y}\big],
    \end{array}
\end{equation}
 for any $0 \le s_1 \le s_2 \le T$ and any  $v,w \in H^{k+1}$.
As an alternative estimate, we have
 \begin{equation}
    \begin{array}{lcl}
    | \mathcal B(s_2,s_2,s_1,s_1)[v, w] |
     & \le & K_{\mathcal B} (1+s_2-s_1)^{-\nu} \|v\|_{H^k_y}\big[\|w\|_{L^1_y}+\|w\|_{H^k_y}\big],\label{eq:J-slower}
    \end{array}
\end{equation}
where $\nu\leq \min\{\mu,\frac{d-1}4\}.$ 
\end{lemma}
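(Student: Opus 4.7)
The plan is to start from the integration-by-parts decomposition $\mathcal{B} = \mathcal{B}_1 + \mathcal{B}_2$ in \eqref{eq:Jtotal}, which on the diagonal $s_1'=s_1$, $s_2'=s_2$ becomes
\begin{equation}
\mathcal{B}_1(s_2,s_2,s_1,s_1)[v,w] = \tfrac{\mu}{2}\int_{s_1}^{s_2}(1+s_2-s)^{-(\mu+1)}\langle S_H(s-s_1)v,\, S_H(s-s_1)w\rangle_{H^k_y}\,\mathrm ds,
\end{equation}
\begin{equation}
\mathcal{B}_2(s_2,s_2,s_1,s_1)[v,w] = -\tfrac{1}{2}\langle S_H(s_2-s_1)v,\, S_H(s_2-s_1)w\rangle_{H^k_y} + \tfrac{1}{2}(1+s_2-s_1)^{-\mu}\langle v,w\rangle_{H^k_y},
\end{equation}
so that the integrated first-order derivative has been traded against algebraically weighted $H^k_y$ inner products of the semigroup. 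The only analytic input needed is then the heat-semigroup decay provided by Corollary \ref{cor:algebraic:decay}, namely $\|S_H(t)z\|_{H^k_y}\lesssim (1+t)^{-(d-1)/4}\bigl(\|z\|_{L^1_y}+\|z\|_{H^k_y}\bigr)$, together with the trivial contraction $\|S_H(t)z\|_{H^k_y}\le \|z\|_{H^k_y}$.

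For \eqref{eq:J-faster} I apply Cauchy--Schwarz under the $H^k_y$ inner product and use the $L^1_y\cap H^k_y\to H^k_y$ decay on both $v$ and $w$, giving a pointwise bound at rate $(1+s-s_1)^{-(d-1)/2}$ times the product of the mixed norms. The $\mathcal{B}_2$ part is immediate since $\mu\le (d-1)/2$ and since the second, boundary term already carries the exact factor $(1+s_2-s_1)^{-\mu}$. For $\mathcal{B}_1$ I am reduced to the scalar convolution estimate
\begin{equation}
\int_{s_1}^{s_2}(1+s_2-s)^{-(\mu+1)}(1+s-s_1)^{-(d-1)/2}\,\mathrm ds\;\lesssim\;(1+s_2-s_1)^{-\mu},
\end{equation}
which I prove by splitting at the midpoint: on $[s_1,(s_1{+}s_2)/2]$ the factor $(1+s_2-s)^{-(\mu+1)}$ is $\sim(1+s_2-s_1)^{-(\mu+1)}$ and the remaining integrand integrates to at worst $(1+s_2-s_1)^{1-(d-1)/2}$ (with a logarithm for $d=3$), while on $[(s_1{+}s_2)/2,s_2]$ the factor $(1+s-s_1)^{-(d-1)/2}$ is $\sim(1+s_2-s_1)^{-(d-1)/2}$ and $\int(1+s_2-s)^{-(\mu+1)}\mathrm ds$ is uniformly bounded since $\mu+1>1$. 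Using $\mu\le (d-1)/2$, both pieces are absorbed into $(1+s_2-s_1)^{-\mu}$ (this is where the $d=2$ borderline case has to be checked: $-\mu-1/2\le -\mu$ is trivial, and for $d=3$ the logarithm is absorbed since $\mu+1>\mu$).

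For \eqref{eq:J-slower} I instead keep the contraction bound on the $v$-factor and use the decay bound only on $w$, leading to the pointwise estimate
\begin{equation}
\bigl|\langle S_H(s-s_1)v,\,S_H(s-s_1)w\rangle_{H^k_y}\bigr|\;\lesssim\;\|v\|_{H^k_y}(1+s-s_1)^{-(d-1)/4}\bigl(\|w\|_{L^1_y}+\|w\|_{H^k_y}\bigr).
\end{equation}
The $\mathcal{B}_2$ contribution is then controlled by $(1+s_2-s_1)^{-\min\{\mu,(d-1)/4\}}$, which bounds $(1+s_2-s_1)^{-\nu}$. For $\mathcal{B}_1$ the same midpoint-splitting argument as above with exponent $(d-1)/4$ in place of $(d-1)/2$ yields decay at rate $-\min\{\mu,(d-1)/4\}$ (possibly better in the low-dimensional sub-cases), once again accommodating $\nu$. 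Collecting the pieces gives the stated constant $K_{\mathcal B}$, independent of $T$.

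No genuine obstacle is expected; the whole proof is a careful bookkeeping of algebraically decaying kernel convolutions, and the only subtlety is the low-dimensional verification ($d\in\{2,3\}$) where the inner integral itself grows in $s_2-s_1$, so that the balance with the $(1+s_2-s)^{-(\mu+1)}$ weight and the assumption $\mu\le(d-1)/2$ must be used explicitly. The alternative bound \eqref{eq:J-slower} will be needed in \S\ref{sec:det:bounds:critical} precisely to avoid a double $L^1_y$-cost on both arguments, which would be incompatible with the $v$-factor in the mixed deterministic term appearing in \eqref{eq:size:main2}.
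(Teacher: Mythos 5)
Your proof is correct and follows essentially the same route as the paper: the integration-by-parts decomposition $\mathcal{B}=\mathcal{B}_1+\mathcal{B}_2$ from \eqref{eq:Jtotal}, Cauchy--Schwarz in $H^k_y$, and the algebraic decay of the heat semigroup from Corollary~\ref{cor:algebraic:decay} (with the contraction bound from Lemma~\ref{lem:theta} in the unsymmetric estimate). The only difference is that where you reprove the scalar convolution estimate
\begin{equation}
\int_{s_1}^{s_2}(1+s_2-s)^{-(\mu+1)}(1+s-s_1)^{-\rho}\,\mathrm ds \lesssim (1+s_2-s_1)^{-\min\{\mu,\rho\}},
\qquad \rho\in\{(d-1)/2,\ (d-1)/4\},
\end{equation}
from scratch via a midpoint split, the paper simply cites the general convolution bound in Lemma~\ref{lem:z}. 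Your self-contained derivation is a legitimate and slightly more elementary alternative, and it correctly tracks the low-dimensional borderline cases (logarithm at $d=3$ in \eqref{eq:J-faster}, at $d=5$ in \eqref{eq:J-slower}, and the $(1+z)^{1-\rho}$ growth for $\rho<1$) and verifies that they are absorbed using $\mu+1>1$ together with $\mu\le(d-1)/2$ (respectively $\nu\le\min\{\mu,(d-1)/4\}$). Both arguments yield the same $T$-independent constant.
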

\begin{proof}
Let us first focus on proving the    estimates for $\mathcal B_1(s_2,s_2,s_1,s_1)[v,w]$.
Appealing to Corollary \ref{cor:algebraic:decay} in combination with Lemma \ref{lem:z}  shows that
    \begin{equation}
    \begin{array}{lcl}
      \mathcal{B}_1(s_2, s_2, s_1, s_1)[v, w]
      & \le & 
        \tfrac{\mu}2M_{\rm alg}^2\int_{s_1}^{s_2}(1+s_2-s)^{-\color{black}(\mu+1)}(1+s-s_1)^{-(d-1)/2}\,\mathrm ds
        \\[0.2cm]
        & \leq &  C_1 (1+s_2-s_1)^{-\mu},\label{eq:poly:bound}
    \end{array}
    \end{equation}
  for some $C_1>0,$ since $\color{black}(\mu+1)+({d-1})/{2}-1>\mu$ for $d\geq 2.$
For the alternative estimate, we  can trivially bound the $v$-component (using Lemma \ref{lem:theta}; $M=1$) and appeal to the same corollary and lemma as before  
to  arrive at
    \begin{equation}
    \begin{array}{lcl}
        |\mathcal B_1(s_2,s_2,s_1,s_1)[v,w]|
        &\leq & \tfrac{\mu}2\|v\|_{H^k_y}\int_{s_1}^{s_2}(1+s_2-s)^{\color{black}-(\mu+1)}\|S_H(s-s_1)w\|_{H^k_y}\,\mathrm ds
        \\[0.2cm]
        & \le & 
        \tfrac{\mu}2M_{\rm alg}\int_{s_1}^{s_2}(1+s_2-s)^{-\color{black}(\mu+1)}(1+s-s_1)^{-(d-1)/4}\,\mathrm ds
        \\[0.2cm]
        & \leq & C_2 (1+s_2-s_1)^{-\lambda},
    \end{array}
    \end{equation}
    for some $C_2>0,$    where $\lambda\leq \min\{\mu+1,\frac{d-1}4,\mu+\frac{d-1}4\}$ for $d\neq 5$ 
while for  $d=5$ we require in addition $\lambda< \mu+1.$ 
    Note that estimating $\mathcal B_2(s_2,s_2,s_1,s_1)[v,w]$ is rather straightforward, resulting into \eqref{eq:J-faster} as well as \eqref{eq:J-slower} upon observing that $\nu\leq \min\{\mu,\lambda\}=\min\{\mu,\frac{d-1}4\}$ needs to be satisfied.
\end{proof}

An application of the mild It\^o formula \cite{da2019mild} yields
\begin{equation}
\label{eq:fw:max:req:mild:ito}
	\begin{aligned}
	\| \nabla_y \mathcal E_B(s)\|^2_{H^k_y}
	&=    
	\sum_{j=0}^\infty \int_{0}^s \langle\nabla_yS_H(s- r)B(r) \sqrt{Q} e_j,
	\nabla_yS_H(s- r) B(r) \sqrt{Q} e_j \rangle_{H^{k}_y}  \, \mathrm dr
	\\&\qquad\qquad\qquad+ 2 \int_{0}^s \langle \nabla_y S_H(s- r) \mathcal E_B(r), \nabla_yS_H(s- r) B(r)[\,\cdot\,]   \rangle_{H^{k}_y} \mathrm dW_r^Q,
\end{aligned}
\end{equation}
for every $0\leq s\leq T.$
After  reversing the order of integration, this results into the bound
\begin{equation}
    \mathcal J_B(t)\leq  \mathcal I_I(t)+2\mathcal I_{II}(t),\quad 0\leq t\leq T,\label{eq:comp:begin}
\end{equation}
where  
\begin{align}
    \mathcal I_I(t)&=\int_0^t\sum_{j=0}^\infty \mathcal K_I^{(j)}(t,r)\,\mathrm dr,\\
 \mathcal I_{II}(t)&=\int_0^t\int_{r}^t (1+t-s)^{-\mu} \langle \nabla_yS_H(s- r) \mathcal E_B(r), \nabla_y S_H(s- r) B(r)[\,\cdot\,]   \rangle_{H^{k}_y} \mathrm ds \,\mathrm dW_r^Q,
\end{align}
with
\begin{equation}
\begin{aligned}
    \mathcal K_I^{(j)}(t,r)&= \int_{r}^t (1+t-s)^{-\mu} \langle\nabla_y S_H(s- r)B(r) \sqrt{Q} e_j,
	\nabla_yS_H(s- r) B(r) \sqrt{Q} e_j \rangle_{H^{k}_y}  \,\mathrm ds\\
    &=\mathcal B(t,t,r,r)[B(r)\sqrt{Q}e_j,B(r)\sqrt{Q}e_j].
    \end{aligned}
\end{equation}
Note that \eqref{eq:ito} yields
\begin{equation}
    \mathbb  E[\mathcal I_{II}(t)]^{p}\leq p^{p/2}K_{\rm cnv}^p \mathbb E\left[\int_0^t\sum_{j=0}^\infty \mathcal K_{II}^{(j)}(t,r)^2\,\mathrm dr\right]^{p/2},\end{equation}
in which
\begin{equation}\begin{aligned}
    \mathcal K_{II}^{(j)}(t,r)&=\int_r^t (1+t-s)^{-\mu} \langle \nabla_yS_H(s- r) \mathcal E_B(r), \nabla_yS_H(s- r) B(r)\sqrt Qe_j    \rangle_{H^{k}_y} \mathrm ds\\&=\mathcal B(t,t,r,r)[\mathcal E_B(r),B(r)\sqrt{Q}e_j].\label{eq:comp:end}\end{aligned}
\end{equation}
In the same spirit as \cite[Lem. 3.10 and Lem. 4.4]{hamster2020expstability}, we shall prove
the pathwise estimates below, consequently enabling us to prove Proposition \ref{prop:max:reg:main:bnd} and hence Proposition \ref{prop:E_B}.

\begin{lemma}
Suppose $T>0$  and let $\mu\in(0,\frac{d-1}{2}]$ for $d\geq 2.$ 
There exists a constant $K_0>0$  that does not depend on $T$ so that  we have the estimate
    \begin{equation}
        \sum_{j=0}^\infty \mathcal K_{I}^{(j)}(t,r)\leq K_0(1+t-r)^{-\mu}[\|B(r)\|_{HS(\mathcal W_Q;H^k_y)}^2+\|B(r)\|_{\Pi_2(\mathcal W_Q;L^1_y)}^2\big]\label{eq:KIbound},
    \end{equation}
for any $0\leq r\leq t\leq T,$  together with
    \begin{equation}
        \sum_{j=0}^\infty \mathcal K_{II}^{(j)}(t,r)^2\leq K_0(1+t-r)^{-2\nu}\|\mathcal E_B(r)\|_{H^k_y}^2\big[\|B(r)\|_{HS(\mathcal W_Q;H^k_y)}^2+\|B(r)\|_{\Pi_2(\mathcal W_Q;L^1_y)}^2\big],\label{eq:KIIbound}
    \end{equation}
where $\nu\leq \min\{\mu,\frac{d-1}4\}.$
\end{lemma}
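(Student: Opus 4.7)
The plan is to derive both pathwise estimates as direct corollaries of the bilinear-form bounds \eqref{eq:J-faster} and \eqref{eq:J-slower} established in the preceding lemma, applied to the explicit representations
\begin{equation}
\mathcal K_I^{(j)}(t,r)=\mathcal B(t,t,r,r)[B(r)\sqrt Qe_j,B(r)\sqrt Qe_j],\qquad \mathcal K_{II}^{(j)}(t,r)=\mathcal B(t,t,r,r)[\mathcal E_B(r),B(r)\sqrt Qe_j].
\end{equation}
The choice of which bound to invoke is dictated by the available norms on each argument. For $\mathcal K_I^{(j)}$, both slots carry the noise element $B(r)\sqrt Qe_j$, for which we can control both the $L^1_y$-norm (via $\Pi_2(\mathcal W_Q;L^1_y)$) and the $H^k_y$-norm (via $HS(\mathcal W_Q;H^k_y)$). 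For $\mathcal K_{II}^{(j)}$ one slot is occupied by $\mathcal E_B(r)$, which we only control in $H^k_y$, so the symmetric bound \eqref{eq:J-faster} is unavailable and the asymmetric bound \eqref{eq:J-slower} must be used instead, at the price of the slower decay rate $\nu\le \min\{\mu,(d-1)/4\}$.

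For \eqref{eq:KIbound}, I would apply \eqref{eq:J-faster} with $v=w=B(r)\sqrt Qe_j$ and then use $(a+b)^2\le 2(a^2+b^2)$ to obtain
\begin{equation}
|\mathcal K_I^{(j)}(t,r)|\le 2K_{\mathcal B}(1+t-r)^{-\mu}\big[\|B(r)\sqrt Qe_j\|_{L^1_y}^2+\|B(r)\sqrt Qe_j\|_{H^k_y}^2\big].
\end{equation}
Summing over $j\ge 0$ and recognising the two Hilbert--Schmidt / $2$-summing norms on the right, which by definition \eqref{eq:nonorm} (and the fact that $HS$ coincides with $\Pi_2$ when the target is Hilbert) do not depend on the choice of orthonormal basis $(e_j)_{j\ge 0}$ of $\mathcal W$, yields \eqref{eq:KIbound} with $K_0=2K_{\mathcal B}$.

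For \eqref{eq:KIIbound}, I would invoke \eqref{eq:J-slower} with $v=\mathcal E_B(r)\in H^k_y$ and $w=B(r)\sqrt Qe_j$, giving
\begin{equation}
|\mathcal K_{II}^{(j)}(t,r)|\le K_{\mathcal B}(1+t-r)^{-\nu}\|\mathcal E_B(r)\|_{H^k_y}\big[\|B(r)\sqrt Qe_j\|_{L^1_y}+\|B(r)\sqrt Qe_j\|_{H^k_y}\big].
\end{equation}
Squaring, applying $(a+b)^2\le 2(a^2+b^2)$, and summing in $j$ again collapses the sums into the $\Pi_2(\mathcal W_Q;L^1_y)$- and $HS(\mathcal W_Q;H^k_y)$-norms of $B(r)$, producing \eqref{eq:KIIbound} after enlarging $K_0$ if necessary.

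There is essentially no serious obstacle here; the delicate work has already been done in the bilinear-form Lemma \ref{lem:max:reg:bnds:j:delta}. The only points requiring care are (i) checking that no basis dependence sneaks in when summing over $j$, which is automatic since both $\|\cdot\|_{HS}$ and $\|\cdot\|_{\Pi_2}$ are basis-independent norms, and (ii) ensuring the parameter $\nu$ is constrained as in the lemma so that the integrability needed in the proof of \eqref{eq:J-slower} is respected.
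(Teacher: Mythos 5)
Your proposal is correct and takes exactly the route the paper does: the paper's proof is a one-line statement that the bounds follow from \eqref{eq:J-faster} and \eqref{eq:J-slower}, and you have simply (and accurately) filled in the intermediate steps, including the correct choice of the asymmetric bound for $\mathcal K_{II}^{(j)}$ because $\mathcal E_B(r)$ is only controlled in $H^k_y$, the use of $(a+b)^2\le 2(a^2+b^2)$, and the collapse of the sums over $j$ into the basis-independent $HS$ and $\Pi_2$ norms.
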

\begin{proof}
These bounds 
readily follow from \eqref{eq:J-faster} and \eqref{eq:J-slower}, respectively.
\end{proof}
We are now ready to prove the maximal regularity result. In order to do so, let us introduce  the terms
\begin{equation}
    \begin{aligned}\mathcal S_{\delta}[B]&=\sup_{0\leq r\leq t} (1+t-r)^{-\delta}\|\mathcal E_B(r)\|_{H^k_y}^2,\\
\mathcal H_{\delta';p}[B]&=\left[\int_0^t (1+t-r)^{-\delta'}\|B(r)\|_{HS(\mathcal W_Q;H^k_y)}^2\big]\,\mathrm dr\right]^p\\&\qquad\qquad+\left[\int_0^t (1+t-r)^{-\delta'}\|B(r)\|_{\Pi_2(\mathcal W_Q;L^1_y)}^2\big]\,\mathrm dr\right]^p,\label{eq:Sdelta:Hdeltap}
\end{aligned}
\end{equation}
for $\delta,\delta'\geq 0$ and $p\geq 1.$  
\begin{proof}[Proof of Proposition \ref{prop:max:reg:main:bnd}]  As a result of the computations \eqref{eq:comp:begin}--\eqref{eq:comp:end}, in combination with the bounds \eqref{eq:KIbound} and \eqref{eq:KIIbound}, we obtain 
\eqref{eq:fw:max:reg:bnd:j:perp:b} after noting that 
\begin{equation}
\begin{aligned}   \label{eq:splitting}p^{p/2}\left[\int_0^t\sum_{k=0}^\infty \mathcal K_{II}^{(k)}(t,r)^2\,\mathrm dr\right]^{p/2}&\leq 2^{2p}K_0^p\big[\mathcal S_0[B]^p+p^p\mathcal H_{2\nu;p}[B]\big],\end{aligned}
\end{equation}
with $\nu = \min\{\mu,\frac{d-1}{4}\}$, which follows from the  inequality $p^{p/2}(ab)^{p/2}
=(apb)^{p/2}
\leq 2^{p/2}[a^p+p^pb^p].$
Estimate \eqref{eq:fw:max:reg:bnd:j:perp:b} readily follows from the fact that $\mu\leq  2\nu$ 
holds  in the parameter regime  $\mu\in (0,\frac{d-1}{2}]$ for any dimension $d\geq 2$.
\end{proof}
Note that for $d>5$ and $\mu\in (1,\frac{d-1}4],$ we  have $\nu=\mu$. This particularly allows us to improve \eqref{eq:splitting} by letting the right-hand side depend on the terms $\mathcal S_\mu[B]$ and $\mathcal H_{\mu;p}[B]$, as in \eqref{eq:Sdelta:Hdeltap}, and thus
\begin{align}
    \nonumber\mathbb E[\mathcal I_{II}(t)]^p&\leq K^p\sup_{0\leq s\leq t}(1+t-s)^{-\mu}\|\mathcal E_B(s)\|_{H^k_y}^2+p^pK^p\left[\int_0^t(1+t-s)^{-\mu}\|B(s)\|_{HS(\mathcal W_Q;H^k_y)}\mathrm ds\right]^p\\
    &\qquad\qquad+p^pK^p\left[\int_0^t(1+t-s)^{-\mu}\|B(s)\|_{\Pi_2(\mathcal W_Q;L^1_y)}\mathrm ds\right]^p,
    \end{align} 
for some $K>0.$ In the spirit of the weighted decay estimate in  \cite[Prop. 3.10]{bosch2024multidimensional}, see also the results in \cite[Sec. 3.4]{bosch2024multidimensional}, we expect
\begin{equation}
\begin{aligned}
    \mathbb E[\mathcal S_{\mu}[B]^p]&\leq p^pK^p\mathbb E[\mathcal H_{\mu;p}[B]],
    \end{aligned}
\end{equation}
for some $K>0,$
causing the supremum of the stochastic convolution term in \eqref{eq:fw:max:reg:bnd:j:perp:b} to be superfluous; see also {\S}\ref{sec:spectral:gap}. Recall that in \cite[Prop 3.11] {bosch2024multidimensional} this supremum pops up in the growth estimate due to the presence of ``crosstalk'' as a result of having to deal with forward integrals.



\begin{proof}[Proof of Proposition \ref{prop:E_B}] The proof follows the arguments in \cite[Prop. 3.18]{bosch2024multidimensional}. In view of Corollary \ref{cor:E_B:only} together with Proposition \ref{prop:max:reg:main:bnd}, we find that the  moment bounds for $\mathcal J_B(t)$, 
for any $0\leq t\leq T,$ are given by
\begin{equation}
    \mathbb E [\mathcal J_B(t)^{p}] \le K^{p}   \Theta_*^{2p}(p^{p}+\log(T)^{p})+p^{p}K^{p}\Theta_*^{2p}\leq p^p\Theta_1^{p}+\Theta_2^{p},\label{eq:I_B^p}
\end{equation}
for some  $K>0$, with $\Theta_1=2 K\Theta_*^2$ and $\Theta_2=\frac{1}{2}\Theta_1\log(T).$ 
Now \cite[Cor. B.3]{bosch2024multidimensional}
implies
\begin{equation}
    \mathbb E\max_{i\in\{1,\ldots,T\}}\mathcal J_B(i)^{p}\leq  K^{p}\Theta_*^{2p}(p^{p}+\log(T)^{p}),
\end{equation}
after updating $K>0$.
To conclude, it  suffices to observe
that 
\begin{align}
     \sup_{0\leq t\leq T} \mathcal J_B(t)&= \max_{i\in\{1,\ldots,T\}}\sup_{i-1\leq t\leq i} \mathcal J_B(t)\leq 2^\mu\max_{i\in\{1,\ldots,T\}} \mathcal J_B(i)
\end{align} 
holds for any integer $T\geq 2$. 
In more detail, the ultimate inequality holds true because
\begin{equation}
\begin{aligned}
    \sup_{i-1\leq t\leq i}\mathcal J_B(t)&=\sup_{0\leq u\leq 1} \int_0^{u+i-1}(u+i-s)^{-\mu}\|\mathcal E_B(s)\|_{H^{k+1}_y}^2\mathrm ds\\
    &\leq 2^\mu\sup_{0\leq u\leq 1} \int_0^{u+i-1} (1+i-s)^{-\mu}\|\mathcal E_B(s)\|_{H^{k+1}_y}^2\mathrm ds\\
    &\leq 2^\mu \mathcal J_B(i),
    \end{aligned}
\end{equation}
since $
    (u+x)^{-\mu}\leq 2^\mu(1+x)^{-\mu}$ for $ x\geq 1-u$.
\end{proof}
\subsection{Estimates for \texorpdfstring{$S_L(t)$}{SL(t)}} 
\label{sec:spectral:gap}
In this section, we  discuss the estimates related to the  stochastic convolution \eqref{eq:cnv:def:z:x}.
For our first supremum bound it is possible to relax the condition (HX) to only
require pathwise bounds on $X$ with respect to an exponential weight. In particular, we impose the following condition.
\begin{itemize}
    \item[(HExp)] For the process $X \in \mathcal{N}^2\big([0, T] ; \mathbb F  ; H S(\mathcal W_Q;H^k  ) \big)$ 
there exists a constant $\Xi_{\exp}>0$ such that 
the pathwise bound
\begin{equation}
\int_0^t e^{-\varepsilon(t-s)}\|X(s)\|_{H S(\mathcal W_Q;H^k  )}^2 \mathrm d s \leq \Xi_{\exp}^2
\label{eq:ZXeps}
\end{equation}
holds for all $0 \leq t \leq T$.
\end{itemize}

 For the growth bounds below, we  need $0<\epsilon\leq 2\beta$. Recall that the constant $\beta>0$ is defined in (HTw) and is  associated to the spectral gap of the  operator  $\mathcal L_{\rm tw}.$  
\begin{proposition}\label{prop:ZX}
Suppose that \textnormal{(Hf)} and \textnormal{(HTw)} holds.   Assume that $T \ge 2$ is an integer and let $\epsilon \in (0, 2 \beta]$.
There exists a constant $K_{\rm gr} > 0$ that does not depend on $T$
so that for any process $X$  that satisfies \textnormal{(HExp)}, and every integer $p \ge 1$,
we have the supremum bound 
\begin{equation}
\mathbb E \sup _{0 \leq t \leq T}\left\|\mathcal{Z}_X(t)\right\|_{H^k }^{2p}
\leq K^{2p}_{\rm gr} \Xi_{\rm exp}^{2p} (p^p+\log (T)^p).
\label{eq:supZX}
\end{equation}

\end{proposition}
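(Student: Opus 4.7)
The plan is to adapt the short-time/long-time splitting strategy used for $\mathcal{E}_B$ in Corollary \ref{cor:E_B:only}, but now replacing the algebraic decay of $S_H(t)$ with the exponential decay of $S_L(t) P_{\rm tw}^\perp$ on $H^k$ afforded by the spectral gap. Since the spectral projection $P_{\rm tw}$ acts only in the $x$-direction and commutes with both $\mathcal L_{\rm tw}$ and $\Delta_y$, it commutes with $S_L(t) = S_{\rm tw}(t)S_H(t) = S_H(t)S_{\rm tw}(t)$, so I may freely insert or remove $P_{\rm tw}^\perp$ from the convolution. Combining the standard gap estimate $\|S_{\rm tw}(t) P_{\rm tw}^\perp\|_{\mathscr L(H^k_x)} \leq M e^{-\beta t}$ from hypothesis \textnormal{(HTw)} with the contractivity of $S_H(t)$ (Lemma \ref{lem:theta}) yields the key semigroup estimate
\begin{equation}
\|S_L(t) P_{\rm tw}^\perp\|_{\mathscr L(H^k)} \leq M e^{-\beta t},
\end{equation}
which I will use throughout.

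First I would perform the decomposition $\mathcal{Z}_X(t) = \mathcal{Z}_X^{\rm lt}(t) + \mathcal{Z}_X^{\rm sh}(t)$ with cutoff $\lfloor t\rfloor$, and note the factorisation
\begin{equation}
\mathcal{Z}_X^{\rm lt}(t) = S_L(t - \lfloor t\rfloor) \mathcal{Z}_X(\lfloor t\rfloor).
\end{equation}
For the short-time piece, a maximal inequality for stochastic convolutions of the analytic semigroup $S_L$ (cf.\ \cite[Sec. 6]{hamster2020expstability} or the analogue used in Lemma \ref{lem:sh}) gives
\begin{equation}
\mathbb E \sup_{i \leq t \leq i+1}\|\mathcal{Z}_X^{\rm sh}(t)\|_{H^k}^{2p} \leq K^{2p} p^p \,\mathbb E\Big[\int_i^{i+1}\|X(s)\|_{HS(\mathcal W_Q; H^k)}^2\,\mathrm ds\Big]^p,
\end{equation}
and hypothesis \textnormal{(HExp)} bounds the inner integral uniformly by $e^{\varepsilon}\Xi_{\exp}^2$, since $e^{-\varepsilon((i+1)-s)} \geq e^{-\varepsilon}$ for $s \in [i, i+1]$.

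For the long-time piece, the commutation identity together with the exponential semigroup bound gives
\begin{equation}
\sup_{i \leq t \leq i+1}\|\mathcal{Z}_X^{\rm lt}(t)\|_{H^k}^{2p} \leq M^{2p}\|\mathcal{Z}_X(i)\|_{H^k}^{2p},
\end{equation}
so it suffices to control the pointwise moments $\mathbb E\|\mathcal{Z}_X(i)\|_{H^k}^{2p}$. Applying the Burkholder–Davis–Gundy inequality \eqref{eq:ito}, commuting $P_{\rm tw}^\perp$ past $S_L$, and invoking the exponential semigroup bound with $0 < \varepsilon \leq 2\beta$, I obtain
\begin{equation}
\mathbb E\|\mathcal{Z}_X(i)\|_{H^k}^{2p} \leq K_{\rm cnv}^{2p} p^p\, \mathbb E\Big[\int_0^i e^{-2\beta(i-s)}\|X(s)\|_{HS(\mathcal W_Q; H^k)}^2\,\mathrm ds\Big]^p \leq K^{2p} p^p \Xi_{\exp}^{2p}.
\end{equation}

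The final step is to upgrade these pointwise moment bounds into a supremum-in-time estimate. Since $\sup_{0 \leq t \leq T}\|\mathcal{Z}_X(t)\|_{H^k}^{2p} \leq \max_{1\leq i\leq T}\sup_{i-1\leq t\leq i}[\,\|\mathcal{Z}_X^{\rm sh}(t)\|_{H^k}^{2p} + \|\mathcal{Z}_X^{\rm lt}(t)\|_{H^k}^{2p}\,]$, an appeal to the maximum-expectation estimate \cite[Cor. B.3]{bosch2024multidimensional}, which converts $p$-th moment bounds of size $(K p)^p$ into maximum-over-$T$-integers bounds of size $K^p(p^p + \log(T)^p)$, produces the stated estimate \eqref{eq:supZX}. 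The main technical point is verifying the commutation $P_{\rm tw}^\perp S_L(t) = S_L(t) P_{\rm tw}^\perp$ together with the sharp exponential semigroup bound, and this has essentially already been carried out in the appendix on $S_L$; no genuinely new difficulty appears relative to the treatment of $S_H$ in {\S}\ref{sec:est:stoch:conv}.
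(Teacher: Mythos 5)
Your proposal is correct and follows essentially the same route as the paper: short-time/long-time splitting at $\lfloor t\rfloor$, the maximal inequality \eqref{eq:maximal:ineq:SL} for the short-time piece, the decay bound \eqref{eq:decay} together with $\|S_L(t)P_{\rm tw}^\perp\|_{\mathscr L(H^k)}\le Me^{-\beta t}$ and $\varepsilon\le 2\beta$ for the long-time piece, and the maximum-expectation estimate of \cite[Cor.\ B.3]{bosch2024multidimensional} to assemble the $(p^p+\log(T)^p)$ factor. This mirrors the paper's (very brief) proof, which points to the same short-time inequality \eqref{eq:maximal:ineq:SL} and the same semigroup-decay mechanism \eqref{eq:decay}.
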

\begin{proof}
This can be deduced from  computations similar to those in {\S}\ref{sec:est:stoch:conv}. For the short-term integrals we  invoke the  maximal inequality \eqref{eq:maximal:ineq:SL}, whereas for the long-term integrals we   exploit the decay of the semigroup by using \eqref{eq:decay}.
\end{proof}

Observe that (HX) implies (HExp) on account of \eqref{eq:absorb:decay}. We can actually control the weighted integral $\mathcal I_X(t)$ with $(1-t-s)^{-\mu}$ replaced by $e^{-\epsilon(t-s)}$, and (HX) replaced by (HExp), but we do not need it in this paper.  The latter  explains why the  growth bound in Proposition \ref{prop:IX} can be achieved without any restriction on the decay rate $\mu.$ 
 To establish this bound, we follow
 the observations in 
\cite{bosch2024multidimensional,hamster2019stability,hamster2020expstability}, 
 and recall the identity
\begin{equation}
\begin{aligned}
    \frac{\mathrm d}{\mathrm dt}\langle S_{\rm tw}(t-s)v,S_{\rm tw}(t-s')w\rangle_{H^k_x}&=
    \langle S_{\rm tw}(t-s)\mathcal Av,S_{\rm tw}(t-s')w \rangle_{H^{k}_x}\\
    &\qquad-2\langle\partial_xS_{\rm tw}(t-s)v,\partial_xS_{\rm tw}(t-s')w\rangle_{H^{k}_x}, 
\end{aligned}
\end{equation}
which holds for any $t,s,s'\geq 0$ that satisfy $t>s$ and $t>s'$.  Here $\mathcal A=\mathcal L_{\rm tw}+\mathcal L_{\rm tw}^{\rm adj}-2\partial_x^2$ is a bounded operator on $H^k_x$. We can trivially extend $\mathcal A$ to an operator defined on $H^k$ and shall denote it by $A.$ 
In addition, we introduce a new inner product on  $H^{k+1}$, given by
\begin{equation}
    \langle\langle v,w\rangle\rangle_{H^{k+1}}=\langle v,w\rangle_{H^k}+\langle \partial_xv,\partial_x w\rangle_{H^k},
\end{equation}
which is again equivalent to the original inner product in the sense that we have the inequalities
\begin{equation}
    \langle v,v\rangle_{H^{k+1}}\leq \langle \langle v,v\rangle\rangle_{H^{k+1}}\leq 2\langle v,v\rangle_{H^{k+1}}. 
\end{equation}
This allows us to write
\begin{equation}
\begin{aligned}
    \frac{\mathrm d}{\mathrm dt}\langle S_{L}(t-s)v,S_{L}(t-s')w\rangle_{H^k}&=
    \langle S_L(t-s)Av,S_L(t-s') w \rangle_{H^{k}}\\
    &\qquad +2\langle S_L(t-s)v,S_L(t-s)w \rangle_{H^{k}}\\[.2cm]&\qquad\qquad-2\langle\langle  S_L(t-s)v,S_L(t-s)w\rangle \rangle_{H^{k+1}}.
    \label{eq:identity2}
\end{aligned}
\end{equation}

On account of the equivalence above, we  may replace $\|\cdot\|_{H^{k+1}}^2$ in \eqref{eq:cnv:def:i:x} with $\langle\langle\cdot,\cdot\rangle\rangle_{H^{k+1}}$. In a similar fashion as {\S}\ref{sec:estimates:max:reg}, we  define the bilinear form
\begin{equation}
\label{eq:bil:form:V}
	\mathcal{V}(s_2',s_2, s_1',s_1)[v, w]
	= \int_{s_1'}^{s_2} (1+s_2'-s)^{-\mu} \langle  \langle S_L(s-s_1) P^\perp_{\rm tw}  v,
	S_L(s-s_1')  P^\perp_{\rm tw}  
w \rangle\rangle_{H^{k+1}} \mathrm ds,
\end{equation}
for $0\leq s_1\leq s_1'\leq s_2\leq s_2'\leq T$ and any pair $v,w\in H^{k+1}.$ One can show that
\begin{equation}
    |\mathcal V(s_2,s_2,s_1,s_1)[v,w]|\leq K_{\mathcal V}(1+s_2-s_1)^{-\mu}\|v\|_{H^k}\|w\|_{H^k}
\end{equation}
    holds for some $K_{\mathcal V}>0.$ 
\begin{proposition}[maximal regularity estimate]
Suppose $T>0$  and let $\mu>0.$
There exists a constant $K_{\rm mr} > 0$ that does not depend on $T$
so that for any process
\begin{equation}
    X \in \mathcal N^{2p}\big( [0,T]; \mathbb F ; HS(\mathcal{W}_Q; H^k) \big),
\end{equation}
and every integer $p \ge 1$, we have the bound
\begin{equation}
\mathbb E  [\mathcal I_X(t)]^{p}
 \le  K_{\rm mr}^{p} \, \mathbb E   \sup_{0 \le r \le t} 
      \|\mathcal{Z}_X(r)\|_{H^k_y}^{2p}+
     p^{p} K_{\rm mr}^{p} \mathbb E
    \left[\int_0^t (1+t-r)^{-\mu} \| X(r) \|_{HS(\mathcal W_Q;H^k)}^2 \, \mathrm dr \right]^{p},
\end{equation}
for any $0\leq t\leq T.$
\end{proposition}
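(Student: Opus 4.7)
The plan is to mimic the structure of the proof of Proposition \ref{prop:max:reg:main:bnd}, replacing the purely dissipative identity used for $S_H(t)$ by the enriched identity \eqref{eq:identity2} for $S_L(t)$ and replacing the bilinear form $\mathcal{B}$ by $\mathcal{V}$. Specifically, I would rearrange \eqref{eq:identity2} so that the dissipative $\langle\langle\cdot,\cdot\rangle\rangle_{H^{k+1}}$ contribution sits on one side and apply the mild Itô formula to $\langle\langle \mathcal{Z}_X(s),\mathcal{Z}_X(s)\rangle\rangle_{H^{k+1}}$, integrate against the weight $(1+t-s)^{-\mu}$, and swap the order of integration. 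The resulting expression decomposes as
\begin{equation}
\mathcal{I}_X(t) \;\lesssim\; \mathcal{I}_{\rm tr}(t) + \mathcal{I}_{A}(t)+\mathcal{I}_{\rm id}(t) + \mathcal{I}_{\rm st}(t),
\end{equation}
where the four pieces correspond respectively to the quadratic variation of $\mathcal{Z}_X$, the $A$-contribution and identity contribution from \eqref{eq:identity2}, and the Itô stochastic term. All four can be expressed in terms of the bilinear form $\mathcal{V}(t,t,r,r)[\,\cdot\,,\,\cdot\,]$ introduced in \eqref{eq:bil:form:V}.

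The deterministic trace piece $\mathcal{I}_{\rm tr}(t)$ is the exact analogue of $\mathcal{I}_I(t)$ from {\S}\ref{sec:estimates:max:reg}: expanding the quadratic variation using an orthonormal basis and using the stated bound $|\mathcal{V}(s_2,s_2,s_1,s_1)[v,w]|\le K_{\mathcal{V}}(1+s_2-s_1)^{-\mu}\|v\|_{H^k}\|w\|_{H^k}$ yields
\begin{equation}
\mathcal{I}_{\rm tr}(t) \;\le\; K\int_0^t (1+t-r)^{-\mu}\|X(r)\|_{HS(\mathcal{W}_Q;H^k)}^2\,\mathrm{d}r,
\end{equation}
after absorbing the $P^\perp_{\rm tw}$ projection (bounded on $H^k$). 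The pieces $\mathcal{I}_{A}(t)$ and $\mathcal{I}_{\rm id}(t)$, coming from the bounded operator $A$ and the identity term in \eqref{eq:identity2}, can be estimated pathwise by $\mathcal{V}$-type integrals involving $\|\mathcal{Z}_X(r)\|_{H^k}^2$ against the same weight, then dominated by $\sup_{0\le r\le t}\|\mathcal{Z}_X(r)\|_{H^k}^{2}$ times the finite (in general $T$-dependent, but that is harmless after raising to the $p$-th power) integral of the kernel; crucially, the spectral gap encoded in $P^\perp_{\rm tw}$ ensures the $\mathcal{V}$-kernel decays at rate $(1+s_2-s_1)^{-\mu}$ against $\|v\|_{H^k}\|w\|_{H^k}$ without needing the stronger $H^{k+1}$-norm, which is what permits absorption into the supremum term.

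The stochastic piece $\mathcal{I}_{\rm st}(t)$ is handled by the BDG-type inequality \eqref{eq:ito} after reversing integration order, producing a term of the form
\begin{equation}
p^{p/2} K^{p}\, \mathbb{E}\left[\int_0^t \sum_{j=0}^\infty \mathcal{V}(t,t,r,r)\bigl[\mathcal{Z}_X(r),\,P^\perp_{\rm tw}X(r)\sqrt{Q}e_j\bigr]^2\,\mathrm{d}r\right]^{p/2}.
\end{equation}
Applying the $\mathcal{V}$-bound with the alternative pairing $\|v\|_{H^k}\|w\|_{H^k}$, summing over $j$, and then using the elementary Young-type splitting $p^{p/2}(ab)^{p/2}\le 2^{p/2}(a^p + p^p b^p)$ (exactly as in \eqref{eq:splitting}) separates the $\mathcal{Z}_X(r)$ factor from the $X(r)$ factor, yielding one contribution controlled by $\sup_{0\le r\le t}\|\mathcal{Z}_X(r)\|_{H^k}^{2p}$ and another by $p^p[\int_0^t(1+t-r)^{-2\mu}\|X(r)\|_{HS}^2\,\mathrm{d}r]^p$. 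Since $2\mu\ge\mu$, the latter can be absorbed into the desired right-hand side. The main technical obstacle will be the careful bookkeeping of which $(1+t-r)^{-\mu}$ kernels are picked up where and verifying that the $\mathcal{V}$-estimate does indeed hold with the $H^k$-$H^k$ pairing (which rests entirely on the exponential decay of $S_{\rm tw}$ on the range of $P^\perp_{\rm tw}$ combined with the contractivity of $S_H$ on $H^k$); once that is in hand, the remainder of the proof is a direct transcription of the argument for Proposition \ref{prop:max:reg:main:bnd}.
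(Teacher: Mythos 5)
Your proposal follows the same approach the paper sketches for this result: the paper's own proof of this proposition consists of a single sentence pointing back to the argument of Proposition \ref{prop:max:reg:main:bnd}, with the bilinear form $\mathcal{V}$ from \eqref{eq:bil:form:V} substituted for $\mathcal{B}$, and your mild It\^o / bilinear-form / BDG-plus-Young route is precisely that transcription. Two small remarks on the presentation. First, your four-piece decomposition $\mathcal{I}_{\rm tr}+\mathcal{I}_A+\mathcal{I}_{\rm id}+\mathcal{I}_{\rm st}$ conflates two levels of the argument: the mild It\^o formula applied to $\langle\langle\mathcal{Z}_X(s),\mathcal{Z}_X(s)\rangle\rangle_{H^{k+1}}$ produces only \emph{two} terms (trace and stochastic), exactly as in the $S_H$ case; the $A$-contribution, the $\langle\cdot,\cdot\rangle_{H^k}$-contribution and the boundary terms coming from \eqref{eq:identity2} arise a level deeper, inside the proof of the pointwise estimate $|\mathcal{V}(s_2,s_2,s_1,s_1)[v,w]|\le K_{\mathcal V}(1+s_2-s_1)^{-\mu}\|v\|_{H^k}\|w\|_{H^k}$, which is then invoked to bound each of the two mild-It\^o pieces. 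Second, you are right to single out that $\mathcal{V}$-bound as the only genuinely new ingredient relative to Lemma \ref{lem:max:reg:bnds:j:delta}, and right that it hinges on the exponential decay of $S_L(t)P^\perp_{\rm tw}$ (via $S_{\rm tw}$ restricted to the complement and contractivity of $S_H$), which lets the weight $(1+t-s)^{-\mu}$ be extracted for arbitrary $\mu>0$ via \eqref{eq:absorb:decay}; this is also why a single $H^k$--$H^k$ pairing suffices here, with no need for the $L^1$-type estimate that was required for $S_H$. With that caveat on where the $A$- and identity-terms live, your argument is sound and closes the proposition.
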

\begin{proof}
The computations are completely analogous to those  in {\S}\ref{sec:estimates:max:reg}, but this time we need to exploit the bilinear form \eqref{eq:bil:form:V}.
\end{proof}
\begin{proof}[Proof of Proposition \ref{prop:IX}]
    The proof is identical to the proof of Proposition \ref{prop:E_B}.
\end{proof}
\begin{proof}[Proof of Proposition \ref{prp:cnv:bnd:x:unif}]
    The bound \eqref{eq:cnv:growth:bnd:exp:bnd:with:sup} is a direct corollary of Proposition \ref{prop:ZX}, while \eqref{eq:cnv:l:cor:weighted:RHS5} follows immediately from Proposition \ref{prop:IX}.
\end{proof}



\section{Weighted integrals of deterministic convolutions}\label{sec:det:bounds:critical}
In this section, we provide estimates regarding weighted integrals of deterministic convolutions.
As in {\S}\ref{sec:supremum}, we
consider the semigroups $S_H(t)$ and $S_L(t)$,
but a crucial difference is
that the critical value for $\mu$ is now given by $(d-1)/4$ instead of $(d-1)/2.$
Indeed, we also briefly discuss what happens when one goes beyond this critical value.

\paragraph{Convolutions with \texorpdfstring{$S_H(t)$}{SH(t)}}
Consider the convolution
\begin{equation}
    \mathcal E_G(t)=\int_0^tS_H(t-s)G(s)\mathrm ds
\end{equation}
together with the weighted integral
\begin{equation}
   \mathcal J_G(t)= \int_0^t(1+t-s)^{-\mu}\|\nabla_y \mathcal E_G(s)\|_{H^k_y}^2\mathrm ds,\label{eq:JG(t)}
\end{equation}
where $G$ satisfies the following a priori pathwise condition. 
\begin{itemize}
    \item[(HG)] The process  $G:[0,T]\times \Omega\to H^k_y$ has paths  $\mathbb P$-almost surely in $L^1([0,T];H^k_y)\cap L^1([0,T];L^1_y)$ and there exists a constant $\Theta_*>0$ such that
\begin{equation}\int_0^t (1+t-s)^{-\mu}\big[\|G(s)\|_{H^k_y}+\|G(s)\|_{L^1_y}\big]\mathrm ds\leq \Theta_*\end{equation}
holds for all $0\leq t\leq T.$
\end{itemize}


\begin{proposition}\label{prop:deter:conv}
Suppose $T>0$   and let $\mu\in (0,\frac{d-1}{4}]$ for $d\geq 2$.
There exists a constant $K_{\rm dc} > 0$ that does not depend on $T$
so that, for any process $G$ that satisfies \textnormal{(HG)},   the bound   
\begin{equation}
    \sup_{0\leq t\leq T} \mathcal J_G(t)\leq K_{\rm dc}\Theta_*^{2},
\end{equation}
holds $\mathbb P$-almost surely.
\end{proposition}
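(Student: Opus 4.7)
The plan is to reduce $\mathcal J_G(t)$ to a pathwise double integral of the bilinear form $\mathcal B$ from \eqref{eq:fw:max:reg:def:j}, and to estimate that form via the same integration by parts as in Lemma~\ref{lem:max:reg:bnds:j:delta}, now retaining the asymmetric case where the two shift parameters differ.

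First I would write $\nabla_y\mathcal E_G(s) = \int_0^s \nabla_y S_H(s-r) G(r)\,\mathrm dr$, expand the square in $\|\cdot\|_{H^k_y}^2$, and apply Fubini together with symmetry in $(r_1,r_2)$ to obtain
\begin{equation*}
    \mathcal J_G(t) \;=\; 2\int_0^t\!\!\int_0^{r_2} \mathcal B(t,t,r_2,r_1)\bigl[G(r_1),G(r_2)\bigr]\,\mathrm dr_1\,\mathrm dr_2.
\end{equation*}
For each fixed $0\le r_1\le r_2\le t$ I would then apply the same integration by parts in $s$ as in the derivation of \eqref{eq:Jtotal} to obtain $\mathcal B(t,t,r_2,r_1)=\mathcal B_1+\mathcal B_2$ with
\begin{equation*}
\begin{aligned}
\mathcal B_2 &= -\tfrac12\langle S_H(t-r_1)v,S_H(t-r_2)w\rangle_{H^k_y}+\tfrac12(1+t-r_2)^{-\mu}\langle S_H(r_2-r_1)v,w\rangle_{H^k_y},\\
\mathcal B_1 &= \tfrac{\mu}{2}\int_{r_2}^t(1+t-s)^{-(\mu+1)}\langle S_H(s-r_1)v,S_H(s-r_2)w\rangle_{H^k_y}\,\mathrm ds.
\end{aligned}
\end{equation*}

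Using Corollary~\ref{cor:algebraic:decay} together with the constraint $\mu\le(d-1)/4$, I would bound each piece pointwise in $(r_1,r_2)$ by $[\|v\|_*][\|w\|_*]$, with $[\|v\|_*]:=\|v\|_{L^1_y}+\|v\|_{H^k_y}$, against one of the two kernels
\begin{equation*}
K_{\mathrm a}(t,r_1,r_2)=(1+t-r_1)^{-\mu}(1+t-r_2)^{-\mu},\qquad K_{\mathrm b}(t,r_1,r_2)=(1+t-r_2)^{-\mu}(1+r_2-r_1)^{-\mu}.
\end{equation*}
The first term of $\mathcal B_2$ falls directly under $K_{\mathrm a}$. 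The second term of $\mathcal B_2$ falls under $K_{\mathrm b}$ upon replacing the semigroup decay exponent $(d-1)/4$ by the smaller $\mu$. For $\mathcal B_1$ I would use $(1+s-r_1)^{-(d-1)/4}\le(1+r_2-r_1)^{-(d-1)/4}$ to factor out the $(r_2-r_1)$-dependence, then apply the standard convolution estimate $\int_{r_2}^t(1+t-s)^{-(\mu+1)}(1+s-r_2)^{-(d-1)/4}\,\mathrm ds\le C(1+t-r_2)^{-\gamma}$ with some $\gamma\ge\mu$, again yielding a bound of $K_{\mathrm b}$-type.

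Plugging these kernels into the double integral, I would close the estimate by iterated application of (HG) to $\phi(r):=\|G(r)\|_{L^1_y}+\|G(r)\|_{H^k_y}$. The $K_{\mathrm a}$ contribution factorises on the nose as $\bigl(\int_0^t(1+t-r)^{-\mu}\phi(r)\,\mathrm dr\bigr)^2\le\Theta_*^2$. For $K_{\mathrm b}$, (HG) applied at time $r_2$ gives $\int_0^{r_2}(1+r_2-r_1)^{-\mu}\phi(r_1)\,\mathrm dr_1\le\Theta_*$, and a second application at time $t$ again gives $\Theta_*^2$. The main obstacle is the boundary term $(1+t-r_2)^{-\mu}\langle S_H(r_2-r_1)v,w\rangle_{H^k_y}$, whose natural decay is in the gap $(r_2-r_1)$ rather than $(t-r_1)$; the critical constraint $\mu\le(d-1)/4$ is precisely what converts the algebraic heat-semigroup decay into a $(1+r_2-r_1)^{-\mu}$ factor that can be absorbed by (HG) applied consecutively at $r_2$ and $t$, rather than at $t$ alone.
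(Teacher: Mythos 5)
Your proof is correct, but it takes a genuinely different route from the paper's. The paper splits $\mathcal J_G(t)$ into a long-term component (inner variables at distance at least $1$ from $s$) and a short-term component (inner variables within distance $1$ of $s$); the short-term piece is bounded via the $\mathcal B=\mathcal B_1+\mathcal B_2$ integration by parts with crude semigroup bounds, while the long-term piece is bounded directly from the faster decay rate $(d+1)/4$ of $\nabla_y S_H$ together with a Cauchy--Schwarz splitting of that exponent. You skip the split entirely: a single Fubini manipulation writes $\mathcal J_G(t)$ as a double integral of the off-diagonal form $\mathcal B(t,t,r_2,r_1)$, to which the integration by parts applies uniformly. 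The payoff is that integration by parts trades both singular gradients $\nabla_y S_H$ for the contractive, non-singular $S_H$ at the cost of one extra power of $(1+t-s)^{-1}$ on the weight, so no case distinction between $s-r<1$ and $s-r\ge1$ is ever needed and only the $(d-1)/4$ decay rate of $S_H$ enters. Your closing observation---that $\mu\le(d-1)/4$ is exactly the threshold below which the boundary term $\tfrac12(1+t-r_2)^{-\mu}\langle S_H(r_2-r_1)v,w\rangle_{H^k_y}$ and the bulk term $\mathcal B_1$ admit a $(1+r_2-r_1)^{-\mu}(1+t-r_2)^{-\mu}$-type kernel, absorbed by two successive applications of (HG), first in the inner variable $r_1$ at time $r_2$, then in $r_2$ at time $t$---is a cleaner explanation for the critical exponent than the one that is only implicit in the paper's two-lemma combination. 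One technical remark to tighten: since (HG) only gives $G(r)\in H^k_y$ while the bilinear form $\mathcal B$ in \eqref{eq:fw:max:reg:def:j} is formally defined on $H^{k+1}_y$, the step that identifies $\mathcal B(t,t,r_2,r_1)=\mathcal B_1+\mathcal B_2$ via integration by parts over $s\in(r_2,t)$ relies on the integrable $(s-r_2)^{-1/2}$ singularity of $\nabla_y S_H$ near $s=r_2$; this should be flagged, either by approximating $G$ in $H^{k+1}_y$ and passing to the limit, or by directly verifying absolute integrability of $U'(s)=-2\langle\nabla_y S_H(s-r_1)v,\nabla_y S_H(s-r_2)w\rangle_{H^k_y}$ on $(r_2,t)$ before invoking the fundamental theorem of calculus.
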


The  estimate above turns out to be critical, in the sense that for any $\mu > \frac{d-1}{4}$ we obtain
\begin{equation}
\begin{aligned}
    \sup_{0\leq t\leq T}| \mathcal J_G(t)|&\leq K_{\rm dc}\left[\sup_{0\leq t\leq T}\int_0^t (1+t-s)^{- \frac{d-1}{4}}\big[\|G(s)\|_{H^k_y}+\|G(s)\|_{L^1_y}\big]\mathrm ds\right]\\
    &\qquad\qquad  \times \left[\sup_{0\leq t\leq T}\int_0^t(1+t-s)^{-\min\{\mu, \frac{d+3}{4}\}}\big[\|G(s)\|_{H^k_y}+\|G(s)\|_{L^1_y}\big]\mathrm ds\right].\label{eq:JG*}
    \end{aligned}
\end{equation}
Even if we replace the algebraic rate  in \eqref{eq:JG(t)} with an exponential weight by writing
\begin{equation}
    \mathcal J_G^\epsilon(t)=\int_0^te^{-\epsilon(t-s)}\|\nabla_y \mathcal E_G(s)\|_{H^k_y}^2\mathrm ds,
\end{equation}
for some $\epsilon >0$, we end up with the same upper bound as in \eqref{eq:JG*}, where $\min\{\mu, \frac{d+3}{4}\}$ reads as $\frac{d+3}{4}$.


\paragraph{Convolutions with \texorpdfstring{$S_L(t)$}{SL(t)}} We now consider the deterministic convolution
\begin{equation}
    \mathcal Z_F(t)=\int_0^tS_L(t-s)P_{\rm tw}^\perp F(s)\mathrm ds,
\end{equation}
together with the associated weighted integral
\begin{equation}
   \mathcal I_F(t)= \int_0^t(1+t-s)^{-\mu}\|\mathcal Z_F(s)\|_{H^{k+1}}^2\mathrm ds,\label{eq:IF(t)}
\end{equation}
where $F$ satisfies the following a priori pathwise condition. 
\begin{itemize}
    \item[(HF)] The process  $F:[0,T]\times \Omega\to H^k$ has paths  $\mathbb P$-almost surely in $L^1([0,T];H^k)$ and there exists a constant $\Xi_*>0$ such that \begin{equation}
    \label{eq:cnvd:bnd:a:prp:F:Xi}
    \int_0^t (1+t-s)^{-\mu}\|F(s)\|_{H^k}\,\mathrm ds\leq \Xi_*\end{equation}
    holds for all $0\leq t\leq T.$
\end{itemize}

\begin{proposition}\label{prop:deter:conv:gap}
Assume that \textnormal{(Hf)} and \textnormal{(HTw)} holds. Suppose $T>0$   and $\mu> 0$.
There exists a  $K_{\rm dc} > 0$ that does not depend on $T$
so that, for any process $F$ that satisfies \textnormal{(HF)},  the bound   
\begin{equation}
    \sup_{0\leq t\leq T} \mathcal I_F(t)\leq K_{\rm dc}\Xi_*^{2},\label{eq:IF:bound}
\end{equation}
holds $\mathbb P$-almost surely.
\end{proposition}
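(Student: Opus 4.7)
The plan is to adapt the maximal regularity strategy of {\S}\ref{sec:spectral:gap} to this purely deterministic setting, which is considerably simpler than its stochastic counterpart since there are no It{\^o} corrections or running-supremum bounds on stochastic integrals to handle. First, switching to the equivalent inner product $\langle\langle\cdot,\cdot\rangle\rangle_{H^{k+1}}$ and expanding $\mathcal Z_F(s)$ as a convolution, we write
\begin{equation*}
\mathcal I_F(t)\;\lesssim\;\int_0^t\!\!\int_0^t\!\!\int_{\max(r,r')}^t(1+t-s)^{-\mu}\langle\langle S_L(s-r)P_{\rm tw}^\perp F(r),S_L(s-r')P_{\rm tw}^\perp F(r')\rangle\rangle_{H^{k+1}}\,\mathrm ds\,\mathrm dr\,\mathrm dr'.
\end{equation*}
Applying Fubini to swap the $s$-integration with the $(r,r')$-integrations and exploiting the symmetry $r\leftrightarrow r'$, this reduces to
\begin{equation*}
\mathcal I_F(t)\;\lesssim\;\int_0^t\!\int_0^{r'}\bigl|\mathcal V(t,t,r',r)[P_{\rm tw}^\perp F(r),P_{\rm tw}^\perp F(r')]\bigr|\,\mathrm dr\,\mathrm dr',
\end{equation*}
with $\mathcal V$ the bilinear form from \eqref{eq:bil:form:V}.

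The key technical step is to derive a sharpened estimate for $\mathcal V$ in the asymmetric regime $s_1\le s_1'$. Starting from the identity \eqref{eq:identity2} and integrating by parts in the $s$-variable (as in the derivation of \eqref{eq:Jtotal}), one produces boundary and integral contributions, each of which can be controlled by combining the spectral-gap semigroup bound $\|S_L(\tau)P_{\rm tw}^\perp v\|_{H^k}\le Me^{-\beta\tau}\|v\|_{H^k}$ with the boundedness of the lower-order operator $A$ on $H^k$. Because integration by parts trades the $\|\cdot\|_{H^{k+1}}$-norm appearing inside $\mathcal V$ for $\|\cdot\|_{H^k}$-norms of the semigroup, the troublesome $\tau^{-1/2}$ smoothing singularity is avoided altogether, and one obtains
\begin{equation*}
|\mathcal V(t,t,r',r)[v,w]|\;\le\; K(1+t-r')^{-\mu}e^{-\beta(r'-r)}\|v\|_{H^k}\|w\|_{H^k},\qquad 0\le r\le r'\le t.
\end{equation*}

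To close the estimate, I would then feed this pointwise bound into the double integral and redistribute the algebraic weight so that both copies of $\|F\|$ pair naturally with $(1+t-\cdot)^{-\mu}$. Using the elementary inequality $(1+t-r)^{\mu}\le(1+t-r')^{\mu}(1+r'-r)^{\mu}$ together with the boundedness of $(1+u)^{\mu} e^{-\beta u/2}$ lets us convert the asymmetric weight into a symmetric factor of the form $(1+t-r)^{-\mu}(1+t-r')^{-\mu}$ at the cost of an integrable-in-$(r'-r)$ remainder. A final application of Fubini then reduces $\mathcal I_F(t)$ to a product of two weighted integrals of $\|F\|$ against $(1+t-\cdot)^{-\mu}$, each controlled by $\Xi_*$ through (HF). The main obstacle lies in this last step: balancing the asymmetric algebraic weight produced by $\mathcal V$ against the symmetric weighted-$L^1$ hypothesis (HF) requires the exponential decay from the spectral gap to be sharp enough to absorb the polynomial mismatch $(1+r'-r)^{\mu}$ — this is precisely where the absence of a spectral gap (as in Proposition \ref{prop:deter:conv}) forces the critical constraint $\mu\le(d-1)/4$, whereas here the full range $\mu>0$ is admissible.
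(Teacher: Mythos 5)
Your strategy is correct in substance, but it takes a genuinely different route from the paper's. The paper's proof mimics the short-term/long-term split of \eqref{eq:cnvd:split:j:g}: for the long-term component it uses Cauchy--Schwarz plus the non-singular smoothing bound $\|S_L(\tau)P_{\rm tw}^\perp\|_{\mathscr L(H^k,H^{k+1})}\le Me^{-\beta\tau}$ for $\tau\ge 1$ together with Lemma \ref{lem:z} and \eqref{eq:absorb:decay}; only the short-term piece invokes the bilinear form $\mathcal V$ and integration by parts. You instead treat \emph{all} timescales uniformly through a sharpened off-diagonal estimate for $\mathcal V$. This is a cleaner, more unified argument: the paper only needs (and only records) the diagonal bound $|\mathcal V(s_2,s_2,s_1,s_1)[v,w]|\le K_{\mathcal V}(1+s_2-s_1)^{-\mu}\|v\|_{H^k}\|w\|_{H^k}$, whereas you derive the stronger off-diagonal bound $|\mathcal V(t,t,r',r)[v,w]|\le K(1+t-r')^{-\mu}e^{-\beta(r'-r)}\|v\|_{H^k}\|w\|_{H^k}$, which I have verified follows from the integration-by-parts identity \eqref{eq:identity2} together with $\|S_L(\tau)P_{\rm tw}^\perp v\|_{H^k}\le Me^{-\beta\tau}\|v\|_{H^k}$ and \eqref{eq:est:exp:poly2}. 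The payoff of doing this extra work is that you avoid the case split entirely. Your closing observation that the spectral gap is precisely what removes the constraint $\mu\le(d-1)/4$ present in Proposition \ref{prop:deter:conv} is also correct and matches the remark following the statement.

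One step in your closing paragraph does not go through as literally stated. The claimed reduction to a ``symmetric factor $(1+t-r)^{-\mu}(1+t-r')^{-\mu}$'' is false: taking $r=r'$ forces $(1+t-r')^{-\mu}\le K(1+t-r')^{-2\mu}$, which fails as $t-r'\to\infty$; alternatively, halving the exponent to $\mu/2$ gives a weight that hypothesis (HF) does not control. The correct closing is simpler than what you propose: absorb $e^{-\beta(r'-r)}\le C(1+r'-r)^{-\mu}$ via \eqref{eq:absorb:decay}, then integrate iteratively. Since (HF) holds for \emph{every} time $r'\le T$, the inner integral $\int_0^{r'}(1+r'-r)^{-\mu}\|F(r)\|_{H^k}\,\mathrm dr\le\Xi_*$, leaving
\begin{equation*}
\mathcal I_F(t)\lesssim \Xi_*\int_0^t(1+t-r')^{-\mu}\|F(r')\|_{H^k}\,\mathrm dr'\lesssim\Xi_*^2,
\end{equation*}
which is the desired bound. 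So the gap is a local misstep in redistributing the weight, not a flaw in the overall plan, and the fix is a straightforward nested application of (HF) at the intermediate time.
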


Let us emphasise that in the result above one can take an arbitrary $\mu > 0$ on account of the exponential decay of $S_L(t) P^\perp_{\rm tw}$.  In fact, we could have considered the weight $e^{-\epsilon(t-s)}$ instead of $(1+t-s)^{-\mu}$ in \eqref{eq:IF(t)} and \eqref{eq:cnvd:bnd:a:prp:F:Xi}, once restricted to  $0<\epsilon\leq 2\beta$ with $\beta>0$ as in (HTw).


\subsection{Proofs of Propositions \ref{prop:deter:conv} and \ref{prop:deter:conv:gap}}
Let us
focus on the convolution with the heat semigroup first. By linearity and continuity of the inner product, we observe that
\begin{equation}
\begin{aligned}
    \|\nabla_y \mathcal E_{G}(s)\|_{H^{k}_y}^2&=\langle \nabla_y \mathcal E_{G}(s),\nabla_y \mathcal E_{G}(s)\rangle _{H^{k}_y}\\&=
    \int_0^s\int_0^s\langle \nabla_y S_H(s-r)G(r),\nabla_y S_H(s-r')G(r')\rangle_{H^{k}_y}\mathrm dr'\,\mathrm dr.\end{aligned}
\end{equation}
  We proceed by splitting $\mathcal J_G(t)$ into a long-term and short-term component, given by 
\begin{equation}
\label{eq:cnvd:split:j:g}
    \begin{aligned}
        \mathcal J_G^{\rm lt}(t)&= \int_0^t(1+t-s)^{-\mu}  \int_{0}^{s-1}\int_{0}^{s-1}\langle \nabla_y S_H(s-r)G(r),\nabla_y S_H(s-r')G(r')\rangle_{H^{k}_y}\mathrm dr'\,\mathrm dr\,\mathrm ds, \\
        \mathcal J_G^{\rm sh}(t)& = \int_0^t(1+t-s)^{-\mu}  \int_{s-1}^{s}\int_{s-1}^{s}\langle \nabla_y S_H(s-r)G(r),\nabla_y S_H(s-r')G(r')\rangle_{H^{k}_y}\mathrm dr'\,\mathrm dr\,\mathrm ds.
    \end{aligned}
\end{equation}
For the long-term integral, we are able to exploit the fact that $\nabla_yS_H(t)$ decays faster than $S_H(t)$; see Lemma \ref{lem:theta} 
and Corollary \ref{cor:algebraic:decay}.
 
\begin{lemma}[long-term bound]\label{lem:deter:conv:long}
   Suppose $T>0$   and let $\mu\in (0,\frac{d-1}{2}]$ for $d\geq 2$.
There exists a constant $K_{\rm dc} > 0$ that does not depend on $T$
so that  the bound
\begin{equation}
    \sup_{0\leq t\leq T}\mathcal J^{\rm lt}_G(t)\leq K_{\rm dc}^p\left[\sup_{0\leq t\leq T}\int_0^t (1+t-s)^{-\mu}\big[\|G(s)\|_{H^k_y}+\|G(s)\|_{L^1_y}\big]\mathrm ds\right]^{2} \label{eq:JGlt}
\end{equation}
holds $\mathbb P$-almost surely.
\end{lemma}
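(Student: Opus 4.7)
The approach is to regard $\mathcal J_G^{\rm lt}(t)$ as a triple integral in $(s,r,r')$, exchange the order of integration via Fubini so that the $s$-integration sits on the inside, and symmetrise in $r, r'$ to restrict attention to $0 \le r \le r' \le t-1$ at the cost of a factor $2$. The resulting inner integral is then precisely the bilinear form $\mathcal{B}(t,t,r',r)[G(r),G(r')]$ from \eqref{eq:fw:max:reg:def:j} appearing in its full off-diagonal form $s_1 = r \ne r' = s_1'$, to which Lemma \ref{lem:max:reg:bnds:j:delta} does not directly apply. The first task is therefore to extend that lemma to the off-diagonal setting.

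The plan for that generalisation is to repeat the integration-by-parts argument that produced the decomposition $\mathcal B = \mathcal B_1 + \mathcal B_2$, but based on the identity
\begin{equation*}
\langle \nabla_y S_H(s-r)v,\, \nabla_y S_H(s-r')w\rangle_{H^k_y}
 \,=\, -\tfrac{1}{2}\tfrac{d}{ds}\langle S_H(s-r)v,\, S_H(s-r')w\rangle_{H^k_y},
\end{equation*}
which follows from integration by parts in $y$ and the self-adjointness of $\Delta_y$. Integrating by parts in $s$ produces a left-boundary term $\tfrac{1}{2}(1+t-r')^{-\mu}\langle S_H(r'-r)v, w\rangle_{H^k_y}$, a right-boundary term $-\tfrac{1}{2}\langle S_H(t-r)v, S_H(t-r')w\rangle_{H^k_y}$, and a remainder $\tfrac{\mu}{2}\int_{r'}^t(1+t-s)^{-\mu-1}\langle S_H(s-r)v, S_H(s-r')w\rangle_{H^k_y}\,\mathrm ds$. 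Each inner product is then bounded via Cauchy--Schwarz combined with Corollary \ref{cor:algebraic:decay}; for the left-boundary term one first invokes the semigroup property $S_H(r'-r) = S_H((r'-r)/2)^2$ and the self-adjointness of $S_H$ to symmetrise and extract the full decay rate $(1+r'-r)^{-(d-1)/2}\psi(v)\psi(w)$, where $\psi(u) := \|u\|_{H^k_y} + \|u\|_{L^1_y}$.

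The final step is to integrate these pointwise bounds over the triangle $\{0 \le r \le r' \le t-1\}$ against $\psi(G(r))\psi(G(r'))$ and show the result is majorised by $C\Psi(t)^2$ with $\Psi(t) := \int_0^t(1+t-s)^{-\mu}\psi(G(s))\,\mathrm ds$. The right-boundary contribution factors at once as $\bigl[\int_0^t(1+t-r)^{-(d-1)/4}\psi(G(r))\,\mathrm dr\bigr]^2 \le \Psi(t)^2$ thanks to the hypothesis $\mu \le (d-1)/4$, which gives $(1+u)^{-(d-1)/4} \le (1+u)^{-\mu}$. For the left-boundary contribution, the bound $(1+r'-r)^{-(d-1)/2} \le (1+r'-r)^{-\mu}$ reduces the inner $r$-integral to $\Psi(r') \le \sup_{\tau\le t} \Psi(\tau)$, after which the outer $r'$-integration against $(1+t-r')^{-\mu}$ produces a second copy of $\Psi(t)$. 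For the remainder term I would split the $s$-integration at $s^\ast = (t+r')/2$: on $[s^\ast, t]$ one extracts $(1+t-r)^{-(d-1)/4}(1+t-r')^{-(d-1)/4}$ from the semigroup factors and absorbs the integrable tail of $(1+t-s)^{-\mu-1}$, recovering the same structure as the right-boundary piece; on $[r', s^\ast]$ one uses $(1+t-s)^{-\mu-1} \le 2^{\mu+1}(1+t-r')^{-\mu-1}$ to pull out the $(1+t-r')$-decay and absorbs the residual weights $(1+s-r)^{-(d-1)/4}(1+s-r')^{-(d-1)/4}$ into a tame convolution.

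The main obstacle is the remainder term, whose integrand mixes three algebraic weights with distinct centres $t, r, r'$. The midpoint splitting keeps the bookkeeping manageable but each subregion must yield decay factors that, after integration against $\psi(G(r))\psi(G(r'))$, combine cleanly into a product dominated by $(1+t-r)^{-\mu}(1+t-r')^{-\mu}$. It is precisely at this stage that the assumption $\mu \le (d-1)/4$ is used in an essential way, since it allows each occurrence of the fast semigroup decay rate $(d-1)/4$ to be traded for the slower weight rate $\mu$ defining $\Psi$, so that the convolution inequalities close without $T$-dependent growth.
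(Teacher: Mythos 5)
Your proposal takes a genuinely different route from the paper and, as outlined, does not close. The paper's proof of the long-term bound does not touch the bilinear form $\mathcal{B}$ at all: it first bounds the $H^k_y$ inner product by the product of norms, so the double $r,r'$-integral collapses into a perfect square $\big(\int_0^{s-1}\|\nabla_y S_H(s-r)G(r)\|_{H^k_y}\,\mathrm dr\big)^2$; it then exploits the \emph{enhanced} decay rate $(d+1)/4$ of $\nabla_y S_H$ for $t\ge 1$ (Corollary \ref{cor:algebraic:decay}), applies a further Cauchy--Schwarz to split this rate as $\tfrac{d+3}{8}+\tfrac{d-1}{8}$ so that one factor is absorbed into $\mathcal R(T):=\sup_{t\le T}\int_0^t(1+t-s)^{-\mu}Z(s)\,\mathrm ds$, and finally reverses the order of integration and invokes Lemma \ref{lem:z}, which closes because $\tfrac{d+3}{4}>1$. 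By instead applying the short-term integration-by-parts machinery to the long-term piece, you trade $\nabla_y S_H$ for $S_H$ and so discard the extra $\tfrac12$-power of decay that the paper's argument builds on; this is precisely the ingredient that makes the paper's route clean and uniform in $d$.

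The concrete gap is in the remainder term $\mathcal B_1$. On $[r',s^\ast]$ you pull out $(1+t-r')^{-\mu-1}$ and call the remaining integral ``a tame convolution,'' but $\int_{r'}^{s^\ast}(1+s-r)^{-\frac{d-1}{4}}(1+s-r')^{-\frac{d-1}{4}}\,\mathrm ds$ is not uniformly bounded: for $d=2,3$ it grows like $(t-r')^{1-\frac{d-1}{2}}$, and for $d=4$ the natural extraction yields only a factor $(1+r'-r)^{-\frac{d-1}{8}}$, which with $\mu=\tfrac{d-1}{4}$ is too weak to be absorbed into $\Psi(r')$. More fundamentally, the decay this subregion produces is centred at $r'-r$, not $t-r$, so your stated goal that ``each subregion must yield\ldots a product dominated by $(1+t-r)^{-\mu}(1+t-r')^{-\mu}$'' is not what emerges; one is forced into a mixed bound of the form $(1+r'-r)^{-\frac{d-1}{4}}(1+t-r')^{-\lambda}$ for a suitable $\lambda>\mu$, which does integrate to $[\sup_\tau\Psi(\tau)]^2$ but only via an iterated-supremum argument that your outline does not carry out. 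There is also a small imprecision in identifying the inner integral with $\mathcal B(t,t,r',r)$: the lower limit after Fubini is $\max\{r,r'\}+1$ rather than $\max\{r,r'\}$, which shifts the left boundary term. The approach is salvageable with a more careful treatment of $\mathcal B_1$, but as written it leaves the hardest step unresolved, whereas the paper's double Cauchy--Schwarz avoids the three-centre weight entirely.
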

\begin{proof}
Applying Cauchy--Schwartz and introducing the shorthand
$Z(t)=\|G(t)\|_{L^1_y}+\|G(t)\|_{H^k_y}$, we obtain
    \begin{align}
      \nonumber  \mathcal J_G^{\rm lt}(t)&\leq \int_0^t(1+t-s)^{-\mu}  \int_{0}^{s-1}\int_{0}^{s-1}\| \nabla_y S_H(s-r)G(r)\|_{H^{k}_y}\|\nabla_y S_H(s-r')G(r')\|_{H^{k}_y}\mathrm dr'\,\mathrm dr\,\mathrm ds \\
 &\leq \int_0^t(1+t-s)^{-\mu}  \left(\int_{0}^{s-1}\|\nabla_y S_H(s-r)G(r)\|_{H^{k}_y}\mathrm dr\right)^2\mathrm ds\\
    \nonumber&\leq  M_{\rm alg}^2\int_0^t(1+t-s)^{-\mu}  \left(\int_{0}^{s}(1+s-r)^{-\frac{d+1}{4}}Z(r)\,\mathrm dr\right)^2\mathrm ds,
    \end{align}
which follows from Corollary 
\ref{cor:algebraic:decay}. 
A further application of Cauchy--Schwarz
yields
    \begin{align}
        \nonumber\left(\int_{0}^{s}(1+s-r)^{-\frac{d+1}{4}}Z(r)\,\mathrm dr\right)^2&= \left(\int_{0}^{s}(1+s-r)^{-\frac{d+3}{8}}\sqrt{Z(r)}\, (1+s-r)^{-\frac{d-1}{8}}\sqrt{Z(r)}\,\mathrm dr\right)^2\\&\leq \int_0^s(1+s-r)^{-\frac{d+3}{4}}Z(r)\,\mathrm dr\int_0^s(1+s-r)^{-\frac{d-1}{4}}Z(r)\,\mathrm dr\\
        &\leq \mathcal R(T)\int_0^s(1+s-r)^{-\frac{d+3}{4}}Z(r)\,\mathrm dr,\nonumber
    \end{align}
    with 
    $
        \mathcal R(T)=\sup_{0\leq t\leq T}\int_0^t(1+t-s)^{-\mu}Z(s)\,\mathrm ds.$
Upon reversing the order of integration, we conclude
\begin{equation}
    \begin{aligned}
        \mathcal J_G^{\rm lt}(t)&\leq M^2_{\rm alg}\mathcal R(T)\int_0^t(1+t-s)^{-\mu} 
 \int_{0}^{s}(1+s-r)^{-\frac{d+3}{4}}Z(r)\,\mathrm dr \,\mathrm ds\\
 &=M^2_{\rm alg}\mathcal R(T) \int_0^tZ(r)
 \int_{r}^{t}(1+t-s)^{-\mu} (1+s-r)^{-\frac{d+3}{4}}\,\mathrm ds \,\mathrm dr\\
 &\leq M^2_{\rm alg}C\mathcal R(T) \int_0^t(1+t-r)^{-\mu}  Z(r)\,\mathrm dr\\
 &\leq M^2_{\rm alg}C\mathcal R(T)^2
    \end{aligned}
\end{equation}
for some $C>0.$
The penultimate inequality follows from  Lemma  \ref{lem:z} and the fact $({d+3})/{4}>1$ for $d\geq 2$ (which would have failed without the additional decay associated to $\nabla_y S_H$). This implies the stated bound  \eqref{eq:JGlt}.
\end{proof}

Let us now analyse $\mathcal J_G^{\rm sh}(t)$. We would like to point out that the computations below have a high resemblance to 
those in \cite[Sec. 9.2]{hamster2019stability}.
\begin{lemma}[short-term bound]\label{lem:deter:conv:short}
     Suppose $T>0$   and let $\mu\in (0,\frac{d-1}{2}]$ for $d\geq 2$.
There exists a constant $K_{\rm dc} > 0$ that does not depend on $T$
so that the bound
\begin{equation}
    \sup_{0\leq t\leq T}| \mathcal J^{\rm sh}_G(t)|\leq K_{\rm dc}\left[\sup_{0\leq t\leq T}\int_0^t (1+t-s)^{-\mu} \|G(s)\|_{H^k_y}\,\mathrm ds\right]^{2}
\end{equation}
holds $\mathbb P$-almost surely.
\end{lemma}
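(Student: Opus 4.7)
The strategy mirrors the maximal regularity machinery used in Section~\ref{sec:estimates:max:reg} for the stochastic case, relying on the same bilinear-form structure. My first step would be to reverse the order of integration so that the inner integral over $s$ becomes precisely the bilinear form $\mathcal{B}$ defined in \eqref{eq:fw:max:reg:def:j}. For fixed $r,r'\in[0,t]$ with $|r-r'|\le 1$ and (WLOG) $r\le r'$, the $s$-range shrinks to $[r',\min(r+1,t)]$, and by the symmetry $r\leftrightarrow r'$ of the inner product one obtains
\begin{equation*}
    \mathcal J_G^{\rm sh}(t) = 2\iint_{0\le r\le r'\le t,\;r'-r\le 1} \mathcal B\big(t,\min(r+1,t),r',r\big)\big[G(r),G(r')\big]\,\mathrm dr\,\mathrm dr'.
\end{equation*}

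Next, I would apply the integration-by-parts decomposition $\mathcal B=\mathcal B_1+\mathcal B_2$ derived in \eqref{eq:Jtotal}, which converts the non-integrable $(s-r)^{-1/2}$ singularity of $\nabla_y S_H$ into an interior term with weight $(1+t-s)^{-\mu-1}$ together with boundary contributions at $s=r'$ and $s=\min(r+1,t)$. Using the contractivity $\|S_H(\tau)v\|_{H^k_y}\le\|v\|_{H^k_y}$ from Lemma~\ref{lem:theta}, each piece can be bounded by $\|G(r)\|_{H^k_y}\|G(r')\|_{H^k_y}$ times an algebraic weight. The crucial observation is that since $\min(r+1,t)-r\le 1$, one has $(1+t-\min(r+1,t))^{-\mu}\le 2^\mu(1+t-r)^{-\mu}$, which together with the trivial bound $\int_{r'}^{\min(r+1,t)}(1+t-s)^{-\mu-1}\,\mathrm ds\le \frac{1}{\mu}(1+t-\min(r+1,t))^{-\mu}$ yields
\begin{equation*}
    \big|\mathcal B\big(t,\min(r+1,t),r',r\big)\big[G(r),G(r')\big]\big|\le K\,(1+t-r)^{-\mu}\,\|G(r)\|_{H^k_y}\|G(r')\|_{H^k_y}.
\end{equation*}

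Third, I would exploit the near-diagonal support $|r-r'|\le 1$ to invoke the two-sided equivalence $(1+t-r)\sim(1+t-r')$ (uniformly in $t$), distributing the algebraic weight symmetrically between the two variables. Combined with the analogous equivalence $(1+t-s)^{-\mu}\sim(1+t-r)^{-\mu}$ for $r\in[s-1,s]$ and an application of Fubini, this lets us factorize the double integral and arrive at the claimed bound $K_{\rm dc}[\sup_{0\le t\le T}\int_0^t (1+t-s)^{-\mu}\|G(s)\|_{H^k_y}\,\mathrm ds]^2$.

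The main obstacle, and the step that will require the most care, is the last: distributing the single available algebraic weight $(1+t-r)^{-\mu}$ between the two factors $\|G(r)\|\|G(r')\|$ so as to produce a squared single-variable weighted integral rather than the weaker $L^2$-type quantity $\int(1+t-s)^{-\mu}\|G(s)\|^2_{H^k_y}\,\mathrm ds$ that a naive Cauchy--Schwarz would deliver. The resolution uses precisely the near-diagonal constraint together with the $r\leftrightarrow r'$ symmetry, so that the seemingly asymmetric estimate is actually available symmetrically, and the integration region is a unit-width neighborhood of the diagonal rather than the full square $[0,t]^2$.
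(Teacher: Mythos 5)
Your proposal is correct and takes essentially the same route as the paper: reverse the order of integration to expose the bilinear form $\mathcal B$ from \eqref{eq:fw:max:reg:def:j}, apply the integration-by-parts decomposition $\mathcal B=\mathcal B_1+\mathcal B_2$ from \eqref{eq:Jtotal} with crude contractivity bounds on $S_H$, and then exploit the unit-width near-diagonal constraint $|r-r'|\le 1$ to insert a trivially bounded weight on the second variable, yielding the squared weighted sup-integral. The only minor imprecision is calling the last step a ``factorization'' of the double integral — it is really a sup-out-one-factor estimate as in the paper's final display — but that is a matter of phrasing, not substance.
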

\begin{proof}
Recalling the bilinear form \eqref{eq:fw:max:reg:def:j}, 
we reverse the order of integration
and exploit the symmetry of the inner product
 to find
\begin{align}
     \mathcal J_G^{\rm sh}(t)&\nonumber =\int_0^t  \int_{\max\{0,r-1\}}^{\min\{t,r+1\}}\int_{\max\{r,r'\}} 
        ^{\min\{t,r+1,r'+1\}}(1+t-s)^{-\mu}\\&\hspace{3.5cm}\times\langle \nabla_y S_H(s-r)G(r),\nabla_y S_H(s-r')G(r')\rangle_{H^{k}_y}\,\mathrm ds\,\mathrm dr'\,\mathrm dr\\
        &\nonumber=\int_0^t \int_{\max\{0,r-1\}}^{\min\{t,r+1\}} \mathcal B\big(t,\min\{t,r+1,r'+1\},\max\{r,r'\},\min\{r,r'\}\big)\,\mathrm dr'\,\mathrm dr.
\end{align}
On account of the decomposition \eqref{eq:Jtotal} and  crude bounds on the semigroup terms, we obtain
\begin{equation}
     \mathcal J_G^{\rm sh}(t)\leq C\mathcal H(t),\quad \mathcal H(t)=|\mathcal H_A(t)|+|\mathcal H_{B}(t)|+|\mathcal H_{C}(t)|,
\end{equation}
for some constant $C>0$, where 
\begin{align}
    \mathcal H_A(t)&=\int_0^t Z(r)\int_{\max\{0,r-1\}}^{\min\{t,r+1\}}(1+t-\min\{t,r+1,r'+1\})^{-\mu} Z(r')\,\mathrm dr'\,\mathrm dr,\\
     \mathcal H_B(t)&=\int_0^t Z(r)\int_{\max\{0,r-1\}}^{\min\{t,r+1\}}(1+t-\max\{r,r'\})^{-\mu} Z(r')\,\mathrm dr'\,\mathrm dr,\\
      \mathcal H_C(t)&=\int_0^t Z(r)\int_{\max\{0,r-1\}}^{\min\{t,r+1\}}Z(r')\int_{\max\{r,r'\}} 
        ^{\min\{t,r+1,r'+1\}}(1+t-s)^{-(\mu+1)} \,\mathrm ds\,\mathrm dr'\,\mathrm dr,
\end{align}
this time with $Z(t)=\|G(t)\|_{H^k_y}$.  Direct inspection of these expressions
yields the bound
\begin{equation}
\begin{aligned}
\mathcal H(t)&\leq 4\cdot 2^\mu\int_0^t(1+t-r)^{-\mu}Z(r)\,\int_{\max\{0,r-1\}}^{\min\{t,r+1\}}Z(r')\,\mathrm dr'\,\mathrm dr\\
   &\leq 4\cdot 4^\mu\int_0^t(1+t-r)^{-\mu}Z(r)\,\int_{\max\{0,r-1\}}^{\min\{t,r+1\}}(1+\min\{t,r+1\}-r')^{-\mu}Z(r')\,\mathrm dr'\,\mathrm dr\\
   &\leq 4^{\mu+1}\left[\sup_{0\leq s\leq t}\int_0^s(1+s-r)^{-\mu}Z(r)\,\mathrm dr\right]^2.
   \end{aligned}
\end{equation}
This proves the short-term estimate.
\end{proof}

\begin{proof}[Proof of Proposition \ref{prop:deter:conv}]
    The assertion follows by combining the results in Lemmas \ref{lem:deter:conv:long} and  \ref{lem:deter:conv:short}.
\end{proof}


\begin{proof}[Proof of Proposition \ref{prop:deter:conv:gap}]
We proceed by mimicking \eqref{eq:cnvd:split:j:g} and splitting
$\mathcal{I}_F$ into
a short-term and long-term component. The proof of Lemma \ref{lem:deter:conv:long}
can be followed for the long-term component, but now by invoking the second inequality in \eqref{eq:SL:bounds}, applying Lemma \ref{lem:z}, and noting that exponential decay can be bounded by algebraic decay of any rate; see \eqref{eq:absorb:decay}. The short-term component can be bounded using  the bilinear form  \eqref{eq:bil:form:V}, by means of computations that are  analogous to those in the proof of Lemma \ref{lem:deter:conv:short}.
\end{proof}

 \section{Nonlinear stability}\label{sec:stability}

As a result of our findings 
in {\S}\ref{sec:phase}--{\S}\ref{sec:evol:pert}, we  focus on the following generic framework. Throughout this section, we assume  there exist continuous processes $
    v:[0,\infty)\times\Omega\to H^k$ and $ \theta:[0,\infty)\times\Omega\to H^k_y$, for some $k\geq 0,$ for which we have the well-posed  mild representations 
\begin{equation}\begin{aligned}
v(t)&=S_L(t)v(0)+\int_0^tS_L(t-s)[\sigma^2F_{\rm bnd}(v(s),\theta(s),s)+F_{\rm nl}(v(s),\theta(s),s)]\,\mathrm ds\\&\quad\quad\quad+\sigma \int_0^t S_L(t-s)X
(v(s),\theta(s),s)\,\mathrm dW_s^Q,\label{eq:generic_v}
\end{aligned}
\end{equation}
together with
\begin{equation}\begin{aligned}
    \theta(t)&=S_H(t)\theta(0)+\int_0^tS_H(t-s)[\sigma^2G_{\rm bnd}(v(s),\theta(s),s)+G_{\rm nl}(v(s),\theta(s),s)]\,\mathrm ds
    \\&\quad\quad\quad+\sigma \int_0^t S_H(t-s)B(v(s),\theta(s),s)\,\mathrm dW_s^Q, \label{eq:generic_theta}
    \end{aligned}
\end{equation}
where both are $\mathbb P$-a.s satisfied for all time $0\leq t<\tau_\infty$. Whenever we have $\tau_\infty < \infty$,
we have the blow-up behaviour
\begin{equation}
\label{eq:st:blowup:crit}
\|v\|_{C([0,t];H^k)}+\|v\|_{L^2([0,t];H^{k+1})}+\|\theta\|_{C([0,t];H^k_y)}+\|\theta\|_{L^2([0,t];H^{k+1}_y)}\to\infty \hbox{ as } t\to\tau_\infty.
\end{equation}

The maps\footnote{Although now stated  for full generality, the explicit dependence on $\Omega$ in \eqref{63} and \eqref{64} is unnecessary and can be ignored for our current application. In case of the one dimensional \cite{hamster2019stability,hamster2020diag,hamster2020expstability,hamster2020} and  multidimensional setting on a cylindrical domain \cite{bosch2025conditionalspeedshapecorrections, bosch2024multidimensional}, if one rather works with noise that is not translation invariant, one can simply adjust the proof by making the nonlinearities explicitly depend on $[0,\infty)\times \Omega$ too|making the equations nonautonomous and random|as this allows us to incorporate the effect of the global phase $\gamma(t).$}
\begin{equation}
\begin{aligned}
    F_{\rm bnd}, F_{\rm nl}&:H^k\times H^{k+1}_y\times [0,\infty)\times \Omega\to H^k,\\ X&:H^k\times H^{k+1}_y\times [0,\infty)\times \Omega\to HS(\mathcal W_Q;H^k),\label{63}
    \end{aligned}
\end{equation}
and
\begin{equation}
\begin{aligned}
    G_{\rm bnd},G_{\rm nl}&:H^k\times H^{k+1}_y\times [0,\infty)\times \Omega\to H^k_y\cap L^1_y,\\ B&:H^k\times H^{k+1}_y\times [0,\infty)\times \Omega\to HS(\mathcal W_Q;H^k_y)\cap \Pi_2(\mathcal W_Q;L^1_y),\label{64}
    \end{aligned}
\end{equation}
are assumed to satisfy the bounds 
\begin{align}
    \|F_{\rm bnd}(v,\theta,\cdot)\|_{H^k}+ \|G_{\rm bnd}(v,\theta,\cdot)\|_{H^k_y}+\|G_{\rm bnd}(v,\theta,\cdot)\|_{L^1_y}&\leq K_{\rm bnd}
    \label{5.5},\\
    \|F_{\rm nl}(v,\theta,\cdot)\|_{H^k}+\|G_{\rm nl}(v,\theta,\cdot)\|_{H^k_y}+\|G_{\rm nl}(v,\theta,\cdot)\|_{L^1_y}&\leq K_{\rm nl}\textrm{RHS}(v,\theta)\label{5.6},\end{align}
    where
    \begin{equation}
        \textrm {RHS}(v,\theta)=\|v\|_{H^{k+1}}^2+\|v\|_{H^k}\|\theta\|_{H^k_y}+\|\nabla_y 
    \theta\|_{H^{k}_y}^2,\label{eq:RHS}
    \end{equation}
    together with
    \begin{align}
    \|X(v,\theta,\cdot)\|_{HS(\mathcal W_Q;H^k)}+ \|B(v,\theta,\cdot)\|_{HS(\mathcal W_Q;H^k_y)}+ \|B(v,\theta,\cdot)\|_{\Pi_2(\mathcal W_Q;L^1_y)}&\leq K_{\rm stc}
    ,\label{5.7}\end{align}
    whenever $\|v\|_{H^k},\|\theta\|_{H^{k}_y}\leq \epsilon_0$ for some $\epsilon_0>0$ fixed.
In addition, if $\|v\|_{H^k},\|\theta\|_{H^k_y}
\leq \eta_0^{1/2}\leq \epsilon_0$  for some sufficiently small $\eta_0>0$, then the orthogonal identities 
\begin{equation}
    \langle \sigma F_{\rm bnd}(v,\theta,\cdot)+F_{\rm nl}(v,\theta,\cdot),\psi_{\rm tw}\rangle_{L^2_x}=0\quad\text{and}\quad\langle B(v,\theta,\cdot)[\xi],\psi_{\rm tw}\rangle_{L^2_x}=0\label{eq:ort}
\end{equation}
are also satisfied, in a pointwise fashion on $\mathbb R^{d-1}$. 
Our main objective  is to control the size of
%
\begin{equation}
\begin{aligned}
  N_{\mu;k}(t)&=\|v(t)\|_{H^k}^2+\|\theta(t)\|_{H^k_y}^2+\int_0^t(1+t-s)^{-\mu}\mathrm{RHS}(v(s),\theta(s))\,\mathrm ds,\label{eq:size:final}
    \end{aligned}
\end{equation}
where the choice of $\mu>0$ depends on the dimension; see Table \ref{table:main:hfdst2}.
For any $0 < \eta < \eta_0$,
we define the stopping time 
\begin{equation}
    t_{\rm st}(\eta;k)=\inf\{t\geq 0:N_{\mu;k}(t)>\eta\},\label{eq:st}
\end{equation}
 associated to our exit problem, and observe that the  conditions in \eqref{eq:ort} are automatically satisfied 
 when 
 $t\leq t_{\rm st}(\eta;k)$.  In particular, the 
 blow-up properties \eqref{eq:st:blowup:crit} and our choice in \eqref{eq:size:final} imply that  $t_{\mathrm{st}}(\eta;k) < \tau_\infty$ whenever $\tau_\infty < \infty$; we refer to  \cite[Sec. 7]{bosch2024multidimensional} for a more detailed discussion on this matter. Finally, we observe that $\langle v(t,\cdot,y),\psi_{\rm tw}\rangle_{L^2_x}=0$ holds for all $y\in \mathbb R^{d-1}$ and $t\leq t_{\rm st}(\eta;k)$ if and only if $\langle v(0,\cdot,y),\psi_{\rm tw}\rangle_{L^2_x}=0$ for all $y\in\mathbb R^{d-1}$. It is worth mentioning that we are  not required to impose the latter orthogonality condition on the initial condition, since we do not exploit the decay of the initial perturbation in the proof of Proposition \ref{prop:E_stab}; see also Remark \ref{remark:ortho}.


\begin{proposition}\label{prop:E_stab}
 Consider the generic setting above. Pick two sufficiently small constants $\delta_\eta>0$ and $\delta_\sigma>0$. Then there exists a constant $K>0$ so that for any integer $T\geq 2,$ any $0<\eta<\delta_\eta,$ any $0\leq \sigma\leq \delta_\sigma,$ and any integer $p\geq 1$, we have the bound
 \begin{equation}\label{eq:better}
     \mathbb E\left[\sup_{0\leq t\leq t_{\rm st}(\eta;k)\wedge T}|N_{\mu;k}(t)|^p\right]\leq K^p\Big[\|v(0)\|_{H^{k}}^{2p}+\|\theta(0)\|_{H^{k}_y}^{2p}
     +\sigma^{2p}\mathfrak d_{\rm det}(T)^{2p}(p^p+\log(T)^p)\Big].
 \end{equation}
\end{proposition}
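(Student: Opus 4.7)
The plan is to work directly with the mild formulations \eqref{eq:generic_v}--\eqref{eq:generic_theta}, decomposing each of $v(t)$ and $\theta(t)$ into four pieces: the initial-data semigroup term, the $\sigma^2$-bounded deterministic convolution, the nonlinear deterministic convolution driven by $F_{\rm nl}/G_{\rm nl}$, and the stochastic convolution. For each piece I would then estimate its contribution to both the supremum of $\|v\|_{H^k}^{2p}$ and $\|\theta\|_{H^k_y}^{2p}$ and to the $p$th power of the weighted integral in \eqref{eq:size:final}. The orthogonality assumptions \eqref{eq:ort} are crucial here: they let me replace the drift and diffusion inputs of the $v$-equation by $P_{\rm tw}^\perp$-valued versions throughout $[0,t_{\rm st}\wedge T]$, which unlocks the spectral-gap machinery of Propositions \ref{prop:IX}, \ref{prp:cnv:bnd:x:unif}, and \ref{prop:deter:conv:gap} for $v$ (giving contributions that scale like $\sigma^{2p}(p^p + \log T^p)$ with \emph{no} algebraic growth in $T$). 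The $\theta$-component must instead be handled via Corollary \ref{cor:E_B} and Proposition \ref{prop:deter:conv}, which are the sole source of the $\mathfrak{d}_{\det}(T)^{2p}$ factor.

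On the stopped interval we have the pathwise a priori bounds $\|v(s)\|_{H^k}, \|\theta(s)\|_{H^k_y}\le\eta^{1/2}$ and $\int_0^s(1+s-r)^{-\mu}\mathrm{RHS}(v,\theta)\,\mathrm dr\le\eta$. Feeding these into \eqref{5.6}, the nonlinear deterministic pieces satisfy $\Theta_*\le K_{\rm nl}\eta$ in Proposition \ref{prop:deter:conv} (and analogously $\Xi_*\le K_{\rm nl}\eta$ in Proposition \ref{prop:deter:conv:gap}), so these convolutions contribute at most $O(\eta^2)$ to $N_{\mu;k}$ pathwise, absorbable for small $\eta$. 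The $\sigma^2$-bounded pieces use $\|F_{\rm bnd}\|_{H^k}, \|G_{\rm bnd}\|_{H^k_y\cap L^1_y}\le K_{\rm bnd}$ of \eqref{5.5}: on the $v$-side one gets $O(\sigma^4)$ from exponential decay, while on the $\theta$-side Proposition \ref{prop:deter:conv} with $\Theta_*\sim\sigma^2$ yields $O(\sigma^4\mathfrak{d}_{\det}(T)^2)$ for both the pointwise and the weighted-integral contributions. The stochastic pieces produce $\sigma^{2p}(p^p+\log T^p)$ on the $v$-side and $\sigma^{2p}\mathfrak{d}_{\rm stc}(T)^p(p^p+\log T^p)$ on the $\theta$-side; consulting Table \ref{table:main:hfdst2} shows $\mathfrak{d}_{\rm stc}(T)\le\mathfrak{d}_{\det}(T)^2$ for every admissible $\mu$, so all of these are dominated by the target bound.

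The principal obstacle will be the cross term $\|v\|_{H^k}\|\theta\|_{H^k_y}$ sitting inside $\mathrm{RHS}(v,\theta)$, which couples the two components nontrivially and is precisely the mechanism responsible for the $\mathfrak{d}_{\det}^{2}$ amplification documented in the heuristic \eqref{eq:int:crude:est:theta:mix}. I plan to resolve it by bootstrapping: first establish $\mathbb{E}\sup_{t\le t_{\rm st}\wedge T}\|v(t)\|_{H^k}^{2p}\lesssim\|v(0)\|^{2p}+\sigma^{2p}(p^p+\log T^p)$ directly (the $v$-equation is essentially decoupled from $\theta$ at lowest order once one uses either Young's inequality $\|v\|\|\theta\|\le\tfrac12\|v\|^2+\tfrac12\|\theta\|^2$ or the a priori bound $\|\theta\|\le\eta^{1/2}$ in $F_{\rm nl}$). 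Substituting this into the $\theta$-side mix contribution $\int_0^t(1+t-s)^{-(d-1)/4}\|v(s)\|\|\theta(s)\|\,\mathrm ds$ and pulling out the $\sup_s\|\theta(s)\|\le\eta^{1/2}$ factor gives a bound of the form $\eta\,\mathfrak{d}_{\det}(T)^2\,\mathbb{E}\sup\|v\|^2\lesssim\eta\sigma^2\mathfrak{d}_{\det}(T)^2\log T$, which is consistent with (and absorbable into) the stated RHS once $\eta\le\delta_\eta$ is chosen small enough.

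Assembling the four deterministic and four stochastic pieces via Minkowski/Jensen (to handle the $p$th moment of a sum and to compare $\|v\|_{H^{k+1}}^2$-type weighted integrals with the output of Proposition \ref{prop:IX}), and invoking the elementary bound $(\sum_{i=1}^n a_i)^p\le n^{p-1}\sum a_i^p$, one arrives at an inequality of the schematic form $\mathbb{E}\sup N_{\mu;k}^p\le K^p[\text{init}^p+\sigma^{2p}\mathfrak{d}_{\det}^{2p}(p^p+\log T^p)] + K\eta\,\mathbb{E}\sup N_{\mu;k}^p$. Choosing $\delta_\eta,\delta_\sigma$ small enough that $K\delta_\eta\le 1/2$ lets the final term be absorbed into the left-hand side, yielding \eqref{eq:better}.
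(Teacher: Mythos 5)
Your overall decomposition into semigroup, $\sigma^2$-bounded, nonlinear-deterministic, and stochastic pieces for each of $v$ and $\theta$ is exactly the paper's §7.1--§7.2 machinery, and your identification of the cross term $\|v\|_{H^k}\|\theta\|_{H^k_y}$ as the sole source of the $\mathfrak{d}_{\rm det}(T)^{2p}$ amplification is correct. However, your treatment of that cross term has a genuine gap, and the bootstrap that is supposed to resolve it does not actually close.

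First, the bootstrap step --- ``first establish $\mathbb{E}\sup\|v\|_{H^k}^{2p}\lesssim\|v(0)\|^{2p}+\sigma^{2p}(p^p+\log T^p)$ directly'' --- cannot be carried out in isolation. The nonlinearity $F_{\rm nl}$ feeding the $v$-equation is bounded only by $\mathrm{RHS}(v,\theta)$, which contains $\|v\|\|\theta\|$ and $\|\nabla_y\theta\|^2$; its convolution $\mathcal{Z}_{F;\rm nl}$ therefore satisfies only $\sup\|\mathcal{Z}_{F;\rm nl}\|^{2p}\le K^p\eta^p\sup N^p$ and leaves an un-absorbable $\eta^p\mathbb{E}\sup N^p$ on the right. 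Young's inequality $\|v\|\|\theta\|\le\frac12\|v\|^2+\frac12\|\theta\|^2$ does not help either, since it reintroduces $\|\theta\|^2$, which is not controlled independently. So the $v$- and $\theta$-moments must be estimated simultaneously inside a single absorption argument, as the paper does.

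Second, and more importantly, ``pulling out $\sup_s\|\theta(s)\|\le\eta^{1/2}$'' is too coarse. After this step you are left with $\eta^{p/2}\,\mathbb{E}\big[\int_0^t(1+t-s)^{-\mu}\|v(s)\|\,\mathrm ds\big]^p$, whose noise contribution is of order $\eta^{p/2}\sigma^p\mathfrak{d}_{\rm det}(T)^p(p^p+\log T^p)^{1/2}$. That quantity is not dominated by $\sigma^{2p}\mathfrak{d}_{\rm det}(T)^{2p}(p^p+\log T^p)$ (in the relevant regime $\eta\gg\sigma^2\mathfrak{d}_{\rm det}^2$ it is much larger), nor does it come multiplied by $\mathbb{E}\sup N^p$, so the small-$\eta$ absorption at the end of your proof cannot touch it. Your arithmetic ``$\eta\,\mathfrak{d}_{\rm det}^2\,\mathbb{E}\sup\|v\|^2$'' conflates first and second powers and does not come out of the inequalities you describe.

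What the paper does instead is keep $\sup\|\theta\|$ as a random variable: it introduces the pathwise majorant $\mathcal{E}_{\rm max}(T;\sigma)$ built from the four pieces of the mild $\theta$-representation, uses $\sup\|\theta\|\le K\mathcal{E}_{\rm max}$, and then splits via $(ab)^p\le a^{2p}+b^{2p}$. The $a^{2p}=\mathcal{E}_{\rm max}^{2p}$ term contributes $(\sup N)^{2p}\le\eta^p(\sup N)^p$ (absorbable with the small $\eta$ factor you need), $\sigma^{4p}\mathfrak{d}_{\rm det}^{2p}$ and $\sigma^{2p}\sup\|\mathcal{E}_B\|^{2p}$, while the $b^{2p}$ term, after decomposing $v$ via Lemma~\ref{lem:mix}, produces the decisive $\sigma^{2p}\mathfrak{d}_{\rm det}(T)^{2p}\sup\|\mathcal{Z}_X\|^{2p}$. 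It is essential that the $\theta$-contribution stays inside the expectation where it can be re-estimated by the $\theta$-side lemmas, rather than being crushed to a deterministic $\eta^{1/2}$: that is precisely the step your proposal misses.

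The rest of your argument --- the contributions of the initial data, the $\sigma^2$-bounded pieces, the spectral-gap machinery for $v$, the heat-semigroup estimates for $\theta$, and the observation $\mathfrak{d}_{\rm stc}\le\mathfrak{d}_{\rm det}^2$ --- is in order and matches Corollaries~\ref{cor:v:theta} and~\ref{cor:int:v:theta}.
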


For our intents and purposes, it  suffices to  have $\|v\|_{H^k}^2$ on the right-hand side of \eqref{5.6}; see {\S}\ref{sec:intro}--{\S}\ref{sec:phase}.  Assuming the latter, it seems that one could simply replace the $H^{k+1}$-norm with the $H^{k}$-norm in \eqref{eq:size:final}. Completely neglecting this integral term makes the nonlinear stability analysis  work as well for $d>5$. On the other hand, one needs to keep the blow-up criterion in mind, which requires us to establish control of the maximal regularity setting.

In line with the discussion in {\S}\ref{sec:intro}, the term $\mathfrak d_{\rm det}(T)^{2p}$ in \eqref{eq:better} is caused by the integral term 
    \begin{equation}
        \int_0^t(1+t-s)^{-\mu}\|v(s)\|_{H^k}\|\theta(s)\|_{H^k_y}\,\mathrm ds
    \end{equation}
    in \eqref{eq:size:final}. Without the presence of the mixed term $\|v\|_{H^k}\|\theta\|_{H^k}$ in \eqref{5.6}|in particular, with regard to evolution \eqref{eq:generic_theta}|the growth term would have been $\mathfrak d_{\rm mix}(T)^{2p},$ where $\mathfrak d_{\rm mix}(T)=\mathfrak d_{\rm det}(T)\sqrt{\mathfrak d_{\rm stc}(T)}$. This would have led to significantly longer timescales.

\subsection{Estimates for the continuous norms}
Following the  earlier works \cite{bosch2024multidimensional,hamster2019stability, hamster2020expstability}, 
we proceed by introducing the notation
\begin{equation}
\mathcal{Z}_0(t)=S_L(t) P^\perp_{\rm tw} v(0),
\end{equation}
together with the integrals
\begin{equation}
\begin{aligned}
 \mathcal{Z}_{F ; \rm bnd}(t)&=\int_0^{t} S_L(t-s) P^\perp_{\rm tw} F_{\rm bnd}(v(s),\theta(s),s) 1_{s<t_{\rm st}(\eta;k)} \mathrm ds, \\
 \mathcal{Z}_{F ; \mathrm{nl}}(t)&=\int_0^{t} S_L(t-s) P^\perp_{\rm tw} F_{\mathrm{nl}}(v(s),\theta(s),s) 1_{s<t_{\rm st}(\eta;k)} \mathrm ds, \\
 \mathcal{Z}_{X}(t)&=\int_0^{t} S_L(t-s) P^\perp_{\rm tw} X(v(s),\theta(s),s) 1_{s<t_{\rm st}(\eta;k)} \mathrm d W_s^Q.
\end{aligned}
\end{equation}
The presence of $P^\perp_{\rm tw}=I-P_{\rm tw}$ in the above is simply to lay emphasis on  \eqref{eq:ort}. Similarly, we define
\begin{equation}
\mathcal{E}_0(t)=S_H(t) \theta(0),
\end{equation}
together with the integrals
\begin{equation}
\begin{aligned}
 \mathcal{E}_{G ; \rm bnd}(t)&=\int_0^{t} S_H(t-s)   G_{\rm bnd}(v(s),\theta(s),s) 1_{s<t_{\rm st}(\eta;k)} \mathrm ds, \\
 \mathcal{E}_{G ; \mathrm{nl}}(t)&=\int_0^{t} S_H(t-s)  G_{\mathrm{nl}}(v(s),\theta(s),s) 1_{s<t_{\rm st}(\eta;k)} \mathrm ds, \\
 \mathcal{E}_{B}(t)&=\int_{0}^{t} S_H(t-s)  B(v(s),\theta(s),s) 1_{s<t_{\rm st}(\eta;k)} \mathrm d W_s^Q.
\end{aligned}
\end{equation}
Using these expressions, we obtain the estimates
\begin{equation}\begin{aligned}
    \label{eq:st:a:priori:bnd:e}
        &\mathbb E\sup_{0\leq t\leq t_{\rm st}(\eta;k)\wedge T} \|v(t)\|_{H^k}^{2p}\\&\quad\leq 
        4^{2p}\mathbb E\sup_{0\leq t\leq T}\left[\|\mathcal Z_0(t)\|_{H^k}^{2p}+\sigma^{4p}\|\mathcal Z_{F;\rm bnd}(t)\|_{H^k}^{2p}+\|\mathcal Z_{F; \rm nl}(t)\|_{H^k}^{2p}+\sigma^{2p}\|\mathcal Z_X(t)\|_{H^k}^{2p}\right]
        \end{aligned}
    \end{equation}
    and
\begin{equation}\begin{aligned}
    \label{eq:st:a:priori:bnd:h}
        &\mathbb E\sup_{0\leq t\leq t_{\rm st}(\eta;k)\wedge T}\|\theta(t)\|_{H^k_y}^{2p}\\&\quad\leq 4^{2p}\mathbb E\sup_{0\leq t\leq T}\left[\|\mathcal{E}_0(t)\|_{H^k_y}^{2p}+\sigma^{4p}\|\mathcal{E}_{G;\rm bnd}(t)\|_{H^k_y}^{2p}+\|\mathcal{E}_{G; \rm nl}(t)\|_{H^k_y}^{2p}+\sigma^{2p}\|\mathcal{E}_B(t)\|_{H^k_y}^{2p}\right],
        \end{aligned}
    \end{equation}
    where all individual term can be dealt with as follows.

\begin{lemma}\label{lem:E0ElinEnl}
    Consider the generic setting above. Then there exists a constant $K>0$ such that for any $0<\eta<\eta_0$, any $T>0$, and any $p \ge 1$, we have the pathwise bounds
\begin{equation}\begin{aligned}
 \sup_{0\leq t\leq T}\big[\|\mathcal Z_0(t)\|_{H^k}^{2p}+\|\mathcal{E}_0(t)\|_{H^k_y}^{2p}\big]&\leq K^{p}\big[\|v(0)\|_{H^k}^{2p}+\|\theta(0)\|_{H^k_y}^{2p}\big],\\
         \sup_{0\leq t\leq T}\big[\|\mathcal Z_{F;\rm nl}(t)\|_{H^k}^{2p}+\|\mathcal{E}_{G;\rm nl}(t)\|_{H^k_y}^{2p}\big]&\leq \eta^p K^{p}\sup_{0\leq t\leq t_{\rm st}(\eta;k)\wedge T} N_{\mu;k}(t)^p,
    \label{eq:sup}\end{aligned}\end{equation}
    together with
    \begin{equation}
        \sup_{0\leq t\leq T} \|\mathcal Z_{F;\rm bnd}(t)\|_{H^k}^{2p}\leq K^{p}\quad\text{and}\quad\sup_{0\leq t\leq T}\|\mathcal{E}_{G;\rm bnd}(t)\|_{H^k_y}^{2p} \leq K^{p}\mathfrak d_{\rm det}(T)^{2p}.\label{eq:bnd:FG}
    \end{equation}
\end{lemma}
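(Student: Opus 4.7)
My plan is to bound each of the six quantities separately, exploiting either the uniform boundedness, the exponential decay of $S_L(t)P^\perp_{\rm tw}$, or the algebraic decay of $S_H(t)$ from Corollary \ref{cor:algebraic:decay}, depending on the term. The pathwise nature of the claim means no stochastic estimates are needed here; everything reduces to bounding deterministic Bochner integrals.

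\textbf{Initial data terms.} Since $S_L(t)$ and $S_H(t)$ are uniformly bounded (in fact $S_H(t)$ is contractive; see Lemma \ref{lem:theta}), I would immediately get $\|\mathcal Z_0(t)\|_{H^k}\le M\|v(0)\|_{H^k}$ and $\|\mathcal E_0(t)\|_{H^k_y}\le \|\theta(0)\|_{H^k_y}$ uniformly in $t$, and then raise to the $2p$-th power.

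\textbf{Bounded drift terms.} For $\mathcal Z_{F;\rm bnd}$, I would invoke the exponential decay of $S_L(t)P^\perp_{\rm tw}$ afforded by the spectral gap assumption (HTw), combined with the uniform bound \eqref{5.5}, to obtain $\|\mathcal Z_{F;\rm bnd}(t)\|_{H^k}\le MK_{\rm bnd}\int_0^t e^{-\beta(t-s)}\,\mathrm ds \le MK_{\rm bnd}/\beta$ uniformly in $t$. For $\mathcal E_{G;\rm bnd}$, the key ingredient is Corollary \ref{cor:algebraic:decay}, which gives
\begin{equation}
\|S_H(t-s)G_{\rm bnd}(v(s),\theta(s),s)\|_{H^k_y} \le M_{\rm alg}(1+t-s)^{-(d-1)/4}\big(\|G_{\rm bnd}\|_{H^k_y}+\|G_{\rm bnd}\|_{L^1_y}\big),
\end{equation}
and then \eqref{5.5} allows me to compute $\|\mathcal E_{G;\rm bnd}(t)\|_{H^k_y}\le 2M_{\rm alg}K_{\rm bnd}\int_0^t(1+t-s)^{-(d-1)/4}\,\mathrm ds$, which is precisely $\mathfrak d_{\rm det}(T)$ up to a multiplicative constant by the definition \eqref{eq:int:def:d:det}.

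\textbf{Nonlinear drift terms.} These require extracting the factor $\eta^p$ from the stopping-time constraint. For $\mathcal Z_{F;\rm nl}$, I combine the exponential decay of $S_L P_{\rm tw}^\perp$ with the elementary inequality $e^{-\beta(t-s)}\le C_\mu(1+t-s)^{-\mu}$ (Lemma \ref{lem:z}/\eqref{eq:absorb:decay}) to arrive at
\begin{equation}
\|\mathcal Z_{F;\rm nl}(t)\|_{H^k}\le MK_{\rm nl}C_\mu \int_0^{t\wedge t_{\rm st}(\eta;k)} (1+t-s)^{-\mu}\mathrm{RHS}(v(s),\theta(s))\,\mathrm ds.
\end{equation}
Since $(1+t-s)^{-\mu}\le (1+(t\wedge t_{\rm st})-s)^{-\mu}$ for $s\le t\wedge t_{\rm st}\le t$, this last integral is bounded by $N_{\mu;k}(t\wedge t_{\rm st}(\eta;k))$. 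The stopping-time constraint provides $N_{\mu;k}(t\wedge t_{\rm st})\le \eta$, so I can factor $N_{\mu;k}=\sqrt{N_{\mu;k}}\cdot\sqrt{N_{\mu;k}}\le\sqrt\eta\cdot\sqrt{\sup_{0\le s\le t_{\rm st}(\eta;k)\wedge T}N_{\mu;k}(s)}$; squaring (to the $2p$-th power) yields the $\eta^p N_{\mu;k}^p$ bound. For $\mathcal E_{G;\rm nl}$, the argument is identical except that I start from the $H^k_y+L^1_y$ algebraic decay in Corollary \ref{cor:algebraic:decay} and use the inequality $(1+t-s)^{-(d-1)/4}\le (1+t-s)^{-\mu}$, which holds because $\mu\le (d-1)/4$ by Table \ref{table:main:hfdst2}.

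\textbf{Anticipated difficulty.} There is no serious obstacle here; each step is routine once the right decay estimate is chosen. The only subtlety worth flagging is the bookkeeping around the stopping-time indicator: one must observe that the truncation $1_{s<t_{\rm st}(\eta;k)}$ and monotonicity of $s\mapsto(1+t-s)^{-\mu}$ jointly allow the weighted integral up to time $t$ to be compared to $N_{\mu;k}$ evaluated at the stopped time, which is what enables the clean factorisation into $\sqrt\eta\cdot\sqrt{N_{\mu;k}}$. This is where the particular choice of the weight $(1+t-s)^{-\mu}$ in the definition of $N_{\mu;k}$ pays off.
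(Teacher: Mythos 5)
Your proposal is correct and follows essentially the same route as the paper: uniform semigroup bounds for the initial-data terms, the $\mathfrak d_{\rm det}$ integral for $\mathcal E_{G;\rm bnd}$ via Corollary \ref{cor:algebraic:decay}, and the comparison of the weighted integrals against $N_{\mu;k}$ combined with the stopping-time bound $N_{\mu;k}\le\eta$ to extract the $\eta^p$ factor. The only difference is cosmetic: you spell out the $\mathcal Z_{F;\rm bnd}$ and $\mathcal Z_{F;\rm nl}$ estimates (exponential decay of $S_L(t)P^\perp_{\rm tw}$, absorption into the algebraic weight, monotonicity of $(1+t-s)^{-\mu}$ under truncation at the stopping time) where the paper dismisses them as "straightforward norm estimates", and you explicitly flag that $(d-1)/4\ge\mu$ so the semigroup decay rate dominates the weight — both welcome clarifications of the same argument.
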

\begin{proof}
    The first inequality in \eqref{eq:sup} is obvious. The results for $\mathcal Z_{F;\#}(t)$, where $\#\in\{\rm bnd,nl\}$,  
    follow directly from 
    straightforward norm estimates; see also  \cite[Lem. 5.3]{hamster2020expstability}. 
The algebraic decay estimate in Corollary \ref{cor:algebraic:decay} yields
\begin{equation}
\begin{aligned}
\|\mathcal{E}_{G;\rm bnd}(t)\|_{H^k_y}
&\leq M_{\rm alg} K_{\rm bnd} \int_{0}^t(1+t-s)^{-\frac{d-1}4}\mathrm ds
\leq M_{\rm alg}K_{\rm bnd}\mathfrak d_{\rm det}(T),\label{eq:expres1}
\end{aligned}
\end{equation}
for any $0\leq t\leq T.$ This proves the second inequality in \eqref{eq:bnd:FG}. In a similar fashion, we compute
\begin{equation}
\begin{aligned}
\|\mathcal{E}_{G;\rm nl}(t)\|_{H^k_y}^2
&\leq M_{\rm alg}^2K_{\rm nl}^2\left(\int_{0}^t(1+t-s)^{-\frac{d-1}4}\mathrm {RHS}(v(s),\theta(s)) 1_{s < t_{\rm st}(\eta;k)}\,\mathrm ds \right)^2\\
&
\leq  M_{\rm alg}^2 K_{\rm nl}^2\eta\sup_{0\leq t\leq t_{\rm st}(\eta;k)\wedge T}N_{\mu;k}(t),\label{eq:expres2}
\end{aligned}
\end{equation}
for any $0\leq t\leq T,$
proving the final inequality in \eqref{eq:sup}.
\end{proof}
\begin{lemma}\label{lem:EJ}
Consider the generic setting above. There exist constants $K_X, K_B > 0$ so that for any $0<\eta<\eta_0$, any integer $T \ge 2$, and any integer $p \ge 1$, we have the bound 
%
    \begin{equation}
        \mathbb E\sup_{0\leq t\leq T}   \|\mathcal Z_X(t)\|_{H^k}^{2p} 
        \leq 
        K_X^p (p^p+\log(T)^p),\label{eq:ZX}
    \end{equation}
 together with 
    \begin{equation}
        \mathbb E\sup_{0\leq t\leq T}   \|\mathcal E_B(t)\|_{H^k_y}^{2p}
        \leq 
        K_B^p
      \mathfrak d_{\rm stc}(T)^p(p^p+\log(T)^p).\label{eq:EB}
    \end{equation}
\end{lemma}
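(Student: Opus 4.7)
The plan is to recognise that both $\mathcal{Z}_X(t)$ and $\mathcal{E}_B(t)$ are stochastic convolutions of exactly the type analysed in \S\ref{sec:supremum}, with integrands that are truncated by the adapted indicator $\mathbf{1}_{s<t_{\rm st}(\eta;k)}$. The stopping time definition \eqref{eq:st} together with the hypothesis $\eta<\eta_0\le \epsilon_0^2$ guarantees that whenever $s<t_{\rm st}(\eta;k)$ we have both $\|v(s)\|_{H^k}\le \epsilon_0$ and $\|\theta(s)\|_{H^k_y}\le \epsilon_0$, so the a priori bound \eqref{5.7} supplies the uniform estimate
\begin{equation}
\bigl\| X(v(s),\theta(s),s)\mathbf{1}_{s<t_{\rm st}(\eta;k)}\bigr\|_{HS(\mathcal W_Q;H^k)}\le K_{\rm stc},
\end{equation}
and similarly for $B$ in both the $HS(\mathcal W_Q;H^k_y)$ and $\Pi_2(\mathcal W_Q;L^1_y)$ norms. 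Since $t_{\rm st}(\eta;k)$ is a stopping time, the truncated integrands remain progressively measurable, so they legitimately fit the framework of Section \ref{sec:supremum}.

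For \eqref{eq:ZX}, the plan is to apply Proposition \ref{prp:cnv:bnd:x:unif} (the uniform-bound version for the $S_L$-convolution) with $\Lambda_*=K_{\rm stc}$, invoking the estimate \eqref{eq:cnv:growth:bnd:exp:bnd:with:sup}. This produces exactly the right-hand side $K_X^p(p^p+\log(T)^p)$ with $K_X = K_{\rm gr}^2 K_{\rm stc}^2$, since the exponential decay of $S_L(t)P_{\rm tw}^\perp$ absorbs any additional growth factor in $T$. For \eqref{eq:EB}, I would apply Corollary \ref{cor:E_B} with $\Lambda_*=K_{\rm stc}$ and the choice of $\mu$ dictated by Table \ref{table:main:hfdst2}, specifically $\mu=(d-1)/4$ for $2\le d\le 5$ and $\mu\in(1,(d-1)/4]$ for $d>5$. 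The estimate \eqref{eq:RHS57} then gives $\mathbb E\sup_{0\le t\le T}\|\mathcal E_B(t)\|_{H^k_y}^{2p}\le K_{\rm gr}^{2p}K_{\rm stc}^{2p}\mathfrak d_{\rm stc}(T)^p(p^p+\log(T)^p)$, which is precisely \eqref{eq:EB} with $K_B = K_{\rm gr}^2 K_{\rm stc}^2$.

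There is essentially no hard step here: the heavy lifting—Dudley-type chaining, mild It\^o formula, and maximal regularity—has already been packaged into Proposition \ref{prop:E_B}, Corollary \ref{cor:E_B}, Proposition \ref{prop:IX}, and Proposition \ref{prp:cnv:bnd:x:unif}. The only minor point to verify is that one may safely insert the indicator $\mathbf{1}_{s<t_{\rm st}(\eta;k)}$ inside the stochastic integrals without destroying the hypothesis (HB) or (HX) from Section \ref{sec:supremum}: this is immediate since the indicator is bounded by $1$ and adapted, so the weighted integral bound in (HB)/(HX) and the uniform bound in the corollaries both persist with the same constant $K_{\rm stc}$. Thus the lemma reduces to a direct invocation of the previously established abstract growth estimates.
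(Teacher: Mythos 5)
Your proof matches the paper's exactly: the paper proves Lemma \ref{lem:EJ} by noting that \eqref{eq:EB} follows from Corollary \ref{cor:E_B} and \eqref{eq:ZX} follows from Proposition \ref{prp:cnv:bnd:x:unif}, with the indicator-truncated integrands bounded uniformly by $K_{\rm stc}$ via \eqref{5.7}. Your additional remark that the adapted indicator preserves progressive measurability and the uniform bound is correct and a worthwhile sanity check, but the paper takes it as implicit.
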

\begin{proof} The bound \eqref{eq:EB}  follows from 
Corollary \ref{cor:E_B}, while  \eqref{eq:ZX} follows from Proposition \ref{prp:cnv:bnd:x:unif}.
\end{proof}
\begin{corollary}\label{cor:v:theta}
Consider the generic setting above. There exists a constant $K > 0$ so that for any $0<\eta<\eta_0$, any integer $T\geq 2$, and any integer $p \ge 1$, we have the  bound
\begin{equation}\begin{aligned}
        &\mathbb E\sup_{0\leq t\leq t_{\rm st}(\eta;k)\wedge T}\big[\|v(t)\|_{H^k}^{2p}+\|\theta(t)\|_{H^k_y}^{2p}\big]\leq K^p\Bigg(\|v(0)\|_{H^k}^{2p}+\|\theta(0)\|_{H^k_y}^{2p}\\&\qquad\qquad+\sigma^{4p}\mathfrak d_{\rm det}(T)^{2p}+\sigma^{2p}\mathfrak d_{\rm stc}(T)^{p}\mathfrak (p^p+\log(T)^p)
        +\eta^p\mathbb E\left[\sup_{0\leq t\leq t_{\rm st}(\eta;k)\wedge T}N_{\mu;k}(t)^p\right]\Bigg).\label{eq:cor:v:theta}
        \end{aligned}
    \end{equation}
\end{corollary}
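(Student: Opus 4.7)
The proof is essentially an assembly of the pieces established in the preceding two lemmas, so the plan is short and non-technical. I would start from the splittings \eqref{eq:st:a:priori:bnd:e} and \eqref{eq:st:a:priori:bnd:h}, which reduce the left-hand side of \eqref{eq:cor:v:theta} to controlling the running suprema of the eight quantities
\begin{equation*}
\mathcal Z_0,\; \mathcal Z_{F;\rm bnd},\; \mathcal Z_{F;\rm nl},\; \mathcal Z_X,\qquad
\mathcal E_0,\; \mathcal E_{G;\rm bnd},\; \mathcal E_{G;\rm nl},\; \mathcal E_B,
\end{equation*}
with the appropriate prefactors of $1$, $\sigma^{4p}$, $1$, $\sigma^{2p}$, respectively.

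Next I would feed in the pathwise estimates from Lemma \ref{lem:E0ElinEnl}. The $\mathcal Z_0$ and $\mathcal E_0$ contributions yield the $\|v(0)\|_{H^k}^{2p} + \|\theta(0)\|_{H^k_y}^{2p}$ term. The $\mathcal Z_{F;\rm nl}$ and $\mathcal E_{G;\rm nl}$ contributions yield the term $\eta^p\,\mathbb E[\sup N_{\mu;k}(t)^p]$. The two $\rm bnd$ terms, multiplied by their prefactor $\sigma^{4p}$, contribute $\sigma^{4p} K^p$ and $\sigma^{4p} K^p \mathfrak d_{\rm det}(T)^{2p}$; since $\mathfrak d_{\rm det}(T) \ge 1$ for $T \ge 2$, both are absorbed into a single $\sigma^{4p}\mathfrak d_{\rm det}(T)^{2p}$ contribution.

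For the stochastic convolutions I would invoke Lemma \ref{lem:EJ} to obtain
\begin{equation*}
\sigma^{2p}\,\mathbb E\sup_{0\leq t\leq T}\|\mathcal Z_X(t)\|_{H^k}^{2p} \leq \sigma^{2p} K_X^p (p^p + \log(T)^p)
\end{equation*}
and the analogous bound for $\mathcal E_B$ with the extra factor $\mathfrak d_{\rm stc}(T)^p$. Since $\mathfrak d_{\rm stc}(T) \ge 1$, the $\mathcal Z_X$ estimate is dominated by the $\mathcal E_B$ estimate, so together they contribute $\sigma^{2p} \mathfrak d_{\rm stc}(T)^p (p^p + \log(T)^p)$.

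Summing the eight pieces with a common constant $K > 0$ and taking expectations (which is trivial for the pathwise pieces) produces exactly the right-hand side of \eqref{eq:cor:v:theta}. There is no genuine obstacle here: every ingredient is already in place and the corollary is simply the bookkeeping step that aggregates them. The only mild subtlety is making sure one absorbs $\sigma^{4p}$ (without $\mathfrak d_{\rm det}$) into $\sigma^{4p}\mathfrak d_{\rm det}(T)^{2p}$ and $\sigma^{2p}(p^p+\log(T)^p)$ into $\sigma^{2p}\mathfrak d_{\rm stc}(T)^p(p^p+\log(T)^p)$ using $\mathfrak d_{\rm det}(T), \mathfrak d_{\rm stc}(T) \ge 1$ for $T \ge 2$.
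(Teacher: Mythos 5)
Your proposal is correct and takes essentially the same approach as the paper: the corollary is obtained by plugging the pathwise bounds of Lemma~\ref{lem:E0ElinEnl} and the moment bounds of Lemma~\ref{lem:EJ} into the splittings~\eqref{eq:st:a:priori:bnd:e} and~\eqref{eq:st:a:priori:bnd:h}, then absorbing lower-order terms. The only microscopic imprecision is the claim that $\mathfrak d_{\rm stc}(T)\ge 1$ in general: for $d=3$ one has $\mathfrak d_{\rm stc}(T)=\log(T)$, which lies in $[\log 2,1)$ for $T\in[2,e)$; this is harmless since $\mathfrak d_{\rm stc}(T)$ is still bounded below by the fixed positive constant $\log 2$ on $T\ge 2$, so the $\mathcal Z_X$-contribution is absorbed by suitably enlarging $K$.
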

    \subsection{Estimates for  the weighted integrals with higher regularity}\label{sec:est:weighted:Sobolev}
    We  proceed with a method that was initially proposed by \cite{hamster2019stability}. This method turned out to be fruitful\linebreak in \cite{bosch2024multidimensional}, showing long-time persistence of planar waves evolving over a cylindrical domain, while it also gives more optimal results compared to the mild It\^o-formula approach in  \cite[Sec. 5.2]{hamster2020expstability}.
    
In order  to deal with the integrated $H^{k+1}$-norm of  $v(t)$ in \eqref{eq:size:final}, we introduce the integrals
\begin{equation}
\begin{aligned}
\label{eq:st:a:priori:bnd:i}
\mathcal{I}_{  0}(t)&=\int_0^t (1+t-s)^{-\mu}\| \mathcal{Z}_0(s)\|_{H^{k+1}}^2 \mathrm ds,\\
 \mathcal{I}_{ F ; \mathrm{bnd}}(t)&=\int_0^t (1+t-s)^{-\mu}\| \mathcal{Z}_{F ; \mathrm{bnd}}(s)\|_{H^{k+1}}^2 \mathrm ds, \\
 \mathcal{I}_{ F; \mathrm{nl}}(t)&=\int_0^t (1+t-s)^{-\mu}\|\mathcal{Z}_{F ; \mathrm{nl}}(s)\|_{H^{k+1}}^2 \mathrm ds, \\
   \mathcal{I}_{ X}(t)&=\int_0^t (1+t-s)^{-\mu}\| \mathcal{Z}_{X}(s)\|_{H^{k+1}}^2 \mathrm ds.
\end{aligned}
\end{equation}
Turning to the integrated $H^{k}_y$-norm of  $\nabla_y\theta(t)$, we define 
\begin{equation}
\begin{aligned}
\label{eq:st:a:priori:bnd:j}
\mathcal{J}_{  0}(t)&=\int_0^t (1+t- s)^{-\mu}\| \nabla_y\mathcal{E}_0(s)\|_{H^{k}_y}^2 \mathrm ds,\\
 \mathcal{J}_{ G ; \mathrm{bnd}}(t)&=\int_0^t (1+t- s)^{-\mu}\| \nabla_y\mathcal{E}_{G ; \mathrm{bnd}}(s)\|_{H^{k}_y}^2 \mathrm ds, \\
 \mathcal{J} _{ G ; \mathrm{nl}}(t)&=\int_0^t (1+t- s)^{-\mu}\|\nabla_y\mathcal{E}_{G ; \mathrm{nl}}(s)\|_{H^{k}_y}^2 \mathrm ds, \\
   \mathcal{J}_{ B}(t)&=\int_0^t (1+t- s)^{-\mu}\| \nabla_y\mathcal{E}_{B}(s)\|_{H^{k}_y}^2 \mathrm ds.
\end{aligned}
\end{equation}
This leads directly to the estimates
  \begin{equation}
        \begin{aligned}
            &\mathbb E \sup_{0\leq t\leq t_{\rm st}(\eta;k)\wedge T}\left[\int_0^t (1+t-s)^{-\mu}\|v(s)\|_{H^{k+1}}^2\mathrm ds\right]^p \\&\qquad\quad\leq 4^{2p}\mathbb E \sup_{0\leq t\leq T}\Big[\mathcal I_{0}(t)^p+\sigma^{4p}\mathcal I_{F;\rm bnd}(t)^p
+ \mathcal I_{F;\rm nl}(t)^p+\sigma^{2p}\mathcal I_{X}(t)^p\Big]\label{eq:sup:v}
    \end{aligned}
    \end{equation}
and 
  \begin{equation}
        \begin{aligned}\label{eq:sup:theta}
            &\mathbb E \sup_{0\leq t\leq t_{\rm st}(\eta;k)\wedge T}\left[\int_0^t (1+t-s)^{-\mu}\|\nabla_y\theta(s)\|_{H^{k}_y}^2\mathrm ds\right]^p \\&\qquad\quad\leq 4^{2p}\mathbb E \sup_{0\leq t\leq T}\Big[\mathcal J_{0}(t)^p+\sigma^{4p}\mathcal J_{F;\rm bnd}(t)^p
+ \mathcal J_{F;\rm nl}(t)^p+\sigma^{2p}\mathcal J_{B}(t)^p\Big].\\
        \end{aligned}
        \end{equation}

\begin{lemma}\label{lem:maximalregH1}
Consider the generic setting above. Then there exists a constant $K>0$ such that for any $0<\eta<\eta_0$, any $T>0$, and any $p \ge 1$, we have the pathwise bounds
    \begin{equation}\begin{aligned}
        \sup_{0\leq t\leq T}\big[\mathcal I_{0}(t)^p +\mathcal J_{0}(t)^p \big] &\leq 
        K^{p} \big[\|v(0)\|_{H^k}^{2p}+\|\theta(0)\|_{H^k_y}^{2p}\big],\\
        \sup_{0\leq t\leq T}  \big[\mathcal  I_{F;\rm bnd}(t)^p+\mathcal J_{G;\rm bnd}(t)^p\big]&\leq
    K^{p}\mathfrak d_{\rm det}(T)^{2p},\\
          \sup_{0\leq t\leq T}   \big[\mathcal I_{F;\rm nl}(t)^p+\mathcal J_{G;\rm nl}(t)^p\big]&\leq \eta^{p}
  K^{p}\sup_{0\leq t\leq t_{\rm st}(\eta;k)\wedge T} N_{\mu;k}(t)^p\label{eq:IJ:estimates}.\end{aligned}\end{equation}
\end{lemma}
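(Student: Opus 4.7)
The plan is to establish each of the three pairs of bounds separately, using the deterministic convolution estimates of Propositions \ref{prop:deter:conv} and \ref{prop:deter:conv:gap} as the workhorse for the inhomogeneous terms and a direct energy-type identity for the initial-data terms $\mathcal I_0$ and $\mathcal J_0$, which are not formally a convolution against a forcing.

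For $\mathcal J_0$ I would observe that $u(s)=S_H(s)\theta(0)$ satisfies $\partial_s u=\Delta_y u$, so the energy identity $\tfrac{d}{ds}\|u\|_{H^k_y}^2=-2\|\nabla_y u\|_{H^k_y}^2$ integrates to $\int_0^t\|\nabla_y S_H(s)\theta(0)\|_{H^k_y}^2\,\mathrm ds\le\tfrac12\|\theta(0)\|_{H^k_y}^2$. Since $(1+t-s)^{-\mu}\le1$, the weight is absorbed for free, giving $\mathcal J_0(t)\le K\|\theta(0)\|_{H^k_y}^2$ uniformly in $t$. The analogous bound for $\mathcal I_0$ follows from the identity \eqref{eq:identity2} applied to $u(s)=S_L(s)P^\perp_{\rm tw}v(0)$ with the equivalent inner product $\langle\langle\cdot,\cdot\rangle\rangle_{H^{k+1}}$; after integrating, the extra $A$- and identity-type terms generated are absorbed by appealing to the exponential decay estimate $\|S_L(s)P^\perp_{\rm tw}v(0)\|_{H^k}\le C e^{-\beta s}\|v(0)\|_{H^k}$ from (HTw) and \eqref{eq:decay}. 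This yields $\mathcal I_0(t)\le K\|v(0)\|_{H^k}^2$ uniformly in $t$, and raising to the $p$-th power produces the first bound in \eqref{eq:IJ:estimates}.

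For the bounded forcing terms, I apply Proposition \ref{prop:deter:conv:gap} to $\mathcal I_{F;\rm bnd}$ and Proposition \ref{prop:deter:conv} to $\mathcal J_{G;\rm bnd}$. The uniform bound \eqref{5.5} and the choice of $\mu$ from Table \ref{table:main:hfdst2} combine to give
\begin{equation*}
\Xi_*,\;\Theta_*\;\le\;C\int_0^T(1+T-s)^{-\mu}\,\mathrm ds\;\le\;K\mathfrak d_{\rm det}(T),
\end{equation*}
from which both propositions return growth of order $K^2\mathfrak d_{\rm det}(T)^2$; the $p$-th power yields the second bound. For the nonlinear forcing, the same two propositions apply, but now \eqref{5.6} and the definition of $N_{\mu;k}$ in \eqref{eq:size:final} imply
\begin{equation*}
\Xi_*,\;\Theta_*\;\le\;K_{\rm nl}\int_0^t(1+t-s)^{-\mu}\mathrm{RHS}(v(s),\theta(s))\,1_{s<t_{\rm st}(\eta;k)}\,\mathrm ds\;\le\;K\sup_{0\le s\le t\wedge t_{\rm st}(\eta;k)}N_{\mu;k}(s).
\end{equation*}
Squaring and using $\sup N_{\mu;k}(\,\cdot\wedge t_{\rm st}(\eta;k))\le\eta$ to pull one factor of $\eta$ out of $[\sup N_{\mu;k}]^2$ gives the third bound after raising to the $p$-th power.

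The main obstacle is the $\mathcal I_0$ estimate: unlike the pure heat semigroup, the identity \eqref{eq:identity2} for $S_L$ generates lower-order corrections through the operator $A$, so the plain energy identity is insufficient and one must explicitly exploit the spectral gap of $\mathcal L_{\rm tw}$ afforded by the projection $P^\perp_{\rm tw}$ to absorb these corrections uniformly in $T$; the remaining computations are essentially routine applications of the Section \ref{sec:det:bounds:critical} machinery with the constants in \eqref{5.5}--\eqref{5.6} and the stopping-time threshold $\eta$.
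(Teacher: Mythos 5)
Your proof is correct and takes essentially the same route as the paper: the initial-data terms $\mathcal I_0,\mathcal J_0$ via an energy/integration-by-parts identity in the equivalent $\langle\langle\cdot,\cdot\rangle\rangle_{H^{k+1}}$ norm (the paper packages this as crude bounds on the bilinear forms $\mathcal B$ and $\mathcal V$, but the calculation is the same), and the forcing terms via Propositions \ref{prop:deter:conv} and \ref{prop:deter:conv:gap} together with \eqref{5.5}, \eqref{5.6}, the Table \ref{table:main:hfdst2} definition of $\mathfrak d_{\rm det}(T)$, and the stopping-time threshold $\eta$. One small slip: the exponential decay $\|S_L(s)P^\perp_{\rm tw}v(0)\|_{H^k}\le Me^{-\beta s}\|v(0)\|_{H^k}$ that absorbs the $A$-terms comes from \eqref{eq:SL:bounds} (together with the boundedness of $A$), not from \eqref{eq:decay}, which is the stochastic maximal inequality.
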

\begin{proof}
Recalling the bilinear form in \eqref{eq:fw:max:reg:def:j}, we  observe that
$\mathcal J_0(t)\leq \mathcal B(t,t,0,0)[\theta(0),\theta(0)]$ holds. The first inequality in \eqref{eq:IJ:estimates} is inferred from exploiting the decomposition  \eqref{eq:Jtotal}, where we bound both expressions in a crude way. 
The estimates for $\mathcal J_{G;\rm bnd}$
and $\mathcal J_{G;\rm nl}$ follow from Proposition \ref{prop:deter:conv}.
In a similar fashion,
the bilinear form \eqref{eq:bil:form:V} can be exploited to derive the first estimate in \eqref{eq:IJ:estimates} for $\mathcal I_0(t)$, while we appeal to Proposition \ref{prop:deter:conv:gap} for the remaining two estimates.
\end{proof}
 \begin{lemma} Consider the generic setting above. There exists a constant $K > 0$ so that for any $0<\eta<\eta_0$, any integer $T\geq 2$, and any integer $p \ge 1$, we have the  bound
    \begin{equation}
        \mathbb E\sup_{0\leq t\leq T}\big[\mathcal I_X(t)^p+\mathcal J_B(t)^p\big]
        \leq 
        K^p
      \mathfrak d_{\rm det}(T)^{p}(p^p+\log(T)^p).\label{eq:EBJB}
    \end{equation}
\end{lemma}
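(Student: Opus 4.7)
The proof plan is essentially bookkeeping: I intend to apply Proposition \ref{prp:cnv:bnd:x:unif} and Corollary \ref{cor:E_B} directly to the stopped integrands, after checking that their hypotheses are in force.

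First, I verify that the decay parameter $\mu$ prescribed in Table \ref{table:main:hfdst2} lies in the critical regime \eqref{eq:mu:regime} demanded by those results. For $2 \leq d \leq 5$ we have $\mu = (d-1)/4$, while for $d > 5$ we have $\mu \in (1, (d-1)/4]$, so condition \eqref{eq:mu:regime} is met in every dimension under consideration.

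Next, I establish that the stopped integrands $X(v(s),\theta(s),s)\, 1_{s < t_{\rm st}(\eta;k)}$ and $B(v(s),\theta(s),s)\, 1_{s < t_{\rm st}(\eta;k)}$ are pathwise uniformly bounded in the relevant Hilbert--Schmidt and $\Pi_2$ norms by $K_{\rm stc}$. Indeed, on the set $\{s < t_{\rm st}(\eta;k)\}$ the definition \eqref{eq:st} of the stopping time forces $\|v(s)\|_{H^k}^2 + \|\theta(s)\|_{H^k_y}^2 \leq \eta$, and since $\delta_\eta$ is chosen small enough that $\eta \leq \epsilon_0^2$, the uniform bound \eqref{5.7} applies with constant $K_{\rm stc}$; off this set the indicator kills the integrand outright.

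With both observations in hand, Proposition \ref{prp:cnv:bnd:x:unif} (applied with $\Lambda_* = K_{\rm stc}$) produces the bound on $\mathbb E \sup_{0 \leq t \leq T} \mathcal I_X(t)^p$, and Corollary \ref{cor:E_B} produces the analogous bound on $\mathbb E \sup_{0 \leq t \leq T} \mathcal J_B(t)^p$. Both estimates carry the common growth factor $\mathfrak d_{\rm det}(T)^p(p^p + \log(T)^p)$, so the claim follows upon summing them and absorbing the universal constants into a single $K > 0$. There is no real obstacle at this stage; all the technical work sits in \S\ref{sec:supremum}, where the supremum and maximal regularity estimates were obtained.
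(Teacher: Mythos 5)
Your proposal is correct and follows exactly the same route as the paper: cite Corollary \ref{cor:E_B} and Proposition \ref{prp:cnv:bnd:x:unif} after observing that the stopped integrands are uniformly bounded by $K_{\rm stc}$ and that $\mu$ lies in the regime \eqref{eq:mu:regime}. Your extra verifications are sound and merely make explicit what the paper leaves implicit.
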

\begin{proof} The bound follows from Corollary \ref{cor:E_B} in combination with Proposition \ref{prp:cnv:bnd:x:unif}.
\end{proof}
\begin{corollary}\label{cor:int:v:theta}
Consider the generic setting above. There exists a constant $K > 0$ so that for any $0<\eta<\eta_0$, any integer $T\geq 2$, and any integer $p \ge 1$, we have the  bound
\begin{align}
\nonumber&\mathbb E\sup_{0\leq t\leq t_{\rm st}(\eta;k)\wedge T}\left[\int_0^t(1+t-s)^{-\mu}\big[\|v(s)\|_{H^{k+1}}^{2}+\|\nabla_y\theta(s)\|_{H^k_y}^{2}\big]\mathrm ds\right]^p\leq K^p\Bigg(\|v(0)\|_{H^k}^{2p}+\|\theta(0)\|_{H^k_y}^{2p}\\&\qquad\qquad+\sigma^{4p}\mathfrak d_{\rm det}(T)^{2p}+\sigma^{2p}\mathfrak d_{\rm det}(T)^{p}\mathfrak (p^p+\log(T)^p)
        +\eta^p\mathbb E\left[\sup_{0\leq t\leq t_{\rm st}(\eta;k)\wedge T}N_{\mu;k}(t)^p\right]\Bigg).\label{eq:cor:int:theta}
        \end{align}
\end{corollary}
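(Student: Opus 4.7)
The plan is to proceed in the same spirit as the proof of Corollary \ref{cor:v:theta}, but now working with the weighted integral decompositions \eqref{eq:sup:v} and \eqref{eq:sup:theta} rather than the pointwise-in-time decompositions \eqref{eq:st:a:priori:bnd:e} and \eqref{eq:st:a:priori:bnd:h}. First I would invoke \eqref{eq:sup:v} together with \eqref{eq:sup:theta} to bound the left-hand side of \eqref{eq:cor:int:theta} by a finite sum of expectations involving the quantities $\mathcal I_0, \mathcal I_{F;\rm bnd}, \mathcal I_{F;\rm nl}, \mathcal I_X$ and their $\mathcal J$-counterparts, each weighted by the appropriate power of $\sigma$ and combined via the standard $(a_1+\ldots+a_\ell)^p \le \ell^{p-1}(a_1^p+\ldots+a_\ell^p)$ inequality.

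Next I would estimate each of the deterministic-type pieces using the pathwise bounds recorded in Lemma \ref{lem:maximalregH1}. The initial-condition contribution $\mathcal I_0 + \mathcal J_0$ absorbs into the $\|v(0)\|_{H^k}^{2p} + \|\theta(0)\|_{H^k_y}^{2p}$ term directly. The bounded-drift contributions $\sigma^{4p}[\mathcal I_{F;\rm bnd}^p + \mathcal J_{G;\rm bnd}^p]$ produce exactly the $\sigma^{4p} \mathfrak d_{\rm det}(T)^{2p}$ term in \eqref{eq:cor:int:theta}, while the nonlinear drift contributions $\mathcal I_{F;\rm nl}^p + \mathcal J_{G;\rm nl}^p$ are controlled by $\eta^p \sup_{0 \le t \le t_{\rm st}(\eta;k)\wedge T} N_{\mu;k}(t)^p$ (pre-expectation), which matches the last term of the desired bound after taking expectations.

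For the stochastic pieces $\sigma^{2p}[\mathcal I_X^p + \mathcal J_B^p]$, I would directly apply \eqref{eq:EBJB} to obtain a factor of $K^p \mathfrak d_{\rm det}(T)^p(p^p + \log(T)^p)$, producing the required $\sigma^{2p} \mathfrak d_{\rm det}(T)^p(p^p + \log(T)^p)$ contribution. Collecting all pieces and absorbing constants into a single $K > 0$ (independent of $T$, $\sigma$, $\eta$, and $p$) yields \eqref{eq:cor:int:theta}.

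I do not expect any genuine obstacle here: all the work has been done in Lemma \ref{lem:maximalregH1} and \eqref{eq:EBJB}. The only point requiring a moment of care is that $\mathcal I_X$ and $\mathcal J_B$ grow at rate $\mathfrak d_{\rm det}(T)$ rather than $\mathfrak d_{\rm stc}(T)$ (which would have appeared naively for the stochastic convolution alone in Corollary \ref{cor:v:theta}), reflecting the fact that the maximal regularity estimates with the algebraic weight $(1+t-s)^{-\mu}$ for $\mu$ in the regime \eqref{eq:mu:regime} cost an extra factor of $\mathfrak d_{\rm det}$. This is precisely what forces the timescale restriction derived in the proof of Theorem \ref{thm:main}.
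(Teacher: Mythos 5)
Your proposal is correct and takes the same route as the paper; in fact, the paper gives no explicit proof of Corollary~\ref{cor:int:v:theta}, leaving it implicit that it follows directly from the decompositions \eqref{eq:sup:v}--\eqref{eq:sup:theta} combined with the pathwise bounds of Lemma~\ref{lem:maximalregH1} and the moment bound \eqref{eq:EBJB}, exactly as you describe. Your closing remark that the algebraic weight in the $\mu$-regime \eqref{eq:mu:regime} forces a $\mathfrak d_{\rm det}(T)$ (rather than $\mathfrak d_{\rm stc}(T)$) growth factor for the stochastic pieces is also the correct interpretation of why this corollary costs more than Corollary~\ref{cor:v:theta}.
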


\subsection{Estimates for  the weighted integrals of mixed type}\label{sec:est:mixed:type}

Finally, we need to address the mixed term $\|v\|_{H^k}\|\theta\|_{H^k}$ of the nonlinearities in \eqref{eq:RHS}.  
Our strategy will be to pull the supremum of
$\|\vartheta\|_{H^k}$ outside of the integral \eqref{eq:size:final}.
To this end, we introduce the notation
\begin{equation}
    \mathcal{E}_{\rm max}(T;\sigma) = \sigma^2 \mathfrak{d}_{\rm det}(T) + \| \vartheta(0) \|_{H^k_y} + \sup_{0 \le t \le t_{\rm st}(\eta;k)\wedge T} N_{\mu;k}(t) + \sigma \sup_{0 \le t \le T} \| \mathcal{E}_B(t) \|_{H^k_y}.
\end{equation}
Note that the representation
\eqref{eq:generic_theta},
together with the estimates
\eqref{eq:expres1}--\eqref{eq:expres2}
and \eqref{eq:inequalities:heatsemigroup},
implies the pathwise bound
\begin{equation}
\sup_{0 \le t \le t_{\rm st}(\eta;k) \wedge T}
\|\vartheta(t)\|_{H^k_y} \le K \mathcal{E}_{\rm max}(T;\sigma),
\end{equation}
for some $K > 0$.
We now proceed by introducing the integrals
\begin{equation}
\begin{aligned}
\label{eq:st:M:simpler}
\mathcal{M}_{\#}(t)&=\int_0^t (1+t-s)^{-\mu}\| \mathcal{Z}_{\#}(s)\|_{H^{k}} \mathrm ds,
\end{aligned}
\end{equation}
for $\#\in \{0,F;\textrm{bnd},F;\textrm{nl},X\} $.
Upon recalling the representation
\eqref{eq:generic_v}
and applying the elementary inequality $(ab)^p \le a^{2p}+b^{2p}$,
we readily arrive at the bound
   \begin{align}
            \nonumber &\mathbb E \sup_{0\leq t\leq t_{\rm st}(\eta;k)\wedge T}\left[\int_0^t (1+t-s)^{-\mu}\|v(s)\|_{H^{k}}\|\theta(s)\|_{H^k_y}\mathrm ds\right]^p  \\  &\qquad  \leq K^p  \mathbb E \,\mathcal{E}_{\rm max}(T;\sigma)^p \sup_{0\leq t\leq T}\Big[\mathcal M_{0}(t)^p+\sigma^{2p}\mathcal M_{F;\rm bnd}(t)^p
+ \mathcal M_{F;\rm nl}(t)^p+\sigma^{p}\mathcal M_{X}(t)^p
\Big]\label{eq:sup:mixed:short}\\
& \qquad  \leq  4K^{2p} \mathbb E\, \mathcal{E}_{\rm max}(T;\sigma)^{2p}+\mathbb E \sup_{0\leq t\leq T}\Big[\mathcal M_{0}(t)^{2p}+\sigma^{4p}\mathcal M_{F;\rm bnd}(t)^{2p}
+ \mathcal M_{F;\rm nl}(t)^{2p}+\sigma^{2p}\mathcal M_{X}(t)^{2p}
\Big].\nonumber
\end{align}
The relevant supremum estimates on the integrals
\eqref{eq:st:M:simpler} are provided in our next result.

\begin{lemma}\label{lem:mix}
Consider the generic setting above. Then there exists a constant $K>0$ such that for any $0<\eta<\eta_0$, any $T>0$, and any $p \ge 1$, we have the pathwise bounds
\begin{equation}\begin{aligned}
        \sup_{0\leq t\leq T} \mathcal M_{0}(t)^{2p}   &\leq 
        K^p\|v(0)\|_{H^k}^{2p}, \\
        \sup_{0\leq t\leq T}   \mathcal  M_{F;\rm bnd}(t)^{2p}&\leq
    K^p \mathfrak d_{\rm det}(T)^{2p},\\
          \sup_{0\leq t\leq T}    \mathcal M_{F;\rm nl}(t)^{2p}&\leq
          K^p \eta^p \sup_{0\leq t\leq t_{\rm st}(\eta;k)\wedge T} N_{\mu;k}(t)^p,\\
  \sup_{0\leq t\leq T}   \mathcal M_{X}(t)^{2p}&\leq K^p \mathfrak d_{\rm det}(T)^{2p} \sup_{0\leq s\leq T}\|\mathcal Z_X(s)\|_{H^k}^{2p}.
\end{aligned}\end{equation}
\end{lemma}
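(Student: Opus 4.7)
The plan is to exploit the exponential decay of the semigroup $S_L(t) P^\perp_{\rm tw}$ that comes from the spectral gap hypothesis (HTw), so that the four integrals in \eqref{eq:st:M:simpler} either become uniformly bounded in time or can be reduced to expressions that are already controlled by $N_{\mu;k}$ or by $\mathfrak{d}_{\rm det}(T)$. The key technical ingredient will be the one-sided kernel estimate
\begin{equation}
\label{eq:plan:kernel}
\int_0^{t-r} (1+t-r-u)^{-\mu} e^{-\beta u}\, \mathrm du \le K_\beta (1+t-r)^{-\mu},
\end{equation}
which holds uniformly in $t\ge r\ge 0$ and follows by splitting the integral at the midpoint $(t-r)/2$ and bounding the two halves separately. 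This inequality lets us convert the sharp short-range exponential decay inherited from $S_L P^\perp_{\rm tw}$ into the weaker algebraic decay that is compatible with our $N_{\mu;k}$-based bookkeeping.

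I will handle the four cases in increasing order of difficulty. For $\mathcal M_0$: the first bound in \eqref{eq:SL:bounds} gives $\|\mathcal{Z}_0(s)\|_{H^k}\le M_{\exp} e^{-\beta s}\|v(0)\|_{H^k}$, and another midpoint split of $\int_0^t (1+t-s)^{-\mu} e^{-\beta s}\mathrm ds$ yields a uniform-in-$t$ bound. For $\mathcal M_{F;\rm bnd}$: exponential decay of $S_L P^\perp_{\rm tw}$ combined with the uniform bound \eqref{5.5} on $F_{\rm bnd}$ gives $\|\mathcal Z_{F;\rm bnd}(s)\|_{H^k}\le K_{\rm bnd} M_{\exp}/\beta$, so we can pull the supremum outside and are left with $\int_0^t(1+t-s)^{-\mu}\mathrm ds$, which is exactly $\mathfrak d_{\rm det}(t)$ up to a constant. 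For $\mathcal M_X$: we simply pull $\sup_{0\le s\le T}\|\mathcal Z_X(s)\|_{H^k}$ out of the integral and again use $\int_0^t(1+t-s)^{-\mu}\mathrm ds\le K\mathfrak d_{\rm det}(T)$; note that here we keep the stochastic supremum as a separate factor, since the estimate for it will come from Lemma \ref{lem:EJ}.

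The remaining term $\mathcal M_{F;\rm nl}$ is the one that makes genuine use of \eqref{eq:plan:kernel} and is where I expect the bookkeeping to be most delicate. After plugging in $\|F_{\rm nl}(v(r),\theta(r),r)\|_{H^k}\le K_{\rm nl}\,\mathrm{RHS}(v(r),\theta(r))$ and the exponential decay of $S_L(s-r)P^\perp_{\rm tw}$, I swap the $s$- and $r$-integrals via Fubini and apply \eqref{eq:plan:kernel} to the inner $s$-integral, obtaining
\begin{equation*}
\mathcal{M}_{F;\rm nl}(t) \le C \int_0^t (1+t-r)^{-\mu}\, \mathrm{RHS}(v(r),\theta(r))\, \mathbf 1_{r<t_{\rm st}(\eta;k)}\, \mathrm dr.
\end{equation*}
For $t\le t_{\rm st}(\eta;k)$ the right-hand side is exactly bounded by $N_{\mu;k}(t)$, while for $t>t_{\rm st}(\eta;k)$ the monotonicity $(1+t-r)^{-\mu}\le (1+t_{\rm st}(\eta;k)-r)^{-\mu}$ reduces the integral to $N_{\mu;k}(t_{\rm st}(\eta;k))$. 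In either case, $\mathcal M_{F;\rm nl}(t)\le C\sup_{0\le s\le t_{\rm st}(\eta;k)\wedge T} N_{\mu;k}(s)$, which by definition of the stopping time is at most $C\eta$. The claimed bound $\mathcal M_{F;\rm nl}(t)^{2p}\le K^p \eta^p \sup N_{\mu;k}^p$ then follows by writing $x^{2p}=(x)(x^{2p-1})$ with one factor dominated by $\eta$ and the other by $\sup N_{\mu;k}$. The main obstacle is thus purely bookkeeping: making sure that the indicator $\mathbf 1_{r<t_{\rm st}(\eta;k)}$ and the possibility $t>t_{\rm st}(\eta;k)$ are handled consistently so that the exit-time $N_{\mu;k}(t_{\rm st}(\eta;k))\le \eta$ bound is applicable, which is the reason we carefully use the monotonicity of the weight rather than a cruder $L^\infty$ estimate.
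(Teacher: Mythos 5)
Your proposal takes essentially the same route as the paper: exponential decay of $S_L(t)P^\perp_{\rm tw}$ gives a uniform bound for $\mathcal M_0$; the suprema of $\mathcal Z_{F;\rm bnd}$ and $\mathcal Z_X$ are pulled out of the convolution with $\int_0^t(1+t-s)^{-\mu}\,\mathrm ds\le K\mathfrak d_{\rm det}(T)$ for the two middle estimates; and $\mathcal M_{F;\rm nl}$ is handled by Fubini, the exponential--algebraic kernel estimate, and the monotonicity of $(1+t-r)^{-\mu}$ in $t$ so that the remaining integral is dominated by $N_{\mu;k}$ up to the stopping time. Your midpoint-split proof of the kernel estimate is just an ad hoc re-derivation of the inequality the paper cites from its appendix (Lemma~\ref{lem:z} via~\eqref{eq:est:exp:poly2}).

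One small slip in the last step: writing $x^{2p}=(x)(x^{2p-1})$ and dominating one factor by $\eta$ and the other by $\sup N_{\mu;k}$ gives $\eta\,(\sup N_{\mu;k})^{2p-1}$, which is not the claimed $\eta^p\,(\sup N_{\mu;k})^p$. You want the symmetric split $x^{2p}=(x^p)(x^p)$: since $\mathcal M_{F;\rm nl}(t)\le C\sup N_{\mu;k}$ and $\sup N_{\mu;k}\le\eta$ on $[0,t_{\rm st}\wedge T]$, one $p$-th power is bounded by $C^p\eta^p$ and the other by $C^p(\sup N_{\mu;k})^p$, which gives the stated estimate with $K=C^2$.
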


\begin{proof}
    Exploiting the exponential decay of $S_L(t)P^{\perp}_{\rm tw}$, see Lemma \ref{lem:decay:SL}, yields
\begin{equation}
        \mathcal M_{0}(t)\leq M\|v(0)\|_{H^k} \int_0^t(1+t-s)^{-\mu}e^{-\beta s}\mathrm ds\leq MK_{\mu,\beta}\|v(0)\|_{H^k},
    \end{equation}
    for some $K_{\mu,\beta}>0.$ For $\# \in \{F; \mathrm{bnd}, X\}$, we obtain
    \begin{equation}
    \begin{aligned}
        \mathcal M_{\#}(t)&\leq \sup_{0\leq t\leq T}\|\mathcal Z_{\#}(t)\|_{H^k}\int_0^t(1+t-s)^{-\mu}\mathrm ds\leq K' \mathfrak d_{\rm det}(T)\sup_{0\leq t\leq T}\|\mathcal Z_{\#}(t)\|_{H^k},
        \end{aligned}
    \end{equation}
    for some other  constant $K'>0$. Finally, reversing the order of integration yields
    \begin{equation}
        \begin{aligned}
            \mathcal M_{F;\rm nl}(t)&\leq K_{\rm nl}\int_0^t(1+t-s)^{-\mu}\int_0^s e^{-\beta (s-r)}
        \mathrm {RHS}\big(v(r),\theta(r)\big) 1_{r < t_{\rm st}(\eta;k)}\,\mathrm dr\,\mathrm ds\\
        &\leq K_{\rm nl}\int_0^t\mathrm {RHS}\big(v(r),\theta(r)\big) 1_{r < t_{\rm st}(\eta;k)} \int_r^t (1+t-s)^{-\mu} e^{-\beta (s-r)}\,\mathrm ds\,\mathrm dr \\
        &\leq K_{\rm nl}K_{\mu,\beta}\int_0^t(1+t-r)^{-\mu} \mathrm {RHS}\big(v(r),\theta(r)\big) 1_{r < t_{\rm st}(\eta;k)}\,\mathrm dr\\
        &\leq K_{\rm nl}K_{\mu,\beta}\sup_{0\leq t\leq t_{\rm st}(\eta;k)\wedge T} N_{\mu;k}(t).
        \end{aligned}
    \end{equation}
    The stated bounds above now all follow 
    after recalling the uniform bound \eqref{eq:bnd:FG} for the supremum of $\mathcal{Z}_{F;\rm bnd}$ and upon noting that $N_{\mu;k}(t)\leq \eta$ holds for $0\leq t\leq  t_{\rm st}(\eta;k)\wedge T. $ 
\end{proof}
\begin{corollary}\label{cor:int:mix}
Consider the generic setting above. There exists a constant $K > 0$ so that for any $0<\eta<\eta_0$, any integer $T\geq 2$, and any integer $p \ge 1$, we have the  bound
\begin{equation}
\begin{aligned}
&\mathbb E\sup_{0\leq t\leq t_{\rm st}(\eta;k)\wedge T}\left[\int_0^t(1+t-s)^{-\mu} \|v(s)\|_{H^{k}}\|\theta(s)\|_{H^k_y}\mathrm ds\right]^p\leq K^p\Bigg(\|v(0)\|_{H^k}^{2p}+\|\theta(0)\|_{H^k_y}^{2p}\\&\qquad+\sigma^{4p}\mathfrak d_{\rm det}(T)^{2p}+\sigma^{2p}\mathfrak d_{\rm det}(T)^{2p}\mathfrak (p^p+\log(T)^p)
        +\eta^p\mathbb E\left[\sup_{0\leq t\leq t_{\rm st}(\eta;k)\wedge T}N_{\mu;k}(t)^p\right]\Bigg).\label{eq:cor:mix}
        \end{aligned}
        \end{equation}
\end{corollary}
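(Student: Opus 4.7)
The proof will assemble the ingredients already prepared above: the decomposition \eqref{eq:sup:mixed:short}, the pathwise bounds in Lemma \ref{lem:mix}, and the supremum estimates from Lemma \ref{lem:EJ}. The plan is to use \eqref{eq:sup:mixed:short} as the starting point, which reduces the mixed integral to controlling $\mathbb E[\mathcal{E}_{\rm max}(T;\sigma)^{2p}]$ and the fourth-power suprema of the four deterministic/stochastic convolutions $\mathcal{M}_0, \mathcal{M}_{F;\rm bnd}, \mathcal{M}_{F;\rm nl}, \mathcal{M}_{X}$ (the superscripts $2p$ rather than $p$ arising from the $ab \le \tfrac{1}{2}(a^2+b^2)$ step).

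First I would dispense with the four $\mathcal{M}_\#$ contributions. Three of them are immediate from the pathwise bounds in Lemma \ref{lem:mix}: they yield respectively $K^p \|v(0)\|_{H^k}^{2p}$, $K^p \sigma^{4p} \mathfrak d_{\rm det}(T)^{2p}$, and $K^p \eta^p \mathbb E \sup N_{\mu;k}(t)^p$, each of which fits inside the bracketed right-hand side of \eqref{eq:cor:mix}. For the stochastic contribution $\sigma^{2p}\sup \mathcal{M}_X(t)^{2p}$, Lemma \ref{lem:mix} gives the pathwise factorisation $K^p \mathfrak d_{\rm det}(T)^{2p}\sup\|\mathcal Z_X\|_{H^k}^{2p}$, and taking expectations and invoking \eqref{eq:ZX} from Lemma \ref{lem:EJ} produces exactly the target term $\sigma^{2p}\mathfrak d_{\rm det}(T)^{2p}(p^p+\log(T)^p)$.

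Next I would estimate $\mathbb E[\mathcal{E}_{\rm max}(T;\sigma)^{2p}]$. Expanding the definition and using $(a+b+c+d)^{2p}\le C_p(a^{2p}+b^{2p}+c^{2p}+d^{2p})$, I obtain four pieces. The deterministic piece $(\sigma^2 \mathfrak d_{\rm det}(T))^{2p}$ and the initial-data piece $\|\vartheta(0)\|_{H^k_y}^{2p}$ fit directly. For the supremum of $\|\mathcal E_B(t)\|_{H^k_y}$, estimate \eqref{eq:EB} from Lemma \ref{lem:EJ} gives a bound proportional to $\mathfrak d_{\rm stc}(T)^p(p^p+\log(T)^p)$; after multiplication by $\sigma^{2p}$ this is absorbed into $\sigma^{2p}\mathfrak d_{\rm det}(T)^{2p}(p^p+\log(T)^p)$ because $\mathfrak d_{\rm stc}(T)\le \mathfrak d_{\rm det}(T)^{2}$ in every relevant dimension (as can be checked case-by-case from Table \ref{table:main:hfdst2}).

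The key trick|which I view as the only genuine obstacle|is handling the term $\sup N_{\mu;k}(t)^{2p}$ appearing in $\mathcal{E}_{\rm max}^{2p}$. A naive bound would produce a quadratic power of $\sup N_{\mu;k}$ on the right-hand side, which is useless for closing the nonlinear argument in Proposition \ref{prop:E_stab}. The resolution is the pathwise inequality $N_{\mu;k}(t)^{2p}\le \eta^p N_{\mu;k}(t)^p$ valid on $\{t\le t_{\rm st}(\eta;k)\wedge T\}$, which exchanges one power of $N_{\mu;k}$ for the small parameter $\eta^p$. This delivers exactly the prefactor $\eta^p$ needed in \eqref{eq:cor:mix} and is what will allow the term to be absorbed in the subsequent nonlinear stability bootstrap. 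Combining all these pieces and adjusting the constant $K$ yields the claimed bound.
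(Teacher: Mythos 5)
Your proposal is correct and matches the paper's (terse, one-line) proof: both start from the decomposition \eqref{eq:sup:mixed:short}, invoke the pathwise bounds of Lemma \ref{lem:mix}, and then use \eqref{eq:ZX} and \eqref{eq:EB} from Lemma \ref{lem:EJ} to control the stochastic contributions $\mathcal{M}_X$ and (inside $\mathcal{E}_{\rm max}$) $\mathcal{E}_B$. You correctly identified the two points the paper leaves implicit---the pathwise exchange $N_{\mu;k}(t)^{2p}\le\eta^p N_{\mu;k}(t)^p$ before the stopping time, and the absorption $\mathfrak d_{\rm stc}(T)^p\lesssim\mathfrak d_{\rm det}(T)^{2p}$---and both hold (up to a $T$-independent constant when $d=5$).
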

\begin{proof}
    Inspecting \eqref{eq:sup:mixed:short},
    these bounds follow from Lemma \ref{lem:mix} together with
    \eqref{eq:ZX} and \eqref{eq:EB}.
\end{proof}

The perspicacious reader might wonder whether the bound \eqref{eq:cor:mix} is too crude. A more refined analysis could proceed by analysing the
16 mixed terms \begin{equation}
\begin{aligned}
\mathscr{M}_{\#_1;\#_2}(t)&=\int_0^t (1+t-s)^{-\mu}\| \mathcal{Z}_{\#_1}(s)\|_{H^{k}}\| \mathcal{E}_{\#_2}(s)\|_{H^{k}_y} \mathrm ds,
\end{aligned}
\end{equation}
with $\#_1\in \{0,F;\textrm{bnd},F;\textrm{nl},X\} $ and $\#_2\in \{0,G;\textrm{bnd},G;\textrm{nl},B\}. $
For example,
by taking $p=1$ for simplicity, 
we obtain the more efficient bound
\begin{equation}
\begin{array}{lcl}
\sigma^2 \mathbb E \sup_{0\le t \le T} \mathscr{M}_{X;B}(t)
& \le & \sigma^2 K \mathfrak{d}_{\rm det}(T)
\mathbb E\big[
\sup_{0 \le t \le T} \| \mathcal{E}_B(t) \|_{H^k_y} \sup_{0 \le t \le T} \|\mathcal{Z}_X(t)\|_{H^k}\big]
\\[0.2cm]
& \le & 
\sigma^2 K\mathfrak{d}_{\rm det}(T)
\big( 
\mathfrak{d}_{\rm stc}(T)^{-1/2} \mathbb E
\sup_{0 \le t \le T} \| \mathcal{E}_B(t)\|_{H^k_y}^2 
\\[0.2cm]
& & \qquad \qquad 
+\, \mathfrak{d}_{\rm stc}(T)^{1/2} \mathbb E
\sup_{0 \le t \le T} \| \mathcal{Z}_X(t)\|_{H^k}^2 
\big) 
\\[0.2cm]
& \le & K \sigma^2 \mathfrak{d}_{\rm det}(T) \mathfrak{d}_{\rm stc}(T)^{1/2} \log(T).
\end{array}
\end{equation}
In addition, assuming the extra condition $\vartheta(0) \in L^1_y$, 
we obtain the pathwise bound
\begin{equation}
\begin{array}{lcl}
     \mathscr{M}_{X;0}(t)
    & \le & \sigma M_{\rm alg}
\big[\|\theta(0)\|_{H^k_y}+\|\theta(0)\|_{L^1_y}\big] \int_0^t(1+t-s)^{-\mu}(1+s)^{-\frac{d-1}4}\|\mathcal Z_{X}(s)\|_{H^k}\mathrm ds
\\[0.2cm]
& \le & 
\sigma M_{\rm alg}
\big[\|\theta(0)\|_{H^k_y}+\|\theta(0)\|_{L^1_y}\big]  \sup_{0 \le s \le T} \|\mathcal Z_{X}(s)\|_{H^k}
\\[0.2cm]
& & \qquad \qquad \times \int_0^t(1+t-s)^{-\mu}(1+s)^{-\frac{d-1}4} \,\mathrm ds  
\\[0.2cm]
& \le & 
\sigma K
\big[\|\theta(0)\|_{H^k_y}+\|\theta(0)\|_{L^1_y}\big]  \sup_{0 \le s \le T} \|\mathcal Z_{X}(s)\|_{H^k}
\big(1 + T^{1/2} 1_{d = 2} \big),
\end{array}
\end{equation}
in which we have used Corollary \ref{cor:algebraic:decay} and Lemma \ref{lem:z}.
This leads to the more efficient estimate
\begin{equation}
\begin{array}{lcl}
     \sigma \mathbb E \sup_{0 \le t \le T} \mathscr{M}_{X;0}(t)
     &\le& K
     \big[ \|\vartheta(0)\|_{H^k_y}^2 + \| \vartheta(0)\|_{L^1_y}^2 + 
     \sigma^2 \log(T) \big(1 + \mathfrak{d}_{\rm stc}(T)^2 1_{d=2} \big)
     \big]\\
      &\leq& K
     \big[ \|\vartheta(0)\|_{H^k_y}^2 + \| \vartheta(0)\|_{L^1_y}^2 + 
     \sigma^2 \mathfrak d_{\rm det}(T)\mathfrak d_{\rm stc}(T)^{1/2}\log(T) 
     \big].
\end{array}
\end{equation}
However, it seems that the term $\mathscr{M}_{X; G;\rm nl}$
admits no comparable path to improvement,
because we do not want $\sup_{0\leq t\leq t_{\rm st}(\eta;k)\wedge T} N_{\mu;k}(t)$ to be multiplied by $T$-dependent factors. Differently put, the growth term $\sigma^{2p}\mathfrak d_{\rm det}(T)^{2p}$ in \eqref{eq:cor:mix} cannot be relaxed; which is in line with our discussion in {\S}\ref{sec:intro}.

\begin{proof}[Proof of Proposition \ref{prop:E_stab}]
Collecting the results in Corollaries \ref{cor:v:theta}, \ref{cor:int:v:theta}, and \ref{cor:int:mix}, we conclude
\begin{equation}\begin{aligned}
            &\mathbb E\left[\sup_{0\leq t\leq t_{\rm st}(\eta;k)\wedge T}N_{\mu;k}(t)^p\right]\leq K^p\Bigg(\|v(0)\|_{H^{k}}^{2p}+ \|\theta(0)\|_{H^{k}_y}^{2p}+\sigma^{4p} \mathfrak d_{\rm det}(T)^{2p}
           \\&\qquad\qquad\qquad\quad\quad+\sigma^{2p}  \mathfrak d_{\rm det}(T)^{2p}(p^p+\log(T)^p)
            + \eta^p\mathbb E\left[\sup_{0\leq t\leq t_{\rm st}(\eta;k)\wedge T}N_{\mu;k}(t)^p\right]\Bigg).
            \end{aligned}
        \end{equation}
        We now obtain the assertion by
        restricting the size of ${\eta}$ and subsequently absorbing $\sigma^{4p} \mathfrak d_{\rm det}(T)^{2p}$ into the other explicit $\sigma^2$-dependent term.
\end{proof}
 \begin{proof}
     [Proof of Theorem \ref{thm:general}]
     The first part of the assertion is given by Proposition \ref{prop:E_stab}, noting that the generic setting is indeed satisfied according to {\S}\ref{sec:phase}--{\S}\ref{sec:evol:pert}.
    Now, introduce the random variable
\begin{equation}
    Z_T=\sup_{0\leq t\leq t_{\rm st(\eta;k)}\wedge T}|N_{\mu;k}(t)|
\end{equation}
and observe that $
    \mathbb P(t_{\rm st}(\eta;k)< T)=\mathbb P(Z_T\geq \eta) $ holds. By exploiting the exponential Markov-type inequality in \cite[Lem. B.1]{bosch2024multidimensional}
    with 
\begin{equation}\Theta_1=K\sigma^2\mathfrak d_{\rm det}(T)^2,\quad \Theta_2=Ku_0^2+K\sigma^2\log(T)\mathfrak d_{\rm det}(T)^2,\end{equation}
where $u_0^2=\|v(0)\|_{H^k}^2+\|\theta(0)\|_{H^k_y}^2,$
we obtain
   the probability tail estimate
    \begin{equation}
        \mathbb P(Z_T\geq \eta)\leq 3T^{1/2e}\exp\left(-\frac{\eta-2eKu_0^2}{2eK\sigma^2\mathfrak d_{\rm det}(T)^2}\right).
    \end{equation}
The bound \eqref{eq:25} now readily follows upon choosing $\kappa \leq (4eK)^{-1} $ and by noting that $3T^{1/2e}\leq 2T$ holds for $T\ge 2$.
\end{proof}

\appendix

\section{Preliminary estimates}\label{sec:prelim}
In this part we collect several  essential preliminary estimates required to establish the metastability of  planar travelling waves in our noisy framework.  We   start in {\S}\ref{sec:travel:heat} by recalling well-established facts concerning the linearisation about the travelling wave (in one dimension) together with  properties of the heat semigroup (in any spatial dimension). 
In {\S}\ref{sec:stoch:convolution} we  provide fundamental maximal inequalities for stochastic convolutions that underpin our stability estimates.

\subsection{Semigroup bounds}\label{sec:travel:heat}\label{lem:decay:SL}
Let us denote by $S_{\rm tw}(t):H^k_x\to H^k_x$ 
 the analytic semigroup generated by  $\mathcal L_{\rm tw}$. Recall that $P_{\rm tw}:H_x^k\to H_x^k$ is the spectral projection on the neutral eigenvalue of $\mathcal L_{\rm tw}$ and we write $P^\perp _{\rm tw}=I-P_{\rm tw}.$
\begin{lemma}Pick a sufficiently large $M \geq 1$. Then for every $t>0$ we have the bounds
\begin{equation}\label{eq:Stw:bounds}
\begin{array}{ll}
\|S_{\rm tw}(t) P^\perp_{\rm tw}\|_{\mathscr{L}\left(H^k_x, H^k_x\right)} & \leq M e^{-\beta t}, \\
\|S_{\rm tw}(t) P^\perp_{\rm tw}\|_{\mathscr{L}\left(H^k_x, H^{k+1}_x\right)} & \leq M \max\{t^{-\frac{1}{2}},1\} e^{-\beta t}.
\end{array}
\end{equation}
\end{lemma}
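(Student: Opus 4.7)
\emph{Plan.} Under (HTw), the operator $\mathcal{L}_{\rm tw}$ is sectorial on $H^k_x$ with a simple isolated eigenvalue at $\lambda = 0$ whose spectral projection is $P_{\rm tw}$, and the remainder of the spectrum lies in $\{\operatorname{Re}\lambda \leq -2\beta\}$. Since $P_{\rm tw}$ commutes with $\mathcal{L}_{\rm tw}$ and hence with $S_{\rm tw}(t)$, the semigroup leaves $\ker P_{\rm tw} = \operatorname{range} P^\perp_{\rm tw}$ invariant. The restriction of $\mathcal{L}_{\rm tw}$ to this closed subspace is sectorial with spectrum entirely in $\{\operatorname{Re}\lambda \leq -2\beta\}$, so standard analytic semigroup theory (e.g.\ \cite{engel2000one,lunardi2018interpolation}) delivers the exponential decay bound
\begin{equation*}
  \|S_{\rm tw}(t) P^\perp_{\rm tw}\|_{\mathscr{L}(H^k_x, H^k_x)} \leq M_1 e^{-\beta t}, \qquad t \geq 0,
\end{equation*}
for some $M_1 \geq 1$, which immediately gives the first stated bound (in fact with decay rate $2\beta$, but we only need $\beta$).

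\emph{Smoothing at short times.} For the $H^k_x \to H^{k+1}_x$ bound, we first treat $0 < t \leq 1$. The sectoriality of $\mathcal{L}_{\rm tw}$ in $H^k_x$, together with the identification $[H^k_x, H^{k+2}_x]_{1/2} = H^{k+1}_x$ (see \cite{book:Triebel,lunardi2018interpolation}), yields the standard analytic smoothing estimate
\begin{equation*}
  \|S_{\rm tw}(t)\|_{\mathscr{L}(H^k_x, H^{k+1}_x)} \leq C_1 \, t^{-1/2}, \qquad 0 < t \leq 1.
\end{equation*}
Boundedness of $P^\perp_{\rm tw} \in \mathscr{L}(H^k_x)$ then gives $\|S_{\rm tw}(t) P^\perp_{\rm tw}\|_{\mathscr{L}(H^k_x, H^{k+1}_x)} \leq C_2 t^{-1/2} \leq C_2 \, t^{-1/2} e^{\beta} e^{-\beta t}$ on this range of $t$.

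\emph{Large-time bound and conclusion.} For $t \geq 1$, we use the semigroup property together with the fact that $P^\perp_{\rm tw}$ commutes with $S_{\rm tw}(t)$ to split
\begin{equation*}
  S_{\rm tw}(t) P^\perp_{\rm tw} = S_{\rm tw}(\tfrac{1}{2})\,S_{\rm tw}(t - \tfrac{1}{2})\,P^\perp_{\rm tw}.
\end{equation*}
Applying the smoothing estimate at the fixed time $t = 1/2$ and the exponential decay on $H^k_x$ from the first step yields
\begin{equation*}
  \|S_{\rm tw}(t) P^\perp_{\rm tw}\|_{\mathscr{L}(H^k_x, H^{k+1}_x)} \leq C_1\,(\tfrac{1}{2})^{-1/2} \cdot M_1 e^{-\beta(t-1/2)} \leq C_3 \, e^{-\beta t}, \qquad t \geq 1.
\end{equation*}
Combining the two regimes and enlarging $M$ to dominate $\max\{C_2 e^{\beta}, C_3\}$ and $M_1$ yields the second claimed bound. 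The argument is essentially a textbook assembly of sectoriality plus spectral-gap decay; the only mildly delicate ingredient is the $t^{-1/2}$ short-time rate, which is however immediate from the half-power interpolation identity and is in any case invoked freely throughout \cite{engel2000one,lunardi2018interpolation,book:Triebel}.
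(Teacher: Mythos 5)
Your proof is correct, and it is the standard argument the paper points to: the paper simply cites \cite[Prop.~5.2.1]{lunardi2004linear} (and \cite{hamster2020expstability,bosch2024multidimensional}) rather than writing it out, and your derivation — exponential decay from the spectral gap on $\operatorname{ran} P^\perp_{\rm tw}$, the $t^{-1/2}$ analytic smoothing via the half-power interpolation $[H^k_x,H^{k+2}_x]_{1/2}=H^{k+1}_x$, and the semigroup split $S_{\rm tw}(t)=S_{\rm tw}(\tfrac12)S_{\rm tw}(t-\tfrac12)$ for $t\geq1$ — is exactly what those references supply.
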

\begin{proof}
These bounds can be deduced from the general theory in \cite[Prop. 5.2.1]{lunardi2004linear}. Alternatively, we refer to \cite[Lem. 2.1]{hamster2020expstability} for the case $k=0$ and to the comments in \cite[Sec. 3]{bosch2024multidimensional}.
\end{proof}
 Further, we write $S_L(t):H^k\to H^k$ for the semigroup generated by  $\mathcal L=\mathcal L_{\rm tw}+\Delta_y$ together with $S_H(t):H^k_y\to H^k_y$ which is the analytic semigroup generated by $\Delta_y.$ We can view both $S_{\rm tw}(t)$ and $S_H(t)$ as semigroups 
 on $H^k$ by letting them act as
 \begin{equation}
     [S_{\rm tw}(t)v](x,y)=[S_{\rm tw}(t)v(\cdot,y)](x),\qquad [S_{H}(t)v](x,y)=[S_{H}(t)v(x,\cdot)](y).
 \end{equation}
  This directly implies the commutation relation $S_L(t)=S_{\rm tw}(t)S_H(t)=S_H(t)S_{\rm tw}(t).$ 
Finally, throughout this paper, our convention is to write again $P_{\rm tw}^\perp =I-P_{\rm tw}:H^k\to H^k$ for the complement of the (spectral)  projection that is trivially extended to the coordinate $y$. 
\begin{lemma}Pick a sufficiently large $M \geq 1$. Then for every $t>0$ we have the bounds
\begin{equation}\label{eq:SL:bounds}
\begin{array}{ll}
\|S_L(t) P^\perp_{\rm tw}\|_{\mathscr{L}\left(H^k, H^k\right)} & \leq M e^{-\beta t}, \\
\|S_L(t) P^\perp_{\rm tw}\|_{\mathscr{L}\left(H^k, H^{k+1}\right)} & \leq M \max\{t^{-\frac{1}{2}},1\} e^{-\beta t}.
\end{array}
\end{equation}
\end{lemma}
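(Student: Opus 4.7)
The proof will proceed by exploiting the factorisation $S_L(t) = S_H(t)\,S_{\rm tw}(t) = S_{\rm tw}(t)\,S_H(t)$ together with the commutation of $P_{\rm tw}^\perp$ with $S_H(t)$ (since $P_{\rm tw}^\perp$ acts only on the $x$-coordinate, while $S_H(t)$ acts only on the $y$-coordinate). The key observation is that, although $\partial_x$ does not commute with $S_{\rm tw}(t)$ due to the $x$-dependent coefficient $Df(\Phi_0(x))$, all $y$-derivatives commute with $S_{\rm tw}(t)$, and every derivative commutes with $S_H(t)$.

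For the first bound, I would pick a multi-index $\alpha \in \mathbb{Z}^d_{\geq 0}$ with $|\alpha| \le k$, split it as $\alpha = (\alpha_x, \alpha_y)$ with $\alpha_x \in \mathbb{Z}_{\geq 0}$ and $\alpha_y \in \mathbb{Z}^{d-1}_{\geq 0}$, and compute
\begin{equation*}
\partial^{\alpha}\bigl[S_L(t) P^\perp_{\rm tw} v\bigr]
= S_H(t)\,\partial_y^{\alpha_y}\bigl[\partial_x^{\alpha_x} S_{\rm tw}(t) P^\perp_{\rm tw} v\bigr].
\end{equation*}
Since $S_H(t)$ is a contraction on $L^2$ and $\partial_y^{\alpha_y}$ commutes with $S_{\rm tw}(t) P^\perp_{\rm tw}$, taking $L^2$-norms and applying Fubini reduces the problem to the pointwise-in-$y$ estimate
\begin{equation*}
\bigl\|\partial_x^{\alpha_x} S_{\rm tw}(t) P^\perp_{\rm tw}\bigl[\partial_y^{\alpha_y} v\bigr](\cdot,y)\bigr\|_{L^2_x}
\le M e^{-\beta t}\,\bigl\|[\partial_y^{\alpha_y} v](\cdot,y)\bigr\|_{H^{\alpha_x}_x},
\end{equation*}
which is the first bound of \eqref{eq:Stw:bounds}. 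Integration in $y$ and summation over $\alpha$ with $|\alpha|\le k$ then yields the first bound (after updating $M$).

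For the second bound, I would proceed analogously but now take $|\alpha| \le k+1$ and treat two cases. If $\alpha_x \le k$, then $|\alpha_y| \ge 1$ and I gain the missing regularity through $S_H(t)$: writing $\alpha_y = \tilde\alpha_y + e_j$ for some $j$, I use
\begin{equation*}
\partial^{\alpha}\bigl[S_L(t) P^\perp_{\rm tw} v\bigr]
= \bigl[\partial_{y_j} S_H(t)\bigr]\,\partial_y^{\tilde\alpha_y}\bigl[\partial_x^{\alpha_x} S_{\rm tw}(t) P^\perp_{\rm tw} v\bigr],
\end{equation*}
and invoke the standard heat-semigroup estimate $\|\partial_{y_j} S_H(t)\|_{\mathscr{L}(L^2)} \le K\max\{t^{-1/2},1\}$ combined with the first part of \eqref{eq:Stw:bounds}; the exponential factor $e^{-\beta t}$ comes from $S_{\rm tw}(t)P^\perp_{\rm tw}$. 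If $\alpha_x = k+1$ (so $|\alpha_y|=0$), then I instead use the second bound of \eqref{eq:Stw:bounds} directly on $\partial_x^{k+1} S_{\rm tw}(t) P^\perp_{\rm tw}$, which already contains the factor $M\max\{t^{-1/2},1\}e^{-\beta t}$, and use contractivity of $S_H(t)$ on $L^2$ for the outer semigroup. In both cases the right-hand side is controlled by $\|v\|_{H^k}$, giving the claim.

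I expect no substantial obstacle: the entire argument is commutation bookkeeping plus the standard pointwise-in-$y$ reduction to the one-dimensional estimates \eqref{eq:Stw:bounds}, combined with the contractive/smoothing properties of the heat semigroup in the transverse direction. The only subtlety is to ensure that the extra derivative in the second bound is always placed so that it hits a semigroup factor ($S_H$ or $S_{\rm tw}$) that can supply smoothing, which the case split above handles in a clean way.
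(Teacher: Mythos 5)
Your proof is correct and amounts to the second (and more direct) of the two routes the paper offers: it exploits the factorisation $S_L(t)P_{\rm tw}^\perp = S_H(t)S_{\rm tw}(t)P_{\rm tw}^\perp$, commutation of $y$-derivatives with the purely-$x$ operators, and contractivity/smoothing of $S_H(t)$, to reduce everything pointwise-in-$y$ to the one-dimensional estimates \eqref{eq:Stw:bounds}. The paper's own proof is a single sentence citing precisely this relation (or a Fourier argument in the spirit of \cite[Lem.~3.3]{kapitula1997}); you have simply supplied the commutation bookkeeping and the case split on $\alpha_x$ explicitly, which is a faithful and gap-free elaboration of the intended argument.
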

\begin{proof}
The estimates in \eqref{eq:Stw:bounds} readily carry over to the semigroup $S_L(t)$ upon a Fourier computation
as in \cite[Lem. 3.3]{kapitula1997} or just by directly exploiting the  relation $S_{L}(t)P_{\rm tw}^\perp=S_H(t)S_{\rm tw}(t)P_{\rm tw}^\perp$.
\end{proof}
    It is important to recall that the travelling wave operator $\mathcal L$ actually has no spectral gap on $\mathbb R^{d}$ for any dimension $d>1$. We have at most algebraic decay  in contrast to exponential decay in one dimension \cite{kapitula2013spectral} or on a cylindrical domain \cite{bosch2025conditionalspeedshapecorrections,bosch2024multidimensional}. This can be  seen easily from a Fourier argument\footnote{Tacitly exploiting the fact that we  focus on the equal diffusion  case, so that there is no need to worry about the possibility of having transverse instabilities \cite{carter2024stabilizing,carter2023criteria,hoffman2015multi}.}, utilising the facts below  regarding the Laplacian $\Delta_y$. 
    In particular,  we point out that operator $P_{\rm tw}$, seen as bounded operator on $H^k,$ is not the spectral projection of $\mathcal L.$  


The  bounds below tell us  that the heat semigroup $S_H(t)$ decays algebraically in $H^k_y$  with rate $(d-1)/4,$ provided  the initial data is also in $L^1_y.$ 
\begin{lemma}\label{lem:theta}
    For any $k\geq 0$ there exists a constant $M>0$ such that for   $t>0$ we have 
    \begin{equation}
        \begin{aligned}
\|S_H(t)w\|_{H^k_y}&\leq M\|w\|_{H^k_y},\\
\|S_H(t)w\|_{H^k_y}&\leq M(1+t)^{-(d-1)/4}\|w\|_{L^1_y}+Me^{-\alpha t}\|w\|_{H^k_y},
\\
\|\partial_{x_i}S_H(t)w\|_{H^{k}_y}
&\leq Mt^{-1/2}\|w\|_{H^k_y},\\
\|\partial_{x_i}S_H(t)w\|_{H^{k}_y}&\leq M(1+t)^{-(d+1)/4}\|w\|_{L^1_y}+Mt^{-1/2}e^{-\alpha t}\|w\|_{H^k_y},
\label{eq:inequalities:heatsemigroup}
        \end{aligned}
    \end{equation}
    for some $\alpha>0$ and $i=2,\ldots,d.$
    In addition, we can take $M=1$ in the first inequality of \eqref{eq:inequalities:heatsemigroup}, showing that  $S_H(t)$ is a contractive semigroup in $H^k_y.$
\end{lemma}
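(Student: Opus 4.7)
The plan is to prove the four estimates via Fourier analysis combined with the explicit form of the heat kernel on $\mathbb R^{d-1}$. Since $S_H(t)$ is the Fourier multiplier $e^{-t|\xi|^2}$, the contractivity on $H^k_y$ (first inequality with $M=1$) is immediate: for $w\in H^k_y$,
\begin{equation}
\|S_H(t)w\|_{H^k_y}^2=\int_{\mathbb R^{d-1}}(1+|\xi|^2)^k e^{-2t|\xi|^2}|\hat w(\xi)|^2\,\mathrm d\xi\leq \|w\|_{H^k_y}^2,
\end{equation}
and the same argument with the multiplier $i\xi_j e^{-t|\xi|^2}$ delivers the third inequality after writing $|\xi_j|e^{-t|\xi|^2}\leq (2et)^{-1/2}e^{-t|\xi|^2/2}$ and reabsorbing the remaining exponential.

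For the $L^1_y\to H^k_y$ estimates I would invoke the heat kernel $K_t(y)=(4\pi t)^{-(d-1)/2}e^{-|y|^2/(4t)}$. A direct Gaussian computation shows $\|\partial^\alpha K_t\|_{L^2_y}\lesssim t^{-(d-1)/4-|\alpha|/2}$ for every multi-index $\alpha$, so by Young's convolution inequality
\begin{equation}
\|\partial^\alpha (K_t*w)\|_{L^2_y}\leq \|\partial^\alpha K_t\|_{L^2_y}\|w\|_{L^1_y}\lesssim t^{-(d-1)/4-|\alpha|/2}\|w\|_{L^1_y}.
\end{equation}
Summing over $|\alpha|\leq k$ yields the pure $L^1_y\to H^k_y$ bound $\|S_H(t)w\|_{H^k_y}\lesssim t^{-(d-1)/4}\|w\|_{L^1_y}$ for $t\geq 1$, and adding one $\partial_{x_j}$ gives $\|\partial_{x_j}S_H(t)w\|_{H^k_y}\lesssim t^{-(d+1)/4}\|w\|_{L^1_y}$ for $t\geq 1$.

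To obtain the mixed bounds (inequalities two and four) valid for all $t>0$, I would split into short-time ($0<t\leq 1$) and long-time ($t\geq 1$) regimes. For $t\leq 1$ the weights $(1+t)^{-(d-1)/4}$ and $(1+t)^{-(d+1)/4}$ are bounded below by $2^{-(d+1)/4}$, and the right-hand side's $H^k_y$ term $Me^{-\alpha t}\|w\|_{H^k_y}$ (for any fixed $\alpha>0$) already dominates the left-hand side by the contractivity and third estimates established above, after adjusting $M$. For $t\geq 1$, we have $(1+t)^{-(d-1)/4}\asymp t^{-(d-1)/4}$ and $(1+t)^{-(d+1)/4}\asymp t^{-(d+1)/4}$, so the $L^1_y$-based bounds derived from the kernel estimates furnish the claim. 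Matching constants on the overlap $t=1$ is automatic by choosing $M$ large enough.

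No step is genuinely hard here; the only mildly delicate point is choosing $\alpha>0$ small enough that the short-time exponential $e^{-\alpha t}$ is essentially harmless on $[0,1]$ yet still provides genuine decay. One could alternatively derive the Gaussian smoothing bound $\|\partial^\alpha K_t\|_{L^2_y}\lesssim t^{-(d-1)/4-|\alpha|/2}$ directly from the Fourier side using $\int_{\mathbb R^{d-1}}|\xi|^{2|\alpha|}e^{-2t|\xi|^2}\,\mathrm d\xi = C_\alpha t^{-(d-1)/2-|\alpha|}$ together with Plancherel's identity, which avoids any appeal to explicit Gaussian moments.
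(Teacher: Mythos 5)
Your argument is correct and self-contained, but it takes a genuinely different route from the paper. The paper dispatches the four inequalities in \eqref{eq:inequalities:heatsemigroup} by a citation to Kapitula (1997), Lemma~3.3, and Xin (1992), Lemma~2.2, and supplies a Fourier argument only for the contractivity statement at the end. You instead derive all four bounds directly: contractivity and the short-time smoothing $\|\partial_{x_i}S_H(t)w\|_{H^k_y}\lesssim t^{-1/2}\|w\|_{H^k_y}$ from elementary multiplier estimates, and the $L^1_y\to H^k_y$ decay by computing $\|\partial^\alpha K_t\|_{L^2_y}\sim t^{-(d-1)/4-|\alpha|/2}$ via Plancherel and then applying Young's inequality. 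The final step of splitting $t\leq 1$ (where the exponential $H^k_y$ term on the right dominates after adjusting $M$) from $t\geq 1$ (where the $L^1_y$ kernel bound takes over, since $(1+t)^{-\beta}\asymp t^{-\beta}$) is the correct way to glue the two regimes. A small constant slip in the third-inequality bookkeeping ($\sup_{x\geq 0}xe^{-tx^2/2}=(et)^{-1/2}$, not $(2et)^{-1/2}$, if you are genuinely splitting off half the exponential) is harmless since only the existence of $M$ is claimed. The trade-off between the two approaches: the paper's citation is shorter and defers to a standard reference, while your derivation is fully self-contained and makes transparent exactly where the exponent $(d-1)/4$ and the extra $t^{-1/2}$ per derivative come from.
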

\begin{proof}
    These bounds can be found in \cite[Lem. 3.3]{kapitula1997} and  we specifically refer to \cite[Lem. 2.2]{xin1992multidimensional}; see also \cite{chern1987convergence,kapitula1994stability,kapitula1997,kawashima1987large}. Utilising  the representations of the heat semigroup and the Sobolev norm in terms of the Fourier transform yields \begin{equation}\|S_H(t)w\|_{H^k_y}^2=\int_{\mathbb R^{d-1}}(1+|\xi|^2)^ke^{-2t|\xi|^2}|\widehat w(\xi)|^2\mathrm d\xi\leq \int_{\mathbb R^{d-1}}(1+|\xi|^2)^k|\widehat w(\xi)|^2\mathrm d\xi=\|w\|_{H^k_y}^2,\end{equation} showing that $S_H(t)$ is indeed a contractive semigroup in $H^k_y.$
\end{proof}
\begin{corollary}\label{cor:algebraic:decay}
   For any $k\geq 0$ there exists a constant $M_{\rm alg}>0$ such that for  $t>0$ we have
    \begin{equation}
        \|S_H(t)w\|_{H^k_y}\leq M_{\rm alg}(1+t)^{-(d-1)/4}\big[\|w\|_{L^1_y}+\|w\|_{H^k_y}\big],
    \end{equation}
    while for $t\geq 1$ we have
    \begin{equation}
        \|\nabla_y S_H(t)w\|_{H^k_y}\leq M_{\rm alg}(1+t)^{-(d+1)/4}\big[\|w\|_{L^1_y}+\|w\|_{H^k_y}\big].
    \end{equation}
\end{corollary}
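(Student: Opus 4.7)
The plan is to derive both estimates directly from Lemma \ref{lem:theta} by performing a simple case split in time and absorbing the fast-decaying exponential terms into the desired polynomial rate.

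For the first inequality, I would split the analysis into short times $0 < t \le 1$ and long times $t \ge 1$. On the short-time range, the quantity $(1+t)^{-(d-1)/4}$ is bounded below by $2^{-(d-1)/4}$, so the uniform contractivity bound $\|S_H(t)w\|_{H^k_y} \le M\|w\|_{H^k_y}$ from the first inequality of \eqref{eq:inequalities:heatsemigroup} immediately yields the desired estimate after multiplication by a harmless constant. On the long-time range $t \ge 1$, I would invoke the second bound of \eqref{eq:inequalities:heatsemigroup}, namely
\begin{equation}
\|S_H(t)w\|_{H^k_y} \le M(1+t)^{-(d-1)/4}\|w\|_{L^1_y} + Me^{-\alpha t}\|w\|_{H^k_y},
\end{equation}
and absorb the exponential factor into algebraic decay by the elementary observation that $e^{-\alpha t} \le C_\alpha (1+t)^{-(d-1)/4}$ holds uniformly in $t \ge 0$, for a suitable constant $C_\alpha > 0$ depending only on $\alpha$ and $d$.

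For the second inequality, I would apply the analogous argument to the components $\partial_{x_i} S_H(t) w$ using the fourth bound of \eqref{eq:inequalities:heatsemigroup},
\begin{equation}
\|\partial_{x_i}S_H(t)w\|_{H^{k}_y} \le M(1+t)^{-(d+1)/4}\|w\|_{L^1_y} + Mt^{-1/2}e^{-\alpha t}\|w\|_{H^k_y},
\end{equation}
noting that the restriction $t \ge 1$ eliminates the singular prefactor $t^{-1/2}$ (since $t^{-1/2} \le 1$ there) and again allows one to absorb $e^{-\alpha t}$ into the polynomial decay rate $(1+t)^{-(d+1)/4}$ by the same elementary estimate. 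Summing over $i = 2, \ldots, d$ and taking square roots then produces the stated bound on the full gradient $\nabla_y S_H(t) w$ after adjusting the constant $M_{\rm alg}$ to include this dimensional factor.

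There is no real obstacle here: the corollary is essentially a cosmetic repackaging of Lemma \ref{lem:theta} that unifies the two regimes (initial data in $L^1_y$ versus $H^k_y$) into a single expression with algebraic decay. The only point requiring a bit of care is making sure that the constant $M_{\rm alg}$ can be chosen uniformly so that both the short-time contractivity bound and the long-time algebraic plus exponential bound can be simultaneously absorbed; this is handled by taking $M_{\rm alg} = 2^{(d-1)/4} M \cdot \max\{1, C_\alpha\}$ or a similar expression.
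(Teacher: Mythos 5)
Your proof is correct and follows the route the paper intends: you invoke the four estimates of Lemma \ref{lem:theta} and use the elementary absorption of exponential into algebraic decay (the paper's inequality \eqref{eq:absorb:decay}), with the restriction $t \ge 1$ in the gradient bound precisely absorbing the $t^{-1/2}$ singularity. A minor remark: the short-time case split for the first inequality is unnecessary, since $e^{-\alpha t} \le C_\alpha (1+t)^{-(d-1)/4}$ already holds uniformly for all $t > 0$, so the second bound of \eqref{eq:inequalities:heatsemigroup} gives the claim directly—but this extra step is harmless.
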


 In \cite{bosch2025conditionalspeedshapecorrections,bosch2024multidimensional} we achieved and utilised exponential decay of the heat semigroup by projecting out the kernel of the generator $\Delta_y$, which was possible since we were working on a compact spatial domain. 
 Since the heat semigroup only admits algebraic decay on the whole space, one must be careful when estimating convolutions with expressions that also decay at algebraic rates.
The following inequality provides the bounds that we need in this regard.
\begin{lemma}\label{lem:z} For every $\lambda\in\mathbb R$ and $\mu,\nu>0$ that satisfies  $\lambda\leq \min\{\mu,\nu\}$,  there exists a constant  $C>0$ such that
    \begin{equation}
    \int_0^z(1+z-z')^{-\mu}(1+z')^{-\nu}\mathrm dz'\leq C(1+z)^{-\lambda},\quad z\geq 0, \label{eq:z}
\end{equation}
whenever  $\lambda \leq \mu+\nu-1$ if $\mu,\nu\neq 1$ or $\lambda<\mu$ if $\nu=1$ or $\lambda<\nu$ if $\mu=1.$ 
\end{lemma}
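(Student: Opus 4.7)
The plan is to decompose the interval of integration as $[0,z] = [0, z/2] \cup [z/2, z]$ and bound the two contributions symmetrically. For small $z$ (say $z \le 1$) the integrand is bounded by a constant and \eqref{eq:z} is trivial, so throughout the sketch I assume $z \ge 1$, which lets me freely replace $1 + z/2$ by $\tfrac12(1+z)$ up to harmless constants.

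On the left half $[0,z/2]$ I would pull the factor $(1+z-z')^{-\mu}$ out of the integral using $(1+z-z')^{-\mu} \le (1+z/2)^{-\mu} \le C (1+z)^{-\mu}$, leaving the scalar integral $\int_0^{z/2}(1+z')^{-\nu}\,dz'$. This is the standard one-variable power integral whose size is
\begin{equation*}
\int_0^{z/2}(1+z')^{-\nu}\,dz' \;\le\; \begin{cases} C, & \nu > 1,\\ C \log(1+z), & \nu = 1,\\ C(1+z)^{1-\nu}, & \nu < 1.\end{cases}
\end{equation*}
Multiplying by the prefactor $(1+z)^{-\mu}$ yields three cases for the left-half contribution: $C(1+z)^{-\mu}$, $C(1+z)^{-\mu}\log(1+z)$, and $C(1+z)^{-(\mu+\nu-1)}$, respectively. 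An identical argument on $[z/2, z]$, now pulling out $(1+z')^{-\nu} \le C(1+z)^{-\nu}$, produces the mirror-image bounds with $\mu$ and $\nu$ swapped.

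Summing the two halves gives a clean case analysis. When both $\mu,\nu > 1$ the bound collapses to $C(1+z)^{-\min\{\mu,\nu\}}$, which covers any $\lambda \le \min\{\mu,\nu\}$ (and since $\min\{\mu,\nu\} \le \mu+\nu-1$ whenever $\mu,\nu \ge 1$, the stated condition $\lambda \le \mu+\nu-1$ is vacuous here). When both $\mu,\nu < 1$ the dominating term from each half is $C(1+z)^{-(\mu+\nu-1)}$, so we need exactly $\lambda \le \mu+\nu-1$. In the mixed case $\mu > 1 > \nu$ (or its transpose), a direct comparison shows $\mu+\nu-1 > \nu$, so $C(1+z)^{-\nu}$ dominates and $\lambda \le \nu = \min\{\mu,\nu\}$ suffices. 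Finally, if $\nu = 1$ one half carries the unavoidable logarithm $(1+z)^{-\mu}\log(1+z)$, and absorbing this factor into $(1+z)^{-\lambda}$ requires the strict inequality $\lambda < \mu$; the case $\mu = 1$ is identical by symmetry.

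No step is genuinely hard — the proof is essentially bookkeeping around the splitting point. The only place requiring a moment of care is the boundary case $\mu = 1$ or $\nu = 1$, where the logarithmic blow-up in the scalar integral is what forces the strict inequality in the hypothesis; this is why the statement separates these cases. All of the above constants can be absorbed into a single $C > 0$ depending only on $\lambda, \mu, \nu$.
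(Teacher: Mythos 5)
Your proof is correct. The paper itself does not supply an argument for this lemma --- it only cites Lemma 3.2 of Chern and Liu and Lemma 5.3 of Hoffman, Hupkes and Van Vleck --- so you have filled in a self-contained derivation rather than reproduced the paper's own. The midpoint split at $z'=z/2$, followed by pulling the nearly constant factor out of each half and integrating the remaining power, is exactly the textbook route to such convolution bounds, and it is almost certainly the argument those cited works use as well. Your case analysis is complete and correct: you verify that $\min\{\mu,\nu\}\le\mu+\nu-1$ is automatic whenever $\max\{\mu,\nu\}\ge1$, that the exponent $\mu+\nu-1$ becomes binding only when both exponents drop below $1$, and that the logarithm appearing when $\mu=1$ or $\nu=1$ is what forces the strict inequality in the hypothesis. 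The only minor point worth stating explicitly is that the case $\mu=\nu=1$ is covered by either of the boundary clauses and both halves then contribute the same $(1+z)^{-1}\log(1+z)$, but this is implicit in what you wrote. No gap.
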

\begin{proof}
    This inequality can be found in, e.g., \cite[Lem 3.2]{chern1987convergence} and \cite[Lem. 5.3]{hoffman2015multi}.
\end{proof}
\begin{example}
    In specific cases, the integral in \eqref{eq:z} has a closed form. For $d\in\{2,4\},$ we have
    \begin{equation}
        \int_0^t(1+t-s)^{-(d-1)/4}(1+s)^{-(5-d)/4}\,\mathrm ds=2\sqrt 2\arctan\left(\sqrt{\frac{t/2+1}{\sqrt{t+1}}-1}\right)\leq \pi\sqrt2,
    \end{equation}
    while for $d=3$ the  integral above is equal to $2\arctan\left(\frac{t/2}{\sqrt{t+1}}\right)$, which is bounded by $\pi.$
\end{example}
Let
us  recall that a similar phenomena arises with exponential functions. We have
\begin{equation}
    \int_0^z e^{-\mu (z-z')}e^{-\nu z'}\mathrm dz'\leq Ce^{-\lambda z},\quad z\geq 0,\label{eq:est:exponentials}
\end{equation}
where $\lambda\leq \min\{\mu,\nu\}$ for $\mu\neq \nu$; the integral evaluates to $z e^{-\mu z}$ whenever $\mu=\nu,$ which can then be bounded by $Ce^{-\lambda z}$ for $\lambda<\mu$. In addition, it is not difficult to show that algebraic decay dominates exponential decay    in the sense that 
\begin{equation}
    \int_0^z e^{-\mu (z-z')}(1+z')^{-\nu}\mathrm dz'\leq C(1+z)^{-\lambda},\quad z\geq 0,\label{eq:est:exp:poly}
\end{equation}
holds for $\lambda \leq \nu$ and any $\mu,\nu>0.$ By symmetry, we also have
\begin{equation}
    \int_0^z (1+z-z')^{-\mu}e^{-\nu z'}\mathrm dz'\leq C(1+z)^{-\lambda},\quad z\geq 0,\label{eq:est:exp:poly2}
\end{equation}
for $\lambda \leq \mu$ and every $\mu,\nu>0.$
These estimates follow  from Lemma \ref{lem:z} after noting  that  exponential decay can be absorbed into  algebraic decay; for every $\mu>0$ and every $\nu>0$ there exists a constant $C(\mu,\nu)>0$ such that
\begin{equation}
    e^{-\mu z}\leq C(\mu,\nu)(1+z)^{-\nu},\quad z\geq 0.\label{eq:absorb:decay}
\end{equation}
Indeed, an easy computation shows that the optimal constant $C(\mu,\nu)$ is given by
\begin{equation}
C(\mu,\nu)=\begin{cases}
    1,& \nu \leq \mu,\\
       e^{\mu-\frac{\nu}{\mu}}\left(\frac\nu\mu\right)^\nu,&\nu>\mu.
    
    \end{cases}
\end{equation}

\subsection{Stochastic convolutions}\label{sec:stoch:convolution}

Typically,  maximal inequalities (i.e., BDG-type inequalities) rely on the assumption that the semigroup is either contractive \cite{hausenblas2001note} or that it admits an $H^\infty$-calculus \cite{weis2006h}; see also \cite{Neerven2020MaximalEF,veraar2011note}. 
 In our case, we
 exploit the fact that both $S_{\rm tw}(t)$ and therefore $S_L(t)$ admit an $H^\infty$-calculus after projecting out the isolated simple eigenvalue, and that the heat semigroup $S_H(t)$ is contractive.

\begin{lemma}\label{lem:semigroup:bnds:SL}Pick a  sufficiently large $K_{L} \geq 1$. Then for any $T>0$, any integer $p \geq 1$, and any process $X \in \mathcal{N}^2\left([0, T] ;\mathbb F ; H S(\mathcal W_Q, H^k)\right)$ we have the bound
\begin{equation}
\mathbb E\left\|\int_0^t S_L(t-s) X(s) \,\mathrm d W_s^Q\right\|_{H^k}^{2 p} \leq K_{L}^{2 p} p^p \mathbb E\left[\int_0^t\|S_L(t-s) X(s)\|_{H S(\mathcal W_Q;H^k)}^2 \mathrm d s\right]^p,\label{eq:decay}
\end{equation}
for all $0\leq t\leq T,$
together with the maximal inequality
\begin{equation}\label{eq:maximal:ineq:SL}
\mathbb E \sup _{0 \leq t \leq T}\left\|\int_0^t S_L(t-s) X(s) \,\mathrm d W_s^Q\right\|_{H^k}^{2 p} \leq K_{L}^{2 p} p^p \mathbb E\left[\int_0^T\|X(s)\|_{H S(\mathcal W_Q;H^k)}^2 \mathrm d s\right]^p .
\end{equation}
\end{lemma}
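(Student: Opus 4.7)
For the first bound \eqref{eq:decay}, I would observe that for each fixed $t \in [0,T]$ the stochastic convolution can be rewritten as a genuine stochastic integral
\begin{equation}
\int_0^t S_L(t-s) X(s) \,\mathrm dW_s^Q = \int_0^T Y_t(s)\,\mathrm dW_s^Q,
\qquad Y_t(s) := \mathbf 1_{[0,t]}(s) \, S_L(t-s)X(s),
\end{equation}
so that the inequality is a direct consequence of the vector-valued Burkholder--Davis--Gundy inequality (in the form used in \cite{bosch2025conditionalspeedshapecorrections, hamster2020expstability}), whose constant is $p^p$ up to an absolute multiplicative factor. This is the easy step.

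For the maximal inequality \eqref{eq:maximal:ineq:SL} the issue is that $\mathcal L$ has no spectral gap and $S_L(t)$ is not a contraction on $H^k$, so one cannot invoke Hausenblas~\cite{hausenblas2001note} or the $H^\infty$-calculus results of~\cite{Neerven2020MaximalEF,veraar2011note,weis2006h} directly. My plan is to decompose the integrand via $I = P_{\rm tw} + P_{\rm tw}^{\perp}$ and treat the two pieces separately, exploiting the factorisation $S_L(t) = S_{\rm tw}(t)S_H(t) = S_H(t)S_{\rm tw}(t)$ (which holds because $S_{\rm tw}$ acts only on $x$ and $S_H$ only on $y$, so they commute). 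Since $P_{\rm tw},P_{\rm tw}^{\perp}$ are bounded on $H^k$, Hilbert--Schmidt norms respect the split up to an absolute constant, and the combined bound will involve $\|X(s)\|_{HS(\mathcal W_Q;H^k)}^2$ as stated.

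For the complementary part $P_{\rm tw}^\perp X$, I would note that $S_L(t)P_{\rm tw}^\perp = S_H(t)\,S_{\rm tw}(t)P_{\rm tw}^\perp$ decays exponentially in $\mathscr L(H^k)$ by \eqref{eq:SL:bounds}, since $S_{\rm tw}P_{\rm tw}^\perp$ inherits a spectral gap from \textnormal{(HTw)} (hence admits a bounded $H^\infty$-calculus after trivial extension in $y$), while $S_H$ is contractive. The maximal inequality on the range of $P_{\rm tw}^\perp$ then follows from~\cite{Neerven2020MaximalEF,veraar2011note,weis2006h} with the required $p^p$-scaling. For the neutral part $P_{\rm tw}X$, I would use the identity $S_{\rm tw}(t)P_{\rm tw} = P_{\rm tw}$ (translational zero eigenvalue) and the commutation relation to obtain
\begin{equation}
\int_0^t S_L(t-s) P_{\rm tw}X(s) \, \mathrm dW_s^Q
= \Phi_0'(x)\int_0^t S_H(t-s)\, \langle X(s)[\,\cdot\,],\psi_{\rm tw}\rangle_{L^2_x}\, \mathrm dW_s^Q,
\end{equation}
so that, after taking $H^k$-norms and pulling out the $x$-dependent factor $\|\Phi_0'\|_{H^k_x}$ via Lemma~\ref{lem:kappa:phi} or \eqref{eq:nl:bnd:l2x:to:hk}, the problem collapses to a stochastic convolution in $H^k_y$ against the contractive heat semigroup $S_H$. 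Hausenblas' maximal inequality~\cite{hausenblas2001note} then yields exactly the desired bound, again with constant $p^p$.

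The main obstacle in this plan is the absence of a direct maximal inequality for $S_L$ itself; this is resolved cleanly by the projection splitting, but one has to be careful that the two reference results apply in the correct functional-analytic setting (they require either strict contractivity or a bounded $H^\infty$-calculus with suitable angle) and that the $p^p$-dependence is preserved in both. Once both pieces are in hand, the final bound follows by $(a+b)^{2p}\le 2^{2p-1}(a^{2p}+b^{2p})$ and the uniform bounds $\|P_{\rm tw}\|_{\mathscr L(H^k)}+\|P_{\rm tw}^\perp\|_{\mathscr L(H^k)} \le C$, after absorbing all constants into $K_L$.
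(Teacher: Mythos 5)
Your overall plan — BDG for \eqref{eq:decay}, then split $X = P_{\rm tw}X + P_{\rm tw}^\perp X$ for the maximal inequality — matches the paper, and your treatment of the neutral piece $P_{\rm tw}X$ (pull out the $\Phi_0'(x)$ factor, reduce to an $H^k_y$-valued stochastic convolution against the contractive heat semigroup, apply Hausenblas) is a slightly more explicit version of the paper's terse remark and is correct.

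The weak link is the complementary piece. You write that ``$S_{\rm tw}P_{\rm tw}^\perp$ inherits a spectral gap \ldots (hence admits a bounded $H^\infty$-calculus after trivial extension in $y$)'' and then invoke the maximal inequality from \cite{Neerven2020MaximalEF,veraar2011note,weis2006h}. Those results require a bounded $H^\infty$-calculus (with angle $<\pi/2$) for the \emph{generator of the semigroup actually being convolved with}, which here is $\mathcal L\big|_{H^k_\perp} = (\mathcal L_{\rm tw}+\Delta_y)\big|_{H^k_\perp}$, not $\mathcal L_{\rm tw}$ trivially extended in $y$. The trivial extension only gives you the calculus for the purely-$x$ piece; getting from there to the sum $\mathcal L_{\rm tw}+\Delta_y$ on $H^k_\perp$ needs a commuting operator-sum theorem of Kalton--Weis type, which you do not cite and which would introduce a new ingredient the paper avoids. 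Alternatively, if you meant to apply the $H^\infty$-calculus maximal inequality for $S_{\rm tw}(t-s)$ with ``integrand'' $S_H(t-s)P_{\rm tw}^\perp X(s)$, that fails because this integrand depends on $t$ and is not a progressively measurable process in $s$ — so it is not in the admissible class.

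The paper sidesteps both issues by a more elementary device: using \cite[Prop.\ 3.1]{veraar2011note}, it constructs an explicit equivalent norm $\vvvert\cdot\vvvert_{H^k_x}$ on $H^k_{x,\perp}$ in which $S_{\rm tw}(t)$ is a genuine contraction, extends this slice-by-slice to an equivalent norm on $H^k_\perp$, and then observes that $S_L(t) = S_H(t)S_{\rm tw}(t)$ is contractive on $H^k_\perp$ in this new norm because $S_H$ is contractive in the $y$-directions and $S_{\rm tw}$ is contractive in the renormed $x$-directions. At that point Hausenblas's contractive-semigroup maximal inequality applies directly to $S_L$, with no sum theorem and no adaptedness problem. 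To repair your argument you should either import the Kalton--Weis sum theorem and prove the $H^\infty$-calculus for $\mathcal L\big|_{H^k_\perp}$ itself, or switch to the paper's equivalent-norm-plus-contraction route; the second is cleaner and stays entirely within tools already used elsewhere in the paper.
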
  
\begin{proof} With regards to the notation in \cite{ kalton2015hinftyfunctional, van2007stochastic,veraar2011note},
we point out that $\gamma(\mathbb R_+;H^k)=L^2(\mathbb R_+;H^k)$, since  $H^k$ is a Hilbert space which is of type 2 as well as cotype 2 \cite[Rem. 4.7]{kalton2015hinftyfunctional}. The inequality \begin{equation}\label{eq:ito}
\mathbb E \sup _{0 \leq t \leq T}\left\|\int_0^t X(s) \,\mathrm d W_s^Q\right\|_{H^k}^{2 p} \leq K_{\mathrm{bdg}}^{2 p} p^p \mathbb E\left[\int_0^T\|X(s)\|_{H S(\mathcal W_Q;H^k)}^2 \mathrm d s\right]^p,
\end{equation} for some $K_{\mathrm{bdg}}>0,$ can be extracted from  \cite[Prop. 2.1 and Rem. 2.2]{veraar2011note}. Estimate \eqref{eq:decay} readily follows from the computation
\begin{equation}
    \mathbb E \left\|\int_0^t S_L(t-s) X(s) \,\mathrm d W_s^Q\right\|_{H^k}^{2 p}\leq \mathbb E \sup _{0 \leq \tau \leq t}\left\|\int_0^\tau S_L(t-s) X(s) \,\mathrm d W_s^Q\right\|_{H^k}^{2 p},\quad 0\leq t\leq T,
\end{equation}
and appealing to the bound in \eqref{eq:ito}.

Turning to the maximal inequality,  we proceed as in \cite[Sec. 3.2]{bosch2024multidimensional} and split the process $X$ as
\begin{equation}
\label{eq:fw:decomp:B}
 X(s)=P_{\rm tw}X(s)+P_{\rm tw}^\perp X(s),   
\end{equation}
noting that the spectral projection $P_{\rm tw}$ acts on processes $X$ via
\begin{equation}
P_{\rm tw}X(s)[\xi]=\langle X(s)[\xi],\psi_{\rm tw}\rangle_{L^2_x}\Phi_0',
\end{equation}
for all $\xi\in\mathcal W_Q$.
Since $S_{\rm tw}(t)\Phi_0'=\Phi_0'$, for all $t\geq 0,$ we can use \eqref{eq:ito} to deal with the process $P_{\rm tw}X.$
For the complementary process $P_{\rm tw}^\perp B$,
we shall make us of the subspaces
\begin{equation}
    H^k_{x,\perp}=\{v\in H^k_x:P_{\rm tw}v=0\},\quad k\geq 0,
\end{equation}
 which are again Hilbert spaces when endowed with the norm $\|\cdot\|_{H^k}$. In \cite[Sec. 3.2]{bosch2024multidimensional}
we concluded that $-\mathcal L_{\rm tw}$ is sectorial in $H^k_{x,\perp}$ and admits a bounded $H^\infty$-calculus of angle strictly smaller then $\pi/2$. 
In view of \cite[Prop. 3.1]{veraar2011note}, there exists 
an equivalent norm $\vvvert \cdot \vvvert_{H^k_x}$
on $H^k_{x,\perp}$, which is given by  
\begin{equation}
    \vvvert v\vvvert_{H^k_x}=
    \Bigg[ \int_0^\infty \| \mathcal L_{\rm tw}^{1/2}S_{\rm tw}(r)v \|^2_{H^k_x} \, \mathrm dr \Bigg]^{1/2},
\end{equation}
ensuring that $S_{\rm tw}(t)$ restricted to $H^k_{x,\perp}$
is contractive with respect to this norm. Indeed, we have
\begin{equation}
    \vvvert S_{\rm tw}(t)v\vvvert_{H^k_x}^2\leq \int_0^\infty \|\mathcal L_{\rm tw}^{1/2}S_{\rm tw}(t+r)v\|_{H^k_x}^2\mathrm dr = \int_{t}^\infty \|\mathcal L_{\rm tw}^{1/2}S_{\rm tw}(s)v\|_{H^k_x}^2\mathrm ds\leq \vvvert v\vvvert_{H^k_x}^2.
\end{equation}
Turning to the full spatial domain, we now introduce  the notation
\begin{equation}
H^k_\perp=\{v\in H^k:P_{\rm tw}v=0\},    
\end{equation}
and endow it with the norm
\begin{equation}
\vvvert v \vvvert_{H^k} 
= \left[ \sum_{ |\beta| \le k  } 
\int_{\mathbb R^{d-1}} \vvvert \partial_y^\beta v( \cdot, y) \vvvert_{H^{k - |\beta|}_{x}}^2 \, \mathrm d y \right]^{1/2},
\end{equation}
where the sum is with respect to multi-indices $\beta \in \mathbb Z^{d-1}_{\ge 0}$. This norm is again equivalent to the usual $\| \cdot \|_{H^k}$ norm, which means that there exist constants $C,c>0$ such that
\begin{equation}
\label{eq:fw:norm:eqv}
    c\|v\|_{H^k}\leq \vvvert v\vvvert_{H^k}\leq C\|v\|_{H^k},
\end{equation}
for all $v \in H^k_\perp$. Since it is not difficult to see that $S_L(t)$ restricted to $H^k_\perp$  is a contraction semigroup with respect to the norm $\vvvert\cdot\vvvert_{H^k}$, i.e., $\vvvert S_L(t) v\vvvert_{H^k}\leq \vvvert v\vvvert_{H^k}$ for any $v\in H^k_\perp,$ we obtain
\begin{equation}
    \begin{aligned}
        \mathbb E \sup _{0 \leq t \leq T}\left\|\int_0^t S_L(t-s) X(s) \,\mathrm d W_s^Q\right\|_{H^k}^{2 p}&\leq C^{2p}\mathbb E \sup _{0 \leq t \leq T}\left\vvvert\int_0^t S_L(t-s) X(s) \,\mathrm d W_s^Q\right\vvvert_{H^k}^{2 p}\\&\leq C^{2p}K_{\rm cnv}^{2 p} p^p \mathbb E\left[\int_0^T\vvvert X(s)\vvvert_{H S(\mathcal W_Q;H^k)}^2 \mathrm d s\right]^p\\
        &\leq C^{2p}K_{\rm cnv}^{2 p}c^{-2p} p^p \mathbb E\left[\int_0^T\|X(s)\|_{H S(\mathcal W_Q;H^k)}^2 \mathrm d s\right]^p,
    \end{aligned}
\end{equation}
with $K_L=CK_{\rm cnv}c^{-1}$ for some $K_{\rm cnv}>0$. Notice that  the final computation above  utilises the fact that the maximal inequality holds for contractive semigroups \cite{hausenblas2001note}.
\end{proof}


\begin{lemma}
\label{prop:SH}Pick a sufficiently large $K_{H} \geq 1$. Then for any $T>0$, any integer $p \geq 1$, and any process $B \in \mathcal{N}^2\big([0, T] ;\mathbb F ; H S(\mathcal W_Q, H^k_y)\big)$ we have the bound
\begin{equation}
\mathbb E\left\|\int_0^t S_H(t-s) B(s) \,\mathrm d W_s^Q\right\|_{H^k_y}^{2 p} \leq K_{H}^{2 p} p^p \mathbb E\left[\int_0^t\|S_H(t-s) B(s)\|_{H S(\mathcal W_Q;H^k_y)}^2 \mathrm d s\right]^p,\label{eq:decay:H}
\end{equation}
for all $0\leq t\leq T,$
together with the maximal inequality
\begin{equation}\label{eq:maximal:ineq:SH}
\mathbb E \sup _{0 \leq t \leq T}\left\|\int_0^t S_H(t-s) B(s) \,\mathrm d W_s^Q\right\|_{H^k_y}^{2 p} \leq K_{H}^{2 p} p^p \mathbb E\left[\int_0^T\|B(s)\|_{H S(\mathcal W_Q;H^k_y)}^2 \mathrm d s\right]^p .
\end{equation}
\end{lemma}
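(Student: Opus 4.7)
The argument will parallel the proof of Lemma \ref{lem:semigroup:bnds:SL}, but with a crucial simplification: since $\Delta_y$ generates a contraction semigroup on each $H^k_y$ (by the Fourier computation at the end of the proof of Lemma \ref{lem:theta}), no norm-equivalence trick in the spirit of the $H^\infty$-calculus is required, and there is no spectral projection to split off.

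To establish \eqref{eq:decay:H}, I would fix $t \in [0,T]$ and apply \eqref{eq:ito} directly to the progressively measurable integrand $s \mapsto S_H(t-s)B(s) \in HS(\mathcal W_Q; H^k_y)$. Using the elementary observation
\begin{equation}
\mathbb E\left\|\int_0^t S_H(t-s)B(s)\,\mathrm dW^Q_s\right\|_{H^k_y}^{2p}
\le \mathbb E\sup_{0\le \tau\le t}\left\|\int_0^\tau S_H(t-s)B(s)\,\mathrm dW^Q_s\right\|_{H^k_y}^{2p}
\end{equation}
together with \eqref{eq:ito} applied to $X(s)=S_H(t-s)B(s)$ immediately yields \eqref{eq:decay:H} with $K_H$ depending only on the BDG constant $K_{\mathrm{bdg}}$.

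For the maximal inequality \eqref{eq:maximal:ineq:SH}, the plan is to invoke the general maximal inequality for stochastic convolutions driven by a contractive $C_0$-semigroup on a Hilbert space; such results can be found in \cite{hausenblas2001note} (and also follow from \cite[Prop. 3.1]{veraar2011note} once one has an $H^\infty$-calculus, which is in fact available for $-\Delta_y$ on $H^k_y$). Since Lemma \ref{lem:theta} gives $\|S_H(t)\|_{\mathscr L(H^k_y)}\le 1$ for all $t \ge 0$, one directly obtains
\begin{equation}
\mathbb E\sup_{0\le t\le T}\left\|\int_0^t S_H(t-s)B(s)\,\mathrm dW^Q_s\right\|_{H^k_y}^{2p}
\le K_H^{2p}p^p\,\mathbb E\left[\int_0^T \|B(s)\|_{HS(\mathcal W_Q;H^k_y)}^2\,\mathrm ds\right]^p,
\end{equation}
without the detour through the equivalent norm $\vvvert \cdot\vvvert_{H^k}$ that was necessary in Lemma \ref{lem:semigroup:bnds:SL}.

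The only minor subtlety to address is a measurability/adaptedness check: for \eqref{eq:decay:H}, the integrand $s\mapsto S_H(t-s)B(s)$ is again progressively measurable since $B$ is, and $\|S_H(t-s)B(s)\|_{HS(\mathcal W_Q;H^k_y)} \le \|B(s)\|_{HS(\mathcal W_Q;H^k_y)}$ guarantees $L^2$-integrability. Beyond this, there is no real obstacle: the absence of a spectral projection and the direct contractivity of $S_H(t)$ make this lemma strictly easier than its $S_L$-counterpart, so I expect the proof to be short and essentially a reference to the cited maximal inequalities.
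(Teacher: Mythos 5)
Your proposal is correct and matches the paper's argument exactly: for \eqref{eq:decay:H} the paper also proceeds ``completely analogously'' to Lemma \ref{lem:semigroup:bnds:SL} by bounding the expectation by the expected supremum and applying the BDG inequality \eqref{eq:ito}, and for \eqref{eq:maximal:ineq:SH} the paper also just cites contractivity of $S_H(t)$ and \cite{hausenblas2001note}. Your added remarks on the absence of a spectral projection, the superfluousness of the equivalent-norm construction, and the measurability check are all accurate but inessential observations that the paper leaves implicit.
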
  
\begin{proof}
Proving the bounds for the heat semigroup is much less involved. Obtaining estimate \eqref{eq:decay:H} is completely analogous to the proof of Lemma \ref{lem:semigroup:bnds:SL}. The maximal inequality  \eqref{eq:maximal:ineq:SH} simply exploits the fact that the heat semigroup is contractive \cite{hausenblas2001note}.
\end{proof}

In general, the maximal inequality for either a contractive semigroup or a semigroup that admits an $H^\infty$-calculus can be obtained with a dilation argument \cite{Neerven2020MaximalEF,veraar2011note}. 
 Notice that the factorisation method developed by \cite{da1988regularity,da2014stochastic}, although  applicable to general $C_0$-semigroups, leads to a  bound which is less sharp. A discretisation-based approach can also establish   maximal inequalities for stochastic convolutions involving contractive semigroups, even allowing for (random) evolution families \cite{van2021maximal}.

We remark that the inequalities in  \eqref{eq:maximal:ineq:SL} and \eqref{eq:maximal:ineq:SH} are very powerful and useful for short
time intervals, but on longer timescales it is not immediately clear how to exploit the decay
properties of the semigroup. Indeed, the right-hand side grows linearly in
time for integrands that are constant. This
changes if one drops the supremum; see \eqref{eq:decay} and \eqref{eq:decay:H}. As a matter of fact, the maximal inequality can also be used to extract the decay of the semigroup if one splits the integral into the sum of integrals with length one. In particular, we do this in {\S}\ref{sec:est:stoch:conv} and \cite[Sec. 3.3 and 3.4]{bosch2024multidimensional}, yet in slightly different ways. 

 \section{Explicit examples for the  stochastic forcing}\label{sec:about:noise}
  In this appendix we discuss three types of stochastic forcing that can be encoded
  by the stochastic term in \eqref{eq:u} 
  and that fit the assumptions (HgQ) and (H$L^1$) in {\S}\ref{sec:main:result}. 
As already mentioned in \cite{bosch2024multidimensional}, 
we will need to weaken the translational invariance with respect to the transverse direction in order to maintain the desired Hilbert-Schmidt properties. We will utilise weighted and/or localised noise to achieve this,
sacrificing more and more translational invariance along the way.

  

\subsection{Preliminaries}

For all of our examples we have $\mathcal{W} = L^2(\mathbb R^d; \mathbb R^m)$ and  $Q$ is defined via the convolution
\begin{equation}
\label{eq:nt:def:Q}
    [Q w](x,y) = \int_{\mathbb R} \int_{\mathbb R^{d-1}} q(x,y; x', y') w(x', y') \, \mathrm d y'\, \mathrm d x',
\end{equation}
for some appropriate kernel $q$. For each $\xi \in \mathcal{W}_Q$, the expression
$g(u)[\xi]$ is defined via the pointwise multiplication
\begin{equation}
\label{eq:nt:def:g:u:xi:varpi}
    g(u)[\xi](x,y) = \varpi(y) \tilde{g}(u(x,y) ) \xi(x,y)
\end{equation}
for some scalar weight-function $\varpi:\mathbb R^{d-1}\to\mathbb R$ and nonlinearity $\widetilde{g}: \mathbb R^n \to \mathbb R^{n \times m}$.
Notice that the commutation relation $T_{\gamma}g(u)[\xi]=g(T_\gamma u)[T_{\gamma}\xi]$
in (HgQ) is satisfied by construction.

 We first impose a pointwise local Lipschitz condition on the nonlinearity $\tilde{g}$, similar to (Hf). In particular, we require  $\tilde{g}$ to vanish at the limits of the waveprofile $\Phi_0.$ As an immediate consequence, $\tilde{g}$ can be viewed as a locally Lipschitz Nemytskii operator that maps $H^k$ into $H^k(\mathbb R^d; \mathbb R^{n \times m})$.  

\begin{itemize}
    \item[(A$\tilde{g}$)] We have   $\tilde{g}\in C^{k}(\mathbb R^n;\mathbb R^{n\times m})$
    with
    \begin{equation}
        \tilde{g}(u_-)=\tilde{g}(u_+)=0
    \end{equation}
    for the pair $u_\pm\in\mathbb R^n$ appearing in (Hf) and (HTw). In addition, for any $N>0$ there is a  constant  $C_{\tilde{g}}^N > 0$ so that
    \begin{equation}
        |\tilde{g}(u_A)-\tilde{g}(u_B)|+\ldots+|D^{k}\tilde{g}(u_A)-D^{k}\tilde{g}(u_B)|\leq C_{\tilde{g}}^N|u_A-u_B|\label{eq:mr:lipschitz:g}
    \end{equation}
     holds for all  $u_A,u_B\in\mathbb R^{n}$ with $|u_A|\leq N$ and $|u_B|\leq N$.
\end{itemize}

\begin{lemma}\label{lem:Hk:func}
Pick $k > d/2$ and suppose that \textnormal{(Hf)}, \textnormal{(HTw)} and \textnormal{(A$\tilde{g}$)} are satisfied. For all $N>0$ there exists a constant $K_{\tilde{g}}^N > 0$ so that 
for all $\gamma \in \mathbb R$, and every $v_A, v_B \in H^k$
with $\|v_A\|_{H^k} \le N$ and $\|v_B\|_{H^k} \le N$, we
have the bound
\begin{equation}
\label{eq:nl:bnd:delta:theta}
    \|\tilde{g}(T_\gamma \Phi_0+v_A)-\tilde{g}(T_{\gamma}\Phi_0+v_B)\|_{H^k(\mathbb R^d;\mathbb R^{n \times m })}\leq K_{\tilde{g}}^N\|v_A-v_B\|_{H^k} .
\end{equation}
\end{lemma}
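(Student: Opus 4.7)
The claim is a standard Nemytskii-type estimate, which I would establish by combining a mean value representation with a Moser product inequality. Since $k > d/2$, the Sobolev embedding $H^k \hookrightarrow L^\infty$ together with $\|\Phi_0\|_{L^\infty} < \infty$ from (HTw) yields a uniform constant
\begin{equation*}
M := \|\Phi_0\|_{L^\infty} + C_{\mathrm{emb}} N
\end{equation*}
such that $T_\gamma \Phi_0 + v_A$ and $T_\gamma \Phi_0 + v_B$ take values in the ball of radius $M$ in $\mathbb R^n$, uniformly in $\gamma$. The local Lipschitz constants from (A$\tilde{g}$) then collapse to the single constant $C^M_{\tilde g}$ throughout the argument.

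Setting $\delta v := v_A - v_B$ and $u_s := T_\gamma \Phi_0 + v_B + s\delta v$, the fundamental theorem of calculus and Minkowski's integral inequality reduce the claim to a uniform-in-$s$ bound of $\|D\tilde g(u_s)\delta v\|_{H^k}$. For each multi-index $\alpha \in \mathbb Z^d_{\ge 0}$ with $|\alpha| \le k$, the Leibniz rule and Fa\`a di Bruno's formula expand $\partial^\alpha[D\tilde g(u_s)\delta v]$ into a finite sum of expressions of the form
\begin{equation*}
\mathcal I = D^{1+\ell}\tilde g(u_s)\cdot \partial^{\beta_1}u_s \cdots \partial^{\beta_\ell}u_s \cdot \partial^{\alpha_2}\delta v,
\end{equation*}
with $|\beta_j|\ge 1$ and $|\beta_1|+\cdots+|\beta_\ell|+|\alpha_2| = |\alpha|$. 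The factor $D^{1+\ell}\tilde g(u_s)$ is pointwise bounded by $C^M_{\tilde g}$: for $\ell \le k-1$ this follows from $\tilde g \in C^k$, while for the borderline case $\ell = k$ (which forces $|\alpha_2| = 0$ and each $|\beta_j|=1$) it is provided by the Lipschitz condition on $D^k\tilde g$ in (A$\tilde{g}$), which yields an a.e.\ $L^\infty$-bound on $D^{k+1}\tilde g$.

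The remaining factors are controlled by splitting $\partial^{\beta_j}u_s = \partial^{\beta_j}(T_\gamma \Phi_0) + \partial^{\beta_j}(v_B + s\delta v)$. The $\Phi_0$-contribution vanishes unless $\beta_j$ is a pure $x$-multi-index, in which case it equals $\Phi_0^{(|\beta_j|)}(\,\cdot\, - \gamma)$; bootstrapping the travelling wave ODE \eqref{eq:mr:trv:wave:ode} with $f \in C^{k+2}$ yields $\Phi_0 \in C^{k+3}_b(\mathbb R;\mathbb R^n)$, so this piece is uniformly bounded in $L^\infty(\mathbb R^d)$ independently of $\gamma$. The complementary piece lies in $H^{k - |\beta_j|}$ with norm at most $2N$. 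After absorbing the $L^\infty$-bounded factors (from $D^{1+\ell}\tilde g$ and the $\Phi_0$-derivatives) into constants, the standard Moser product estimate
\begin{equation*}
\|\partial^{\gamma_1} h_1 \cdots \partial^{\gamma_m} h_m\|_{L^2(\mathbb R^d)} \le C \prod_{i=1}^m \|h_i\|_{H^k},
\end{equation*}
valid for $\sum_i|\gamma_i| \le k$ with $k > d/2$, yields $\|\mathcal I\|_{L^2} \le C^M_{\tilde g} P(N) \|\delta v\|_{H^k}$ for some polynomial $P$ depending only on $k$, $d$, and bounded norms of $\Phi_0$. Summing over $|\alpha| \le k$ and integrating in $s \in [0,1]$ then completes the proof.

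The main obstacle is the observation that $T_\gamma \Phi_0$ lies in $L^\infty$ but not in $L^2(\mathbb R^d)$, being constant in the transverse coordinate $y$. This is resolved by the fact that every term $\mathcal I$ contains the factor $\partial^{\alpha_2}\delta v$, which lies in $L^2(\mathbb R^d)$ and supplies the required $y$-integrability; the $\Phi_0$-derivatives enter only through their uniform $L^\infty$-norms and are absorbed into the Moser estimate without affecting its structure.
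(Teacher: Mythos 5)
Your proof is correct and captures the essential points of the argument (Fa\`a di Bruno expansion, multilinear Moser estimate, treating the $T_\gamma\Phi_0$ derivatives as $L^\infty$ rather than $H^k$, with the factor $\delta v$ supplying the required $L^2$-integrability in $y$). However, your route through the fundamental-theorem representation $\tilde g(u_A)-\tilde g(u_B)=\int_0^1 D\tilde g(u_s)\delta v\,\mathrm ds$ is genuinely different from the paper's. The paper instead expands $\partial^\alpha[\tilde g(u_A)-\tilde g(u_B)]$ directly via Fa\`a di Bruno and telescopes, producing two families of terms: those where the difference sits on a derivative factor ($\partial^\beta\delta v$), and those where it sits on the composed function ($D^\ell\tilde g(u_A)-D^\ell\tilde g(u_B)$, estimated by the Lipschitz bound in (A$\tilde g$)). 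That argument never meets $D^{k+1}\tilde g$.

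Your FTC route, by contrast, pushes the top-order case $\ell=k$ onto $D^{k+1}\tilde g(u_s)$, which you resolve by invoking Rademacher's theorem to get an a.e.\ $L^\infty$ bound. This is morally sound, but note the extra subtlety you are sweeping under the rug: $D^{k+1}\tilde g$ is defined only on a full-measure subset of $\mathbb R^n$, and $u_s$ may send a positive-measure subset of $\mathbb R^d$ into the exceptional null set. The expression $D^{k+1}\tilde g(u_s)\,\partial^{\beta_1}u_s\cdots\partial^{\beta_k}u_s$ is still fine because on the set where $u_s$ hits a non-differentiability point one has $\nabla u_s=0$ a.e.\ (the Serrin/Stampacchia chain-rule lemma for $W^{1,\infty}\circ H^1$ compositions), so the whole product vanishes. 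Without citing that lemma the step is not self-contained. The telescoping argument the paper adopts sidesteps all of this, which is precisely why assumption (A$\tilde g$) is formulated in terms of Lipschitz continuity of $D^k\tilde g$ rather than existence of $D^{k+1}\tilde g$; your approach is valid but uses slightly more machinery than the hypothesis was designed to support.
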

\begin{proof}
    We may proceed as in the proof of \cite[Lem. 4.1]{bosch2024multidimensional}, but with one important adjustment since  $\tilde{g}(\Phi_0)$ is no longer contained in $H^k$. Instead, we exploit the fact that $\Phi_0$
    is bounded and satisfies
    \begin{equation}
        \|\partial_x^\ell \Phi_0\|_{\infty}\leq \|\Phi_0'\|_{H^{k}_x} < \infty,\quad \text{for all $1\leq \ell\leq k.$}
    \end{equation}
     This means that  the components $\mathcal I_I$ and $\mathcal I_{II}$, defined within the proof of \cite[Lem. 4.1]{bosch2024multidimensional}, need to be decomposed even further, treating the derivatives of $\Phi_0$ separately from those involving  $v_A$ and $v_B$. Observe that this can be done in a similar fashion as  the proof of Lemma \ref{lem:d2:f:v}.
\end{proof}

In view of  definition \eqref{eq:nt:def:g:u:xi:varpi},
one of the key questions throughout this section
is whether a function $z: \mathbb R^d \to \mathbb R^{n \times m}$ can be viewed as a Hilbert-Schmidt operator
from $\mathcal W_Q$ into $H^k$ via the pointwise multiplication
\begin{equation}
\label{eq:nt:def:multp:op:z}
    z [\xi](x,y) = z(x,y) \xi(x,y).
\end{equation}
To address this, we first note that in all our examples
we have the representation
\begin{equation}
\label{eq:nt:def:sqrt:Q}
    [\sqrt{Q} w](x,y) = \int_{\mathbb R} \int_{\mathbb R^{d-1}} p(x,y; x', y') w(x', y') \, \mathrm d y'\, \mathrm d x',
\end{equation}
for some appropriate kernel $p$.
We recall that any orthonormal basis of $\mathcal{W}_Q$ can be written in the form $(\sqrt{Q} e_{j})_{j \ge 0}$
for some set $(e_{j})_{j \ge 0}$ that is orthonormal in $\mathcal{W}$. Extending this (if necessary) to an orthonormal basis $(e_{j})_{j \ge 0}$ for $\mathcal{W} = L^2(\mathbb R^d; \mathbb R^{m})$,
allows us to introduce the functions
\begin{equation}
\label{eq:nt:def:p:ell}
    p^{j}(x,y) = [\sqrt{Q} e_{j}](x,y) = 
    \int_{\mathbb R} \int_{\mathbb R^{d-1}} p(x,y; x', y') e_j(x', y') \, \mathrm d y'\, \mathrm d x'.
\end{equation}
A straightforward computation involving the product rule|see, for example, the proof of \cite[Lem. 5.4]{bosch2024multidimensional}|implies that
\begin{equation}
\label{eq:nt:fund:bnd:z:hs}
\begin{aligned}
\|z\|_{HS(\mathcal W_Q;H^k)}^2
\le 
(k!)^3
\sum_{j=0}^\infty \sum_{|\alpha|\leq k} \sum_{\beta\le \alpha} \int_{\mathbb R} \int_{\mathbb R^{d-1}}
|\partial^\beta z(x,y)|^2 
|\partial^{\alpha-\beta} p^{j}(x,y) |^2
\, \mathrm dy \, \mathrm dx ,
\end{aligned}\end{equation}
provided that the right-hand side converges. This motivates the definition
\begin{equation}
\label{eq:nt:def:cal:p}
    \mathcal{P}_k(x,y) = \sum_{j=0}^\infty \sum_{|\gamma| \le k}
     | \partial^{\gamma} p^{j}(x,y) |^2=\sum_{|\gamma| \le k}\|\partial^\gamma_{(x,y)} p(x,y;\cdot,\cdot)\|_{L^2(\mathbb R^d;\mathbb R^{m \times m})}^2.
\end{equation}
We now provide three separate conditions on the pair $(\mathcal{P}_k, z)$ that guarantee Hilbert-Schmidt bounds for the multiplication operator \eqref{eq:nt:def:multp:op:z}, which all 
follow directly by inspecting \eqref{eq:nt:fund:bnd:z:hs}.

\begin{corollary}
\label{cor:nt:p:unif:bnd}
Pick $k \ge 0$ and let  $\sup_{(x,y)\in\mathbb R\times\mathbb R^{d-1}}\mathcal{P}_k(x,y)<\infty$. There exists a constant $K > 0$ so that for any $z \in H^k(\mathbb R^d;\mathbb R^{n \times m})$ the associated multiplication operator \eqref{eq:nt:def:multp:op:z} admits the bound
\begin{equation}
    \| z \|_{HS(L^2_Q;H^k)} \le K \| z \|_{H^k(\mathbb R^d;\mathbb R^{n \times m})} .
\end{equation}
\end{corollary}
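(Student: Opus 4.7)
The plan is to read the desired bound directly off the fundamental estimate \eqref{eq:nt:fund:bnd:z:hs} that precedes the corollary, which already does all of the heavy lifting (application of the product rule, rearrangement of derivatives, Cauchy--Schwarz on the pointwise multiplication). What remains is essentially bookkeeping: swap the summation over the basis index $j$ with the integral, recognize the resulting sum as controlled by $\mathcal{P}_k(x,y)$, and pull the uniform bound out of the integral.

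More concretely, I would start from
\begin{equation}
\|z\|_{HS(\mathcal W_Q;H^k)}^2
\le (k!)^3 \sum_{|\alpha|\le k} \sum_{\beta\le \alpha}
\sum_{j=0}^\infty \int_{\mathbb R}\int_{\mathbb R^{d-1}}
|\partial^\beta z(x,y)|^2\,|\partial^{\alpha-\beta} p^{j}(x,y)|^2 \,\mathrm dy\,\mathrm dx,
\end{equation}
where the interchange of the (absolutely convergent) $j$-sum with the integral is justified by monotone convergence since every integrand is non-negative. For each fixed multi-index $\gamma = \alpha - \beta$ with $|\gamma|\le k$, the definition \eqref{eq:nt:def:cal:p} gives pointwise
\begin{equation}
\sum_{j=0}^\infty |\partial^{\gamma} p^{j}(x,y)|^2 \le \mathcal{P}_k(x,y) \le \|\mathcal{P}_k\|_\infty ,
\end{equation}
so the inner sum pulls out as the uniform constant $\|\mathcal{P}_k\|_\infty$.

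This leaves a finite linear combination of the integrals $\int|\partial^\beta z|^2\,\mathrm d\mathbf x$ with $|\beta|\le k$, each of which is bounded by $\|z\|_{H^k(\mathbb R^d;\mathbb R^{n\times m})}^2$. Collecting the combinatorial constants $(k!)^3$, the number of admissible pairs $(\alpha,\beta)$, and $\|\mathcal{P}_k\|_\infty$ into a single constant $K$ yields the stated inequality. There is no genuine obstacle here: the only sensitive point is the legality of the sum-integral interchange, which is immediate by non-negativity; everything else is a finite counting argument.
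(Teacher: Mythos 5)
Your proposal is correct and follows the same route the paper intends: the paper states explicitly that Corollary \ref{cor:nt:p:unif:bnd} (together with Corollaries \ref{cor:nt:p:intg} and \ref{cor:nt:p:intg:full}) ``follow directly by inspecting \eqref{eq:nt:fund:bnd:z:hs},'' and your write-up is precisely that inspection carried out in detail --- exchanging the $j$-sum with the integral (Tonelli, by non-negativity), bounding $\sum_{j}|\partial^{\alpha-\beta}p^{j}(x,y)|^{2}$ by $\|\mathcal{P}_k\|_{\infty}$, and summing the remaining $\int|\partial^{\beta}z|^{2}$ terms into $\|z\|_{H^k}^{2}$.
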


\begin{corollary}
\label{cor:nt:p:intg}
Pick $k \ge 0$ and suppose that there exists a constant $C > 0$ so that
\begin{equation}
     \int_{\mathbb R^{d-1}} \mathcal{P}_k(x,y)\, \mathrm dy < C
\end{equation}
holds for all $x \in \mathbb R$. There exists a constant $K > 0$ so that for any $z \in H^k(\mathbb R;\mathbb R^{n \times m})$ the associated multiplication operator \eqref{eq:nt:def:multp:op:z}|with the convention $z[\xi](x,y) = z(x)\xi(x,y)$|admits the bound
\begin{equation}
    \| z \|_{HS(L^2_Q;H^k)} \le K \| z \|_{H^k(\mathbb R;\mathbb R^{n \times m})}.
\end{equation}
\end{corollary}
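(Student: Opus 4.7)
The plan is to derive this essentially as a refinement of Corollary B.1 (or more precisely of the fundamental bound \eqref{eq:nt:fund:bnd:z:hs} that underlies it), exploiting the fact that $z$ depends only on the $x$-variable. The starting point is to apply \eqref{eq:nt:fund:bnd:z:hs} directly to the multiplication operator in question. Since $z = z(x)$ has no $y$-dependence, the derivative $\partial^\beta z(x,y)$ vanishes identically whenever the multi-index $\beta \in \mathbb Z^d_{\ge 0}$ contains any component corresponding to a $y$-direction. Consequently, in the triple sum of \eqref{eq:nt:fund:bnd:z:hs} only those $\beta$ of the form $\beta = (\beta_1, 0, \ldots, 0)$ with $\beta_1 \le k$ contribute, and for such $\beta$ we have $\partial^\beta z(x,y) = z^{(\beta_1)}(x)$.

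The next step is to exploit the fact that $|z^{(\beta_1)}(x)|^2$ is constant in $y$, which allows us to factor the inner integrals. More precisely, for each fixed pair $(\alpha,\beta)$ with $\beta$ as above and for each $j \ge 0$, we can write
\begin{equation}
\int_{\mathbb R} \int_{\mathbb R^{d-1}} |z^{(\beta_1)}(x)|^2 |\partial^{\alpha-\beta} p^{j}(x,y) |^2 \, \mathrm dy \, \mathrm dx
= \int_{\mathbb R} |z^{(\beta_1)}(x)|^2 \left[\int_{\mathbb R^{d-1}} |\partial^{\alpha-\beta} p^{j}(x,y) |^2 \, \mathrm dy \right] \mathrm dx .
\end{equation}
Summing over $j$ and over all multi-indices $\alpha - \beta$ of length at most $k$ (recall $|\alpha| \le k$ and $\beta \le \alpha$), and using the definition \eqref{eq:nt:def:cal:p} of $\mathcal{P}_k$, the bracketed quantity is controlled pointwise in $x$ by $\int_{\mathbb R^{d-1}} \mathcal{P}_k(x,y)\,\mathrm dy$, which by hypothesis is bounded by $C$ uniformly in $x$.

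Putting these pieces together yields
\begin{equation}
\|z\|_{HS(\mathcal W_Q;H^k)}^2 \le (k!)^3 \, C \sum_{\beta_1 \le k} \binom{k}{\beta_1}_{\!*}\!\int_{\mathbb R} |z^{(\beta_1)}(x)|^2 \, \mathrm dx,
\end{equation}
where the combinatorial factor absorbs the number of admissible $\alpha$'s of length at most $k$ with $\beta \le \alpha$. Since the right-hand side is clearly bounded by $K^2 \|z\|_{H^k(\mathbb R;\mathbb R^{n\times m})}^2$ for a suitable $K>0$, the claim follows. There is no genuine obstacle in this argument; the only point requiring care is the bookkeeping around which $\beta$'s survive and the correct use of the $x$-uniform integral bound on $\mathcal{P}_k$, after which the estimate reduces to a mechanical application of \eqref{eq:nt:fund:bnd:z:hs}.
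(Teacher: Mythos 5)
Your proof is correct and takes the same route as the paper, which simply reads the claim off the fundamental bound \eqref{eq:nt:fund:bnd:z:hs}: since $z$ depends only on $x$, only pure $x$-multi-indices $\beta$ survive, the $y$-integral then pairs with $\int_{\mathbb R^{d-1}} \mathcal{P}_k(x,y)\,\mathrm dy \le C$ uniformly in $x$, and what remains is controlled by $\|z\|_{H^k(\mathbb R;\mathbb R^{n\times m})}^2$. The informal combinatorial factor $\binom{k}{\beta_1}_{*}$ you insert is in fact superfluous, since for fixed $\beta$ the map $\alpha \mapsto \alpha - \beta$ is injective on $\{\alpha \ge \beta\}$, so summing over $j$ and $\alpha$ already gives at most $\mathcal{P}_k(x,y)$ without multiplicity — but this does not affect the validity of the argument.
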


\begin{corollary}
\label{cor:nt:p:intg:full}
Pick $k \ge 0$ and suppose that there exists a constant $C > 0$ so that
\begin{equation}
     \int_{\mathbb R}\int_{\mathbb R^{d-1}} \mathcal{P}_k(x,y)\, \mathrm dy \, \mathrm dx < C.
\end{equation}
There exists a constant $K > 0$ so that for any $z \in W^{k,\infty}(\mathbb R^d;\mathbb R^{n \times m})$ the associated multiplication operator \eqref{eq:nt:def:multp:op:z} admits the bound
\begin{equation}
    \| z \|_{HS(L^2_Q;H^k)} \le K \| z \|_{W^{k,\infty}(\mathbb R;\mathbb R^{n \times m})}.
\end{equation}
\end{corollary}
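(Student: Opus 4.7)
The proof will follow directly by exploiting the fundamental Hilbert-Schmidt estimate \eqref{eq:nt:fund:bnd:z:hs}, using the pointwise $L^\infty$ control on the derivatives of $z$ to fully decouple the $z$-factor from the kernel factor inside the integrand. More precisely, for each multi-index $\beta$ with $|\beta| \le k$ the assumption $z \in W^{k,\infty}(\mathbb R^d;\mathbb R^{n\times m})$ gives the uniform bound
\begin{equation}
    |\partial^\beta z(x,y)|^2 \le \|z\|_{W^{k,\infty}(\mathbb R^d;\mathbb R^{n\times m})}^2,
\end{equation}
which does not depend on $(x,y)$ and can hence be pulled outside both integrals in \eqref{eq:nt:fund:bnd:z:hs}.

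Once this is done, the remaining expression reads
\begin{equation}
    \sum_{j=0}^\infty \sum_{|\alpha| \le k} \sum_{\beta \le \alpha} \int_{\mathbb R} \int_{\mathbb R^{d-1}} |\partial^{\alpha-\beta} p^{j}(x,y)|^2 \, \mathrm dy \, \mathrm dx,
\end{equation}
and the plan is to re-index the double sum over $(\alpha,\beta)$ in terms of the difference $\gamma = \alpha - \beta$, which still satisfies $|\gamma| \le k$. The number of pairs $(\alpha,\beta)$ mapping to a fixed $\gamma$ is bounded by a combinatorial constant $C_{k,d}$ depending only on $k$ and $d$. After interchanging sum and integral (all terms are nonnegative, so Tonelli applies), the resulting expression becomes at most $C_{k,d}\int_{\mathbb R}\int_{\mathbb R^{d-1}} \mathcal P_k(x,y)\,\mathrm dy\,\mathrm dx$ by the very definition \eqref{eq:nt:def:cal:p} of $\mathcal P_k$, and the hypothesis bounds this by $C_{k,d}\, C$.

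Combining the two estimates yields $\|z\|_{HS(\mathcal W_Q;H^k)}^2 \le (k!)^3\, C_{k,d}\, C\, \|z\|_{W^{k,\infty}}^2$, from which the claim follows with $K = \sqrt{(k!)^3 C_{k,d}\, C}$. There is no real obstacle: the only conceptual point worth emphasising|and this is what distinguishes the present statement from Corollaries \ref{cor:nt:p:unif:bnd} and \ref{cor:nt:p:intg}|is that because $z$ now depends on both $x$ and $y$ while the integrability assumption on $\mathcal P_k$ is over the \emph{full} space $\mathbb R^d$, one cannot leave any factor depending on $(x,y)$ alongside $\mathcal P_k$ inside the integral, so the pointwise $L^\infty$ bound on $z$ (rather than an $H^k$- or $L^2_x$-type bound) is the natural tool.
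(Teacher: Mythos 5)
Your proof is correct and follows exactly the route the paper intends: the paper itself only remarks that Corollaries~\ref{cor:nt:p:unif:bnd}--\ref{cor:nt:p:intg:full} "all follow directly by inspecting \eqref{eq:nt:fund:bnd:z:hs}", and your argument is precisely that inspection spelled out, pulling the uniform $W^{k,\infty}$ bound on $\partial^\beta z$ out of the integral and collapsing the remaining sum into $\int\int \mathcal{P}_k$ via the definition \eqref{eq:nt:def:cal:p}. (As a minor side remark, the target norm in the statement reads $W^{k,\infty}(\mathbb R;\cdot)$ but should be $W^{k,\infty}(\mathbb R^d;\cdot)$; you implicitly and correctly used the latter.)
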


We now provide a streamlined approach to establish 
the bounds in (HgQ).  
Corollaries \ref{cor:nt:p:unif:bnd}--\ref{cor:nt:p:intg:full} shall be used in the cases below
to check
the conditions
\eqref{eq:nt:bnd:hs:z:hk:1d} and \eqref{eq:nt:bnd:hs:z:hk:full}. Moreover, we note that
\begin{equation}
    \tilde{g}(\Phi_0) \in H^k(\mathbb R; \mathbb R^{n \times m}) \cap W^{k,\infty}(\mathbb R; \mathbb R^{n \times m}).
\end{equation}

\begin{lemma}
\label{lem:nt:hgq:satisfied}
Suppose that \textnormal{(Hf)} and \textnormal{(HTw)} are satisfied and that the operator $Q$ defined
in \eqref{eq:nt:def:Q} is bounded, non-negative and symmetric.
Suppose furthermore that there exists a constant $K > 0$ so that 
for any $\gamma \in \mathbb R$
the multiplication operator \eqref{eq:nt:def:multp:op:z}
associated to $\varpi \tilde{g}(T_\gamma \Phi_0)$ admits the bound
\begin{equation}
\label{eq:nt:bnd:hs:z:hk:1d}
    \| \varpi \tilde{g}(T_\gamma \Phi_0) \|_{HS(L^2_Q; H^k)} \le K,
\end{equation}
while for any $\tilde{z} \in H^k(\mathbb R^d; \mathbb R^{n \times m})$ 
the multiplication operator \eqref{eq:nt:def:multp:op:z}
associated to $\varpi \tilde{z}$ satisfies
\begin{equation}
\label{eq:nt:bnd:hs:z:hk:full}
    \| \varpi \tilde{z} \|_{HS(L^2_Q; H^k)} \le K \| \tilde{z}\|_{H^k(\mathbb R^d; \mathbb R^{n \times m})}.
\end{equation}
Then \textnormal{(HgQ)} is satisfied.
\end{lemma}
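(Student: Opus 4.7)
The plan is to verify the three remaining conditions of \textnormal{(HgQ)} one at a time, since the non-negativity and symmetry of $Q$ are assumed outright. The commutation relation $T_\gamma g(u)[\xi]=g(T_\gamma u)[T_\gamma\xi]$ follows immediately from the pointwise multiplicative structure \eqref{eq:nt:def:g:u:xi:varpi}: the weight $\varpi$ depends only on $y$, which is unaffected by $T_\gamma$, and both $\tilde g$ and the product with $\xi$ commute with shifts in the $x$-variable. So the only real work concerns the uniform and Lipschitz bounds on $g$ viewed as a map into $HS(\mathcal W_Q;H^k)$.

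For the uniform bound, I would split
\begin{equation}
    g(T_\gamma\Phi_0+v)[\xi](x,y)
    =\varpi(y)\tilde g(T_\gamma\Phi_0)(x)\xi(x,y)+\varpi(y)\big[\tilde g(T_\gamma\Phi_0+v)-\tilde g(T_\gamma\Phi_0)\big](x,y)\,\xi(x,y),
\end{equation}
so that $g(T_\gamma\Phi_0+v)$ is realised as a sum of two multiplication operators of the form \eqref{eq:nt:def:multp:op:z}. The first is handled directly by the hypothesis \eqref{eq:nt:bnd:hs:z:hk:1d}, yielding the constant bound $K$ independent of $\gamma$. For the second, Lemma \ref{lem:Hk:func} applied with $v_B=0$ gives $\tilde g(T_\gamma\Phi_0+v)-\tilde g(T_\gamma\Phi_0)\in H^k(\mathbb R^d;\mathbb R^{n\times m})$ with norm bounded by $K_{\tilde g}^N\|v\|_{H^k}\le K_{\tilde g}^N N$; the hypothesis \eqref{eq:nt:bnd:hs:z:hk:full} then converts this into a Hilbert--Schmidt bound, so the two pieces combine into a single constant $K_g^N$ as required in \eqref{eq:Hg}.

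For the Lipschitz bound, the constant-in-$v$ term drops out under the difference, leaving
\begin{equation}
    g(T_\gamma\Phi_0+v)[\xi]-g(T_\gamma\Phi_0+w)[\xi]=\varpi(y)\big[\tilde g(T_\gamma\Phi_0+v)-\tilde g(T_\gamma\Phi_0+w)\big](x,y)\,\xi(x,y),
\end{equation}
which is again a multiplication operator of the form covered by \eqref{eq:nt:bnd:hs:z:hk:full}. Applying Lemma \ref{lem:Hk:func} once more to the bracketed difference yields
\begin{equation}
    \|\tilde g(T_\gamma\Phi_0+v)-\tilde g(T_\gamma\Phi_0+w)\|_{H^k(\mathbb R^d;\mathbb R^{n\times m})}\le K_{\tilde g}^N\|v-w\|_{H^k},
\end{equation}
and plugging this into \eqref{eq:nt:bnd:hs:z:hk:full} produces exactly the estimate \eqref{eq:HgLip}. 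There is no real obstacle here; the entire proof is an assembly of Lemma \ref{lem:Hk:func} together with the two multiplication-operator estimates supplied as hypotheses, the only subtlety being the splitting off of the constant-in-$v$ piece $\tilde g(T_\gamma\Phi_0)$, which is not itself in $H^k$ but is covered by the dedicated hypothesis \eqref{eq:nt:bnd:hs:z:hk:1d}.
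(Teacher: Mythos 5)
Your proof is correct and follows essentially the same route as the paper's: both reduce matters to Lemma \ref{lem:Hk:func} combined with the two multiplication-operator hypotheses, with the uniform bound obtained by splitting off the constant-in-$v$ piece $\varpi\tilde g(T_\gamma\Phi_0)$ via \eqref{eq:nt:bnd:hs:z:hk:1d}. The only cosmetic differences are the order (you do the uniform bound first, the paper does the Lipschitz bound first and then reuses it for the uniform one) and your explicit mention of the commutation relation, which the paper disposes of once and for all at the top of the appendix by noting it holds by construction for every noise model in \eqref{eq:nt:def:g:u:xi:varpi}.
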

\begin{proof}
For any $\gamma \in \mathbb R$ 
and every $v_A, v_B \in H^k$ with $\max\{ \| v_A \|_{H^k}, \| v_B\|_{H^k} \} \le N$, we 
 use \eqref{eq:nt:bnd:hs:z:hk:full} and \eqref{eq:nl:bnd:delta:theta}, respectively,
to see that
\begin{align}
    \| g(T_{\gamma} \Phi + v_A) - g(T_{\gamma} \Phi + v_B) \|_{HS(\mathcal{W}_Q; H^k)}
    &\le \nonumber 
    K \| \tilde{g}( T_{\gamma} \Phi + v_A) - \tilde{g}( T_{\gamma} \Phi + v_B) ) \|_{H^k(\mathbb R^d; \mathbb R^{n \times m})}
    \\
    &\le K K_{\tilde{g}}^N \|v_A - v_B\|_{H^k},
\end{align}
which yields \eqref{eq:HgLip}.
To obtain the estimate in \eqref{eq:Hg}, it suffices
to note that
for any $v \in H^k$ with $\|v\|_{H^k} \le N$
we have
\begin{align}
\| g( T_{\gamma} \Phi_0 + v ) \|_{HS(\mathcal W_Q; H^k)}
& \le  \nonumber
\| g( T_{\gamma} \Phi_0  ) \|_{HS(\mathcal W_Q; H^k)}
+ \| g( T_{\gamma} \Phi_0 + v ) - g( T_{\gamma} \Phi_0) \|_{HS(\mathcal W_Q; H^k)}
\\ 
& \le  
\| \varpi \tilde{g}( T_{\gamma} \Phi_0  ) \|_{HS(\mathcal W_Q; H^k)}
+ K K_{\tilde{g}}^N N .
\end{align}
Indeed, we may now apply \eqref{eq:nt:bnd:hs:z:hk:1d}
to obtain
\begin{equation}
\| g( T_{\gamma} \Phi_0 + v ) \|_{HS(\mathcal W_Q; H^k)}
 \le 
 K  + K C_{\tilde{g}}^N N, 
\end{equation}
which proves the assertion.
\end{proof}

\subsection{Weighted noise}
\label{subsec:nt:wt}

In this setting the convolution kernel $q$ 
appearing in the definition \eqref{eq:nt:def:Q} for $Q$
is given by 
\begin{equation}
    q(x,y;x',y') = \bar{q}(x-x', y-y'),
\end{equation}
which implies that the covariance function is translationally invariant on the full space $\mathbb R^d$.
Writing $\hat {\bar q}$ for the Fourier transform of $\bar q$,
we impose the smoothness condition below. A key example to keep in mind for $m=1$ is the Gaussian $\bar{q}(x,y) = \mathrm{exp}( - (x^2 + |y|^2))$, which becomes another Gaussian after taking the Fourier transform. 

\begin{itemize}
    \item[(AqW)] We have  $\bar q\in H^{k'}(\mathbb R^d;\mathbb R^{m\times m})\cap L^1(\mathbb R^d;\mathbb R^{m\times m})$, for some integer $k' >2k+d/2,$  with $\bar q(-x,-y)= \bar q(x,y)$ and $\bar q^\top(x,y)=\bar q(x,y)$ for all $(x,y)\in \mathbb  R\times \mathbb R^{d-1}$. Furthermore, for any  $(\omega,\xi) \in  \mathbb R\times \mathbb R^{d-1}$ the $m\times m$ matrix $\hat {\bar q}(\omega,\xi)$ is   non-negative definite.   
\end{itemize}

Thanks to Young's inequality, the $L^1$-integrability
of $\bar{q}$
implies that the convolution operator $Q$ defined 
in \eqref{eq:nt:def:Q} is bounded
on $L^2(\mathbb R^d;\mathbb R^m)$. The symmetry properties
in (AqW) make $Q$  self-adjoint,
while the condition on $\hat{q}$ ensures that
\begin{equation}
    \langle Qw,w\rangle_{L^2(\mathbb R^d;\mathbb R^m)}\geq 0
\end{equation}
holds for all $w \in L^2(\mathbb R^d;\mathbb R^m)$. 

Exploiting the $H^{k'}$-smoothness of $\bar{q}$,
one can follow the proof of \cite[Lem. 4.4]{bosch2024multidimensional} to show that the integration kernel 
\eqref{eq:nt:def:sqrt:Q}
associated to $\sqrt{Q}$
admits the representation
\begin{equation}
    p(x,y;x', y') = \overline{p}(x - x', y-y')
\end{equation}
for some function $\overline{p} \in H^k(\mathbb R^d; \mathbb R^{m \times m}) $.
This can be used to obtain the following essential Hilbert-Schmidt bound for multiplication operators.

\begin{lemma}\label{lem:HS:z}
Pick $k \ge 0$ and suppose that \textnormal{(AqW)} is satisfied. There exists a constant $K > 0$ so that
for any $z\in H^k(\mathbb R^d;\mathbb R^{n\times m})$ 
the associated multiplication operator
\eqref{eq:nt:def:multp:op:z}
admits the bound
\begin{equation}
\label{eq:nt:trn:inv:hs:z}
    \| z\|_{HS(L^2_Q ; H^k)}
    \le K \| z\|_{H^k(\mathbb R^d; \mathbb R^{n \times m}) }.
\end{equation}
\end{lemma}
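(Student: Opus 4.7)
The plan is to reduce the statement directly to Corollary \ref{cor:nt:p:unif:bnd} by showing that the quantity $\mathcal{P}_k(x,y)$ defined in \eqref{eq:nt:def:cal:p} is not just bounded, but in fact constant in $(x,y)$ under assumption (AqW). First I would recall that, as already noted in the text just above the lemma (see also \cite[Lem. 4.4]{bosch2024multidimensional}), the smoothness condition $k' > 2k + d/2$ in (AqW) together with the non-negativity of $\hat{\bar q}(\omega,\xi)$ allows one to take the operator square root of $Q$ and identify it with a convolution operator whose kernel has the translation-invariant form
\begin{equation}
    p(x,y;x',y') = \bar{p}(x-x',y-y'),
\end{equation}
with $\bar{p} \in H^k(\mathbb R^d;\mathbb R^{m\times m})$. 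Concretely, one obtains $\bar p$ by taking the inverse Fourier transform of the matrix square root of $\hat{\bar q}$; the regularity follows from the $H^{k'}$-control on $\bar q$, since the square root loses at most $d/2$ derivatives worth of decay in Fourier space.

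Next I would substitute this translation-invariant kernel into the definition
\begin{equation}
  \mathcal{P}_k(x,y) = \sum_{|\gamma|\le k} \|\partial^\gamma_{(x,y)} p(x,y;\cdot,\cdot)\|^2_{L^2(\mathbb R^d;\mathbb R^{m\times m})}.
\end{equation}
A change of variables $(x',y') \mapsto (x-x',y-y')$ in the $L^2$-integral shows that each summand equals $\|\partial^\gamma \bar{p}\|^2_{L^2(\mathbb R^d;\mathbb R^{m\times m})}$, independently of $(x,y)$. Hence
\begin{equation}
  \mathcal{P}_k(x,y) \equiv \|\bar{p}\|^2_{H^k(\mathbb R^d;\mathbb R^{m\times m})} < \infty,
\end{equation}
so the uniform bound required in Corollary \ref{cor:nt:p:unif:bnd} holds automatically.

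Applying that corollary immediately yields the bound \eqref{eq:nt:trn:inv:hs:z} with $K$ depending only on $\|\bar{p}\|_{H^k}$. The only non-routine point in this argument is the construction of $\bar{p}$ with the required Sobolev regularity; this is where the quantitative gap $k' > 2k + d/2$ in (AqW) is used, and I would simply cite the corresponding step in \cite[Lem. 4.4]{bosch2024multidimensional} rather than redo the Fourier-analytic computation. Everything else is a direct translation-invariance calculation feeding into Corollary \ref{cor:nt:p:unif:bnd}.
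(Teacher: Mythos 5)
Your argument is correct and matches the paper's proof essentially line for line: you substitute the translation-invariant kernel $p(x,y;x',y') = \bar p(x-x',y-y')$ into the definition of $\mathcal{P}_k$, observe by change of variables that it is constant and equal to $\|\bar p\|_{H^k}^2$, and invoke Corollary \ref{cor:nt:p:unif:bnd}. The only optional embellishment is your heuristic remark about the square root losing at most $d/2$ derivatives of Fourier decay, which the paper leaves implicit by citing \cite[Lem. 4.4]{bosch2024multidimensional}; since you also defer to that reference, this does not affect the soundness of the argument.
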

\begin{proof}
Recalling the quantity \eqref{eq:nt:def:cal:p}, we compute
\begin{equation}
\mathcal{P}_k(x,y) = 
\sum_{|\gamma| \le k}
\| \partial^\gamma \overline{p}(x  - \cdot, y - \cdot) \|_{L^2(\mathbb R^{d};\mathbb R^{m \times m})}^2
 = \| \overline{p} \|_{H^k(\mathbb R^d; \mathbb R^{m \times m})}^2,
 \end{equation}
 and observe  that its value is  independent of $(x,y).$
 The result  follows from Corollary \ref{cor:nt:p:unif:bnd}.
\end{proof}

Since $\tilde{g}(\Phi_0)$ is not in $H^k$, 
we will need the weight-function $\varpi$ in \eqref{eq:nt:def:g:u:xi:varpi}
to decay as $|y|\to \infty$ at a rate
that allows the use of
\eqref{eq:nt:trn:inv:hs:z} to show that the noise term
$g$ is well-defined. More precisely, we impose the following condition, which is satisfied for example by 
$\varpi(y)=e^{-|y|^2}$. We note that the $L^1$-bound
is only needed to verify (H$L^1$).

\begin{itemize}
\item[(A$\varpi$W)]
We have $\varpi\in H^{k''}(\mathbb R^{d-1};\mathbb R) \cap L^1(\mathbb R^{d-1};\mathbb R)$ for some integer $k''>k+(d-1)/2$.
\end{itemize}

\begin{lemma}
Pick $k > d/2$ and assume that \textnormal{(Hf)}, \textnormal{(HTw)}, \textnormal{(A$\tilde{g}$)}, \textnormal{(AqW)}, and \textnormal{(A$\varpi$W)} all hold. Then \textnormal{(HgQ)} is satisfied.
\end{lemma}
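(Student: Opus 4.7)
The plan is to invoke Lemma \ref{lem:nt:hgq:satisfied}, which reduces verifying (HgQ) to establishing the two multiplier bounds \eqref{eq:nt:bnd:hs:z:hk:1d} and \eqref{eq:nt:bnd:hs:z:hk:full} for the weight $\varpi$ in combination with the relevant deterministic factors. The boundedness, symmetry, and non-negativity of $Q$ are immediate from (AqW) together with Young's inequality and the assumptions on $\hat{\bar q}$, so only these two bounds remain to be checked.

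Both bounds will be reduced to the translation-invariant Hilbert--Schmidt estimate in Lemma \ref{lem:HS:z}: $\|z\|_{HS(L^2_Q;H^k)} \le K\|z\|_{H^k(\mathbb R^d;\mathbb R^{n\times m})}$ for any $z \in H^k(\mathbb R^d;\mathbb R^{n\times m})$. Consequently, it suffices to obtain uniform $H^k(\mathbb R^d;\mathbb R^{n\times m})$ bounds on the multiplier functions $\varpi \tilde g(T_\gamma \Phi_0)$ and $\varpi \tilde z$, respectively. The key ingredient is that $\varpi$ depends only on $y$, allowing us to separate the integrations in the $x$- and $y$-variables.

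For \eqref{eq:nt:bnd:hs:z:hk:1d} I would note that (A$\tilde g$) together with (HTw) imply $\tilde g(\Phi_0) \in H^k_x$, since $\tilde g(u_\pm)=0$ and $\tilde g$ is locally Lipschitz, while $\Phi_0$ approaches $u_\pm$ exponentially and its derivatives lie in $H^k_x$ (cf.\ the arguments underlying Lemma \ref{lem:Hk:func}). A direct multi-index expansion of $\partial^\alpha[\varpi(y)\tilde g(T_\gamma\Phi_0)(x)]$ and the translation invariance of the $x$-integral then yield
\begin{equation}
\|\varpi \tilde g(T_\gamma \Phi_0)\|_{H^k(\mathbb R^d;\mathbb R^{n\times m})} \le K\|\varpi\|_{H^k_y}\|\tilde g(\Phi_0)\|_{H^k_x},
\end{equation}
which is finite and independent of $\gamma$ since $k \le k''$ and $k''$ is large enough by (A$\varpi$W).

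For \eqref{eq:nt:bnd:hs:z:hk:full} the task is the product estimate $\|\varpi \tilde z\|_{H^k} \lesssim \|\tilde z\|_{H^k(\mathbb R^d;\mathbb R^{n\times m})}$. Expanding derivatives via Leibniz, I would use the fact that every $y$-derivative of $\varpi$ up to order $k$ lies in $L^\infty_y$ — here is the one point where the regularity threshold $k'' > k + (d-1)/2$ in (A$\varpi$W) is essential, as it secures the Sobolev embedding $H^{k''}_y \hookrightarrow W^{k,\infty}_y$ — and bound each term as $\|\partial^\beta_y \varpi \cdot \partial^\gamma \tilde z\|_{L^2} \le \|\partial^\beta_y \varpi\|_{L^\infty_y}\|\partial^\gamma\tilde z\|_{L^2}$. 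This delivers \eqref{eq:nt:bnd:hs:z:hk:full} with constant depending only on $\|\varpi\|_{H^{k''}_y}$, and Lemma \ref{lem:nt:hgq:satisfied} then closes the argument. There is no real obstacle here — the only subtle point is ensuring that the smoothness indices chosen in (AqW) and (A$\varpi$W) are large enough to feed the $H^k$ algebra/Sobolev embedding machinery, which has been built into the hypotheses.
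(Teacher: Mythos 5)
Your proof is correct and follows essentially the same route as the paper: reduce (HgQ) to the two multiplier bounds via Lemma \ref{lem:nt:hgq:satisfied}, feed each one into the translation-invariant Hilbert--Schmidt estimate of Lemma \ref{lem:HS:z}, and verify the required $H^k(\mathbb R^d;\mathbb R^{n\times m})$-bounds on $\varpi\tilde z$ and $\varpi\tilde g(T_\gamma\Phi_0)$ by combining the $W^{k,\infty}_y$-control on $\varpi$ (from $H^{k''}_y\hookrightarrow W^{k,\infty}_y$) with the separation of $x$- and $y$-dependence. The only cosmetic difference is that the paper states the uniform derivative bound $\|\partial^\alpha\varpi\|_\infty < C_1$ up front and derives both multiplier estimates from it, whereas you treat the two estimates in the opposite order; the content is identical.
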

\begin{proof}
Note first that there exists a constant $C_1 > 0$ so that 
\begin{equation}
\| \partial^\alpha \varpi \|_{\infty } < C_1,
\end{equation}
for any multi-index $\alpha \in \mathbb Z^{d-1}_{\ge 0}$ with $|\alpha| \le k$. In particular, for any
$z \in H^k(\mathbb R^d; \mathbb R^{n \times m})$
we have
\begin{equation}
  \| \varpi z \|_{H^k(\mathbb R^d; \mathbb R^{n \times m})} \le C_2   \|z\|_{H^k(\mathbb R^d; \mathbb R^{n \times m})},
\end{equation}
for some $C_2 > 0$. In addition,
there exists a constant $C_3 > 0$ so that
for any $\gamma \in \mathbb R$
we have
\begin{equation}
\| \varpi \tilde{g}(T_{\gamma} \Phi_0)\|_{H^k(\mathbb R^d; \mathbb R^{n \times m})}
\le
C_3 \| \varpi \|_{H^k(\mathbb R^{d-1};\mathbb R)} \| \tilde{g}( \Phi_0)\|_{H^k(\mathbb R; \mathbb R^{n \times m} ) } ,
\end{equation}
because the weight-function $\varpi$ only depends on $y$ while 
$\tilde{g}(T_{\gamma} \Phi_0)$
 only depends on $x$.
In particular, the conditions
\eqref{eq:nt:bnd:hs:z:hk:1d}
and \eqref{eq:nt:bnd:hs:z:hk:full}
follow from \eqref{eq:nt:trn:inv:hs:z},
allowing us to apply
Lemma \ref{lem:nt:hgq:satisfied}.
\end{proof}

\begin{lemma}
Pick $k > d/2$ and assume that \textnormal{(Hf)}, \textnormal{(HTw)}, \textnormal{(A$\tilde{g}$)}, \textnormal{(AqW)}, and \textnormal{(A$\varpi$W)} all hold. Then \textnormal{(H$L^1$)} is satisfied.
\end{lemma}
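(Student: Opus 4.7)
The plan is to exploit the fact that the expression inside the \(L^1\)-norm in \eqref{eq:int:cond} is doubly localised: in the \(y\)-direction by \(\varpi(y)\) (by \textnormal{(A$\varpi$W)}, \(\varpi\in L^1_y\)) and in the \(x\)-direction by the exponentially decaying adjoint eigenfunction \(T_\gamma\psi_{\rm tw}\) (which hence lies in \(L^1_x\cap L^2_x\) uniformly in \(\gamma\) by \textnormal{(HTw)}). This double localisation is precisely what is needed to beat the fact that \(Q\) is translation invariant in both variables, and hence not trace class.

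First, I would set up the pointwise estimates. For \(k>d/2\), the Sobolev embedding \(H^k\hookrightarrow L^\infty(\mathbb R^d)\) combined with the boundedness of \(\Phi_0\) yields a constant \(C_1^N>0\) such that \(\|T_\gamma\Phi_0+v\|_{L^\infty}\le C_1^N\) uniformly in \(\gamma\) whenever \(\|v\|_{H^k}\le N\). Condition \textnormal{(A$\tilde g$)} (together with \(\tilde g(u_\pm)=0\) and the local Lipschitz bound) then gives a uniform pointwise bound \(|\tilde g(T_\gamma\Phi_0(x)+v(x,y))|\le C_2^N\). Next, from the translation-invariant structure assumed in \textnormal{(AqW)}, the kernel of \(\sqrt Q\) has the form \(p(x,y;x',y')=\overline p(x-x',y-y')\) with \(\overline p\in H^{k}(\mathbb R^d;\mathbb R^{m\times m})\), exactly as in the proof of Lemma \ref{lem:HS:z}. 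A direct Parseval computation, independent of the chosen orthonormal basis \((e_j)_{j\ge 0}\) of \(\mathcal W\), then gives the pointwise identity
\begin{equation}
\sum_{j=0}^\infty |p^j(x,y)|^2 = \|\overline p\|_{L^2(\mathbb R^d;\mathbb R^{m\times m})}^2,
\end{equation}
constant in \((x,y)\).

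The heart of the argument is a single weighted Cauchy--Schwarz step. Writing
\begin{equation}
\chi_\gamma(x,y) = |\varpi(y)|\,|T_\gamma\psi_{\rm tw}(x)|,
\end{equation}
which by Fubini satisfies \(\int_{\mathbb R^d}\chi_\gamma\,\mathrm d\mathbf x = \|\varpi\|_{L^1_y}\|\psi_{\rm tw}\|_{L^1_x}\) uniformly in \(\gamma\), the pointwise bound on \(\tilde g\) together with the Cauchy--Schwarz inequality applied to the splitting \(\chi_\gamma=\chi_\gamma^{1/2}\cdot\chi_\gamma^{1/2}\) yields
\begin{equation}
\begin{aligned}
\big\|g(T_\gamma\Phi_0+v)[\sqrt Q e_j]^\top T_\gamma\psi_{\rm tw}\big\|_{L^1(\mathbb R^d;\mathbb R)}^2
&\le (C_2^N)^2\!\left[\int\chi_\gamma\,\mathrm d\mathbf x\right]\!\left[\int\chi_\gamma|p^j|^2\,\mathrm d\mathbf x\right].
\end{aligned}
\end{equation}
Summing in \(j\), swapping sum and integral, and invoking the Parseval identity above gives the uniform bound
\begin{equation}
\sum_{j=0}^\infty\big\|g(T_\gamma\Phi_0+v)[\sqrt Q e_j]^\top T_\gamma\psi_{\rm tw}\big\|_{L^1(\mathbb R^d;\mathbb R)}^2
\le (C_2^N)^2\,\|\overline p\|_{L^2}^2\,\|\varpi\|_{L^1_y}^2\,\|\psi_{\rm tw}\|_{L^1_x}^2,
\end{equation}
which is the estimate \eqref{eq:int:cond} with a basis- and \(\gamma\)-independent constant \(K_{\rm int}^N\).

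The main obstacle to keep in mind is that the naive approach of Cauchy--Schwarz into \(L^2_x\) followed by Parseval breaks down completely here: the summed quantity \(\sum_j\|p^j(\cdot,y)\|_{L^2_x}^2\) equals \(\int_{\mathbb R}\|\overline p\|_{L^2}^2\,\mathrm dx=\infty\) precisely because of the translation invariance of \(Q\) in the \(x\)-direction. The reason the weighted Cauchy--Schwarz succeeds is that it simultaneously exploits the \(L^1\)-integrability in \emph{both} spatial directions, and the \(L^1_x\)-integrability of \(\psi_{\rm tw}\) (a direct consequence of the exponential decay built into \textnormal{(HTw)}) is what absorbs the \(x\)-divergence of \(p^j\).
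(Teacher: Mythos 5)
Your proof is correct, and it shares the essential insight with the paper's argument (double localisation via $\varpi$ in $y$ and $\psi_{\rm tw}$ in $x$ is what compensates for the translation-invariance of $Q$), but the technical route is genuinely different. The paper Cauchy--Schwarzes the splitting $\varpi = \sqrt{|\varpi|}\cdot\sqrt{|\varpi|}$ so as to land on
$\|T_\gamma\psi_{\rm tw}\sqrt{|\varpi|}\|_{L^2}^2\,\|\sqrt{|\varpi|}\,\tilde g(\cdot)\,\sqrt{Q}e_j\|_{L^2}^2$,
sums the second factor over $j$ to recognise the Hilbert--Schmidt norm of the multiplication operator $\sqrt{|\varpi|}\,\tilde g$ from $L^2_Q$ into $L^2$, and then invokes Lemma~\ref{lem:HS:z} (the $k=0$ case of the HS bound) together with the $L^2$-based Lipschitz estimate \eqref{eq:nl:bnd:delta:theta} and the decomposition $\tilde g(T_\gamma\Phi_0+v)=[\tilde g(T_\gamma\Phi_0+v)-\tilde g(T_\gamma\Phi_0)]+\tilde g(T_\gamma\Phi_0)$. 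You instead extract a pointwise $L^\infty$ bound on $\tilde g(T_\gamma\Phi_0+v)$ \emph{before} applying Cauchy--Schwarz, carry the full weight $\chi_\gamma=|\varpi||T_\gamma\psi_{\rm tw}|$ on both factors, and replace the operator-theoretic ingredient with a direct Parseval identity $\sum_j|p^j(x,y)|^2=\|\overline p\|_{L^2}^2$. The upshot is that you use $\|\psi_{\rm tw}\|_{L^1_x}$ where the paper uses $\|\psi_{\rm tw}\|_{L^2_x}$, and you bypass Lemma~\ref{lem:HS:z} entirely. This is slightly more elementary and self-contained; the paper's version avoids the $L^\infty$ bound on $\tilde g$ and reuses machinery already set up for (HgQ). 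Both give a basis- and $\gamma$-independent constant, as required.

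One small polish you may wish to add: state explicitly that the Parseval computation is uniform in $(x,y)$ because $\|\overline p(x-\cdot,y-\cdot)\|_{L^2(\mathbb R^d)}=\|\overline p\|_{L^2(\mathbb R^d)}$ by translation invariance of Lebesgue measure, and that the interchange of sum and integral in the step $\sum_j\int\chi_\gamma|p^j|^2 = \int\chi_\gamma\sum_j|p^j|^2$ is justified by monotone convergence. Neither is an obstacle, but spelling them out matches the level of care in the rest of the paper.
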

\begin{proof}
Let $(\sqrt{Q}e_j)_{j \geq 0}$ denote an arbitrary orthonormal basis of $L^2_Q.$ For any $\gamma \in \mathbb R$
and $v \in H^k$, an application of Cauchy-Schwarz 
together with Lemma \ref{lem:HS:z}  with $k=0$
shows that
    \begin{equation}\begin{aligned}
        \textstyle \sum_{j =0}^\infty \|\varpi& (\tilde{g}(T_\gamma \Phi_0+v)\sqrt Qe_j)^\top T_\gamma \psi_{\rm tw}\|_{L^1(\mathbb R^d;\mathbb R)}^2\\&\textstyle \leq \sum_{j =0}^\infty \|T_\gamma \psi_{\rm tw} \sqrt{|\varpi|} \|_{L^2}^2\|\sqrt{|\varpi|}\tilde{g}(T_\gamma \Phi_0+v)\sqrt Q e_j\|_{L^2}^2\\
        &=\textstyle \|\psi_{\rm tw}\|_{L^2_x}^2\|\varpi\|_{L^1(\mathbb R^{d-1};\mathbb R)}\| \sqrt{|\varpi|} \tilde{g}(T_\gamma \Phi_0+v)\|_{HS(L^2_Q;L^2)}^2\\
        &\textstyle \le K \|\varpi\|_{L^1(\mathbb R^{d-1};\mathbb R)} \| \sqrt{|\varpi|} \tilde{g}(T_\gamma \Phi_0+v)\|_{L^2(\mathbb R^{d}; \mathbb R^{n\times m})}^2
    \end{aligned}\end{equation}
    for some $K > 0$ that does not depend on the choice of basis. The assertion now follows from
    \begin{equation}
        \begin{aligned}
            \|\sqrt{\varpi}&g(T_{\gamma} \Phi_0+v)\|_{L^2(\mathbb R^d;\mathbb R^{n\times m})}\\& \textstyle \leq \|\sqrt{|\varpi|}[\tilde{g}(T_\gamma \Phi_0+v)-\tilde{g}(T_\gamma \Phi_0)] \|_{L^2(\mathbb R^d;\mathbb R^{n\times m})}
            + \|\sqrt{|\varpi|}\tilde{g}(T_\gamma \Phi_0)] \|_{L^2(\mathbb R^d;\mathbb R^{n\times m})}
            \\&\leq\textstyle  \|\varpi\|_{\infty}^{1/2}\|\tilde{g}(T_\gamma \Phi_0+v)-\tilde{g}(T_\gamma \Phi_0)\|_{L^2(\mathbb R^d;\mathbb R^{n\times m})}
            +\|\varpi\|_{L^1(\mathbb R^{d-1};\mathbb R)}^{1/2}\|\tilde{g}(\Phi_0)\|_{L^2(\mathbb R;\mathbb R^{n\times m})}
            \\&\leq  K_{\tilde g}^N\|\varpi\|_{H^k (\mathbb R^{d-1};\mathbb R)}^{1/2}\|v\|_{L^2(\mathbb R;\mathbb R^{n\times m})}+\|\varpi\|_{L^1(\mathbb R^{d-1};\mathbb R)}^{1/2}\|\tilde{g}(\Phi_0)\|_{L^2(\mathbb R;\mathbb R^{n\times m})},
        \end{aligned}
    \end{equation}
    which holds on account of \eqref{eq:nl:bnd:delta:theta}  whenever $\|v \|_{H^k} \le N$.
\end{proof}

\subsection{Localised noise in the transverse direction}
\label{subsec:nt:loc}

Setting the weight-function in \eqref{eq:nt:def:g:u:xi:varpi} to $\varpi = 1$,
we will now consider the case where the integration kernel in the definition \eqref{eq:nt:def:Q} for $Q$
is given by
\begin{equation}
\label{eq:nt:loc:trv:q:repr}
    q(x,y;x', y') =  \bar{q}(x-x')\sum_{\ell = 0}^\infty \lambda_\ell   \mu_\ell(y) \mu_\ell(y')^\top.
\end{equation}
In particular, $q$ exhibits translational invariance in the $x$-direction only. Explicit examples can be obtained by combining those given in {\S}\ref{subsec:nt:wt} 
and {\S}\ref{subsec:nt:loc:full}. Note that our requirements on $\bar{q}$, which
we choose to be scalar in order to ensure that it commutes with the matrices in \eqref{eq:nt:loc:trv:q:repr},
are the scalar counterparts of those in {\S}\ref{subsec:nt:wt}.

\begin{itemize}
    \item[(AqL1)] We have  $\bar q\in H^{k'}(\mathbb R;\mathbb R)\cap L^1(\mathbb R;\mathbb R)$, for some integer $k' \geq 2k+1,$ with $\bar q(-x)=\bar q(x)$ 
    for all $x\in \mathbb  R$. Furthermore, we have $\hat{\bar q}(\omega) \ge 0$ for all $\omega\in\mathbb R$.
\end{itemize}

We remark that the $(y,y')$-dependence in the representation \eqref{eq:nt:loc:trv:q:repr} should be seen as an explicit expansion in terms of the  eigenfunctions of the integral operator $Q$. 
This is encoded in the following structural and summability conditions.
\begin{itemize}
     \item[(AqL2)]   
     We have $\lambda_\ell \ge 0$ for all $\ell \ge 0$, together with the orthogonality properties
     \begin{equation}
         \langle \mu_\ell, \mu_{\ell'} \rangle_{L^2(\mathbb R^{d-1} ; \mathbb R^{m})} = \delta_{\ell\ell'},\quad 0\leq \ell,\ell'<\infty,
     \end{equation}
     and the summability conditions
     \begin{equation}
     \label{eq:nt:aql1:sum:cond}
         \sum_{\ell=0}^\infty \lambda_\ell \| \mu_\ell \|^2_{H^{k''}(\mathbb R^{d-1}; \mathbb R^{m})} < \infty,
         \qquad 
         \sum_{\ell=0}^\infty \lambda_\ell^{1/2} \| \mu_\ell\|_{L^1(\mathbb R^{d-1}; \mathbb R^{m})} < \infty,
     \end{equation}
     for some integer $k'' > k + (d-1)/2$.
\end{itemize}

We note that the first bound in \eqref{eq:nt:aql1:sum:cond} directly implies the trace condition
\begin{equation}
    \mathrm{Tr} \, q(x, \cdot\,; x', \cdot) = \bar q(x-x')\sum_{\ell=0}^\infty \lambda_\ell
    < \infty,
\end{equation}
in which $\mathrm{Tr}$ stands for the trace operator
that acts as
\begin{equation}
    \mathrm{Tr} \, h( \cdot \,; \cdot) = \int_{\mathbb R^{d-1}} \sum_{i=1}^m [h(y; y)]_{ii} \, \mathrm dy
\end{equation}
on functions $h: \mathbb R^{d-1} \times \mathbb R^{d-1} \to \mathbb R^{m \times m}$.
In particular, for every $x, x' \in \mathbb R$ we have
\begin{equation}
    \| q(x, \cdot\,; x', \cdot) \|_{L^2(\mathbb R^{d-1} \times \mathbb R^{d-1};\mathbb R^{m \times m})}^2 = 
    \bar q(x-x')^2\sum_{\ell= 0} ^\infty \lambda_\ell^2 \le \left( \bar q(x-x')\sum_{\ell=0}^\infty \lambda_\ell\right)^2 < \infty.
\end{equation}
Combining Young's inequality for the $x$-direction with Minkowski's inequality for the $y$-direction, we see that
$Q$ is bounded on $\mathcal{W}$.
The fact that $\bar{q}$ is a scalar function, together with the symmetry conditions in (AqL1) and the explicit structure of the $(y,y')$-dependence in \eqref{eq:nt:loc:trv:q:repr}, readily show that $Q$ is self-adjoint. Together with the condition on $\hat{\bar q}$, this explicit structure can also be used to confirm the non-negativity of $Q$.

\begin{remark}\label{remark:B}
We remark here that the first condition in \eqref{eq:nt:aql1:sum:cond} can be reformulated as
\begin{equation}
    \sum_{|\gamma| \le k''} \mathrm{Tr} \,  \partial^\gamma_y \partial^\gamma_{y'} q(x, \cdot\,; x', \cdot ) < \infty,
\end{equation}
hence bypassing the need to consider the full eigenfunction expansion \eqref{eq:nt:loc:trv:q:repr}. The sign conditions on $\lambda_\ell$ can be verified in a semi-explicit fashion via Mercer's condition \cite{mercer1909xvi}, which requires $q(x, \cdot\,; x', \cdot)$ to be positive semi-definite. However, we are not aware of similar methods to 
express the $L^1$-condition in \eqref{eq:nt:aql1:sum:cond} directly in terms of $q$, which motivates our choice to work with \eqref{eq:nt:loc:trv:q:repr}.
\end{remark}

We now note that (AqL1) allows us to construct a function $\overline{p} \in H^k(\mathbb R; \mathbb R)$ satisfying $\hat{p} = \sqrt{ \hat{q} }$; see for example \cite[Lem 4.4]{bosch2024multidimensional}. In particular, we have $\bar{q} = \bar{p} * \bar{p}$. This can be used
to show that the integration kernel \eqref{eq:nt:def:sqrt:Q} associated
to $\sqrt{Q}$ can be written in the form
\begin{equation}
\label{eq:nt:def:p:trv:loc}
   p(x,y;x', y') = \overline{p}(x - x') \sum_{\ell= 0}^\infty \lambda_\ell^{1/2} \mu_\ell(y) \mu_\ell(y')^\top .
\end{equation}
Picking an arbitrary orthonormal basis $(e_{j})_{j \ge 0}$ for $L^2(\mathbb R^d; \mathbb R^m)$, 
we introduce the notation
\begin{equation}
 \tilde{p}_{j;\ell}(x)  =    
 \int_{\mathbb R} \int_{\mathbb R^{d-1}} \bar p(x - x')\mu_\ell(y')^\top  e_{j}(x', y') \, \mathrm  d y'\, \mathrm dx',
\end{equation}
which allows the functions \eqref{eq:nt:def:p:ell}
to be expressed as 
\begin{equation}
    p^{j}(x, y) = \sum_{\ell=0}^\infty \lambda_{\ell}^{1/2}\mu_{\ell}(y) \tilde{p}_{j;\ell}(x).
\end{equation}
The key observation is that
\begin{equation}
\label{eq:nt:sum:ptilde:j:full}
\begin{array}{lcl}
    \sum_{j=0}^\infty \partial^{\gamma_x}_x \tilde{p}_{j;\ell}(x) 
    \partial^{\gamma_x}_x\tilde{p}_{j;\ell'}(x)
    & = & \langle \partial^{\gamma_x}_x \bar p(x - \cdot) \mu_\ell(\cdot) ,
    \partial^{\gamma_x}_x \bar p(x - \cdot) \mu_{\ell'}(\cdot) 
    \rangle_{L^2(\mathbb R^d;\mathbb R^{m})}
    \\[0.2cm]
    & = & \|\partial^{\gamma_x}_x \bar p \|_{L^2(\mathbb R ; \mathbb R)}^2
    \delta_{\ell\ell'}
\end{array}
\end{equation}
holds for any $x \in \mathbb R$, any $\ell \ge 0$, any $\ell'\ge 0$, and any integer
$0 \le \gamma_x \le k$. Indeed, this allows us to
express \eqref{eq:nt:def:cal:p} in the
form\footnote{An alternative computation that bypasses the basis $(e_j)_{j \ge 0}$
proceeds by computing the $\mathbb R^{m \times m}$-norm of 
\eqref{eq:nt:def:p:trv:loc}, but the functions $p^j$ are needed anyway in the proof of Lemma \ref{lem:nt:trv:loc:hl1}.}
\begin{align}
\mathcal{P}_k(x,y)
    &\textstyle\nonumber =  \sum_{\gamma_x, \gamma_y: |\gamma_x| + |\gamma_y| \le k}
    \sum_{j=0}^\infty \sum_{\ell , \ell'=0}^\infty \lambda_\ell^{1/2} \lambda_{\ell'}^{1/2}\partial_x^{\gamma_x} \tilde{p}_{j;\ell}(x)\partial_x^{\gamma_x} \tilde{p}_{j;\ell'}(x)
    \langle \partial_y^{\gamma_y} \mu_\ell(y), \partial_y^{\gamma_y} \mu_{\ell'}(y) \rangle_{\mathbb R^m}
\\
&\textstyle =  
\sum_{\gamma_x, \gamma_y: |\gamma_x| + |\gamma_y| \le k}  \sum_{j=0}^\infty \sum_{\ell=0 }^\infty
    \lambda_\ell\partial_x^{\gamma_x} \tilde{p}_{j;\ell}(x)^2  |\partial_y^{\gamma_y} \mu_\ell(y)|^2
\\
&\textstyle\nonumber = 
\sum_{\gamma_x, \gamma_y: |\gamma_x| + |\gamma_y| \le k}
\|\partial_x^{\gamma_x} \bar{p}\|_{L^2(\mathbb R; \mathbb R)}^2\sum_{\ell=0 }^\infty
    \lambda_\ell
|\partial_y^{\gamma_y} \mu_\ell(y)|^2,
\end{align}
where $\gamma_y\in\mathbb Z^{d-1},$ which leads to the bound
\begin{equation}
\mathcal{P}_k(x,y)
\leq 
(k+1)\|\bar p\|_{H^k(\mathbb R; \mathbb R)}^2
    \sum_{\ell=0}^\infty \lambda_\ell \sum_{|\gamma_y| \le k}
    | \partial^{\gamma_y}_y \mu_\ell(y) |^2.    
\end{equation}
In particular, we see that there exists a constant $K > 0$ so that
\begin{equation}
\label{eq:nt:loc:bnd:p:k:unif}
    \mathcal{P}_k(x,y) \le 
    K \|\bar p\|_{H^k(\mathbb R; \mathbb R)}^2 
    \sum_{\ell=0}^\infty \lambda_\ell \| \mu_\ell \|_{H^{k''}(\mathbb R^{d-1}; \mathbb R^{m})}^2, 
\end{equation}
holds for any $(x,y) \in \mathbb R^d$,
using the fact  $k'' > k + (d-1)/2$ 
to obtain a uniform bound on all the derivatives. In addition, we observe that for all $x \in \mathbb R$ we have
\begin{equation}
\label{eq:nt:loc:bnd:p:k:intg:y}
    \int_{\mathbb R^{d-1}} \mathcal{P}_k(x,y) \, \mathrm dy 
    \le 
    K \|\bar p\|_{H^k(\mathbb R; \mathbb R)}^2 
    \sum_{\ell=0}^\infty \lambda_\ell \| \mu_\ell \|_{H^{k}(\mathbb R^{d-1}; \mathbb R^{m})}^2 .
\end{equation}

\begin{corollary}
Pick $k > d/2$ and assume that \textnormal{(Hf)}, \textnormal{(HTw)}, \textnormal{(A$\tilde{g}$)}, \textnormal{(AqL1)}, and \textnormal{(AqL2)} all hold. Then \textnormal{(HqQ)} is satisfied.    
\end{corollary}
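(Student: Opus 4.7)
The plan is to invoke Lemma \ref{lem:nt:hgq:satisfied}, which reduces the verification of (HgQ) to the two multiplicative Hilbert--Schmidt estimates \eqref{eq:nt:bnd:hs:z:hk:1d} and \eqref{eq:nt:bnd:hs:z:hk:full}, with the trivial weight $\varpi \equiv 1$. Both estimates will be extracted directly from the pointwise bounds \eqref{eq:nt:loc:bnd:p:k:unif} and \eqref{eq:nt:loc:bnd:p:k:intg:y} on the quantity $\mathcal{P}_k(x,y)$ that were established just before the statement of the corollary.

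First I would verify \eqref{eq:nt:bnd:hs:z:hk:full} by observing that the summability condition on $(\lambda_\ell, \mu_\ell)$ in (AqL2), combined with the $H^{k'}$-regularity of $\bar{q}$ in (AqL1) (which yields $\bar p \in H^k(\mathbb R;\mathbb R)$), ensures that the right-hand side of \eqref{eq:nt:loc:bnd:p:k:unif} is finite. In particular, $\mathcal{P}_k$ is uniformly bounded on $\mathbb R^d$, so Corollary \ref{cor:nt:p:unif:bnd} applies and gives
\begin{equation}
\|\tilde{z}\|_{HS(L^2_Q;H^k)} \le K \|\tilde{z}\|_{H^k(\mathbb R^d;\mathbb R^{n\times m})}
\end{equation}
for every $\tilde{z}\in H^k(\mathbb R^d;\mathbb R^{n\times m})$, which is precisely \eqref{eq:nt:bnd:hs:z:hk:full} in the case $\varpi=1$.

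Second, I would verify \eqref{eq:nt:bnd:hs:z:hk:1d}. Since $\tilde{g}(T_\gamma \Phi_0)$ depends only on the $x$-coordinate, the bound \eqref{eq:nt:loc:bnd:p:k:intg:y} shows that $\int_{\mathbb R^{d-1}} \mathcal{P}_k(x,y)\,\mathrm dy$ is uniformly bounded in $x$, again thanks to (AqL2) and the $H^k$-regularity of $\bar p$. Corollary \ref{cor:nt:p:intg} then yields
\begin{equation}
\|\tilde{g}(T_\gamma \Phi_0)\|_{HS(L^2_Q;H^k)} \le K \|\tilde{g}(T_\gamma \Phi_0)\|_{H^k(\mathbb R;\mathbb R^{n\times m})} = K\|\tilde{g}(\Phi_0)\|_{H^k(\mathbb R;\mathbb R^{n\times m})},
\end{equation}
where the final equality is the translation invariance of the $H^k_x$-norm. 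The right-hand side is $\gamma$-independent and finite by (A$\tilde{g}$) together with the exponential approach of $\Phi_0$ to $u_\pm$ from (HTw). With both \eqref{eq:nt:bnd:hs:z:hk:1d} and \eqref{eq:nt:bnd:hs:z:hk:full} in hand, an appeal to Lemma \ref{lem:nt:hgq:satisfied} concludes the proof.

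The main conceptual point|rather than an obstacle|is that the $L^1$-type summability condition \eqref{eq:nt:aql1:sum:cond} is stronger than the trace-class condition on $Q$: it is designed precisely so that the one-dimensional Hilbert--Schmidt estimate \eqref{eq:nt:loc:bnd:p:k:intg:y} closes, which in turn allows us to absorb the fact that $\tilde g(\Phi_0)$ fails to lie in $H^k(\mathbb R^d;\mathbb R^{n\times m})$ into the $x$-integrability built into $\mathcal{P}_k$. Everything else is the direct bookkeeping already carried out in the preamble to the corollary.
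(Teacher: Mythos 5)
Your proof is correct and takes the same route as the paper: reduce (HgQ) to the two multiplicative estimates of Lemma \ref{lem:nt:hgq:satisfied} with $\varpi\equiv 1$, then verify \eqref{eq:nt:bnd:hs:z:hk:full} via \eqref{eq:nt:loc:bnd:p:k:unif} and Corollary \ref{cor:nt:p:unif:bnd}, and \eqref{eq:nt:bnd:hs:z:hk:1d} via \eqref{eq:nt:loc:bnd:p:k:intg:y} and Corollary \ref{cor:nt:p:intg}. One small inaccuracy in your closing remark: the bound \eqref{eq:nt:loc:bnd:p:k:intg:y} relies on the $H^{k''}$-weighted summability in \eqref{eq:nt:aql1:sum:cond}, not the $L^1$-type condition (which is only invoked for (H$L^1$) in Lemma \ref{lem:nt:trv:loc:hl1}), and the integrability that lets Corollary \ref{cor:nt:p:intg} accommodate $\tilde{g}(T_\gamma\Phi_0)\notin H^k(\mathbb R^d)$ is in the $y$-variable, not $x$.
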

\begin{proof}
In view of the bounds \eqref{eq:nt:loc:bnd:p:k:unif} and \eqref{eq:nt:loc:bnd:p:k:intg:y},
this follows directly from
Corollaries \ref{cor:nt:p:unif:bnd}--\ref{cor:nt:p:intg}
and Lemma \ref{lem:nt:hgq:satisfied}.
\end{proof}

\begin{lemma}
\label{lem:nt:trv:loc:hl1}
Pick $k > d/2$ and assume that \textnormal{(Hf)}, \textnormal{(HTw)}, \textnormal{(A$\tilde{g}$)}, \textnormal{(AqL1)}, and \textnormal{(AqL2)} all hold. Then \textnormal{(H$L^1$)} is satisfied.
\end{lemma}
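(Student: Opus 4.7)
Fix $N>0$, $\gamma\in\mathbb R$ and $v\in H^k$ with $\|v\|_{H^k}\le N$, and abbreviate $u=T_\gamma\Phi_0+v$. Since $k>d/2$, the Sobolev embedding $H^k\subset L^\infty$ combined with (A$\tilde g$) produces a uniform pointwise bound
\begin{equation}
M:=\sup\nolimits_{(x,y)\in\mathbb R^d}|\tilde g(u(x,y))|\le K^N
\end{equation}
that depends only on $N$ (and not on $\gamma$ or the chosen basis $(e_j)_{j\ge 0}$). For any such basis we use the explicit representation \eqref{eq:nt:def:p:trv:loc} of the kernel of $\sqrt Q$ to decompose
\begin{equation}
p^j(x,y)=[\sqrt Q e_j](x,y)=\sum_{\ell=0}^\infty\lambda_\ell^{1/2}\mu_\ell(y)\tilde p_{j;\ell}(x),
\end{equation}
so that $\bigl(g(u)[\sqrt Q e_j]\bigr)^\top T_\gamma\psi_{\rm tw}=\sum_{\ell}\lambda_\ell^{1/2}\mu_\ell(y)^\top\tilde g(u(x,y))^\top\psi_{\rm tw}(x-\gamma)\,\tilde p_{j;\ell}(x)$.

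Applying the triangle inequality and $|\mu_\ell(y)^\top\tilde g(u(x,y))^\top\psi_{\rm tw}(x-\gamma)|\le M|\mu_\ell(y)||\psi_{\rm tw}(x-\gamma)|$ gives the pointwise estimate
\begin{equation}
\bigl|\bigl(g(u)[\sqrt Q e_j]\bigr)^\top T_\gamma\psi_{\rm tw}(x,y)\bigr|\le M|\psi_{\rm tw}(x-\gamma)|\sum_{\ell}\lambda_\ell^{1/2}|\mu_\ell(y)||\tilde p_{j;\ell}(x)|.
\end{equation}
Integrating first in $y$ and pulling in $\|\mu_\ell\|_{L^1_y}$ yields
\begin{equation}
\bigl\|\bigl(g(u)[\sqrt Q e_j]\bigr)^\top T_\gamma\psi_{\rm tw}\bigr\|_{L^1(\mathbb R^d;\mathbb R)}\le M\sum_{\ell=0}^\infty\lambda_\ell^{1/2}\|\mu_\ell\|_{L^1_y}\int_{\mathbb R}|\psi_{\rm tw}(x-\gamma)||\tilde p_{j;\ell}(x)|\,\mathrm dx.
\end{equation}

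The main obstacle now is that the $x$-translation invariance of $Q$ forces $\sum_j|\tilde p_{j;\ell}(x)|^2=\|\bar p\|_{L^2(\mathbb R;\mathbb R)}^2$ to be a \emph{constant} in $x$ (by \eqref{eq:nt:sum:ptilde:j:full} with $\gamma_x=0$), so any naive Cauchy–Schwarz bound $\int|\psi_{\rm tw}(x-\gamma)||\tilde p_{j;\ell}(x)|\,\mathrm dx\le\|\psi_{\rm tw}\|_{L^2_x}\|\tilde p_{j;\ell}\|_{L^2_x}$ produces a divergent $j$-sum. The key trick is to \emph{split} the weight $|\psi_{\rm tw}(x-\gamma)|=|\psi_{\rm tw}(x-\gamma)|^{1/2}\cdot|\psi_{\rm tw}(x-\gamma)|^{1/2}$, which is legitimate because (HTw) guarantees that $\psi_{\rm tw}$ decays exponentially and hence lies in $L^1_x$. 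Cauchy–Schwarz in $x$ then gives
\begin{equation}
\int_{\mathbb R}|\psi_{\rm tw}(x-\gamma)||\tilde p_{j;\ell}(x)|\,\mathrm dx\le\|\psi_{\rm tw}\|_{L^1_x}^{1/2}\Bigl(\int_{\mathbb R}|\psi_{\rm tw}(x-\gamma)||\tilde p_{j;\ell}(x)|^2\,\mathrm dx\Bigr)^{1/2},
\end{equation}
and squaring followed by a Cauchy–Schwarz in $\ell$ with weights $a_\ell=\lambda_\ell^{1/2}\|\mu_\ell\|_{L^1_y}$ produces
\begin{equation}
\bigl\|\bigl(g(u)[\sqrt Q e_j]\bigr)^\top T_\gamma\psi_{\rm tw}\bigr\|_{L^1}^2\le M^2\|\psi_{\rm tw}\|_{L^1_x}A\sum_{\ell=0}^\infty a_\ell\int_{\mathbb R}|\psi_{\rm tw}(x-\gamma)||\tilde p_{j;\ell}(x)|^2\,\mathrm dx,
\end{equation}
with $A:=\sum_{\ell}a_\ell<\infty$ by \eqref{eq:nt:aql1:sum:cond}.

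To finish, I sum over $j$, interchange the finite sum over $\ell$ with the integral, and apply \eqref{eq:nt:sum:ptilde:j:full} with $\gamma_x=0$ to replace $\sum_j|\tilde p_{j;\ell}(x)|^2$ by the constant $\|\bar p\|_{L^2(\mathbb R;\mathbb R)}^2$. The remaining $x$-integral of $|\psi_{\rm tw}(x-\gamma)|$ equals $\|\psi_{\rm tw}\|_{L^1_x}$ (independent of $\gamma$), and I arrive at
\begin{equation}
\sum_{j=0}^\infty\bigl\|\bigl(g(u)[\sqrt Q e_j]\bigr)^\top T_\gamma\psi_{\rm tw}\bigr\|_{L^1}^2\le M^2\|\psi_{\rm tw}\|_{L^1_x}^2\|\bar p\|_{L^2(\mathbb R;\mathbb R)}^2 A^2,
\end{equation}
which depends only on $N$ (through $M$) and on quantities that are finite by (HTw), (AqL1) and (AqL2). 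In particular the right-hand side is independent of $\gamma$ and of the chosen orthonormal basis $(e_j)_{j\ge 0}$ of $\mathcal W$, so (H$L^1$) is satisfied with $K_{\rm int}^N$ equal to this expression.
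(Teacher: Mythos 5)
Your argument is correct and follows essentially the same route as the paper: bound $|\tilde g(u)|$ uniformly via the Sobolev embedding, expand $p^j$ in the product form $\lambda_\ell^{1/2}\mu_\ell(y)\tilde p_{j;\ell}(x)$, split the weight $\psi_{\rm tw}$ (using its $L^1_x$-membership from (HTw)) to trade one factor of $\tilde p_{j;\ell}$ for a square, apply Cauchy--Schwarz in $\ell$, and then sum over $j$ via \eqref{eq:nt:sum:ptilde:j:full}. The only cosmetic difference is that you integrate out the $y$-variable first to produce $\|\mu_\ell\|_{L^1_y}$ before performing the splitting, whereas the paper keeps the joint weight $w_\ell(x,y)=|\psi_{\rm tw}(x)||\mu_\ell(y)|$ throughout and splits $T_\gamma w_\ell$; both organizations yield the identical final bound $C_N^2\|\bar p\|_{L^2}^2\|\psi_{\rm tw}\|_{L^1_x}^2\bigl(\sum_\ell\lambda_\ell^{1/2}\|\mu_\ell\|_{L^1_y}\bigr)^2$.
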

\begin{proof}
Observe first that there exists a constant $C_N >0$ so that
\begin{equation}
    \|\tilde{g}(T_{\gamma} \Phi_0 + v)\|_\infty \le C_N,
\end{equation}
for every $\gamma \in \mathbb R$ and $v \in H^k$
with $\|v\|_{H^k} \le N$, on account of 
\eqref{eq:mr:lipschitz:g}
and the fact that $\Phi_0$ is bounded.
Writing $w_\ell(x,y) = |\psi_{\rm tw}(x)| |\mu_\ell(y)|$ 
for convenience,
this allows us to compute
\begin{align}
\label{eq:nt:l1:comp:loc:bnd:g:psi:w:l1}
   \nonumber \textstyle \|(g(T_{\gamma} \Phi_0+v) p^{j})^\top T_{\gamma} \psi_{\rm tw}\|_{L^1(\mathbb R^d;\mathbb R)}^2
    & \textstyle \le  C_N^2
    \big[
    \sum_{\ell=0}^\infty \lambda_\ell^{1/2} 
     \int_{\mathbb R} \int_{\mathbb R^{d-1}} T_{\gamma} w_\ell(x,y)
      |\tilde{p}_{j;\ell}(x)| \, \mathrm dy \, \mathrm dx \big]^2
\\
& \le \textstyle  
C_N^2
\big[
\sum_{\ell=0}^\infty \lambda_\ell^{1/2}
\| T_{\gamma} w_\ell \|_{L^1(\mathbb R^d; \mathbb R  ) }^{1/2}
\| (T_{\gamma} w_\ell)
     \tilde{p}_{j;\ell}^2 \|_{L^1(\mathbb R^d; \mathbb R  )}^{1/2}
\big]^2
\\
& \le \textstyle  
C_N^2  
\sum_{\ell=0}^\infty \lambda_\ell^{1/2} \| w_\ell\|_{L^1(\mathbb R^d; \mathbb R  )}
\sum_{\ell=0}^\infty \lambda_\ell^{1/2} \| (T_{\gamma} w_\ell) \tilde{p}_{j;\ell}^2 \|_{L^1(\mathbb R^d; \mathbb R  )}.\nonumber
\end{align}
Applying \eqref{eq:nt:sum:ptilde:j:full}, we hence find
\begin{equation}
\label{eq:nt:loc:bnd:l1}
\begin{aligned}
 \textstyle \sum_{j=0}^\infty   \|(g&(T_{\gamma}\Phi_0+v) p^{j})^\top T_{\gamma} \psi_{\rm tw}\|_{L^1(\mathbb R^d;\mathbb R)}^2
    \\& \le \textstyle 
    C_N^2 \| \bar{p}\|_{L^2(\mathbb R; \mathbb R)}^2
\sum_{\ell=0}^\infty \lambda_\ell^{1/2} \| w_\ell \|_{L^1(\mathbb R^d;\mathbb R)}
\sum_{\ell=0}^\infty \lambda_\ell^{1/2} \| T_\gamma w_\ell   \|_{L^1(\mathbb R^d;\mathbb R)}
\\
& \le \textstyle 
C_N^2 \| \bar{p}\|_{L^2(\mathbb R; \mathbb R)}^2
\| \psi_{\rm tw} \|_{L^1(\mathbb R; \mathbb R^n)}^2
\big( \sum_{\ell=0}^\infty \lambda_\ell^{1/2} \| \mu_\ell\|_{L^1(\mathbb R^{d-1}; \mathbb R^{ m})} \big)^2,
\end{aligned}
\end{equation}
which completes the proof in view of \eqref{eq:nt:aql1:sum:cond}.
\end{proof}

It is insightful to consider an orthonormal basis 
$(e_{j,\ell})_{j, \ell \ge 0}$ for $L^2(\mathbb R^d; \mathbb R^m)$
of the product form
\begin{equation}
 e_{j,\ell}(x,y) = e_j(x) \mu_\ell(y), 
\end{equation}
in which $(e_j)_{j \ge 0}$ is some orthonormal basis for $L^2(\mathbb R; \mathbb R)$ and the set $(\mu_\ell)_{\ell \ge 0}$ forms|possibly after extension|an orthonormal basis for $L^2(\mathbb R^{d-1}; \mathbb R^{m})$.
In this case, we have
$\tilde{p}_{(j,\ell');\ell}(x) = \delta_{\ell \ell'} \tilde{p}_{(j,\ell);\ell}(x)$, which implies
\begin{equation}
    p^{(j,\ell)}(x,y) = \lambda_\ell^{1/2} \mu_\ell(y)
    \tilde{p}_{(j,\ell);\ell}(x).
\end{equation}
Using \eqref{eq:nt:sum:ptilde:j:full},
we observe that
\begin{equation}
\label{eq:nt:special:basis:sum:i}
    \sum_{j=0}^\infty \partial^{\gamma_x}_x \tilde{p}_{(j,\ell); \ell}(x)^2 = 
    \sum_{j,\ell'=0}^\infty \partial^{\gamma_x}_x \tilde{p}_{(j,\ell'); \ell}(x)^2
    = 
    \| \partial^{\gamma_x}_x \bar p \|_{L^2(\mathbb R ; \mathbb R)}^2,
\end{equation}
which, as a side remark,  provides us with an alternative yet more direct path to the identity
\begin{equation}
\mathcal{P}_k(x,y)
=
    \sum_{\gamma_x, \gamma_y: |\gamma_x| + |\gamma_y| \le k}
    \|\partial_x^{\gamma_x} \bar{p}\|_{L^2(\mathbb R; \mathbb R)}^2 \sum_{\ell=0 }^\infty
    \lambda_\ell |\partial_y^{\gamma_y} \mu_\ell(y)|^2.
\end{equation}

On the other hand, the sum in the second line
of \eqref{eq:nt:l1:comp:loc:bnd:g:psi:w:l1} 
reduces to a single term, leading to
\begin{equation}\textstyle
    \|(g(T_\gamma \Phi_0+v) p^{(j,\ell)})^\top T_{\gamma} \psi_{\rm tw}\|_{L^1(\mathbb R^d;\mathbb R)}^2
    \le  C_N^2   \lambda_\ell \|w_\ell\|_{L^1(\mathbb R^d; \mathbb R)}  \| (T_{\gamma} w_\ell) \tilde{p}_{(j,\ell);\ell}^2 \|_{L^1(\mathbb R^d; \mathbb R)}.
\end{equation}
Applying \eqref{eq:nt:special:basis:sum:i} with $\gamma_x = 0$,
we hence see that
\begin{equation}
\begin{aligned}
\textstyle \sum_{j,\ell=0}^\infty \|(g&(T_\gamma \Phi_0+v) p^{(j,\ell)})^\top T_{\gamma} \psi_{\rm tw}\|_{L^1(\mathbb R^d;\mathbb R)}^2
\\& \textstyle \le  
 C_N^2   \| \bar p \|_{L^2(\mathbb R ; \mathbb R)}^2 
 \| \psi_{\rm tw} \|_{L^1(\mathbb R; \mathbb R^n)}^2
 \sum_{\ell=0}^\infty \lambda_\ell \|\mu_\ell\|_{L^1(\mathbb R^{d-1}; \mathbb R^m)}^2 .
\end{aligned}
\end{equation}
Since sequence space norms admit the ordering $\| \cdot \|_{\ell^2} \le \|\cdot \|_{\ell^1}$, we see that 
this bound is sharper than \eqref{eq:nt:loc:bnd:l1}.
This (potential) basis-dependent estimate points out an interesting subtlety related to the use of the 2-summing norm. Indeed, if one simply uses
the fixed basis $(e_{j,\ell})_{j,\ell\ge 0}$ throughout this section, the second condition in \eqref{eq:nt:aql1:sum:cond}
can be relaxed to
\begin{equation}
    \sum_{\ell=0}^\infty \lambda_\ell \|\mu_\ell\|_{L^1(\mathbb R^{d-1}; \mathbb R^m)}^2 < \infty.
\end{equation}

\subsection{Fully localised noise}
\label{subsec:nt:loc:full}
Continuing
to use $\varpi = 1$ for the weight-function in 
\eqref{eq:nt:def:g:u:xi:varpi}, our final  setting is where the integration kernel in  the definition \eqref{eq:nt:def:Q} for $Q$ is given by
\begin{equation}
\label{eq:nt:full:loc:repr:q}
    q(x,y;x', y') =  \sum_{\ell = 0} ^\infty\lambda_\ell   \mu_\ell(x,y) \mu_\ell(x',y')^\top,
\end{equation}
which means that we have fully lost translational invariance. 
In this case, the statistical properties of the translated noise process differ from those of the original stationary noise. Consequently, it is now really necessary to keep on writing $T_{-\gamma}\mathrm dW_t^Q$  in \eqref{eq:pert:system}   instead of replacing it by $\mathrm dW_t^Q$.

Choosing $m=1$ and writing $\mathbf x=(x,y)$, two motivating examples are given by
\begin{equation}
 q(\mathbf{x};\mathbf{x}')=\begin{cases}
        e^{-1/(R-|\mathbf x|^2)}e^{-1/(R-|\mathbf x'|^2)},&\max\{|\mathbf x|,|\mathbf x'|\}\leq R,\\
        0&\rm else,
    \end{cases}
\end{equation}
for any $R>0$, or  
\begin{equation}q(\mathbf{x};\mathbf x')=\frac{1}{(1+|\mathbf x|^2)^\zeta(1+|\mathbf x'|^2)^\zeta},\end{equation} for  large enough $\zeta>0$ to make $q$ sufficiently smooth.
These examples are both of rank one, in the sense that only one of the scalars $\lambda_\ell$ in 
\eqref{eq:nt:full:loc:repr:q} is non-zero.

As in {\S}\ref{subsec:nt:loc}, we view
the representation \eqref{eq:nt:full:loc:repr:q}
for the convolution kernel as an explicit expansion
in terms of  eigenfunctions of $Q$. In particular, we impose the following structural and summability conditions,
which are the direct analogues of those in (AqL2) above; see also Remark \ref{remark:B}

\begin{itemize}
     \item[(AqT)]   
     We have $\lambda_\ell \ge 0$ for all $\ell\ge 0$, together with the orthogonality properties
     \begin{equation}
         \langle \mu_\ell, \mu_{\ell'} \rangle_{L^2(\mathbb R^{d} ; \mathbb R^{m})} = \delta_{\ell \ell'},\quad 0\leq \ell,\ell'<\infty,
     \end{equation}
     and the summability conditions
     \begin{equation}
     \label{eq:nt:aqt:sum:cond}
         \sum_{\ell=0}^\infty \lambda_\ell \| \mu_\ell \|^2_{H^{k'}(\mathbb R^{d}; \mathbb R^{m})} < \infty,
         \qquad 
         \sum_{\ell=0}^\infty \lambda_\ell^{1/2} \| \mu_\ell\|_{L^1(\mathbb R^{d}; \mathbb R^{m})} < \infty,
     \end{equation}
     for some integer $k' > k + d/2$.
\end{itemize}

By repeating the arguments in {\S}\ref{subsec:nt:loc} almost
 verbatim, we find that these conditions
imply that  $Q$ is bounded,  self-adjoint and positive semi-definite, and
\begin{equation}
    \mathrm{Tr} \, q := \int_{\mathbb R} \int_{\mathbb R^{d-1}} \sum_{i=1}^m[q(x,y;x, y)]_{ii} \, \mathrm dy \, \mathrm dx = \sum_{\ell=0}^\infty \lambda_\ell < \infty.
\end{equation}
Extending the sequence $(\mu_\ell)_{\ell \ge 0}$ (if necessary) to form an orthonormal basis for $\mathcal{W} = L^2(\mathbb R^{d}; \mathbb R^{m})$, we furthermore compute
\begin{equation}
    \mathrm {Tr}(Q) :=\sum_{\ell=0}^\infty\langle Q\mu_\ell,\mu_\ell\rangle_{\mathcal W}=\sum_{\ell=0}^\infty \|\sqrt{Q} \mu_\ell\|_{\mathcal W}^2=\|\sqrt{Q}\|_{HS(\mathcal W)}^2 = 
    \sum_{\ell = 0}^\infty \lambda_\ell <\infty.
\end{equation}
In particular,  $Q$ is trace class in $\mathcal W$, 
$\sqrt{Q}$ is a Hilbert-Schmidt operator on $\mathcal{W}$, and  $(W^Q_t)_{t \ge 0}$ is a (regular) $Q$-Wiener process taking values in $\mathcal W_Q$. In this context it is worthwhile to recall that any trace class operator is necessarily an integral operator \cite{treves2016topological}.

The integration kernel \eqref{eq:nt:def:sqrt:Q} associated to $\sqrt{Q}$ can be written in the form
\begin{equation}
   p(x,y;x', y') = \sum_{\ell=0}^\infty \lambda_\ell^{1/2}  \mu_\ell(x,y) \mu_\ell(x',y')^\top.
\end{equation}
By fixing an arbitrary basis $(e_{j})_{j \ge 0}$ for $L^2(\mathbb R^d; \mathbb R^m)$,
we introduce the notation
\begin{equation}
 \tilde{p}_{j;\ell}  =    
 \int_{\mathbb R}\int_{\mathbb R^{d-1}} \mu_\ell(x', y')^\top e_{j}(x', y') \, \mathrm d y'\, \mathrm dx', 
\end{equation}
which allows the functions \eqref{eq:nt:def:p:ell} to be expressed as
\begin{equation}
    p^{j}(x, y) = \sum_{\ell=0}^\infty \lambda_{\ell}^{1/2}\mu_{\ell}(x,y) \tilde{p}_{j;\ell}.
\end{equation}
The appropriate modification of \eqref{eq:nt:sum:ptilde:j:full} to this setting reads
\begin{equation}
\label{eq:nt:full:loc:sum:ptilde:j:full}
\begin{array}{lcl}
    \sum_{j=0}^\infty  \tilde{p}_{j;\ell} 
    \tilde{p}_{j;\ell'}
    =\langle  \mu_\ell(\cdot) ,
     \mu_{\ell'}(\cdot) 
    \rangle_{L^2(\mathbb R^d;\mathbb R^{m})}
    =
    \delta_{\ell \ell'},
\end{array}
\end{equation}
which holds for any $\ell\ge 0$ and $\ell'\ge 0$.
Recalling \eqref{eq:nt:def:cal:p}, this allows us to
compute
\begin{equation}
    \mathcal{P}_k(x,y)
    = \sum_{|\gamma|\le k} \sum_{\ell=0}^\infty\lambda_\ell | \partial^\gamma \mu_\ell(x,y)|^2 .
\end{equation}
Using the fact that $k' > k + d/2$ holds, we obtain a uniform bound on all the derivatives,
and see that there exists a constant $K > 0$ so that
\begin{equation}
    \mathcal{P}_k(x,y) \le 
    K \sum_{\ell=0}^\infty \lambda_\ell \| \mu_\ell \|_{H^{k'}(\mathbb R^{d-1}; \mathbb R^{m})}^2 \label{eq:70}
\end{equation}
holds for every $(x,y) \in \mathbb R^d$,
mimicking \eqref{eq:nt:loc:bnd:p:k:unif}.
However, \eqref{eq:nt:loc:bnd:p:k:intg:y}
is now replaced by the full integral bound
\begin{equation}
    \int_{\mathbb R}\int_{\mathbb R^{d-1}} \mathcal{P}_k(x,y) \, \mathrm dy \,\mathrm dx
    \le 
    K \sum_{\ell=0}^\infty \lambda_\ell \| \mu_\ell \|_{H^{k}(\mathbb R^{d}; \mathbb R^{m})}^2 .\label{eq:71}
\end{equation}

\begin{lemma}
Pick $k > d/2$ 
and assume that \textnormal{(Hf)}, \textnormal{(HTw)}, \textnormal{(A$\tilde{g}$)}, 
and \textnormal{(AqT)} hold. Then \textnormal{(HgQ)} is satisfied.
\end{lemma}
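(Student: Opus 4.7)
The plan is to verify the hypotheses of Lemma \ref{lem:nt:hgq:satisfied} with the trivial weight $\varpi \equiv 1$, since then the covariance structure in (AqT) already provides all the information we need through the two representations \eqref{eq:70} and \eqref{eq:71} of $\mathcal{P}_k$. First I would observe that, with $\varpi \equiv 1$, the commutation relation required in (HgQ) reduces to the pointwise identity $T_\gamma[\tilde g(u)\xi]=\tilde g(T_\gamma u)\, T_\gamma \xi$, which is immediate from \eqref{eq:nt:def:g:u:xi:varpi}. What remains is to verify the two Hilbert--Schmidt multiplier bounds \eqref{eq:nt:bnd:hs:z:hk:1d} and \eqref{eq:nt:bnd:hs:z:hk:full}; once these are in hand, Lemma \ref{lem:nt:hgq:satisfied} delivers (HgQ).

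For \eqref{eq:nt:bnd:hs:z:hk:full}, I would apply Corollary \ref{cor:nt:p:unif:bnd} directly. Its hypothesis, namely that $\mathcal{P}_k$ is uniformly bounded on $\mathbb{R}^d$, is exactly the content of estimate \eqref{eq:70}: the first summability condition in \eqref{eq:nt:aqt:sum:cond}, combined with the Sobolev embedding $H^{k'}(\mathbb{R}^d;\mathbb{R}^m)\hookrightarrow W^{k,\infty}$ available since $k'>k+d/2$, produces a finite constant that dominates $\mathcal{P}_k(x,y)$ uniformly in $(x,y)$.

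For \eqref{eq:nt:bnd:hs:z:hk:1d}, the key observation is that $\tilde g(T_\gamma \Phi_0)$ depends on $x$ alone, so it cannot lie in $H^k(\mathbb{R}^d;\mathbb{R}^{n\times m})$, and Corollary \ref{cor:nt:p:unif:bnd} is not useful here. Instead, I would invoke Corollary \ref{cor:nt:p:intg:full}, whose hypothesis on $\mathcal{P}_k$ is \eqref{eq:71}: this follows from the same summability condition \eqref{eq:nt:aqt:sum:cond} but now without any Sobolev embedding. The hypothesis on the multiplier $\tilde z = \tilde g(T_\gamma \Phi_0)$ is that it lies in $W^{k,\infty}(\mathbb{R}^d;\mathbb{R}^{n\times m})$, which holds uniformly in $\gamma$ because
\[
\|\tilde g(T_\gamma \Phi_0)\|_{W^{k,\infty}(\mathbb{R}^d;\mathbb{R}^{n\times m})}=\|\tilde g(\Phi_0)\|_{W^{k,\infty}(\mathbb{R};\mathbb{R}^{n\times m})},
\]
a finite quantity by the boundedness of $\Phi_0$ (from (HTw)) together with the local Lipschitz estimates on $\tilde g$ and its first $k$ derivatives supplied by (A$\tilde g$).

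There is no real obstacle here: the entire argument is a routing exercise that matches the two halves of (AqT) to the two corollaries of {\S}\ref{sec:about:noise}. The only mild care needed is to notice that one cannot use the uniform bound route for $\tilde g(T_\gamma \Phi_0)$ (since it does not decay in $y$) and must instead trade the $H^k$-control on the multiplier against the $L^1$-type integrability of $\mathcal{P}_k$ provided by the trace-class structure of $Q$. This same dichotomy will reappear in the verification of (H$L^1$), but the argument there follows the same pattern as in Lemma \ref{lem:nt:trv:loc:hl1}, exploiting the $L^1$-summability \eqref{eq:nt:aqt:sum:cond} of $\lambda_\ell^{1/2}\|\mu_\ell\|_{L^1}$.
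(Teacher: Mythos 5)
Your proof is correct and follows essentially the same route as the paper, whose own proof is a one-line appeal to Corollaries \ref{cor:nt:p:unif:bnd} and \ref{cor:nt:p:intg:full} via Lemma \ref{lem:nt:hgq:satisfied} using the bounds \eqref{eq:70} and \eqref{eq:71}. You have simply unpacked the routing: \eqref{eq:70} feeds Corollary \ref{cor:nt:p:unif:bnd} to give \eqref{eq:nt:bnd:hs:z:hk:full}, and \eqref{eq:71} feeds Corollary \ref{cor:nt:p:intg:full} together with $\tilde g(T_\gamma\Phi_0)\in W^{k,\infty}$ (uniformly in $\gamma$) to give \eqref{eq:nt:bnd:hs:z:hk:1d}; your observation that $\tilde g(T_\gamma\Phi_0)$ fails to decay in $y$, forcing the $W^{k,\infty}$/trace-class route rather than the $H^k$/uniform route, is exactly the point the paper leaves implicit.
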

\begin{proof}
In view of the bounds \eqref{eq:70} and \eqref{eq:71},
this follows directly from
Corollaries \ref{cor:nt:p:unif:bnd} and \ref{cor:nt:p:intg:full}
together with Lemma \ref{lem:nt:hgq:satisfied}.
\end{proof}

\begin{lemma}
Pick $k > d/2$ 
and assume that \textnormal{(Hf)}, \textnormal{(HTw)}, \textnormal{(A$\tilde{g}$)}, 
and \textnormal{(AqT)} hold. Then  \textnormal{(H$L^1$)} is satisfied.
\end{lemma}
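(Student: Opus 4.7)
The plan is to follow the same strategy as in the proof of Lemma \ref{lem:nt:trv:loc:hl1}, adapted to the fully localised setting. Fix $N > 0$, $\gamma \in \mathbb R$, $v \in H^k$ with $\|v\|_{H^k} \le N$, and an arbitrary orthonormal basis $(e_j)_{j \ge 0}$ of $\mathcal{W}$. Just as before, the Lipschitz bound \eqref{eq:mr:lipschitz:g} together with the boundedness of $\Phi_0$ and the Sobolev embedding $H^k \hookrightarrow L^\infty$ yield some constant $C_N > 0$ so that $\|\tilde{g}(T_\gamma \Phi_0 + v)\|_\infty \le C_N$. Using the explicit representation $p^j(x,y) = \sum_{\ell = 0}^\infty \lambda_\ell^{1/2} \mu_\ell(x,y) \tilde p_{j;\ell}$ together with the fact that $|[g(T_\gamma \Phi_0 + v)[\sqrt Q e_j]]^\top T_\gamma \psi_{\rm tw}(x,y)| \le C_N |p^j(x,y)||\psi_{\rm tw}(x - \gamma)|$, I obtain the pointwise bound
\begin{equation}
    \big\| (g(T_\gamma \Phi_0 + v)[\sqrt{Q}e_j])^\top T_\gamma \psi_{\rm tw} \big\|_{L^1(\mathbb R^d;\mathbb R)}
    \le C_N \sum_{\ell=0}^\infty \lambda_\ell^{1/2} |\tilde p_{j;\ell}| \, I_\ell(\gamma),
\end{equation}
where $I_\ell(\gamma) = \int_{\mathbb R}\int_{\mathbb R^{d-1}} |\mu_\ell(x,y)| \, |\psi_{\rm tw}(x-\gamma)| \,\mathrm dy \, \mathrm dx$.

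The key difference with the translationally invariant case of {\S}\ref{subsec:nt:loc} is that $\mu_\ell$ now depends on $x$, so I cannot exploit translation invariance of the $x$-integral. Instead I simply use the boundedness of $\psi_{\rm tw}$, which follows from the Sobolev embedding $\psi_{\rm tw} \in H^{k+3}_x \hookrightarrow L^\infty(\mathbb R;\mathbb R^n)$, to estimate
\begin{equation}
    I_\ell(\gamma) \le \|\psi_{\rm tw}\|_\infty \|\mu_\ell\|_{L^1(\mathbb R^d;\mathbb R^m)},
\end{equation}
which is uniform in $\gamma$. Writing $a_\ell = \lambda_\ell^{1/2} \|\mu_\ell\|_{L^1(\mathbb R^d;\mathbb R^m)}$, squaring and applying the Cauchy--Schwarz inequality with weights $a_\ell^{1/2}$ then yields
\begin{equation}
    \Big(\sum_{\ell=0}^\infty a_\ell |\tilde p_{j;\ell}|\Big)^2 \le \Big(\sum_{\ell=0}^\infty a_\ell \Big) \sum_{\ell=0}^\infty a_\ell |\tilde p_{j;\ell}|^2.
\end{equation}
Summing over $j$ and invoking the orthonormality identity \eqref{eq:nt:full:loc:sum:ptilde:j:full} (which yields $\sum_j \tilde p_{j;\ell}^2 = 1$), I arrive at
\begin{equation}
    \sum_{j=0}^\infty \big\|(g(T_\gamma \Phi_0+v)[\sqrt Q e_j])^\top T_\gamma \psi_{\rm tw}\big\|_{L^1(\mathbb R^d;\mathbb R)}^2
    \le C_N^2 \|\psi_{\rm tw}\|_\infty^2 \Big(\sum_{\ell=0}^\infty \lambda_\ell^{1/2} \|\mu_\ell\|_{L^1(\mathbb R^d;\mathbb R^m)}\Big)^2.
\end{equation}

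The right-hand side is finite by the second summability condition in \eqref{eq:nt:aqt:sum:cond} and is independent of $\gamma$, $v$ (within the ball of radius $N$), and the chosen basis; this establishes \textnormal{(H$L^1$)}. There is no significant obstacle here: the difficulty of losing transverse translation invariance is bypassed entirely by peeling the $\psi_{\rm tw}$ factor off in $L^\infty$, which is possible precisely because the fully localised summability assumption \eqref{eq:nt:aqt:sum:cond} provides direct control on $\|\mu_\ell\|_{L^1(\mathbb R^d;\mathbb R^m)}$, absorbing the full $y$-integral of each eigenfunction.
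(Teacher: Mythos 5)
Your proof is correct and follows essentially the same route as the paper's: you peel $\psi_{\rm tw}$ off in $L^\infty$, bound the remaining $L^1$-integral by $\|\mu_\ell\|_{L^1}$, apply Cauchy--Schwarz with the weights $\lambda_\ell^{1/2}\|\mu_\ell\|_{L^1}$, and finish with the orthonormality identity $\sum_j \tilde p_{j;\ell}^2 = 1$. The only cosmetic difference is that the paper absorbs $\|\psi_{\rm tw}\|_\infty$ into the constant $C_N$ from the start, whereas you keep it visible throughout.
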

\begin{proof}
First, observe that there exists a constant $C_N >0$ with
\begin{equation}
    \|\tilde{g}(T_{\gamma} \Phi_0 + v)\|_\infty
    \| T_{\gamma} \psi_{\rm tw} \|_\infty \le C_N,
\end{equation}
for every $\gamma \in \mathbb R$ and $v \in H^k$
with $\|v\|_{H^k} \le N$, on account of 
\eqref{eq:mr:lipschitz:g}
and the fact that $\Phi_0$ is bounded.
This allows us to compute
\begin{equation}
\begin{aligned}
    \|(g(T_{\gamma} \Phi_0+&v) p^{j})^\top T_{\gamma} \psi_{\rm tw}\|_{L^1(\mathbb R^d;\mathbb R)}^2
    \\& \le \textstyle C_N^2
    \big[
    \sum_{\ell=0}^\infty \lambda_\ell^{1/2} \|\mu_\ell\|_{L^1(\mathbb R^d; \mathbb R^m)}|\tilde{p}_{j;\ell}| \big]^2
\\ 
& \le \textstyle 
C_N^2  
\sum_{\ell=0}^\infty \lambda_\ell^{1/2} \|\mu_\ell\|_{L^1(\mathbb R^d; \mathbb R^m)}
\sum_{\ell=0}^\infty \lambda_\ell^{1/2} \|\mu_\ell\|_{L^1(\mathbb R^d; \mathbb R^m)} \tilde{p}_{j;\ell}^2 .
\end{aligned}
\end{equation}
Applying \eqref{eq:nt:full:loc:sum:ptilde:j:full}, we hence find
\begin{equation}
\begin{array}{lcl}
 \sum_{j=0}^\infty   \|(g(T_{\gamma}\Phi_0+v) p^{j})^\top T_{\gamma} \psi_{\rm tw}\|_{L^1(\mathbb R^d;\mathbb R)}^2
     \le  
C_N^2 \big( \sum_{\ell=0}^\infty \lambda_\ell^{1/2} \| \mu_\ell\|_{L^1(\mathbb R^{d}; \mathbb R^{ m})} \big)^2,
\end{array}
\end{equation}
which completes the proof in view of \eqref{eq:nt:aqt:sum:cond}.
\end{proof}

\printbibliography

\end{document}